\title{Drinfeld center of planar algebra}
\author[ \large
P\lowercase{aramita} D\lowercase{as},
S\lowercase{hamindra} G\lowercase{hosh and}
V\lowercase{ed} G\lowercase{upta}]
{\bf \large P\lowercase{aramita} D\lowercase{as},
S\lowercase{hamindra} K\lowercase{umar} G\lowercase{hosh and}
V\lowercase{ed} P\lowercase{rakash} G\lowercase{upta}}
\thanks{Ved Gupta was partially supported by Shiv Nadar University (SNU) during his affiliation with SNU}
\date{}
\address{Stat-Math Unit, Indian Statistical Institute, Kolkata, INDIA}
\email{paramita.das@isical.ac.in, shami@isical.ac.in}
\address{School of Physical Sciences, Jawaharlal Nehru University, New Delhi, INDIA}
\email{vedgupta@mail.jnu.ac.in}
\numberwithin{equation}{section}
\numberwithin{figure}{section}
\theoremstyle{plain}
\newtheorem{thm}{Theorem}[section]
\theoremstyle{plain}
\newtheorem{lem}[thm]{Lemma}
\theoremstyle{remark}
\newtheorem{rem}[thm]{Remark}
\theoremstyle{plain}
\newtheorem{cor}[thm]{Corollary}
\theoremstyle{definition}
\newtheorem{defn}[thm]{Definition}
\theoremstyle{plain}
\newtheorem{prop}[thm]{Proposition}
\theoremstyle{plain}
\newtheorem{fact}[thm]{Fact}
\newtheorem{conjecture}[thm]{Conjecture}
\newcommand{\comments}[1]{}
\newcommand{\ra}{\rightarrow}
\newcommand{\rab}{\rangle}
\newcommand{\lra}{\longrightarrow}
\newcommand{\lab}{\langle}
\newcommand{\mbb}{\mathbb}
\newcommand{\mcal}{\mathcal}
\newcommand{\N}{\mathbb N}
\newcommand{\C}{\mathbb{C}}
\newcommand{\R}{\mathbb{R}}
\newcommand{\mscr}{\mathscr}
\newcommand{\vlon}{\varepsilon}
\newcommand{\vphi}{\varphi}
\newcommand{\btimes}{\boxtimes}
\keywords{Planar algebras, subfactors, fusion algebras, quantum double, Drinfeld center}
\begin{document}
\global\long\def\vlon{\varepsilon}
\global\long\def\bt{\bowtie}
\global\long\def\ul#1{\underline{#1}}
\global\long\def\ol#1{\overline{#1}}
\global\long\def\norm#1{\left\|{#1}\right\|}
\global\long\def\os#1#2{\overset{#1}{#2}}
\global\long\def\us#1#2{\underset{#1}{#2}}
\global\long\def\ous#1#2#3{\overset{#1}{\underset{#3}{#2}}}
\global\long\def\t#1{\text{#1}}
\global\long\def\lrsuf#1#2#3{\vphantom{#2}_{#1}^{\vphantom{#3}}#2^{#3}}
\global\long\def\tr{\triangleright}
\global\long\def\tl{\triangleleft}
\global\long\def\cc90#1{\begin{sideways}#1\end{sideways}}
\global\long\def\turnne#1{\begin{turn}{45}{#1}\end{turn}}
\global\long\def\turnnw#1{\begin{turn}{135}{#1}\end{turn}}
\global\long\def\turnse#1{\begin{turn}{-45}{#1}\end{turn}}
\global\long\def\turnsw#1{\begin{turn}{-135}{#1}\end{turn}}
\global\long\def\fusion#1#2#3{#1 \os{\textstyle{#2}}{\otimes} #3}

\maketitle
\begin{abstract}
We introduce fusion, contragradient and braiding of Hilbert affine representations of a subfactor planar algebra $P$ (not necessarily having finite depth).
We prove that if $N \subset M$ is a subfactor realization of $P$, then the Drinfeld center of the $N$-$N$-bimodule category generated by $_N L^2 (M)_M$, is equivalent to the category of Hilbert affine representations of $P$ satisfying certain finiteness criterion.
As a consequence, we prove Kevin Walker's conjecture for planar algebras.
\end{abstract}
\section{Introduction}
Vaughan Jones, in \cite{Jon01}, introduced {\em modules over a planar algebra} or {\em annular representations of a planar algebra} to construct subfactors with principal graphs $E_6$ and $E_8$; in the same paper, he explicitly worked out these representations in the case of Temperley-Lieb planar algebra (TL).
Roughly speaking, from a planar algebra $P$, one first creates a $\C$-linear category $AnnP$ (called {\em annular category over $P$}) using annular tangles. 
Annular representations of $P$ are $\C$-linear functors from $AnnP$ to the category of vector spaces.
Although annular representations first appeared in \cite{Jon01}, $AnnTL$ had been studied extensively even before that in \cite{Jon94} and \cite{GL98}.
In \cite{Gho06}, the second named author investigated the annular representations of the group planar algebra (that is, the planar algebra associated to the fixed point subfactor arising from an outer action of a finite group) and showed that the category of these representations is equivalent to the category of representations of a nontrivial quotient of the quantum double of the corresponding group.
Let us elaborate on the reason for the appearance of this nontrivial quotient.
First of all, any element in the planar algebra $P$ can be visualized on a disc / rectangle in $\R^2$; in the same token, a morphism in $AnnP$ (also called {\em annular morphism}) can be thought of as an annulus in $\R^2$ with some additional data.
In either case, these pictorial representations are far from being unique; two pictures that can be obtained from one another via {\em planar isotopy} are considered equivalent.
Such an isotopy for the annulus might not fix its two boundaries; for instance, if the internal disc is rotated by 360 degrees using planar isotopy, then the annular morphism remains unchanged and this is precisely the reason of the quotient in \cite{Gho06} being nontrivial.
Following a suggestion of Jones (in 2003), the second named author noticed that everything fell into place if one uses a more restrictive isotopy (called {\em affine isotopy}) which fixed the boundaries of the annulus at all times.
More precisely, on replacing $AnnP$ by the {\em affine category over $P$} (denoted by $AffP$), it turned out that the category of {\em affine representations of $P$} ($= \t{Rep} (AffP)$) is indeed equivalent to the representation category of the quantum double in the group planar algebra case.

In all likeliness, Jones' hunch regarding the connection between affine representations and quantum double in the group planar algebra case, stemmed from one of Walker's conjecture in the world of TQFT's.
The conjecture (as stated in \cite{FNWW}) is the following:
\begin{conjecture}\label{wconj}
If $\Lambda$ is a $\C$-linear category, and $\Lambda^{\mcal R}$ (resp., $\Lambda^{\mcal A}$) is the rectangular (resp., annular) version of the corresponding locally defined picture / relation category, then $\mcal D (Rep(\Lambda^{\mcal R})) = Rep(\Lambda^{\mcal A})$, that is, the Drinfeld center or quantum double of the representation category of the rectangular picture category is isomorphic to the representation category of the corresponding annular category.
\end{conjecture}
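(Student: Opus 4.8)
The plan is to deduce the conjecture (for planar algebras) as a direct corollary of the main theorem, which identifies the Drinfeld center of the $N$-$N$-bimodule category generated by $_N L^2(M)_M$ with the category of Hilbert affine representations of $P$ satisfying the finiteness criterion. The first task is to translate the three categories in Conjecture \ref{wconj} into planar-algebraic terms. I would take $\Lambda^{\mcal R}$ to be the rectangular picture category whose associated subfactor planar algebra is $P$, so that a representation of $\Lambda^{\mcal R}$ is exactly a module over $P$ in Jones' sense. The annular version $\Lambda^{\mcal A}$ I would identify with the affine category $AffP$, and $Rep(\Lambda^{\mcal A})$ with the category of Hilbert affine representations $Rep(AffP)$. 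The replacement of the literal annular category by $AffP$ is precisely the point flagged in the introduction: in the unitary setting one must use affine isotopy fixing both boundaries, otherwise the $360$-degree rotation forces a quotient and the correspondence degenerates.

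Second, I would realize $Rep(\Lambda^{\mcal R})$ as the bimodule category $\mcal C = \lab {}_N L^2(M)_M \rab$. After Cauchy (idempotent and direct-sum) completion, the rectangular picture category of $P$ is its associated rigid $C^*$-tensor category; using a subfactor realization $N \subset M$ of $P$ --- whose existence for a subfactor planar algebra is guaranteed by the reconstruction theorems of Popa and of Guionnet--Jones--Shlyakhtenko --- the standard dictionary between modules over a planar algebra and bimodules over the associated tower supplies a unitary monoidal equivalence $Rep(\Lambda^{\mcal R}) \simeq \mcal C$, under which the rectangular picture relations become the Jones box-space relations and fusion of $P$-modules matches the relative tensor product of bimodules.

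Third, because the Drinfeld center $\mcal D(-)$ is functorial for monoidal equivalences --- sending such an equivalence to a braided equivalence of centers --- the identification $Rep(\Lambda^{\mcal R}) \simeq \mcal C$ yields $\mcal D(Rep(\Lambda^{\mcal R})) \simeq \mcal D(\mcal C)$. The main theorem then gives $\mcal D(\mcal C) \simeq Rep(AffP)$ together with the finiteness criterion, and combining with the first step produces the desired equivalence $\mcal D(Rep(\Lambda^{\mcal R})) \simeq Rep(\Lambda^{\mcal A})$, which is the content of the conjecture.

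The hard part, I expect, will be reconciling the finiteness criterion of the theorem with the a priori unrestricted category $Rep(\Lambda^{\mcal A})$ intended by Walker: one must argue that, in the rigid semisimple setting relevant here, the finiteness condition is either automatic or is exactly the correct interpretation of the objects of the annular representation category, so that nothing is spuriously omitted. A second delicate point is that the equivalence must be braided, not merely $\C$-linear or monoidal; this requires checking that the half-braidings on objects of $\mcal D(\mcal C)$ correspond, under the affine action, to the braiding on Hilbert affine representations constructed earlier in the paper, rather than establishing only an equivalence of underlying categories.
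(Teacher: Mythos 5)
Your proposal follows essentially the same route the paper takes: read Conjecture \ref{wconj} through Jones' planar-algebraic reformulation (Conjecture \ref{jconj}), identify the rectangular side with the bimodule category $\mcal C$ and the annular side with Hilbert affine representations, and deduce the statement from the main theorem (Theorem \ref{genconj}); moreover, your two flagged ``hard parts'' are exactly what the paper settles, since finite depth makes finite $P$-support automatic (Remark \ref{findepfinPsupp}) and the braided compatibility is built into the statement of Theorem \ref{genconj}. Two caveats worth recording: like the paper, your argument only establishes the planar-algebra instance --- the paper explicitly disclaims proving Conjecture \ref{wconj} in the generality stated (arbitrary $\C$-linear picture/relation categories) --- and the equivalence furnished by Theorem \ref{genconj} is \emph{contravariant}, a detail your reduction via functoriality of the center should acknowledge even though it is harmless here.
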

\comments{
\noindent We illustrate the relevance of the above conjecture in the context of group planar algebra. Let $N\subset M$ be the fixed point subfactor corresponding to an outer action of a finite group $G$ on a $II_1$-factor $M$, and $P$ be its associated planar algebra.
The $N$-$N$-bimodule category $\mcal C$ generated by $_N L^2(M)_M$, is known to be equivalent to the representation category of $G$, whose Drinfeld center is equivalent to the representation category of the quantum double of $G$.
Then, by \cite{Gho06}, the Drinfeld center of $\mcal C$ is equivalent to the category of affine representations of $P$.
}
\noindent We illustrate the relevance of the above conjecture in the context of group planar algebra. Let $N\subset M$ be the fixed point subfactor corresponding to an outer action of a finite group $G$ on a $II_1$-factor $M$, and $P$ be its associated planar algebra. The $N$-$N$-bimodule category $\mcal C$ generated by $_N L^2(M)_M$, is known to be equivalent to the representation category of $G$, whose Drinfeld center is equivalent to the representation category of the quantum double of $G$. Then, by \cite{Gho06}, it follows that the Drinfeld center of $\mcal C$ is equivalent to the category of locally finite Hilbert affine representations of $P$. Here, $\mcal C$ (which can also be recovered from $P$ as described at the beginning of Section \ref{conj}) corresponds to the representation category of the rectangular category $\Lambda^{\mcal R}$ in Walker's conjecture and $AffP$ relates to the annular version $\Lambda^{\mcal A}$ of $\Lambda$.
Using this analogy, Jones reformulated Walker's conjecture for planar algebra in the following way.
\begin{conjecture}\label{jconj}
If $N\subset M$ is a finite depth subfactor and $P$ is the associated subfactor planar algebra, then the category of locally finite Hilbert affine representations of $P$ is equivalent to the Drinfeld center of the $N$-$N$-bimodule category generated by $_N L^2(M)_M$.
\end{conjecture}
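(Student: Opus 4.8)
The plan is to prove the equivalence by constructing an explicit, mutually inverse pair of functors between the Drinfeld center $\mcal D(\mcal C)$ and the category of locally finite Hilbert affine representations $\t{Rep}(AffP)$, and then checking that this pair respects the relevant Hilbert (unitary) structures. The starting point is the planar-algebraic description of $\mcal C$ recalled at the beginning of Section \ref{conj}: the simple objects of $\mcal C$ are indexed by the irreducible $N$-$N$-sub-bimodules appearing in the tower generated by ${}_N L^2(M)_M$, and the morphism spaces between tensor products of the generating bimodule are computed by the boxes of $P$ together with the pictorial (planar) calculus. Because $N \subset M$ has finite depth, there are only finitely many such simple objects and the fusion rules are finite; this is what will make the ``locally finite'' hypothesis on the representation side the exact counterpart of allowing arbitrary (possibly infinite) direct sums in the center.

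First I would build the functor $F \colon \mcal D(\mcal C) \ra \t{Rep}(AffP)$. An object of $\mcal D(\mcal C)$ is a pair $(X, \sigma)$, where $X \in \mcal C$ and $\sigma = \{\sigma_Y\}$ is a half-braiding, i.e.\ a natural family of isomorphisms $\sigma_Y \colon X \ot Y \ra Y \ot X$ satisfying the hexagon identity. The key geometric observation is that every affine tangle can be factored, up to affine isotopy, into a ``rectangular'' part---an honest disc tangle that acts through the tensor structure of $\mcal C$ and hence through the boxes of $P$---and a ``winding'' part in which the inner disc is dragged once around the annulus. I would define the representation vector space from $X$ (reading off its isotypic components against the generating bimodules), let the rectangular tangles act via the planar calculus, and let the winding generator act precisely through the half-braiding $\sigma$. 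This is the rigorous form of the heuristic that the annular/affine degree of freedom is carried by the half-braiding of the quantum double.

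Conversely, I would construct $G \colon \t{Rep}(AffP) \ra \mcal D(\mcal C)$ by reversing this recipe: given a locally finite Hilbert affine representation $V$, the action of the disc (rectangular) tangles recovers an object $X \in \mcal C$, and the action of the winding generators assembles into a candidate half-braiding $\sigma$. The substantive check here is that the defining relations of $AffP$---the affine isotopy relations governing how winding tangles compose and slide past the rectangular ones---force $\sigma$ to be natural in $Y$ and to satisfy the hexagon axiom, so that $(X,\sigma)$ is genuinely an object of $\mcal D(\mcal C)$. One then verifies that $F$ and $G$ are mutually inverse up to natural isomorphism, that they carry morphisms of affine representations to morphisms in the center and back, and that they intertwine the $*$- and inner-product structures, so that the equivalence is in fact an equivalence of the underlying unitary (indeed $C^*$-) categories.

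The main obstacle I anticipate is twofold. On the algebraic side, the delicate point is the faithful matching of the hexagon coherence for $\sigma$ with the composition relations of winding tangles in $AffP$; establishing that affine isotopy imposes neither more nor fewer relations than the half-braiding axioms is where the bookkeeping of the pictorial calculus must be done with care, and it is the analogue of identifying the ``nontrivial quotient'' of the quantum double that appeared in the annular (as opposed to affine) picture of \cite{Gho06}. On the analytic side, since an object of the center may involve infinitely many simple summands, I must ensure that the half-braiding, reassembled from the winding action, is well defined and bounded on the Hilbert space completion; this is exactly where the local finiteness condition enters, guaranteeing that on each finite piece the relevant operators are finite-dimensional and the completions behave well. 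Controlling these two issues simultaneously---purely combinatorial coherence together with the Hilbert-space analysis---is what I expect to be the crux of the argument.
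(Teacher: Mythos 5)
Your skeleton is in fact the same as the paper's: the paper also proves the conjecture by explicit functors in both directions, with the rectangular part of an affine tangle acting through the planar calculus and the winding strings resolved by the half-braiding (the $V(p,c)$ construction of Section \ref{conj}), and with the half-braiding reconstructed from the affine action in the essential-surjectivity step; the coherence bookkeeping you worry about (affine isotopy relations $\Leftrightarrow$ naturality, hexagon, unitarity) is exactly what Proposition \ref{cma} (identifying functionals on affine morphism spaces with commutativity constraints) and Remark \ref{cprop} (grouping and $*$-relations) are designed to settle, so that part of your plan is sound. But your reading of where the finiteness hypotheses bite is backwards, and this is a genuine error. Objects of the center here are pairs $(p,c)$ with $p$ a projection in a finite-dimensional algebra $P_{+2k}$ --- finite direct sums of simples; no ``infinite direct sum'' objects occur. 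Local finiteness of the affine module is precisely what matches this: $V_{+l}(p)=P_{+(k+l)}\us{2k}{\odot}p$ is automatically finite dimensional, and conversely only a \emph{finite-dimensional} $P_{+2l}$-module can be realized as $P_{+(k+l)}\us{2k}{\odot}p$ for a projection $p$ at some higher level (Remark \ref{vlp}). An object with infinitely many simple summands would produce a non-locally-finite module, i.e.\ fall outside both categories, so the analytic obstacle you anticipate is not where the difficulty lies.

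The more serious gap is in your reconstruction functor $G$. Reading off $X$ from the rectangular action on low levels and $\sigma$ from the winding only recovers the submodule generated by those low levels under non-winding tangles; for $F\circ G\cong\t{id}$ you must know that every locally finite Hilbert affine module is generated, under the image of $\psi^0$ (i.e.\ rectangular tangles), by a single fixed level --- what the paper isolates as \emph{finite $P$-support} (Definition \ref{finPsupp}). This follows neither from local finiteness nor from the affine isotopy relations: it is a nontrivial consequence of finite depth, via the identity $AP_{\vlon l,\vlon l}=AP^{=0}_{\vlon k,\vlon l}\circ AP_{\vlon l,\vlon k}$ for $k>\frac{1}{2}\t{depth}(P)$ extracted from \cite[Proposition 6.8]{Gho} together with the bound on weights from \cite[Theorem 6.11]{Gho} (see Remark \ref{findepfinPsupp}). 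Indeed, the paper's route is to prove the general Theorem \ref{genconj} with finite $P$-support as an explicit hypothesis and then observe that it is automatic in finite depth; without supplying this input, your $G$ is well defined on the bottom layers but is not an inverse to $F$, and the proof of the conjecture does not close.
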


\noindent Here, {\em locally finite} means that the corresponding Hilbert spaces are finite dimensional. Note that one could make sense of Conjecture \ref{jconj} only for additive categories since tensor product of affine representations was unknown at that time.

Affine representations first appeared in the work of Jones and Reznikoff (see \cite{JR06}) where they considered the Temperley-Lieb planar algebra. The second named author in \cite{Gho} established some finiteness results regarding affine representations of finite depth subfactor planar algebras and also affirmatively answered the question whether the radius of convergence of the dimension function of any affine representation, is at least as big as the inverse-square of the modulus of the planar algebra. In a recent article \cite{DGG}, we showed that the space of affine morphisms at level zero is isomorphic to the fusion algebra of the bicategory of bimodules of the corresponding subfactor (possibly having infinite depth); this was then used to investigate the affine representations with weight zero and prove that the Conjecture \ref{jconj} is true for irreducible depth two planar algebras (which, by Ocneanu and Szymanski (see \cite{Sz}, \cite{KLS}) corresponds to the subfactors arising from {\em minimal} actions of finite dimensional Kac algebras).

We now discuss the main results of this article.
\begin{enumerate}
\item In this article, we prove Conjecture \ref{jconj}.
On a side note, we must mention here that in \cite{FNWW}, right after the statement of Conjecture \ref{wconj} (above), it is stated that the ``conjecture and its higher category generalizations are proved in \cite{Wal}"; however, we (probably due to our inadequate understanding of TQFTs) were unable to find the proof.
In any case, we do not claim that we have proved (or re-proved) Conjecture \ref{wconj} in this article.
Our focus is restricted to Walker's conjecture for planar algebras (as formulated by Jones).
\item\label{introgenconj} Our main theorem (Theorem \ref{genconj}) is a general version of Conjecture \ref{jconj}, where we do not have any restriction on the depth of the subfactor. For this, we had to impose a new finiteness criterion (which we call {\em finite $P$-support}) on the affine representations, a condition that is automatic in the case of finite depth.
The Drinfeld center of the $N$-$N$-bimodule category, turned out to be equivalent to the category of locally finite Hilbert affine representations with finite $P$-support.
\item We introduce {\em fusion}, {\em contragradient} and {\em braiding} of affine representations (possibly not having finite $P$-support or local finiteness property) using a new inner product taking values in the space of {\em commutativity constraints}, {\em antipode functor on the affine category}) and some specific affine tangles respectively.
We show that the equivalence in part (\ref{introgenconj}) is indeed a tensor equivalence.
Here, we must mention our {\em fusion} or the {\em tensor structure} is similar to the concept of {\em pair of pants} for TQFTs. The nontrivial parts however, are (a) equipping it with an inner product to give a Hilbert affine module structure and (b) making it work even in the absence of any extra constraints on the modules, such as, local-finiteness, semisimplicity, finite $P$-support or finiteness of depth.
Moreover, for braiding, the very intuitive idea of {\em twisting the pair of pants} does not provide a unitary braiding and could possibly be an unbounded operator; it needs some tweaking to make it unitary.
\end{enumerate}
The above results were also announced in Chennai at Sunder's 60th birthday conference in April 2012 (see {\cite{Gho3}}). The single most important thing which helped us in achieving the above is the faithful tracial state on the affine endomorphism spaces (in Section \ref{straffmor}). This was motivated by \cite[Lemma 4.1]{DGG} where we gave a pictorial reformulation of the trace on the space of affine morphisms at level zero, which is induced by the obvious trace on the fusion algebra of the bimodule categories (that is, evaluation at the trivial bimodule) via the isomorphism in \cite[Theorem 3.1]{DGG}.

Once we move beyond the world of locally finite Hilbert affine representations, things become increasingly analytical.
This is relevant only for infinite depth planar algebras.
The category $\t{Rep}(AffP)$ might not be semisimple any more.
We will treat this analytical aspect of affine representations in great generality in an upcoming paper where we also work out some examples, like diagonal subfactors, Temperley-Lieb, and some more.
On the categorical front, in the finite depth case, affine representations could come handy in finding the modular invariants, such as, the $S$- and $T$-matrices (which, for some examples had been analyzed by Izumi using sectors, see \cite{Izu2}) or even determining the maximal atlas (since the Drinfeld center forms a complete invariant with respect to Morita equivalence (see the survey article \cite{Nik})).

Another important direction to look at is the bimodule category of Popa's symmetric enveloping algebra inclusion (SEAI) $M\vee M^{op} \subset M \us{e_N} \btimes M^{op}$ of a finite index extremal subfactor $N\subset M$ (see \cite{Pop94, Pop99}), which he showed was essentially the same as Ocneanu's asymptotic inclusion in the finite depth case. From \cite{Ocn, EK98}, the SEAI could be seen as an analogue of the Drinfeld center construction in the context of subfactors.
To be more precise, one needs to look at the composed inclusion $N\vee M^{op} \subset M \vee M^{op} \subset M \us{e_N} \btimes M^{op}$ (and {\em not} just the SEAI) and then its $(N\vee M^{op})$-$(N\vee M^{op})$ bimodule category turns out to be tensor equivalent to the Drinfeld center of the $N$-$N$ bimodule category (see  \cite{Izu1}). Recently, in the finite depth case, the planar algebra of the SEAI was obtained in the work of Stephen Curran \cite{Cur} where he used affine representations.  In the infinite depth case, the SEAI is an infinite index subfactor
for which the Jones¡¯ planar algebra does not exist. However, there has been attempts of developing
a planar calculus for infinite index bimodules over $II_1$-factors (see [Pen]). On the other hand,
the SEAI has also been described using planar algebra techniques in [CJS].
We hope
that our treatment of affine representations will also shed some light on understanding the bimodule
category of the SEAI in this case as well.

\comments{
We hope that our treatment of affine representations will shed some light on understanding the bimodule category of the SEAI in the infinite depth case as well; here, the SEAI is an infinite index subfactor for which the Jones' planar algebra does not exist. However, there has been attempts of developing a planar calculus for infinite index bimodules over $II_1$-factors (see \cite{Pen}). On the other hand, the SEAI has also been described using planar algebra techniques in \cite{CJS}. 
{\color{blue} Unifying the above, {\bf PENDING} ...}}

\section{Preliminaries}\label{prelim}
\subsection{Some useful notations}\label{usenot}
As per our requirements in this article, instead of giving a detailed description of planar algebras, we recall few conventions and notations that we will be following throughout this article.
Since the definition of planar algebras has evolved over the time, we will follow the terminologies as described in \cite{Jon03, Gho}, which is also referred by some authors as {\em shaded planar algebras}.  We shall freely borrow some useful terminologies and conventions from \cite{DGG1}, which we briefly recall below:

\begin{enumerate}
\item We will consider the natural binary operation on $\{-,+\}$ given by $++ := +$, $+-:=-$, $-+:=-$ and $--:=+$. Notations such as $(-)^l$ have to be understood in this context.
\item We will denote the set of all possible colors of discs in tangles by $\t{Col}:= \left\{ \vlon k: \vlon \in \{+,-\},\, k \in \mbb{N}_0\right\}$ where $\N_0 := \N \cup \{ 0 \}$.
\item In a tangle, we will replace (isotopically) parallel strings by a single strand labelled by the number of strings, and an internal disc with color $\vlon k$ will be replaced by a bold dot with the sign $\vlon$ placed at the angle corresponding to the distinguished boundary components of the disc.
For example,
\psfrag{e}{$\vlon$}
\includegraphics[scale=0.25]{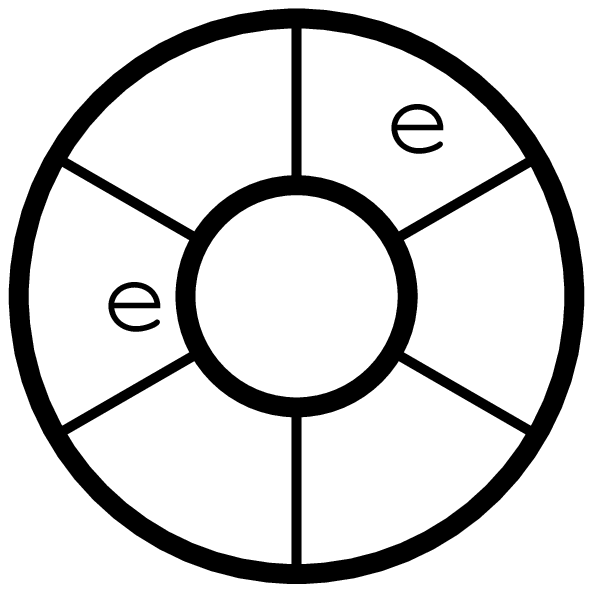}
will be replaced by
\psfrag{2}{$2$}
\psfrag{4}{$4$}
\psfrag{e}{$\vlon$}
\includegraphics[scale=0.25]{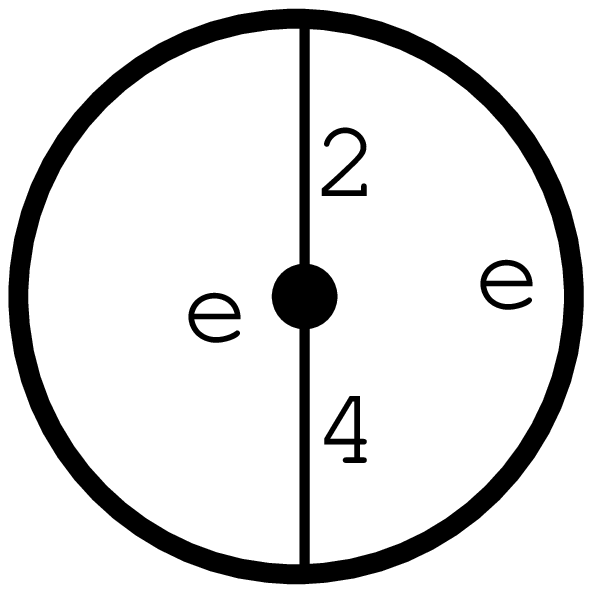}.
In a similar token, if $P$ is a planar algebra, we will replace a $P$-labelled internal disc by a bold dot with the label being placed at the angle corresponding to the distinguished boundary component of the disc; for instance,
\psfrag{x}{$x$}
\psfrag{e}{$\vlon$}
\includegraphics[scale=0.25]{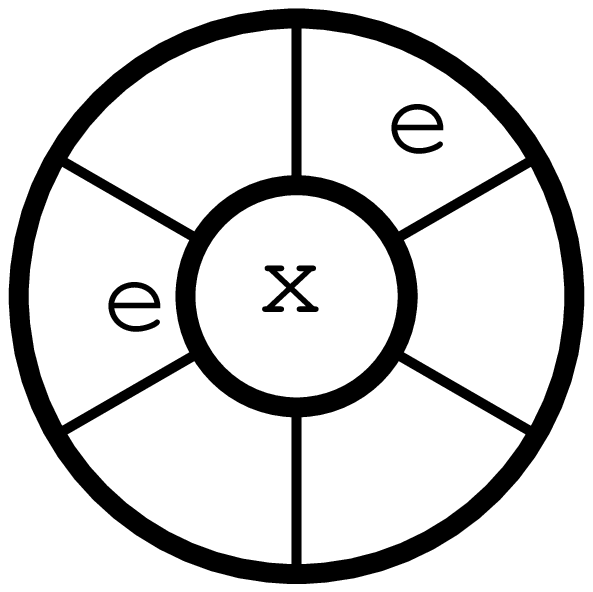}
will be replaced by
\psfrag{x}{$x$}
\psfrag{2}{$2$}
\psfrag{4}{$4$}
\psfrag{e}{$\vlon$}
\includegraphics[scale=0.25]{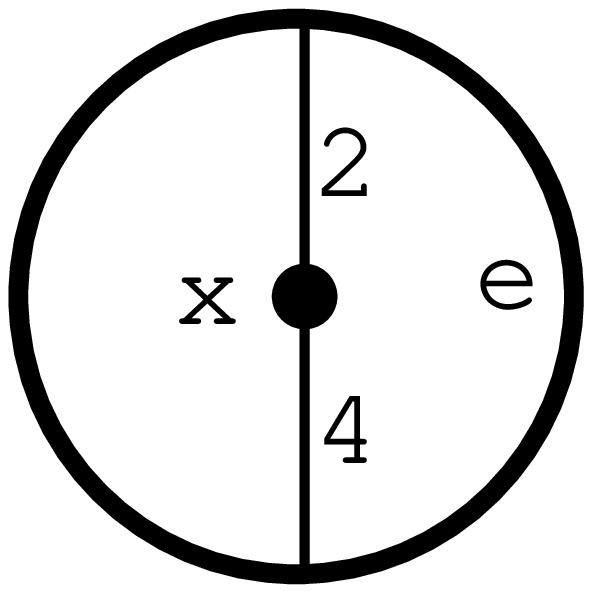}
where $x \in P_{\vlon 3}$.
We will reserve alphabets like $x, y, z$ to denote elements of $P$, $\vlon, \eta, \nu$ to denote a sign, and $k, l, m$ to denote a natural number to avoid confusion.
It should be clear from the context what a bold dot or a string in a picture is labelled by.
\item We set some notation for a set of `generating tangles' (that is, tangles which generate all tangles by composition) in Figure \ref{tangles}.
\begin{figure}[h]
\psfrag{mult}{Multiplication tangle}
\psfrag{unit}{Unit tangle}
\psfrag{identity}{Identity tangle}
\psfrag{jonesproj}{Jones projection tangle}
\psfrag{rightinclusion}{Right inclusion tangle}
\psfrag{leftinclusion}{Left inclusion tangle}
\psfrag{rightcondexp}{Right conditional expectation tangle}
\psfrag{leftcondexp}{Left conditional expectation tangle}
\psfrag{k}{$k$}
\psfrag{2k}{$2k$}
\psfrag{e}{$\vlon$}
\psfrag{-e}{$-\vlon$}
\psfrag{M}{$ M_{\vlon k} =$}
\psfrag{m}{$: (\vlon k,\vlon k)\rightarrow \vlon k$}
\psfrag{1ek}{$1_{\vlon k} =$}
\psfrag{1}{$: \emptyset \rightarrow \vlon k$}
\psfrag{RI}{$RI_{\vlon k} =$}
\psfrag{ri}{$: \vlon k \rightarrow \vlon (k+1)$}
\psfrag{LI}{$LI_{\vlon k}=$}
\psfrag{li}{$: \vlon k \rightarrow -\vlon (k+1)$}
\psfrag{Id}{$I_{\vlon k}=$}
\psfrag{id}{$: \vlon k \rightarrow \vlon k$}
\psfrag{E}{$E_{\vlon (k+1)}=$}
\psfrag{jp}{$: \emptyset \rightarrow \vlon (k+2)$}
\psfrag{RE}{$RE_{\vlon (k+1)} =$}
\psfrag{re}{$: \vlon(k+1) \rightarrow \vlon k$}
\psfrag{LE}{$LE_{\vlon (k+1)} =$}
\psfrag{le}{$: \vlon(k+1) \rightarrow -\vlon k$}
\includegraphics[scale=0.25]{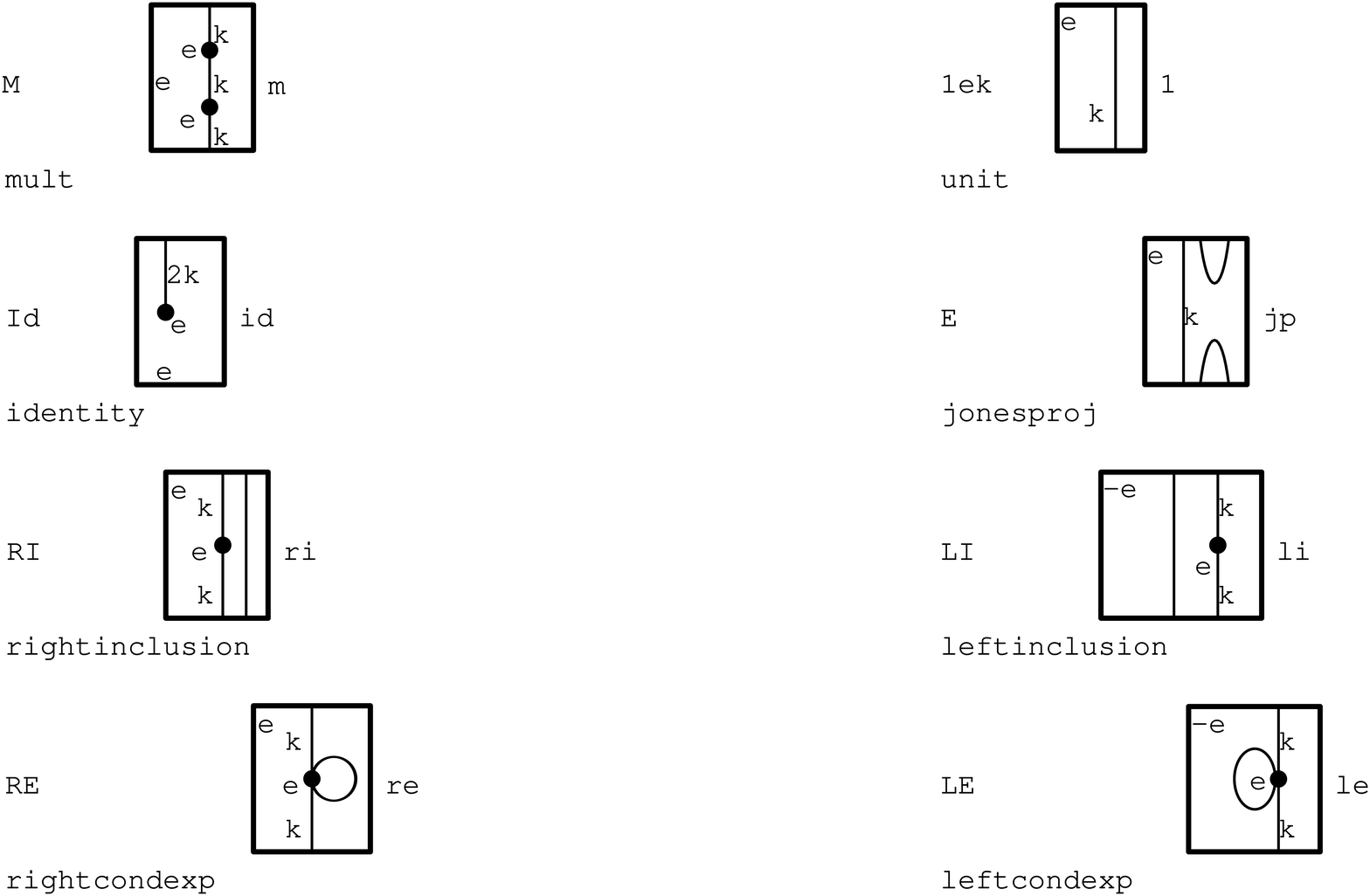}
\caption{Generating tangles.}
\label{tangles}
\end{figure}
\item ${\mcal T}_{\vlon k}$ (resp., ${\mcal T}_{\vlon k} (P)$) will denote the set of tangles (resp., $P$-labelled tangles) which has $\vlon k$ as the color of the external disc; ${\mcal P}_{\vlon k} (P)$ will be the vector space with ${\mcal T}_{\vlon k} (P)$ as a basis.
The action of $P$ induces a linear map ${\mcal P}_{\vlon k} (P) \ni T \os{P}{\longmapsto} P_T \in P_{\vlon k}$.
\end{enumerate}
By a subfactor planar algebra, we will mean the planar algebra associated to a finite index extremal subfactor (as in \cite[Theorem 4.2.1]{Jon}).

\subsection{Affine Category over a  Planar Algebra}\label{affrep}
After the notion of `annular category' over a planar algebra coined by
Jones (\cite{Jon01}), motivated by some natural categorical requirements visible in
some concrete examples, the notion of `affine category' over a planar
algebra evolved in \cite{JR06, Gho}. For our purpose, we will mainly
follow \cite{Gho, DGG} for the terminologies and conventions of
\textit{affine category over a planar algebra} and the corresponding
{\em affine morphisms}. Again, for the sake of convenience, we include
some salient features of this category.

Time and again, to avoid being pedantic, we will abuse terminology by
referring an affine tangular picture as an affine tangle
(corresponding to its affine isotopy class).

In Figure \ref{aff-tangles}, we draw a specific affine tangle called $\Psi_{\varepsilon k, \eta l}^m$ for $\vlon k, \eta l \in \t{Col}$ and $m \in \N_{\vlon , \eta} := 2 \N_0 +\delta_{\vlon, -\eta}$, where a number beside a string has the same meaning as in tangles.
This affine tangle plays a crucial role in the affine category over a planar algebra.
\begin{figure}[h]
\psfrag{AR}{$AR_{\vlon k}$}
\psfrag{y}{$\eta$}
\psfrag{2k-1}{$2k-1$}
\psfrag{2k}{$2k$}
\psfrag{2l}{$2l$}
\psfrag{m}{$m$}
\psfrag{e}{$\varepsilon$}
\psfrag{-e}{$-\varepsilon$}
\psfrag{psi}{$\Psi^m_{\varepsilon k, \eta l}$}
\psfrag{1ek}{$A1_{\varepsilon k}$}
\includegraphics[scale=0.30]{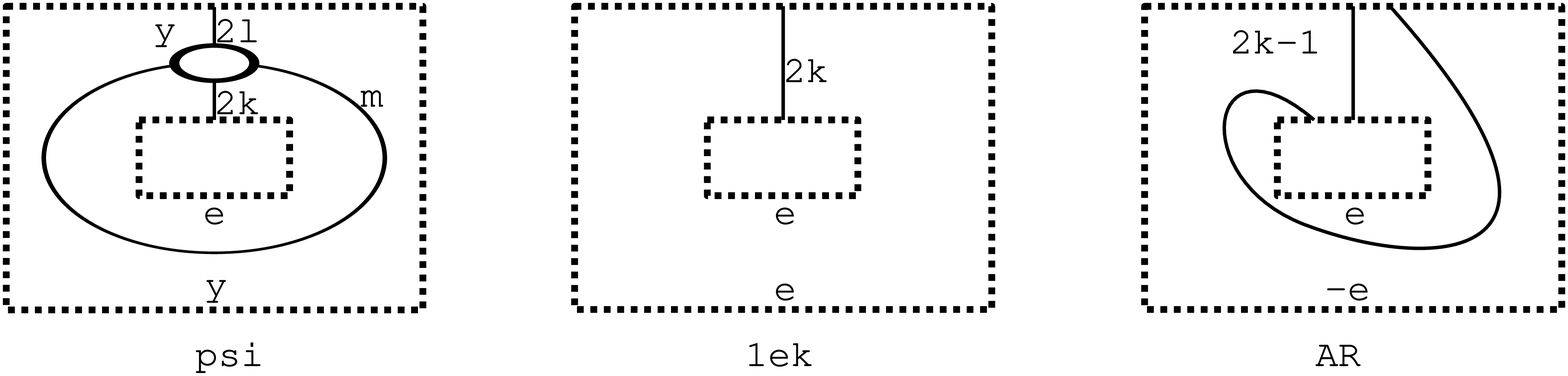}
\caption{Some useful affine tangles. ($\varepsilon , \eta \in \{+, - \}, k, l \in \N_0, m \in \N_{\vlon , \eta}$) \label{aff-tangles}}
\end{figure}

\noindent {\bf Notations:} For each $ \varepsilon k , \eta l\in$ Col, let $\mathcal{AT}_{\varepsilon k, \eta l}$ denote the set of all $(\varepsilon k, \eta l)$-affine tangles, and let $\mcal{A}_{\varepsilon k, \eta l}$ denote the complex vector space with $\mcal{AT}_{\varepsilon k, \eta l}$ as a basis.
The composition of affine tangles $T \in \mathcal{AT}_{\varepsilon k, \eta l}$ and $S
\in \mathcal{AT}_{\xi m, \varepsilon k}$ is given by $T \circ S :=
\frac{1}{2}(2T \cup S) \in \mcal{AT}_{\xi m , \eta l}$ (diagrammatically, it just amounts to plugging in $S$ in the distinguished internal rectangle of $T$ and erasing the boundary); this
composition is linearly extended to the level of the vector spaces
$\mcal{A}_{\vlon k, \eta l}$'s.

The affine tangles $\Psi_{\varepsilon k, \eta l}^m$ in Figure \ref{aff-tangles} have a very useful diagrammatic implication as noted in the Remark below. For details, see \cite[Section 5]{Gho}. 
\begin{rem} \label{Psi-remark}
For each $A \in \mathcal{AT}_{\varepsilon k, \eta l}$, there is an $m
\in \N_{\vlon , \eta}$ and a $T \in
\mcal{T}_{\eta(k + l +m)}$ such that $A = \Psi_{\varepsilon k, \eta
  l}^m (T)$ where $\Psi_{\varepsilon k, \eta l}^m (T)$ is the isotopy
class of the affine tangular picture obtained by inserting $T$ in the
disc of $\Psi_{\varepsilon k, \eta l}^m$.
\end{rem}

Let $P$ be a planar algebra.  An $( \varepsilon k, \eta l)$-affine tangle is said to be $P$-{\em labelled} if each disc is labelled by an element of $P_{\nu m}$ where $\nu m$ is the color of the disc.  Let $\mathcal{AT}_{\varepsilon k, \eta l}(P)$ denote the collection of all $P$-{\em labelled} $(\varepsilon k, \eta l)$-affine tangles, and let $\mathcal{A}_{\varepsilon k, \eta l}(P)$ be the vector space with $\mathcal{AT}_{\varepsilon k, \eta l}(P)$ as a basis.  Composition of $P$-{\em labelled} affine tangles also makes sense as above and extends to their complex span. 

Note that $\Psi_{\varepsilon k, \eta l}^m$ also induces a linear map from $\mathcal{P}_{\eta (k+l+m)}(P)$ into $\mathcal{A}_{\varepsilon k, \eta l}(P)$.
And, from Remark \ref{Psi-remark}, we may also conclude that, for each $A \in \mathcal{A}_{\varepsilon k, \eta l}(P)$, there is an $m \in \N_{\vlon , \eta}$ and an $X \in \mcal{P}_{\eta(k + l +m)}(P)$ such that $A = \Psi_{\varepsilon k, \eta l}^m (X)$.
It can easily be verified that the set $\mathcal{W}_{\varepsilon k,\eta l} := \us{m \in \N_0}{\cup} \left\{ \Psi _{\varepsilon k, \eta l }^{m}(X) : X \in \mathcal{P}_{\eta (k+l+m)}(P) \text{ s.t. } P_X = 0 \right\}$ forms a vector subspace of $\mathcal{A}_{\varepsilon k, \eta l}(P)$.

The category $AP$ is defined by:
\begin{itemize}
\item $\text{ob}(AP) := \{\varepsilon k : \varepsilon \in \{+,-\}, k\in
  \mathbb{N}_{0}\}$,
\item $\text{Mor}_{AP}(\varepsilon k, \eta l) :=
  \frac{\mathcal{A}_{\varepsilon k,\eta l}
    (P)}{\mathcal{W}_{\varepsilon k, \eta l}}$ (also denoted by
  $AP_{\varepsilon k,\eta l}$),
\item composition of morphisms is induced by the composition of
  $P$-{\em labelled} affine tangles (see \cite{Gho}),
\item the identity morphism of $\varepsilon k$ is given by the class
  $[A1_{\varepsilon k}]$ - Figure \ref{aff-tangles}.
\end{itemize}
$AP$ is a $\mbb C$-linear category and is called the \textit{affine
  category over $P$} and the morphisms in this category are called
    {\em affine morphisms}.  (We must add that in the
      Introduction and in \cite{Gho}, the above category was referred
      as $AffP$. We have rechristened it to $AP$ just for the sake of
      convenience.)

For $\vlon k, \eta l \in \t{Col}$ and $m \in \N_{\vlon , \eta}$, consider the composition map
\[
\psi_{\vlon k , \eta l}^{m} \; : \; P_{\eta (k+l+m)} \stackrel{
  I_{\eta (k+l+m)}}{\lra} \mcal{P}_{\eta (k+l+m)}(P) \stackrel{
  \Psi^m_{\vlon k, \eta l}}{\lra} \mcal{A}_{\vlon k, \eta l}(P)
\stackrel{q_{\vlon k, \eta l}}{\ra} AP_{\vlon k, \eta l}
\]
where the map $I_{\eta (k+l+m)} : P_{\eta (k+l+m)} \ra \mcal{P}_{\eta
  (k+l+m)}(P)$ is obtained by labelling the internal disc of the
identity tangle $I_{\eta (k+l+m)}$ (defined in Figure \ref{tangles})
by a vector in $P_{\eta (k+l+m)}$, and $q_{\vlon k , \eta l}:
\mcal{A}_{\vlon k, \eta l}(P) \ra AP_{\vlon k, \eta l}$ is the
quotient map. Note that $\psi^m_{\vlon k, \eta l}$ is indeed linear,
although $I_{\eta (k+l+m)}$ is not.

From the preceding discussion it follows that:
\begin{rem}\label{psi-remark}
For each $a \in AP_{\vlon k, \eta l}$, there exists $m \in \N_{\vlon , \eta}$ and $x \in P_{\eta(k + l +m)}$ such that
$a = \psi_{\vlon k, \eta l}^m (x)$.
\end{rem}

Note that if $P$ is a $\ast $-planar algebra, then each
$\mathcal{P}_{\varepsilon k }(P)$ becomes a $\ast $-algebra where
$\ast $ of a {labelled} tangle is given by $\ast $ of the unlabelled
tangle whose internal discs are labelled with $\ast$ of the labels.
Further, one can define $\ast$ of an affine tangular picture by
reflecting it inside out such that the reflection of the distinguished
boundary segment of any disc becomes the same for the disc in the
reflected picture; this also extends to the $P$-{\em labelled} affine
tangles as in the case of $P$-{\em labelled} tangles.  Clearly, $\ast$
is an involution on the space of $P$-affine tangles, which can be
extended to a conjugate linear isomorphism $\ast :
\mcal{A}_{\varepsilon k, \eta l} (P) \rightarrow \mcal{A}_{\eta l ,
  \varepsilon k} (P)$ for all colours $\varepsilon k$, $\eta l$.
Moreover, it is readily seen that $\ast \left( \mathcal{W}_{\varepsilon k, \eta l}\right) =\mathcal{W}_{\eta l,\varepsilon k}$; so the category $AP$ inherits a $\ast$-category structure.
\begin{defn}
Let $P$ be a planar algebra.
\begin{enumerate}
\item A $\C$-linear functor $V$ from $AP$ to $\mcal{V}ec$ (the category of complex vector spaces), is said to be an `affine $P$-module', that is, there exists a vector space $V_{\vlon k}$ for each $\vlon k \in \text{Col}$ and a linear map $AP_{\vlon k , \eta l} \ni a \os{V}{\longmapsto} V_a \in \text{Mor}_{\mcal{V}ec} (V_{\vlon k} , V_{\eta l}) $ for every $\vlon k , \eta l \in \text{Col}$ such that compositions and identities are preserved. ($V_a$ will be referred as the action of the affine morphism $a$.)
\item Further, for a $*$-planar algebra $P$, 
a `$*$-affine $P$-module' is an additive $*$-functor $V$ from $AP$ to the category of inner product spaces. 
\item A $*$-affine $P$-module $V$ will be called `Hilbert affine $P$-module' if $V_{\vlon k}$'s are Hilbert spaces.
\end{enumerate}
\end{defn}
\comments{
A $\C$-linear functor $V$ from $AP$ to $\mcal{V}ec$ (the category of complex vector spaces), is said to be an {\em affine $P$-module}, that is, there exists vector spaces $V_{\vlon k}$'s and linear maps sending an affine morphism $a \in AP_{\vlon k , \eta l}$ to its action $ V_a
\in \text{Mor}_{\mcal{V}ec} (V_{\vlon k} , V_{\eta l}) $ for $\vlon k
, \eta l \in \text{Col}$ such that compositions and identities are
preserved.  For a $*$-planar algebra $P$, we will be interested in
{\em $*$-affine $P$-modules} preserving the $*$-structure of $AP$,
that is, they will be given by $\C$-linear functors $V$ from $AP$ to
the category of inner product spaces such that $\langle \xi , a \eta
\rangle = \langle a^* \xi , \eta \rangle$ for all affine morphisms
$a$, and $\xi$ and $\eta$ in appropriate $V_{\vlon k}$'s; $V$ will be
called {\em Hilbert affine $P$-module} if $V_{\vlon k}$'s are Hilbert
spaces.  Moreover, a}
Affine modules are called {\em bounded} (resp.,
{\em locally finite}) if the actions of the affine morphisms are
bounded operators with respect to the norm coming from the inner
product (resp., $V_{\vlon k}$'s are finite dimensional).
By closed graph theorem, a Hilbert affine $P$-module is automatically bounded; conversely, every bounded $*$-affine $P$-module can be completed to a Hilbert affine $P$-module.

Below, we give a list of some standard facts on Hilbert affine
$P$-modules for a $*$-planar algebra $P$ with modulus - details can be found in \cite[Section 5]{Gho}. If $V$ is a
Hilbert affine $P$-module and $S_{\vlon k}\subset V_{\vlon k}$ for
$\vlon k\in\mbox{Col}$, then one can consider the `submodule of $V$
generated by $S=\us{\vlon k\in\mbox{Col}}{\coprod}S_{\vlon k}$' given
by $\left\{ \left[S\right]_{\eta l}:=\ol{\mbox{span}\left\{
  \us{\varepsilon k\in\mbox{Col}}{\cup}V_{AP_{\vlon k,\eta l}}\left(S_{\varepsilon k}\right)\right\}
}^{\parallel\cdot\parallel}\right\} _{\eta l}$ which is also the
smallest submodule of $V$ containing $S$.
\begin{rem}\label{irrmod}
Let $V$ be a Hilbert affine $P$-module and $W$ be an $AP_{\vlon k,\vlon k}$-submodule
of $V_{\vlon k}$ for some $\vlon k\in\mbox{Col}$. Then,
\begin{enumerate}
\item $V$ is irreducible if and only if $V_{\vlon k}$ is irreducible $AP_{\vlon k,\vlon k}$-module
for all $\vlon k\in\mbox{Col}$ if and only if $\left[v\right]=V$
for all $0\neq v\in V$.
\item $W$ is irreducible $\Leftrightarrow$ $\left[W\right]$ is an irreducible
submodule of $V$.\end{enumerate}
\end{rem}
\begin{rem}\label{affmodmor}
If $V$ and $W$ are Hilbert affine $P$-modules such that there exists an $\vlon k \in \mbox{Col}$ and an $AP_{\vlon k,\vlon k}$-linear isometry $\theta : V_{\vlon k} \ra W_{\vlon k}$, and $V = \left[ V_{\vlon k} \right]$, then $\theta$ extends uniquely to an 
$AP$-linear isometry $\tilde{\theta}: V \ra W$.
\end{rem}
For $\vlon \in \{ +, - \}$, we will also consider \emph{Hilbert $\vlon$-affine
$P$-module} $V$ consisting of the Hilbert spaces $V_{\pm0},V_{1},V_{2},\ldots$ equipped with a $*$-preserving action of affine morphisms as follows: 
\[
\left.
\begin{tabular}{l}
$AP_{\vlon k,\vlon l} \times V_k \ra V_l$\\
$AP_{\vlon k,\eta 0} \times V_k \ra V_{\eta 0}$\\
$AP_{\eta 0, \vlon l} \times V_{\eta 0} \ra V_l$\\
$AP_{\eta 0, \nu 0} \times V_{\eta 0} \ra V_{\nu 0}$
\end{tabular}
\right\} \t{ for all } k, l \in \N,\; \eta, \nu \in \{\pm\}.
\]
\begin{rem}\label{eaff}
The restriction map from the set of isomorphism classes of Hilbert
affine $P$-modules to that of the Hilbert $\vlon$-affine $P$-modules,
is a bijection.
\end{rem}
 To see this, consider an irreducible Hilbert $+$-affine $P$-module $V$.
Define $(\t{Ind} \, V)_{\vlon k}:=V_{k}$ and $(\t{Ind} \, V)_{\vlon0}:=V_{\vlon0}$
(as Hilbert spaces) and the action of affine morphisms by  $AP_{\vlon k,\eta l}\times (\t{Ind} \, V)_{\vlon k} \ni (a,v) \longmapsto \left(r_{\eta l}\circ a\circ r_{\vlon k}^{-1}\right)\cdot v\in (\t{Ind} \, V)_{\eta l}$
where $r_{\nu s}=\left\{ \begin{array}{ll}
A1{}_{\nu s}, & \mbox{ if }s=0\mbox{ or }\nu=+,\\
AR_{\nu s}, & \mbox{ otherwise;}\end{array}\right.$\\
$A1{}_{\nu s}$ and $AR_{\nu s}$ being the affine tangles mentioned in Figure \ref{aff-tangles}.

For every affine $P$-module $V$, $\mbox{dim}\left(V_{+k}\right)=\mbox{dim}\left(V_{-k}\right)$ for all $k\geq1$ and it increases as $k$ increases. \emph{Weight of $V$} is defined as the smallest non-negative integer $k$ such that $V_{+k}$ or $V_{-k}$ is nonzero.
$V$ is said to have {\em finite support} if there exists $k \in \N_0$ such that $[V_{\pm k}] = V$ and in this case, the smallest such $k$ is called the {\em support of $V$}. Thus, the support of any Hilbert affine $P$-module is the maximum of the weights of the submodules of $V$; this follows from the fact that any Hilbert affine $P$-module $V$ has a unique decomposition (upto isomorphism) $V \cong \us{k \in \N_0} \bigoplus V^k$ where $V^k$ is either the zero module or a submodule (of $V$) with both weight and support being $k$.

\section{Structure of the space of affine morphisms}\label{straffmor}
In this section, we will analyze the space of affine morphisms and prove some of their basic properties which will be very useful in the later sections. Throughout this section, $P$ will denote a (spherical) subfactor planar algebra (possibly having infinite depth).
The analysis of affine morphisms will also show that they are intimately linked with the commutativity constraints appearing in Drinfeld's center construction. Some of the materials in this section are generalization of our analysis of affine morphisms at zero level as discussed in \cite{DGG}.

For $\vlon k , \eta l \in \t{Col}$, consider the set $\mcal S_{\vlon k, \eta l} := \us{m \in \N_{\vlon,\eta} }{\sqcup} \mcal T_{\eta (k+l+m)} (P)$.
Define an equivalence $\sim$ on $\mcal S_{\vlon k , \eta l}$ generated by
\[
\psfrag{eta}{$\eta$}
\psfrag{t}{$T$}
\psfrag{s}{$S$}
\psfrag{~}{$\sim$}
\psfrag{2k}{$2k$}
\psfrag{2l}{$2l$}
\psfrag{m}{$m$}
\psfrag{n}{$n$}
\includegraphics[scale=0.15]{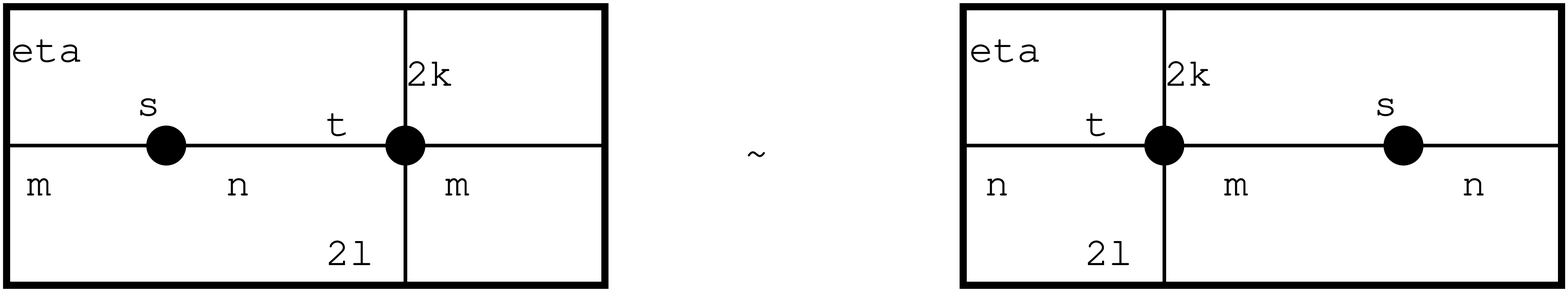}
\]
for all $m, n \in \N_{\vlon,\eta}$, $T \in \mcal T_{\eta (k+l+\frac{m+n}{2})} (P)$ and $S \in \mcal T_{\eta (\frac{m+n}{2})} (P)$. The following lemma is topological in nature and is basically some sort of generalization of what was already established first in \cite{Gho06} and then in \cite{DGG}.
\begin{lem}\label{psi-eq}
The map $\us{m \in \N_{\vlon,\eta} }{\sqcup} \Psi^m_{\vlon k , \eta l} : \mcal S_{\vlon k,\eta l} \longrightarrow \mcal A \mcal T _{\vlon k , \eta l} (P)$ induces a bijection from $\left. \mcal S_{\vlon k , \eta l} \right/ \sim$ to $\mcal A \mcal T _{\vlon k , \eta l} (P)$.
\end{lem}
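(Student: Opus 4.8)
The plan is to exhibit $\Phi := \us{m \in \N_{\vlon,\eta}}{\sqcup} \Psi^m_{\vlon k, \eta l}$ as a surjection onto $\mcal{AT}_{\vlon k, \eta l}(P)$ which is constant on $\sim$-classes and separates distinct classes; the induced map on $\mcal S_{\vlon k, \eta l}/\sim$ is then the desired bijection. Surjectivity is the easiest part and is essentially the $P$-labelled normal form of Remark \ref{Psi-remark}: given a $P$-labelled affine tangle $A \in \mcal{AT}_{\vlon k, \eta l}(P)$, its underlying unlabelled affine tangle is affine-isotopic to $\Psi^m_{\vlon k, \eta l}(T_0)$ for some $m \in \N_{\vlon,\eta}$ and some $T_0 \in \mcal T_{\eta(k+l+m)}$; transporting the $P$-labels through this isotopy produces a $P$-labelled tangle $T \in \mcal T_{\eta(k+l+m)}(P)$ with $A = \Psi^m_{\vlon k, \eta l}(T) = \Phi(T)$.

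For well-definedness it suffices, since $\sim$ is generated by the single depicted move, to check that the two sides of that move have the same image under $\Phi$. Writing $p := \frac{m+n}{2}$ (an integer, as $m, n \in \N_{\vlon,\eta}$ have equal parity), the move relates the tangle in $\mcal T_{\eta(k+l+m)}(P)$ obtained by gluing $S \in \mcal T_{\eta p}(P)$ to $T \in \mcal T_{\eta(k+l+p)}(P)$ along $n$ strands, to the tangle in $\mcal T_{\eta(k+l+n)}(P)$ obtained by gluing the same $S$ to the same $T$ along $m$ strands. Inserting these into $\Psi^m_{\vlon k, \eta l}$ and $\Psi^n_{\vlon k, \eta l}$ respectively yields two affine tangular pictures that differ only by dragging $S$ once around the central hole along the winding strands (from the block of $m$ windings to the block of $n$ windings). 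Since this drag fixes both the inner and the outer boundary circles at all times, it is an affine isotopy, so the two affine tangles coincide and $\Phi$ descends to $\bar\Phi : \mcal S_{\vlon k, \eta l}/\sim \; \lra \mcal{AT}_{\vlon k, \eta l}(P)$.

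The crux is injectivity of $\bar\Phi$, i.e.\ that $\Psi^m_{\vlon k, \eta l}(T) = \Psi^n_{\vlon k, \eta l}(T')$ forces $T \sim T'$. I would recover the pair $(m, T)$ from $A = \Phi(T)$ up to $\sim$ by means of a radial cut. Recall that in $\Psi^m_{\vlon k, \eta l}(T)$ the central disc carries $T$ and sends $2k$ of its strands to the inner circle, $2l$ to the outer circle, and the remaining $2m$ in pairs to $m$ strands winding once around the hole. An embedded arc from the inner to the outer boundary, transverse to the strings and missing the central disc, cuts the annulus into a rectangle and displays $A$ as $\Psi^{m}(T)$, with $m$ the number of winding strands the arc crosses and $T$ the cut-open tangle. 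Two decompositions $(m, T)$ and $(n, T')$ of the same $A$ arise from two such arcs; the space of admissible arcs up to isotopy is connected, so the two are joined by finitely many elementary moves, each pushing the cut across a single strand or past one side of the central disc. Tracking the cut-open tangle across one such move shows that the pair before and after differ precisely by the generating relation with half-sum $\frac{m+n}{2}$ and auxiliary tangle $S$ (the part of the picture swept across), whence $(m,T) \sim (n,T')$.

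The main obstacle is this last step: making the radial-cut bookkeeping rigorous and verifying that a single elementary push reproduces \emph{exactly} the depicted generating move --- the matching parity of $m$ and $n$, the half-sum $\frac{m+n}{2}$ as the color of the swept tangle $S$, and no further identifications. This is a purely topological statement about affine (boundary-fixing) isotopy classes, and it is where the winding strands must be controlled, since the naive full rotation of the central disc around the hole is exactly the ambiguity that $\sim$ is designed to absorb. I would reduce it to the connectivity of the space of radial cuts and to the corresponding analysis already carried out for the scalar and level-zero cases in \cite{Gho06} and \cite{DGG}, of which the present statement is the $P$-labelled generalization.
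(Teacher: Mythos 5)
Your proposal is correct and takes essentially the same route as the paper: the paper's entire proof is a citation to \cite[Lemma 3.7]{DGG} (the case $k=l=0$) together with the remark that the same argument works when $k$ or $l$ is nonzero, and your outline --- surjectivity from the labelled normal form, well-definedness by dragging $S$ around the annulus through an affine isotopy, injectivity via the radial-cut and arc-connectivity analysis --- is exactly the argument underlying that cited lemma, to which you likewise defer the hard topological bookkeeping. There is no substantive discrepancy between your attempt and the paper's proof.
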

\begin{proof}
This lemma is already proved in \cite[Lemma 3.7]{DGG} for a specific case, namely, $k=l=0$.
In fact, the same proof actually works even when $k$ or $l$ is nonzero.
\end{proof}
\begin{rem}\label{apmap}
An immediate consequence of Lemma \ref{psi-eq} is the following:

For any vector space $V$, if $f : \mcal S _{\vlon k , \eta l} \ra V$ is a map which is (i) invariant under $\sim$ and (ii) preserves the kernel of the action of $P$ (that is, $\sum_i \alpha_i f(T_i) = 0$ whenever $\sum_i\alpha_i P_{T_i} = 0$), then $f$ induces a unique linear map
$AP_{\vlon k, \eta l} \ni \psi^{m}_{\vlon k , \eta l} (x) \longmapsto f (I_{\eta (k + l + m)} (x)) \in V$ where $I$ denotes the identity tangle described in Figure \ref{tangles}.
\end{rem}
Next, we consider the map
\[
AP_{\vlon k,\vlon k} \ni a \os{\gamma_{\vlon k}}{\longmapsto} \gamma_{\vlon k} (a) := \delta^{-l} \us{\alpha}{\sum} P_{
\psfrag{eta}{$\eta$}
\psfrag{e}{$\vlon$}
\psfrag{u}{$u_\alpha$}
\psfrag{u*}{$u^*_\alpha$}
\psfrag{2k}{$2k$}
\psfrag{2l}{$2l$}
\psfrag{x}{$x$}
\includegraphics[scale=0.15]{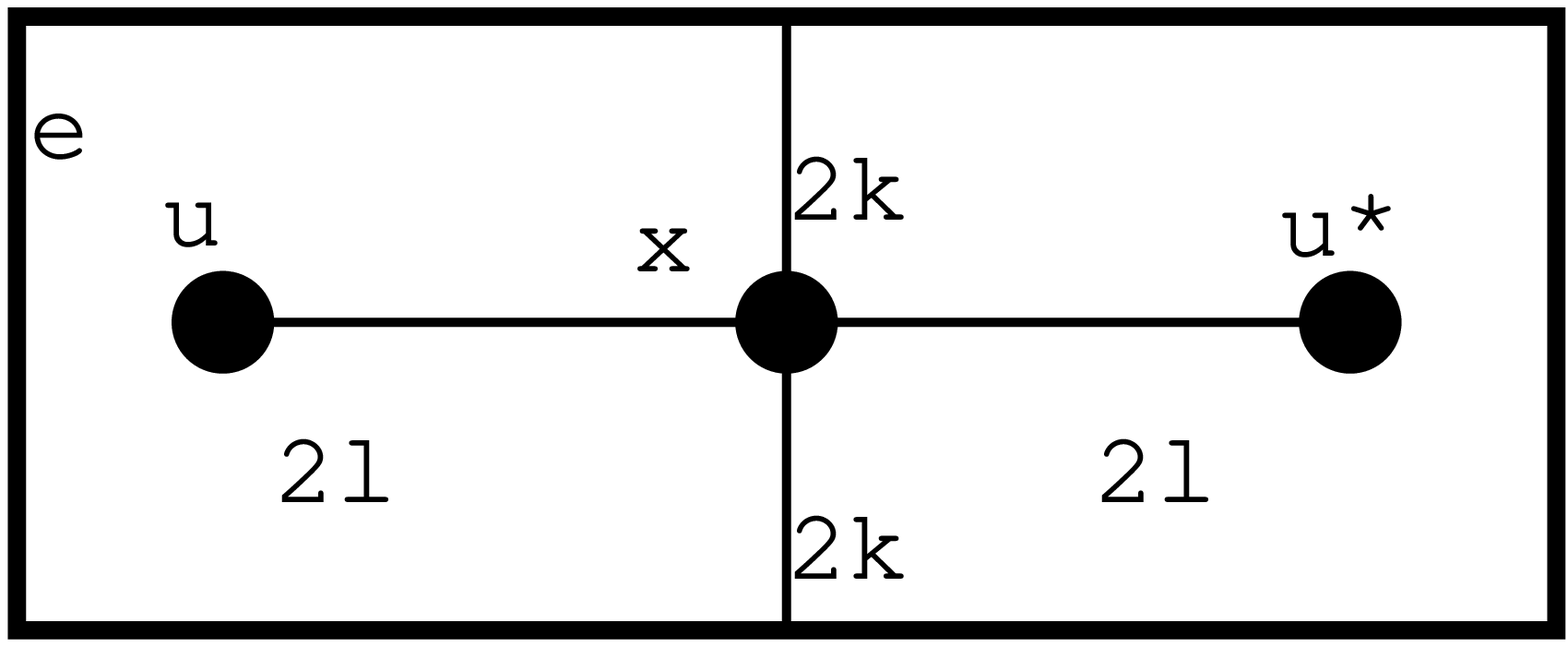}
} \in P_{\vlon 2k}
\]
where $x \in P_{\vlon (2(k+l)}$ such that $a = \psi^{2l}_{\vlon k, \vlon k} (x)$ and $\{ u_\alpha \}_\alpha$ is  an (in fact, any) orthonormal basis of $P_{\vlon l}$ with respect to the canonical trace (that is, the normalized picture trace).
\begin{lem}\label{gamma}
$\gamma_{\vlon k} : AP_{\vlon k, \vlon k} \rightarrow P_{\vlon 2k}$ is well-defined.
\end{lem}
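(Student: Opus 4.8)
The plan is to realise $\gamma_{\vlon k}$ as the map produced by the universal property in Remark \ref{apmap}, after first removing the dependence on the orthonormal basis. The defining formula carries two a priori ambiguities: the choice of orthonormal basis $\{u_\alpha\}_\alpha$ of $P_{\vlon l}$, and the choice of the pair $(l,x)$ with $x \in P_{\vlon 2(k+l)}$ and $a = \psi^{2l}_{\vlon k, \vlon k}(x)$, which is highly non-unique by Remark \ref{psi-remark}. Basis-independence is immediate and purely linear-algebraic: the summand in the definition of $\gamma_{\vlon k}(a)$ is linear in the disc labelled $u_\alpha$ and conjugate-linear in the disc labelled $u^*_\alpha$, so $\us{\alpha}{\sum} P_{[\cdots]}$ has the form $\us{\alpha}{\sum} B(u_\alpha, \ol{u_\alpha})$ for a fixed sesquilinear expression $B$. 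If $\{v_\beta\}_\beta = \{\us{\alpha}{\sum} c_{\beta\alpha} u_\alpha\}_\beta$ is another orthonormal basis, with $(c_{\beta\alpha})$ unitary, then $\us{\beta}{\sum} B(v_\beta, \ol{v_\beta}) = \us{\alpha,\alpha'}{\sum}\left(\us{\beta}{\sum} c_{\beta\alpha}\, \ol{c_{\beta\alpha'}}\right) B(u_\alpha, \ol{u_{\alpha'}}) = \us{\alpha}{\sum} B(u_\alpha, \ol{u_\alpha})$, since $\us{\beta}{\sum} c_{\beta\alpha}\, \ol{c_{\beta\alpha'}} = \delta_{\alpha\alpha'}$. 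Hence the value does not depend on the basis, and we may fix one for the rest of the argument.

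For the remaining ambiguity I would invoke Remark \ref{apmap}. Define $f : \mcal S_{\vlon k, \vlon k} \ra P_{\vlon 2k}$ by sending $T \in \mcal T_{\vlon(2k+2l)}(P)$ to $\delta^{-l}\us{\alpha}{\sum} P_{[\cdots]}$, the same diagram as in the definition of $\gamma_{\vlon k}$ but with the label $x$ replaced by $P_T$; the exponent $l$ is read off unambiguously from the component $\mcal T_{\vlon(2k+2l)}(P) \subset \mcal S_{\vlon k, \vlon k}$ containing $T$. Condition (ii) of Remark \ref{apmap} then holds automatically: the diagram is obtained by inserting $T$ into a fixed ($u_\alpha, u^*_\alpha$)-labelled tangle, so by multilinearity of the $P$-action each summand depends on $T$ only through $P_T$, whence $\us{i}{\sum}\alpha_i P_{T_i} = 0$ forces $\us{i}{\sum}\alpha_i f(T_i) = 0$. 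Once condition (i) is established, Remark \ref{apmap} yields a unique linear map on $AP_{\vlon k, \vlon k}$ sending $\psi^{2l}_{\vlon k, \vlon k}(x)$ to $f(I_{\vlon(2k+2l)}(x))$; since $P_{I_{\vlon(2k+2l)}(x)} = x$, this map is precisely $\gamma_{\vlon k}$, which settles well-definedness.

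The crux, and the step I expect to be the main obstacle, is the invariance of $f$ under the generating relation $\sim$ (condition (i)). Unwinding the relation of Lemma \ref{psi-eq} in the endomorphism case, one must show that capping $T \in \mcal T_{\vlon(2k + \frac{m+n}{2})}(P)$ with $S \in \mcal T_{\vlon(\frac{m+n}{2})}(P)$ into the level-$m$ component produces the same $f$-value as the level-$n$ capping. Diagrammatically, the basis sum $\us{\alpha}{\sum} u_\alpha \ot u^*_\alpha$ acts as a resolution of the identity via the reproducing identity $\us{\alpha}{\sum} u_\alpha\, \t{tr}(u^*_\alpha\, \cdot\,) = \t{id}$ on the relevant box space, and the normalisation $\delta^{-l}$ is calibrated so that the change in the number of encircled strings from $m$ to $n$ is exactly absorbed. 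Concretely, I would slide the $S$-cap through the $(u_\alpha, u^*_\alpha)$-rings using sphericity of $P$, and then balance the closed loops created in the process (each contributing a factor $\delta$) against the differing powers of $\delta$ in the two normalisations. This is the level-$\geq 0$ refinement of the computation behind \cite[Lemma 3.7]{DGG} together with the pictorial trace identity of \cite[Lemma 4.1]{DGG}; the only genuine work lies in tracking these loop and normalisation factors with care.
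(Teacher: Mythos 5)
Your proposal follows the paper's proof essentially verbatim: the paper likewise defines the auxiliary map $\ol{\gamma}_{\vlon k}$ on $\mcal S_{\vlon k,\vlon k}$ by replacing $x$ with $P_T$, verifies invariance under $\sim$ (dismissed there as ``simple linear algebra'') together with kernel-preservation, and then descends through Lemma \ref{psi-eq} / Remark \ref{apmap} to the quotient $AP_{\vlon k,\vlon k}$. Your explicit unitary change-of-basis argument and your sketch of the cap-sliding and loop-counting behind the $\sim$-invariance are finer-grained than the paper's one-line assertions, but they constitute the same route, not a different one.
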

\begin{proof}
Define $\mcal S_{\vlon k , \vlon k} \supset \mcal T_{\vlon 2(k+l)} (P) \ni T \os{\ol{\gamma}_{\vlon k}}{\longmapsto} \delta^{-l} \us{\alpha}{\sum} P_{
\psfrag{eta}{$\eta$}
\psfrag{e}{$\vlon$}
\psfrag{u}{$u_\alpha$}
\psfrag{u*}{$u^*_\alpha$}
\psfrag{2k}{$2k$}
\psfrag{2l}{$2l$}
\psfrag{x}{$P_T$}
\includegraphics[scale=0.15]{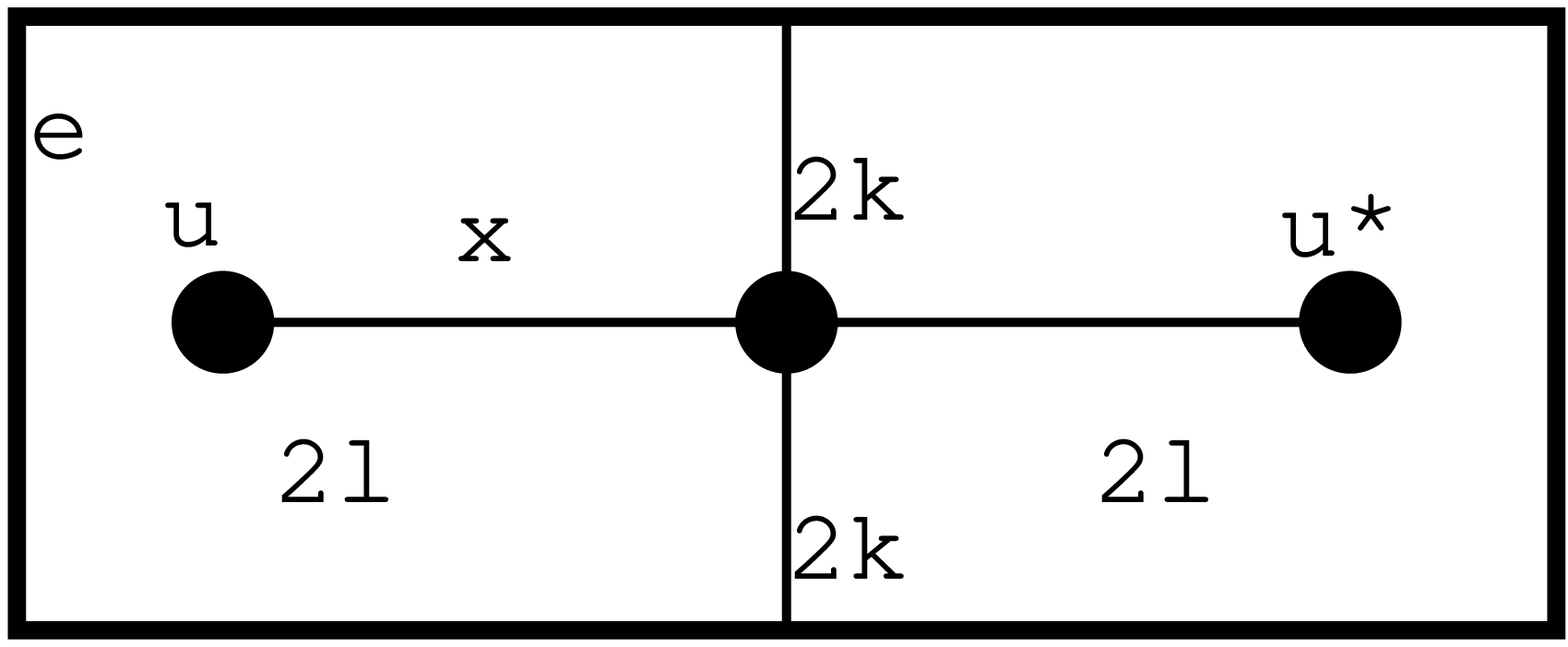}
} \in P_{\vlon 2k}$.
Using simple linear algebra, one can verify that $\ol{\gamma}_{\vlon k}$ is invariant under $\sim$, and hence, by Remark \ref{apmap}, induces a map $\tilde \gamma_{\vlon k} : \mcal A \mcal T_{\vlon k,\vlon k} (P) \rightarrow P_{\vlon 2k}$, that is,
\[
\xymatrix{
\mcal{AT}_{\vlon k, \vlon k}(P) \ar[rr]^{\normalsize {\widetilde \gamma_{\vlon k}} }  & & P_{\vlon 2k}\\
& \mcal{S}_{\vlon k, \vlon k} \ar[lu]^{\us{l \in \N_0}{\sqcup} \Psi^{2l}_{\vlon k, \vlon k}} \ar[ru]_{\bar{\gamma}_{\vlon k}}&} \t{ commutes.}
\]
Extend $\tilde \gamma_{\vlon k}$ linearly to $\tilde \gamma_{\vlon k} : \mcal A_{\vlon k, \vlon k}(P) \rightarrow P_{\vlon k}$.
Note that $\tilde \gamma_{\vlon k} \left( \Psi^{2l}_{\vlon k , \vlon k} (X) \right) = \delta^{-l} \us{\alpha}{\sum} P_{
\psfrag{eta}{$\eta$}
\psfrag{e}{$\vlon$}
\psfrag{u}{$u_\alpha$}
\psfrag{u*}{$u^*_\alpha$}
\psfrag{2k}{$2k$}
\psfrag{2l}{$2l$}
\psfrag{x}{$P_X$}
\includegraphics[scale=0.15]{figures/straffmor/gek2.eps}
}$ for all $l \in \N_0$ and $X \in \mcal P_{\vlon 2(k+l)} (P)$; this also implies $\tilde \gamma ( \mcal W_{\vlon k , \vlon k} ) = \{ 0 \}$.
Hence, $\tilde \gamma_{\vlon k}$ induces a well-defined map from the quotient $AP_{\vlon k,\vlon k}$ to $P_{\vlon 2k}$, namely, $\gamma_{\vlon k}$.
\end{proof}
Define $\omega_{\vlon k} := \t{tr} \circ \gamma_{\vlon k} : AP_{\vlon k,\vlon k} \rightarrow \C$ where $\t{tr}$ is the canonical trace on $P_{\vlon k}$ (that is, the normalized picture trace).

\begin{rem}\label{tromega}
Using (a) the invariance of the action under spherical isotopy and (b) unitarity of the action of the rotation tangles, one can easily show that $\delta^{2k} \omega_{\vlon k} (a \circ b) = \delta^{2l} \omega_{\eta l} (b \circ a)$ for all $a \in AP_{\eta l , \vlon k}$, $b \in AP_{\vlon k , \eta l}$; thus, $\omega_{\vlon k}$ is a trace on $AP_{\vlon k,\vlon k}$.
\end{rem}
We now proceed towards obtaining a stronger version of Remark \ref{psi-remark}. For this, we will use the following fact:
\begin{fact}\label{factz}
If $A$ is a finite dimensional $C^*$-algebra with a faithful tracial state $\tau$ and $\{w_\alpha\}_\alpha$ is an orthonormal basis of $A$ with respect to $\tau$, then $z:= \us{\alpha}{\sum} w_\alpha w^*_\alpha$ is positive, central, invertible and independent of the choice $\{w_\alpha\}_\alpha$ of the orthonormal basis.
\end{fact}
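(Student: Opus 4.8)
The plan is to treat $A$ as a finite-dimensional Hilbert space under the inner product $\langle a, b \rangle := \tau(b^* a)$, which is genuinely positive-definite precisely because $\tau$ is faithful; thus orthonormal bases exist and honestly span $A$ as a vector space. \emph{Positivity} of $z$ is then immediate: each summand $w_\alpha w_\alpha^*$ is of the form $bb^* \ge 0$, and a finite sum of positive elements is positive.

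For \emph{independence of the basis}, I would take a second orthonormal basis $\{v_\beta\}_\beta$ and write $v_\beta = \sum_\alpha u_{\beta\alpha} w_\alpha$. Since both families are orthonormal, the change-of-basis matrix $U = (u_{\beta\alpha})$ is unitary, so $\sum_\beta \overline{u_{\beta\alpha'}}\, u_{\beta\alpha} = \delta_{\alpha\alpha'}$. Substituting and collapsing the double sum gives $\sum_\beta v_\beta v_\beta^* = \sum_{\alpha,\alpha'}\big(\sum_\beta u_{\beta\alpha}\overline{u_{\beta\alpha'}}\big) w_\alpha w_{\alpha'}^* = \sum_\alpha w_\alpha w_\alpha^* = z$, so $z$ does not depend on the chosen orthonormal basis.

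\emph{Centrality} then follows cheaply from independence. For any unitary $u \in A$, the family $\{u w_\alpha\}_\alpha$ is again orthonormal, since $\langle uw_\alpha, uw_\beta\rangle = \tau(w_\beta^* u^* u w_\alpha) = \tau(w_\beta^* w_\alpha) = \delta_{\alpha\beta}$. Hence by the previous step $z = \sum_\alpha (uw_\alpha)(uw_\alpha)^* = u\big(\sum_\alpha w_\alpha w_\alpha^*\big) u^* = u z u^*$, i.e. $zu = uz$. Because every $C^*$-algebra is the linear span of its unitaries, $z$ commutes with all of $A$ and is therefore central.

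Finally, for \emph{invertibility}, let $p$ denote the support projection of the positive element $z$, so that $pz = zp = z$ and hence $(1-p)z = 0$. Then $0 = (1-p) z (1-p) = \sum_\alpha \big((1-p)w_\alpha\big)\big((1-p)w_\alpha\big)^*$ is a sum of positive elements, which forces each $(1-p)w_\alpha = 0$. Since $\{w_\alpha\}$ spans the unital algebra $A$, this yields $(1-p)A = 0$, and in particular $1 - p = (1-p)\cdot 1 = 0$, so $p = 1$ and $z$ is invertible. I expect invertibility to be the step requiring the most care: the other three properties are essentially formal, whereas here one must combine the positivity of the individual summands with the fact --- guaranteed by faithfulness of $\tau$ --- that an orthonormal basis genuinely spans $A$.
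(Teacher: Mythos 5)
Your proof is correct in all four parts: positivity is immediate, the unitary change-of-basis computation gives basis-independence, centrality follows from applying basis-independence to the bases $\{uw_\alpha\}_\alpha$ together with the fact that a unital $C^*$-algebra is spanned by its unitaries, and the support-projection argument for invertibility (using that a vanishing sum of positives forces each summand to vanish, and that the $w_\alpha$ span $A$) is sound. There is, however, nothing in the paper to compare it against: the statement is labelled a ``Fact'' and is used immediately (to define the elements $z_{\eta m}$ and in Lemma \ref{wlogx}) without any proof being supplied, so your argument fills a gap the authors left to the reader rather than paralleling an existing one. Two small observations on your write-up: the logical ordering (independence first, then centrality as a corollary via left multiplication by unitaries) is the efficient one, and is cleaner than the other natural route of decomposing $A \cong \bigoplus_i M_{n_i}$, writing $\tau$ as a weighted sum of normalized matrix traces, and computing $z = \sum_i (n_i^2/c_i)\,1_i$ explicitly with matrix units --- though that computation has the merit of exhibiting $z$ concretely. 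Also, note that traciality of $\tau$ is never actually used in your argument (left multiplication by a unitary preserves $\langle a,b\rangle = \tau(b^*a)$ regardless), so your proof establishes the Fact for any faithful state; traciality matters elsewhere in the paper (e.g.\ for $\omega_{\vlon k}$ being a trace), not here.
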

Let $z_{\eta m} := \us{\alpha}{\sum} u_\alpha u^*_\alpha$ where $\{u_\alpha\}_\alpha$ is any orthonormal basis of $P_{\eta m}$ with respect to the trace $\t{tr}$.
\begin{lem}\label{wlogx}
For all $\vlon k, \eta l \in \t{Col}$, $m \in \N_{\vlon,\eta}$ and $x \in P_{\eta (k+l+m)}$, there exists $y \in P_{\eta (k+l+m)}$ such that (i) $\psi^m_{\vlon k,\eta l} (x) = \psi^m_{\vlon k,\eta l} (y)$ and (ii) $P_{
\psfrag{eta}{$\eta$}
\psfrag{u*}{$u^*_\alpha$}
\psfrag{2k}{$2k$}
\psfrag{2l}{$2l$}
\psfrag{m}{$m$}
\psfrag{y}{$y$}
\psfrag{w}{$w$}
\includegraphics[scale=0.15]{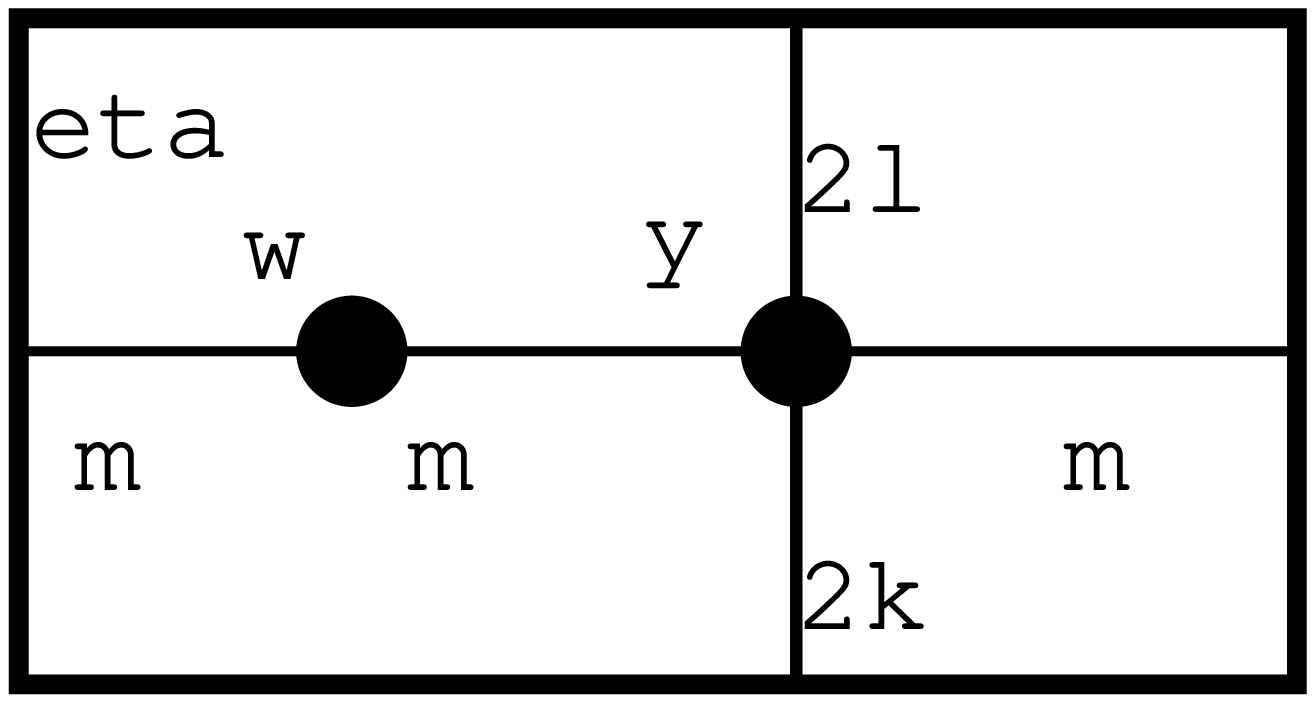}
} = P_{
\psfrag{eta}{$\eta$}
\psfrag{u*}{$u^*_\alpha$}
\psfrag{2k}{$2k$}
\psfrag{2l}{$2l$}
\psfrag{m}{$m$}
\psfrag{y}{$y$}
\psfrag{w}{$w$}
\includegraphics[scale=0.15]{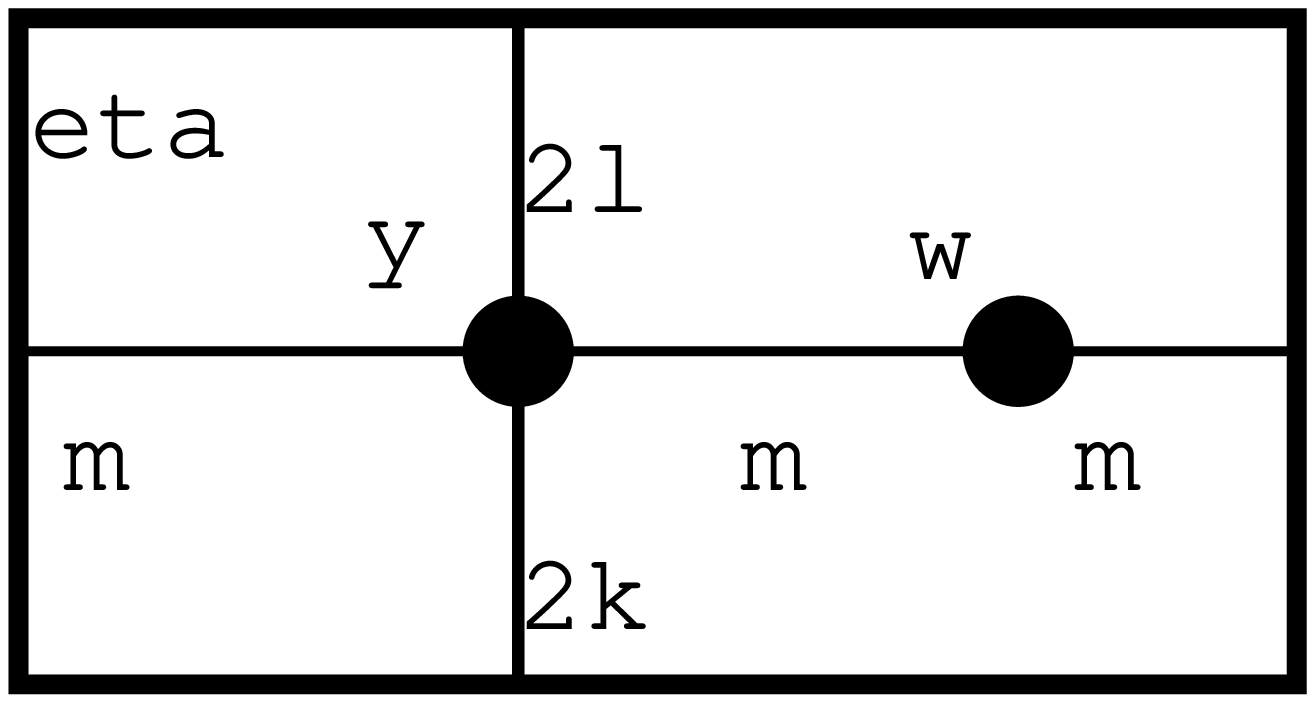}
}$ for all $w \in P_{\eta m}$.
\end{lem}
\begin{proof}
We set $y := \us{\alpha}{\sum} P_{
\psfrag{eta}{$\eta$}
\psfrag{u}{$u_\alpha$}
\psfrag{u*}{$u^*_\alpha$}
\psfrag{2k}{$2k$}
\psfrag{2l}{$2l$}
\psfrag{m}{$m$}
\psfrag{x}{$x$}
\psfrag{z}{$z^{-1}_{\eta m}$}
\includegraphics[scale=0.15]{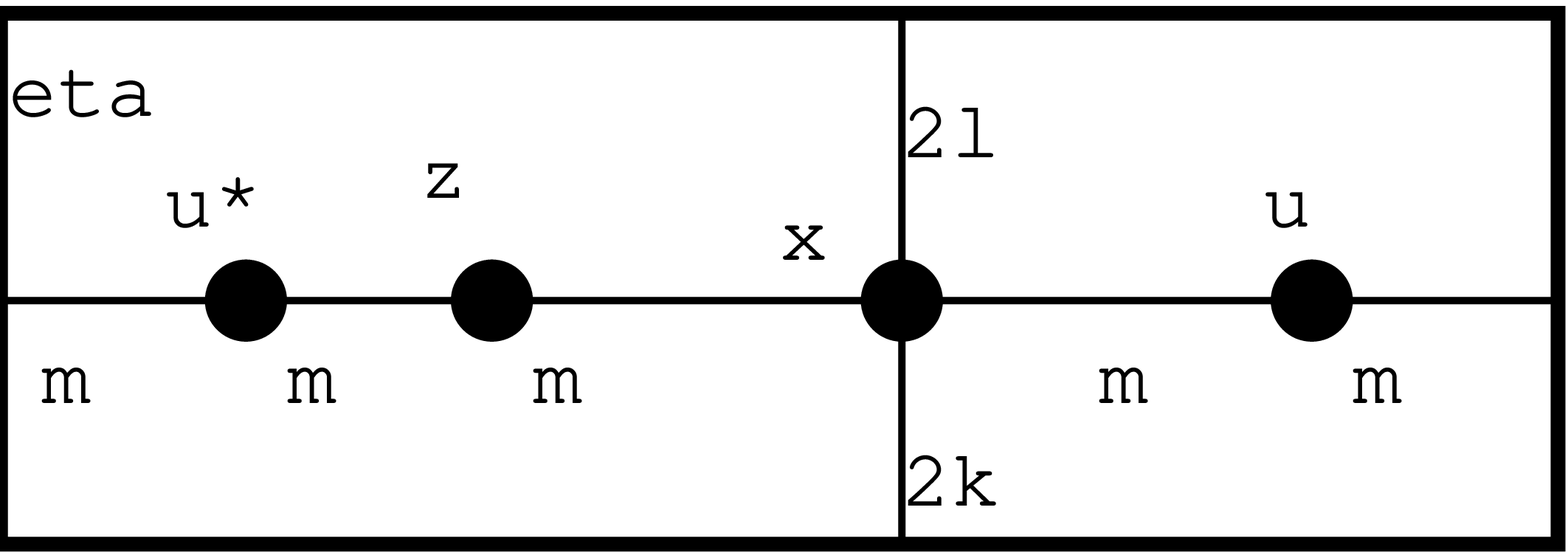}
}$.
(i) immediately follows from Fact \ref{factz}.
For obtaining (ii), we use the Fact \ref{factz} again and also the equation $w u_\alpha = \us{\alpha'}{\sum} \t{tr} (u^*_{\alpha'} w u_\alpha) u_{\alpha'}$ for all $\alpha$.
\end{proof}
We now define a sesquilinear form (conjugate-linear in the first variable) $\lab \cdot , \cdot \rab_{\vlon k}$ on $AP_{\vlon k, \eta l}$ given by
\[
AP_{\vlon k, \eta l} \times AP_{\vlon k, \eta l} \ni (a,b) \os{\lab \cdot , \cdot \rab_{\vlon k}}{\longmapsto} \lab a , b \rab_{\vlon k} := \omega_{\vlon k} (a^* \circ b) \in \C.
\]
\begin{prop}\label{pdip}
$\lab \cdot , \cdot \rab_{\vlon k}$ is positive definite.
\end{prop}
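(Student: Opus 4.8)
The plan is to reduce the evaluation of $\langle a, a\rangle_{\vlon k} = \omega_{\vlon k}(a^* \circ a)$ to a single closed-picture computation in $P$ and then invoke faithfulness and positivity of the canonical (Markov) trace on a subfactor planar algebra. First I would use Remark \ref{psi-remark} to write a general $a \in AP_{\vlon k, \eta l}$ as $a = \psi^m_{\vlon k, \eta l}(x)$ for some $m \in \N_{\vlon, \eta}$ and $x \in P_{\eta(k+l+m)}$, and then replace $x$ by the normalized representative $y$ supplied by Lemma \ref{wlogx}, so that $a = \psi^m_{\vlon k, \eta l}(y)$ with $y$ enjoying the pull-through identity (ii). Since $\gamma_{\vlon k}$ and $\omega_{\vlon k}$ are already known to be well-defined on the quotient $AP_{\vlon k, \vlon k}$, the number $\langle a, a\rangle_{\vlon k}$ does not depend on this choice; the reduced representative only serves to make the diagrammatic bookkeeping clean.

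Next I would compute $a^* \circ a$ diagrammatically. Taking $a^* = \psi^m_{\eta l, \vlon k}(y^*)$ and composing, the $\eta l$-interface glues the $2l$ output strands of $a$ directly to the corresponding strands of $a^*$, producing an element of the form $\psi^{2l'}_{\vlon k, \vlon k}(X)$ with $X \in P_{\vlon 2(k+l')}$ built from $y$ and $y^*$. Feeding this into the definition of $\gamma_{\vlon k}$ and then applying $\t{tr}$, every remaining free strand gets capped: the $2l'$ affine strands are closed by the orthonormal basis $\{u_\alpha\}$ together with the $\delta^{-l'}$ factor, and the $2k$ strands are closed by the final trace. Using sphericality of $P$, unitarity of the rotation tangles (as exploited in Remark \ref{tromega}), and the resolution-of-identity property of $\{u_\alpha\}$, I expect the sum $\delta^{-l'}\sum_\alpha \t{tr}(\cdots)$ to collapse to a positive power of $\delta$ times $\t{tr}_{\eta(k+l+m)}(y^* y)$; the pull-through property of $y$ from Lemma \ref{wlogx} is exactly what lets the $m$ middle strands close up without leaving an unwanted factor.

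Granting this identity, positive definiteness is immediate. The picture trace on each $P_{\eta n}$ of a subfactor planar algebra is a faithful positive trace, so $\t{tr}(y^* y) \geq 0$, giving $\langle a, a\rangle_{\vlon k} \geq 0$; and if $\langle a, a\rangle_{\vlon k} = 0$ then $\t{tr}(y^* y) = 0$, which forces $y = 0$ by faithfulness, whence $a = \psi^m_{\vlon k, \eta l}(0) = 0$.

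The main obstacle I anticipate is the diagrammatic reduction in the second step: one must check that after composing $a^*$ with $a$ and applying $\gamma_{\vlon k}$ followed by $\t{tr}$, all strands pair the $y$-disc with the $y^*$-disc in mirror fashion, so that the resulting closed picture is genuinely $\t{tr}(y^* y)$ rather than $\t{tr}$ of some rotated or partially-contracted variant. Keeping careful track of the various powers of $\delta$ (coming from the $\delta^{-l'}$ in $\gamma_{\vlon k}$, the normalization of the trace, and the removal of closed loops), and verifying that the pull-through identity of Lemma \ref{wlogx} precisely cancels the contribution of the $m$ internal strands, is where the real work lies. Once the closed diagram is correctly identified as $\t{tr}(y^* y)$ up to a positive scalar, the positivity and faithfulness of the subfactor trace finish the proof.
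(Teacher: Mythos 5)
Your strategy coincides with the paper's: reduce via Remark \ref{psi-remark} and Lemma \ref{wlogx} to $a = \psi^m_{\vlon k , \eta l}(y)$ with $0 \neq y$ satisfying the pull-through condition (ii), expand $\omega_{\vlon k}(a^* \circ a)$ into a sum of closed pictures indexed by an orthonormal basis $\{u_\alpha\}_\alpha$ of $P_{\vlon m}$, collapse that sum using the pull-through property, and conclude by positivity of the picture trace. But your key intermediate claim is false: the sum does \emph{not} collapse to a positive power of $\delta$ times $\t{tr}(y^* y)$. The pull-through identity only lets you slide each $u_\alpha$ around the annulus until it sits next to $u^*_\alpha$, so the basis sum enters the closed picture as the single insertion $\us{\alpha}{\sum} u_\alpha u^*_\alpha = z_{\vlon m}$ on the $m$ middle strands. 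This is not a ``resolution of identity'': by Fact \ref{factz}, $z_{\vlon m}$ is positive, central and invertible, but it is in general \emph{not} a scalar multiple of the identity (the algebras $P_{\vlon m}$ are not factors), so the quantity you end up with --- the paper's formula is $\lab a , a \rab_{\vlon k} = \delta^{-(2k+m)} P_{T}$, where $T$ is the closed labelled tangle in which $y$ and $y^*$ are glued along $2(k+l)+m$ strands and $z_{\vlon m}$ is inserted on the remaining $m$ strands --- is not proportional to $\t{tr}(y^* y)$. Consequently your final inference, that $\lab a , a \rab_{\vlon k} = 0$ forces $\t{tr}(y^* y) = 0$ and hence $y = 0$, does not follow from what you have established; this is exactly the point you flagged as ``where the real work lies,'' and as stated it fails.

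The gap is fixable, and the fix is precisely how the paper concludes: since $z_{\vlon m}$ is positive and invertible, factor $z_{\vlon m} = s^* s$ with $s$ invertible and absorb $s$ into $y$; the closed picture then becomes the trace-tangle evaluation of an element of the form $w^* w$ with $w \neq 0$ (because $s$ is invertible and $y \neq 0$), and positivity of the action of the trace tangle gives $\lab a , a \rab_{\vlon k} > 0$. In short, in place of your ``resolution-of-identity property of $\{u_\alpha\}$'' you need Fact \ref{factz} (positivity, centrality and invertibility of $z_{\vlon m}$); with that substitution, the rest of your outline is the paper's proof.
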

\begin{proof}
Let $0 \neq a \in AP_{\vlon k, \eta l}$.
By Remark \ref{psi-remark} and Lemma \ref{wlogx}, there exists $m \in \N_{\vlon,\eta}$ and $0 \neq y \in P_{\eta (k+l+m)}$ such that $a = \psi^m_{\vlon k, \eta l} (y)$ and condition (ii) of Lemma \ref{wlogx} holds.
Note that if $\{u_\alpha\}_\alpha$ is an orthonormal basis of $P_{\vlon m}$ with respect to $\t{tr}$
\[
\lab a , a \rab_{\vlon k} = \delta^{-(2k+m)} \us{\alpha}{\sum} P_{
\psfrag{u}{$u_\alpha$}
\psfrag{u*}{$u^*_\alpha$}
\psfrag{2k}{$2k$}
\psfrag{2l}{$2l$}
\psfrag{m}{$m$}
\psfrag{x}{$y$}
\psfrag{x*}{$y^*$}
\includegraphics[scale=0.15]{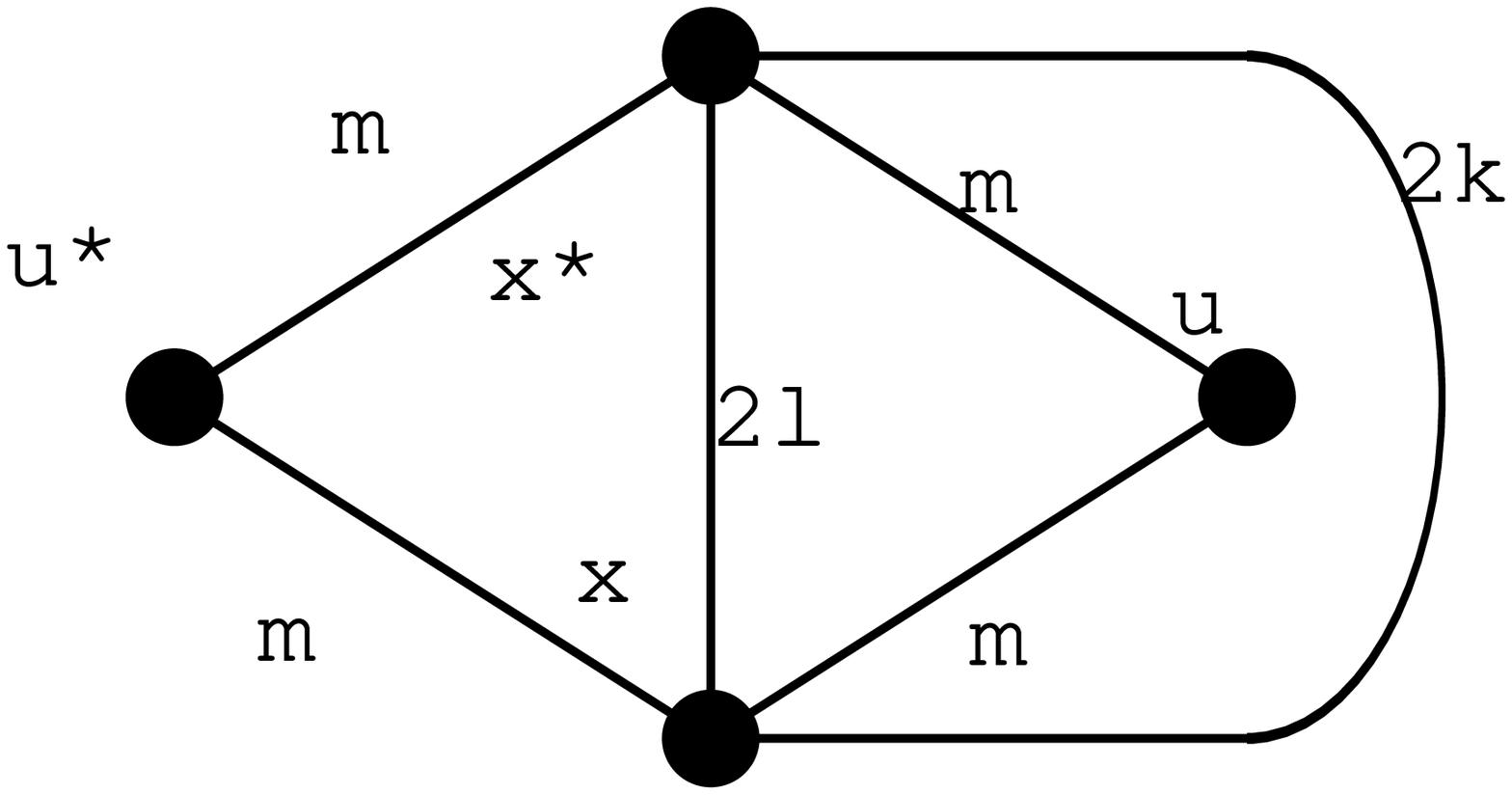}
} \; =  \delta^{-(2k+m)} P_{
\psfrag{z}{$z_{\vlon m}$}
\psfrag{2l}{$2(k+l)+m$}
\psfrag{m}{$m$}
\psfrag{x}{$y$}
\psfrag{x*}{$y^*$}
\includegraphics[scale=0.15]{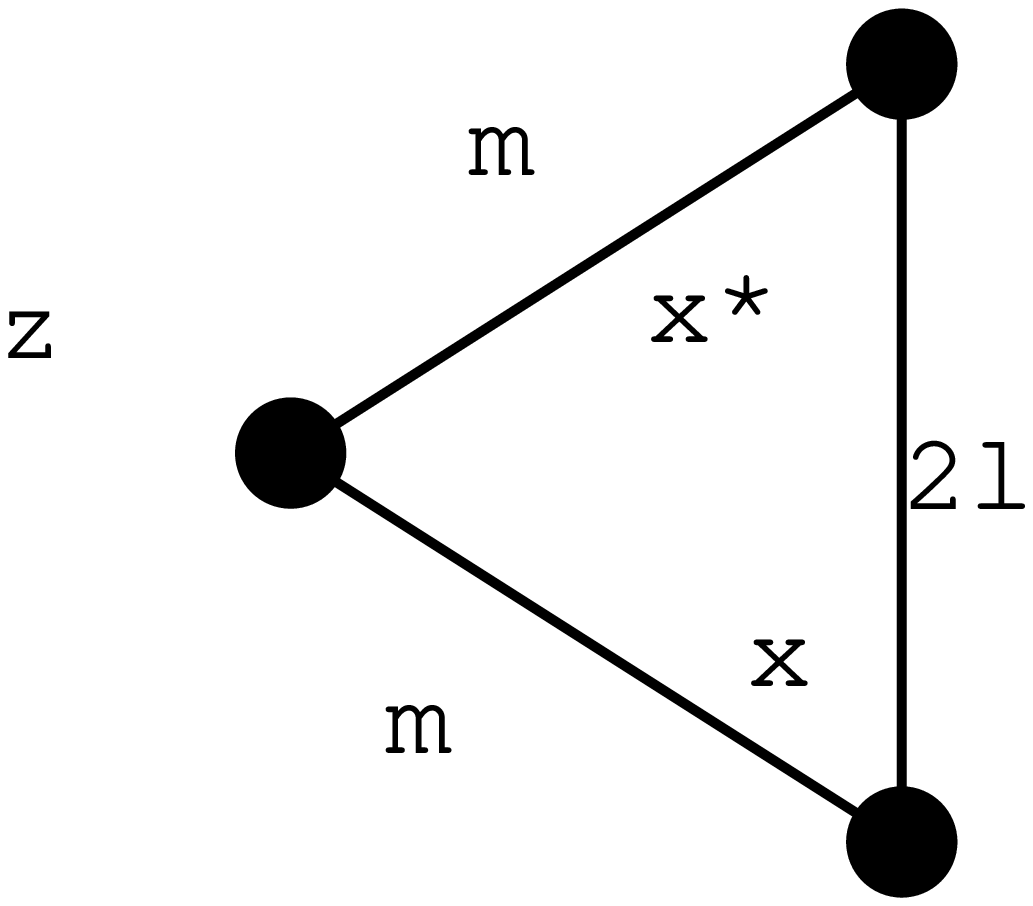}
} \;\;\;\;\;\;\;\;\;\;\;\;\;\;\;\;\;\;\;\; > 0
\]
where the second equality follows from condition (ii) of Lemma \ref{wlogx} and the last inequality can be obtained using (a) positivity of the action of the trace tangle, (b) $y \neq 0$ and (c) $z_{\vlon m}$ being a positive and invertible element of $P_{\vlon m}$.
\end{proof}
Set $Q^m_{\vlon k, \eta l} := \{ y \in P_{\eta (k+l+m)} :  P_{
\psfrag{eta}{$\eta$}
\psfrag{u*}{$u^*_\alpha$}
\psfrag{2k}{$2k$}
\psfrag{2l}{$2l$}
\psfrag{m}{$m$}
\psfrag{y}{$y$}
\psfrag{w}{$w$}
\includegraphics[scale=0.15]{figures/straffmor/wlog.eps}
} = P_{
\psfrag{eta}{$\eta$}
\psfrag{u*}{$u^*_\alpha$}
\psfrag{2k}{$2k$}
\psfrag{2l}{$2l$}
\psfrag{m}{$m$}
\psfrag{y}{$y$}
\psfrag{w}{$w$}
\includegraphics[scale=0.15]{figures/straffmor/wlog2.eps}
} \t{ for all } w \in P_{\eta m} \}$ for all $\vlon k, \eta l \in \t{Col}$ and $m\in \N_{\vlon,\eta}$.
Clearly, $Q^m_{\vlon k,\eta l}$ is a subspace of $P_{\eta (k+l+m)}$.
\begin{cor}\label{wlogx11}
The map $Q^m_{\vlon k, \eta l} \ni x \os{\psi^m_{\vlon k, \eta l} }{\longmapsto} \psi^m_{\vlon k, \eta l}(x) \in AP_{\vlon k , \eta l}$ is injective linear map.
\end{cor}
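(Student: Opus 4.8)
The plan is to read off injectivity from the positivity computation already performed in the proof of Proposition \ref{pdip}. Linearity needs no argument: $Q^m_{\vlon k, \eta l}$ is by definition a subspace of $P_{\eta(k+l+m)}$, and the map in the statement is simply the restriction of the linear map $\psi^m_{\vlon k, \eta l}$ to it. Thus the entire content is injectivity.

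First I would observe that the chain of (in)equalities establishing $\lab a, a\rab_{\vlon k} > 0$ in the proof of Proposition \ref{pdip} uses the element $y$ furnished by Lemma \ref{wlogx} only through two features: that $y$ satisfies condition (ii) of Lemma \ref{wlogx}, i.e. $y \in Q^m_{\vlon k, \eta l}$, and that $y \neq 0$. Condition (ii) is precisely what allows one to collapse $\us{\alpha}{\sum} u_\alpha u_\alpha^*$ into the single positive invertible element $z_{\vlon m}$, while $y \neq 0$ together with positivity of the trace-tangle action and positivity of $z_{\vlon m}$ yields strict positivity. Consequently the very same computation applies verbatim to any $x \in Q^m_{\vlon k, \eta l}$, giving
\[
\lab \psi^m_{\vlon k, \eta l}(x), \psi^m_{\vlon k, \eta l}(x) \rab_{\vlon k} > 0 \qquad \text{whenever } 0 \neq x \in Q^m_{\vlon k, \eta l}.
\]

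Injectivity is then immediate. Suppose $x \in Q^m_{\vlon k, \eta l}$ with $\psi^m_{\vlon k, \eta l}(x) = 0$. Then $\lab \psi^m_{\vlon k, \eta l}(x), \psi^m_{\vlon k, \eta l}(x)\rab_{\vlon k} = \omega_{\vlon k}(0) = 0$, so by the displayed strict inequality we must have $x = 0$. Hence the restriction of $\psi^m_{\vlon k, \eta l}$ to $Q^m_{\vlon k, \eta l}$ has trivial kernel and is injective.

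I do not anticipate any genuine obstacle here: the corollary is essentially a restatement of the heart of Proposition \ref{pdip}. The only point meriting explicit mention is that the positivity argument there depended on the chosen representative $y$ \emph{solely} through its membership in $Q^m_{\vlon k, \eta l}$ and its non-vanishing, so that it transfers without change to an arbitrary element of $Q^m_{\vlon k, \eta l}$ — which is exactly the subspace on which $\psi^m_{\vlon k, \eta l}$ is being restricted. Note that positive-definiteness of $\lab\cdot,\cdot\rab_{\vlon k}$ on $AP_{\vlon k,\eta l}$ by itself is not enough, since that only controls $a$ once one knows $a\neq 0$; the strengthening to a formula expressing $\lab\psi^m(x),\psi^m(x)\rab_{\vlon k}$ as a strictly positive quantity in $x$ (valid on $Q^m_{\vlon k,\eta l}$) is what delivers injectivity of the labelling map itself.
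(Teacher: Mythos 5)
Your proposal is correct and follows exactly the paper's intended argument: the paper's proof of this corollary is literally the remark that ``the proof of this fact already appeared in the proof of Proposition \ref{pdip}'', and what you have done is spell out precisely why — the computation there shows $\lab \psi^m_{\vlon k,\eta l}(x), \psi^m_{\vlon k,\eta l}(x)\rab_{\vlon k} > 0$ for any $0 \neq x \in Q^m_{\vlon k,\eta l}$, using only membership in $Q^m_{\vlon k,\eta l}$ (to collapse the sum over the orthonormal basis into $z_{\vlon m}$) and non-vanishing of $x$, which forces $\psi^m_{\vlon k,\eta l}(x) \neq 0$. Your closing observation that the statement of Proposition \ref{pdip} alone would not suffice, and that one genuinely needs the formula from its proof, is exactly the right point and matches why the paper cites the proof rather than the proposition.
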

\begin{proof}
The proof of this fact already appeared in the proof of Proposition \ref{pdip}.
\end{proof}
\begin{cor}
(i) $\omega_{\vlon k}$ is a faithful tracial state on $AP_{\vlon k,\vlon k}$.

(ii) The map $P_{\vlon 2k} \ni x \os{\psi^0_{\vlon k, \vlon k} }{\longmapsto} \psi^0_{\vlon k, \vlon k} (x) \in AP_{\vlon k, \vlon k}$ is a trace preserving ($\omega_{\vlon k}$ on $AP_{\vlon k, \vlon k}$ and $\t{tr}$ on $P_{\vlon 2k}$) inclusion of unital $*$-algebras.

(iii) $\gamma_{\vlon k} : AP_{\vlon k, \vlon k} \ra P_{\vlon 2k}$ is the unique trace preserving conditional expectation.
\end{cor}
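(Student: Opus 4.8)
The plan is to obtain all three parts from the structural facts already in place, pushing the genuinely diagrammatic content into a few explicit picture-identities and outsourcing the rest to the faithful trace. For (i), the trace identity is Remark~\ref{tromega} with $\eta l = \vlon k$, where the two factors $\delta^{2k}$ cancel and leave $\omega_{\vlon k}(a\circ b)=\omega_{\vlon k}(b\circ a)$. Positivity and faithfulness are immediate from Proposition~\ref{pdip}, since $\omega_{\vlon k}(a^{*}\circ a)=\lab a,a\rab_{\vlon k}\ge 0$ with equality only for $a=0$. For normalization I would first record the identity $\gamma_{\vlon k}\circ\psi^{0}_{\vlon k,\vlon k}=\t{id}_{P_{\vlon 2k}}$, which is just the $l=0$ case of the definition of $\gamma_{\vlon k}$ (there $P_{\vlon 0}=\C$, so the orthonormal-basis capping is trivial and the picture reduces to the inserted box). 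Since the identity morphism of $\vlon k$ is $[A1_{\vlon k}]=\psi^{0}_{\vlon k,\vlon k}(1_{P_{\vlon 2k}})$, this gives $\omega_{\vlon k}([A1_{\vlon k}])=\t{tr}(1_{P_{\vlon 2k}})=1$.

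For (ii), injectivity of $\psi^{0}_{\vlon k,\vlon k}$ is the $m=0$ instance of Corollary~\ref{wlogx11} (here $Q^{0}_{\vlon k,\vlon k}=P_{\vlon 2k}$ because the condition of Lemma~\ref{wlogx} is vacuous when $P_{\vlon 0}=\C$), and is in any case forced by the left inverse $\gamma_{\vlon k}$. That $\psi^{0}_{\vlon k,\vlon k}$ is a unital $*$-homomorphism is a diagrammatic check: stacking two copies of $\Psi^{0}_{\vlon k,\vlon k}$ realizes the product of the two inserted $2k$-boxes, the inside-out reflection defining $*$ on $AP$ restricts to the box-adjoint on $P_{\vlon 2k}$, and the trivial box maps to $[A1_{\vlon k}]$. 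Trace-preservation is then immediate: $\omega_{\vlon k}(\psi^{0}_{\vlon k,\vlon k}(x))=\t{tr}(\gamma_{\vlon k}(\psi^{0}_{\vlon k,\vlon k}(x)))=\t{tr}(x)$.

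For (iii), set $A:=AP_{\vlon k,\vlon k}$ and $B:=\psi^{0}_{\vlon k,\vlon k}(P_{\vlon 2k})$, a finite-dimensional unital $*$-subalgebra on which $\omega_{\vlon k}$ restricts to $\t{tr}$ by (ii). I would establish two further diagrammatic properties of $\gamma_{\vlon k}$: it is $*$-preserving, and it is a $B$-bimodule map, i.e.\ $\gamma_{\vlon k}(\psi^{0}_{\vlon k,\vlon k}(x)\circ a\circ\psi^{0}_{\vlon k,\vlon k}(y))=x\,\gamma_{\vlon k}(a)\,y$, both obtained by sliding the boxes $x,y$ (which sit on the visible $2k$-boundary) past the orthonormal-basis caps defining $\gamma_{\vlon k}$. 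Together with $\gamma_{\vlon k}|_{B}=\t{id}$ this exhibits $\gamma_{\vlon k}$ as an idempotent $B$-bimodule projection onto $B$. Positivity I would then deduce rather than redraw: for $a\in A$ and $x\in P_{\vlon 2k}$, the bimodule property together with (ii) gives $\t{tr}(x^{*}\,\gamma_{\vlon k}(a^{*}\circ a)\,x)=\omega_{\vlon k}((a\circ\psi^{0}_{\vlon k,\vlon k}(x))^{*}\circ(a\circ\psi^{0}_{\vlon k,\vlon k}(x)))\ge 0$ by Proposition~\ref{pdip}; since $\gamma_{\vlon k}(a^{*}\circ a)$ is self-adjoint and this holds for all $x$ with $\t{tr}$ faithful, we conclude $\gamma_{\vlon k}(a^{*}\circ a)\ge 0$. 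Thus $\gamma_{\vlon k}$ is a conditional expectation, trace-preserving by the very definition $\omega_{\vlon k}=\t{tr}\circ\gamma_{\vlon k}$.

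Uniqueness is the standard tracial argument, and I would take care to run it without any finiteness assumption on $A$: if $E\colon A\to P_{\vlon 2k}$ is any trace-preserving conditional expectation, then for each $b\in B$ the module property and trace-preservation give $\t{tr}(E(a)\,b)=\omega_{\vlon k}(a\circ b)=\t{tr}(\gamma_{\vlon k}(a)\,b)$, so $\t{tr}((E(a)-\gamma_{\vlon k}(a))\,b)=0$ for all $b\in B$; faithfulness of $\t{tr}$ on the finite-dimensional $P_{\vlon 2k}$ then forces $E=\gamma_{\vlon k}$. The main obstacle is not a single deep step but the diagrammatic bookkeeping behind $\gamma_{\vlon k}\circ\psi^{0}_{\vlon k,\vlon k}=\t{id}$, the multiplicativity of $\psi^{0}_{\vlon k,\vlon k}$, and the bimodule property of $\gamma_{\vlon k}$; the one conceptual point requiring care is that $A$ may be infinite-dimensional in the infinite-depth case, which is exactly why I keep the conditional-expectation argument purely algebraic, using only that the range $P_{\vlon 2k}$ is finite-dimensional and that $\omega_{\vlon k}$ is a faithful trace.
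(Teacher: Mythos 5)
Your proposal is correct and follows essentially the same route as the paper: faithfulness, positivity and the state property from Proposition \ref{pdip}, injectivity from Corollary \ref{wlogx11}, trace-preservation from $\gamma_{\vlon k}\circ\psi^0_{\vlon k,\vlon k}=\t{id}_{P_{\vlon 2k}}$, and part (iii) from the module property of $\gamma_{\vlon k}$ combined with the trace. The only difference is one of detail: where the paper compresses (iii) into the one-line ``apply $\t{tr}$ on both sides to get the required result,'' you spell out the $*$-preservation, the positivity of $\gamma_{\vlon k}$, and the uniqueness argument via the non-degenerate trace pairing on the finite-dimensional $P_{\vlon 2k}$ --- exactly the standard content that line implicitly invokes.
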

\begin{proof}
(i) Faithfulness and positivity can be concluded from Proposition \ref{pdip} by considering $\eta l = \vlon k$.
Indeed, $\omega_{\vlon k} (1_{AP_{\vlon k , \vlon k} }) = 1$.  And traciality follows readily from pictures and a judicious choice of an orthonormal basis for $P_{\vlon (m +n)}$ combining those of $P_{\vlon m}$ and $P_{\vlon n}$.

(ii) It is obvious that the map is a homomorphism of unital $*$-algebras.
Corollary \ref{wlogx11} implies its injectivity.
Note that $\gamma_{\vlon k} \circ \psi^0_{\vlon k, \vlon k} = \t{id}_{P_{\vlon 2k}}$; this implies that the traces are preserved.

(iii) From the definition of $\gamma_{\vlon k}$, we easily get the relation $y \gamma_{\vlon k} (\psi^{2l}_{\vlon k , \vlon k} (x)) = \gamma_{\vlon k} (\psi^0_{\vlon k,\vlon k} (y) \circ \psi^{2l}_{\vlon k,\vlon k} (x))$ for all $l \in \N_0$, $x \in P_{\vlon 2(k+l)}$ and $y \in P_{\vlon 2k}$, and then, apply $\t{tr}$ on both sides to get the required result.
\end{proof}
The linear map in Corollary \ref{wlogx11} is surjective when $P$ has finite depth and $m$ is sufficiently large ($> \frac{1}{2} \t{depth}(P)$); this easily follows from the proof of \cite[Proposition 6.8]{Gho}; however, this is not the case in general.
In order to understand the general case, we introduce a few definitions.

For $\vlon k, \eta l \in \t{Col}$ and $m \in \N_{\vlon , \eta}$, we consider the subspace $AP^{\leq m}_{\vlon k, \eta l} := \t{Range } \psi^m_{\vlon k,\eta l} \subset AP_{\vlon k, \eta l}$.
Note that $AP^{\leq m}_{\vlon k, \eta l} \subset AP^{\leq m+2}_{\vlon k, \eta l}$ and $AP_{\vlon k, \eta l} = \us{m \in \N_{\vlon , \eta} } \cup AP^{\leq m}_{\vlon k, \eta l} $.
We further consider the subspace $AP^{=m}_{\vlon k, \eta l} := AP^{\leq m}_{\vlon k, \eta l} \ominus AP^{\leq m-2}_{\vlon k, \eta l}$ by taking the orthogonal complement with respect to the inner product $ \lab \cdot , \cdot  \rab_{\vlon k}$ and setting $AP^{\leq m-2}_{\vlon k, \eta l} = \{0\}$ when $m<2$.
Clearly, for $m \in \N_{\vlon ,\eta}$, we have $
Q^m_{\vlon k , \eta l} \os{\psi^m_{\vlon k , \eta l}} \cong AP^{ \leq m}_{\vlon k, \eta l} = \us{m \geq n \in \N_{\vlon , \eta} } \oplus AP^{=n}_{\vlon k, \eta l}$; in particular, $AP_{\vlon k, \eta l} = \us{m \in \N_{\vlon , \eta} } \oplus AP^{=m}_{\vlon k, \eta l}$ ($\oplus$ denotes   vector space (and not Hilbert space)  direct sum.) Also, $\left[ AP^{\leq m}_{\vlon k , \eta l} \right]^* = AP^{\leq m}_{ \eta l , \vlon k}$ and $\left[ AP^{= m}_{\vlon k , \eta l} \right]^* = AP^{= m}_{ \eta l , \vlon k}$, and these spaces are finite dimensional (where $*$ denotes the contravariant involution and not the dual).

 Motivated by the commutativity constraints (also referred as {\em half braidings}) in the
  Drinfeld center of the pictorial version of the category of $N$-$N$ bimodules, we collect some interesting
  pictorial properties, which we call  the {\em space of commutativity constraints},  as follows: 
\[
CC_{\vlon k, \eta l} := \left\{ \left. c \in \us{m \in \N_{\vlon , \eta} } \prod  P_{\eta (k+l+m)} \right| P_{
\psfrag{eta}{$\eta$}
\psfrag{w}{$w$}
\psfrag{2k}{$2k$}
\psfrag{2l}{$2l$}
\psfrag{m}{$m$}
\psfrag{n}{$n$}
\psfrag{cm}{}
\psfrag{cn}{$c_n$}
\includegraphics[scale=0.15]{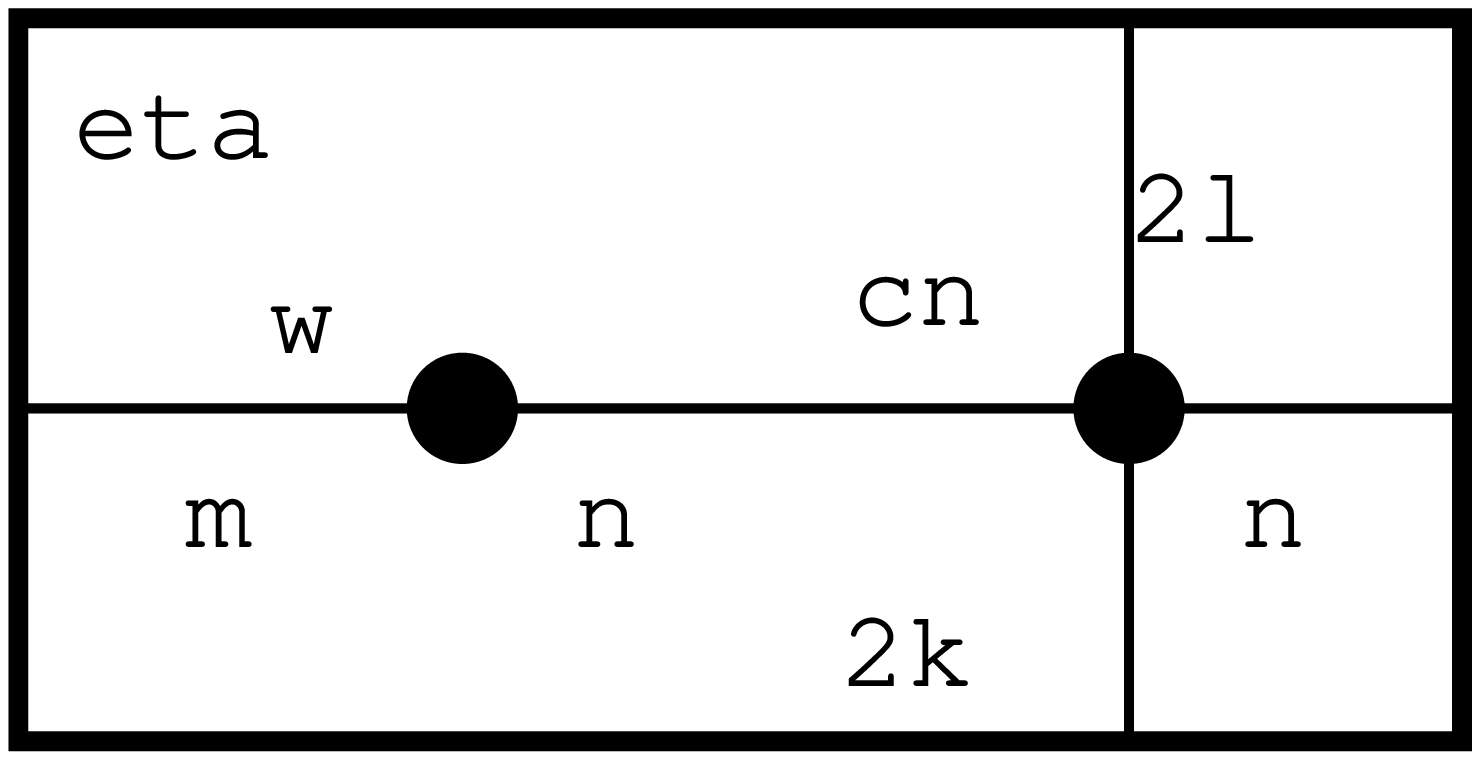}
} = P_{
\psfrag{eta}{$\eta$}
\psfrag{w}{$w$}
\psfrag{2k}{$2k$}
\psfrag{2l}{$2l$}
\psfrag{m}{$m$}
\psfrag{n}{$n$}
\psfrag{cm}{$c_m$}
\psfrag{cn}{}
\includegraphics[scale=0.15]{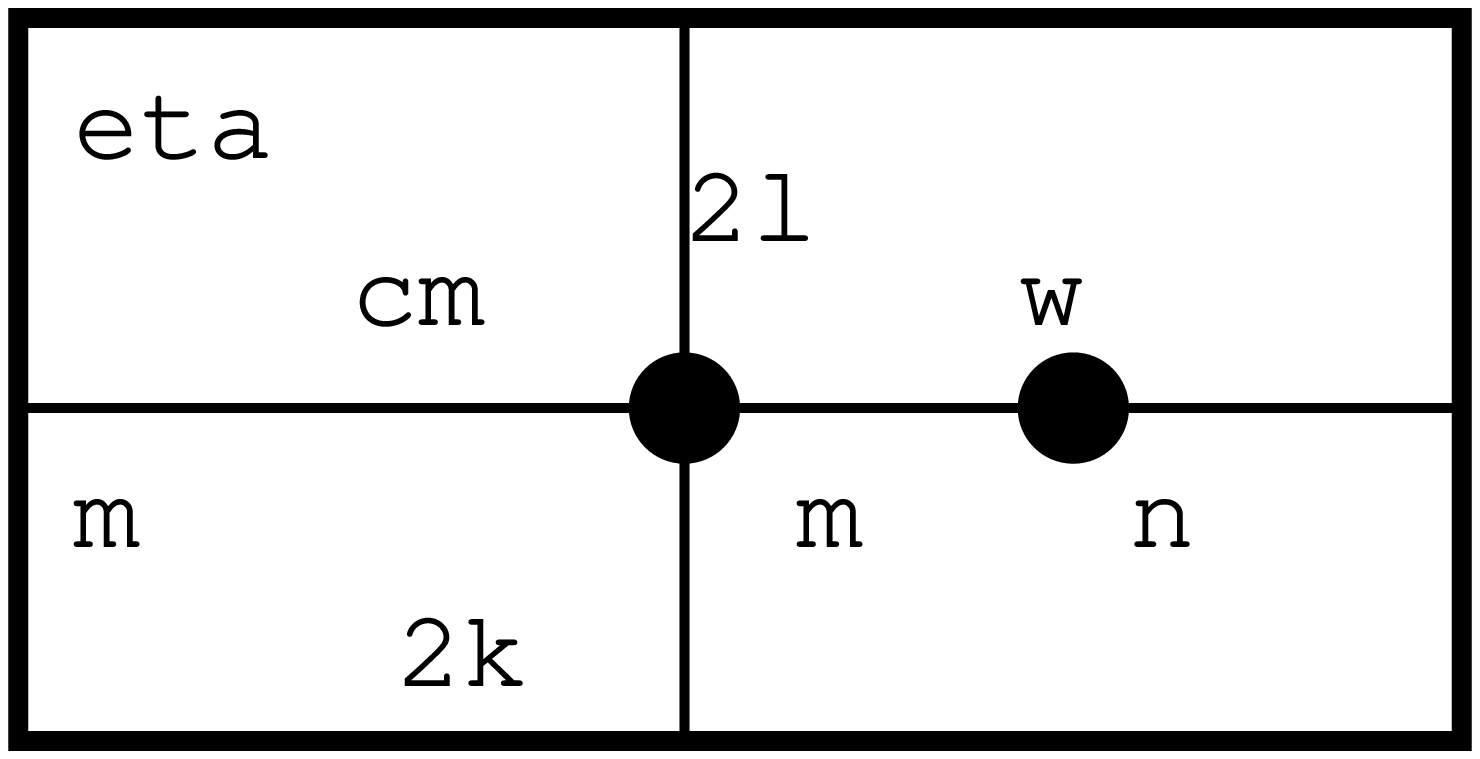}
} \t{ for all } m , n \in \N_{\vlon , \eta}, w \in P_{\eta \frac{m+n}{2} } \right\}.
\]

\comments{
\begin{prop}\label{cma}
For all $\vlon k,\eta l \in \mbox{Col}$, (i) 
the dual vector space $(AP_{\eta l , \vlon k })^\#$, (ii) the product vector space $\us{m \in \N_{\vlon , \eta} } \prod AP^{=m}_{\vlon k , \eta l}$ and (iii) the space of commutativity constraints
\[
CC_{\vlon k, \eta l} := \{ \left. c \in \us{m \in \N_{\vlon , \eta} } \prod  P_{\eta (k+l+m)} \right| P_{
\psfrag{eta}{$\eta$}
\psfrag{w}{$w$}
\psfrag{2k}{$2k$}
\psfrag{2l}{$2l$}
\psfrag{m}{$m$}
\psfrag{n}{$n$}
\psfrag{cm}{}
\psfrag{cn}{$c_n$}
\includegraphics[scale=0.15]{figures/straffmor/cmaiil.eps}
} = P_{
\psfrag{eta}{$\eta$}
\psfrag{w}{$w$}
\psfrag{2k}{$2k$}
\psfrag{2l}{$2l$}
\psfrag{m}{$m$}
\psfrag{n}{$n$}
\psfrag{cm}{$c_m$}
\psfrag{cn}{}
\includegraphics[scale=0.15]{figures/straffmor/cmaiir.eps}
} \t{ for all } m , n \in \N_{\vlon , \eta}, w \in P_{\eta \frac{m+n}{2} } \}
\]
are canonically isomorphic.
\end{prop}
}

\begin{prop}\label{cma}
For all $\vlon k,\eta l \in \mbox{Col}$, (i) 
the dual vector space $(AP_{\eta l , \vlon k })^\#$, (ii) the product vector space $\us{m \in \N_{\vlon , \eta} } \prod AP^{=m}_{\vlon k , \eta l}$ and (iii) the space of commutativity constraints $CC_{\vlon k, \eta l}$ are canonically isomorphic.
\end{prop}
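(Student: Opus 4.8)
The plan is to establish the two canonical isomorphisms (i)~$\cong$~(ii) and (ii)~$\cong$~(iii) separately. The first is a formal consequence of the grading $AP_{\eta l,\vlon k} = \us{m}{\oplus} AP^{=m}_{\eta l,\vlon k}$ together with the inner product already constructed, while the second is the genuinely topological part, generalizing \cite[Theorem 3.1]{DGG} from level zero to arbitrary $\vlon k, \eta l$.

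For (i)~$\cong$~(ii) I would use the bilinear pairing
\[
AP_{\vlon k, \eta l} \times AP_{\eta l, \vlon k} \ni (a,b) \longmapsto \omega_{\vlon k}(b \circ a) \in \C ,
\]
which is linear in each variable. Using $\omega_{\vlon k}(b \circ a) = \lab b^*, a \rab_{\vlon k}$ and the positive-definiteness from Proposition \ref{pdip}, it is nondegenerate: for $0 \neq a$ one takes $b = a^*$ and gets $\lab a, a \rab_{\vlon k} > 0$. The crucial point is that the pairing respects the gradings, since $*$ carries $AP^{=n}_{\eta l, \vlon k}$ onto $AP^{=n}_{\vlon k, \eta l}$ and the summands $AP^{=m}_{\vlon k, \eta l}$ are mutually $\lab \cdot, \cdot \rab_{\vlon k}$-orthogonal for distinct $m$; hence it vanishes on $AP^{=m}_{\vlon k, \eta l} \times AP^{=n}_{\eta l, \vlon k}$ when $m \neq n$ and is a perfect pairing of the finite-dimensional spaces $AP^{=m}_{\vlon k, \eta l}$ and $AP^{=m}_{\eta l, \vlon k}$. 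This gives $AP^{=m}_{\vlon k, \eta l} \cong (AP^{=m}_{\eta l, \vlon k})^\#$ for each $m$, and taking the product over $m$ while using that the algebraic dual of a direct sum is the product of the duals yields
\[
\us{m \in \N_{\vlon, \eta}}{\prod} AP^{=m}_{\vlon k, \eta l} \;\cong\; \us{m \in \N_{\vlon, \eta}}{\prod} (AP^{=m}_{\eta l, \vlon k})^\# \;\cong\; (AP_{\eta l, \vlon k})^\# ,
\]
which is exactly (ii)~$\cong$~(i).

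For (ii)~$\cong$~(iii) I would route through the identifications $Q^m_{\vlon k, \eta l} \os{\psi^m_{\vlon k, \eta l}}{\cong} AP^{\leq m}_{\vlon k, \eta l}$ of Corollary \ref{wlogx11}. Evaluating the defining relation of $CC_{\vlon k, \eta l}$ at $m = n$ (so that $w \in P_{\eta m}$) reduces it to condition (ii) of Lemma \ref{wlogx}, which shows precisely that each component $c_m$ lies in $Q^m_{\vlon k, \eta l}$; hence $\psi^m_{\vlon k, \eta l}(c_m) \in AP^{\leq m}_{\vlon k, \eta l}$, and composing with the orthogonal projections $AP^{\leq m}_{\vlon k, \eta l} \twoheadrightarrow AP^{=m}_{\vlon k, \eta l}$ defines a map $CC_{\vlon k, \eta l} \to \us{m}{\prod} AP^{=m}_{\vlon k, \eta l}$. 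The off-diagonal relations ($m \neq n$, intertwining disc $w \in P_{\eta \frac{m+n}{2}}$) are the coherence tying $c_m$ to $c_n$; I would read them as saying that the level-$n$ component is obtained from the level-$m$ one by the same capping-with-$z^{-1}_{\eta\,\cdot}$ operation of Lemma \ref{wlogx} that implements the inclusion $AP^{\leq m-2} \hookrightarrow AP^{\leq m}$ in the $Q$-picture. For the inverse, given $(a_m) \in \us{m}{\prod} AP^{=m}_{\vlon k, \eta l}$, I write $a_m = \psi^m_{\vlon k, \eta l}(\alpha_m)$ with $\alpha_m \in Q^m_{\vlon k, \eta l}$ the representative orthogonal to $AP^{\leq m-2}$, and reconstruct a family $c = (c_m)$ by summing the level-$m$ liftings of $\alpha_0, \alpha_2, \dots, \alpha_m$; the coherence relations of $CC_{\vlon k, \eta l}$ then hold by construction, and the two assignments are mutually inverse.

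The main obstacle will be the diagrammatic bookkeeping in this second isomorphism: verifying picture by picture that the off-diagonal $CC$-relations coincide with the level-raising maps relating $Q^m$ and $Q^{m+2}$, and that the reconstructed family is both well-defined (independent of the chosen representatives $\alpha_m$, via the kernel-of-action invariance of Remark \ref{apmap}) and genuinely lands in $CC_{\vlon k, \eta l}$. This is where the normalizing elements $z_{\eta m}$ and the identity $w u_\alpha = \us{\alpha'}{\sum} \t{tr}(u^*_{\alpha'} w u_\alpha)\, u_{\alpha'}$ from Lemma \ref{wlogx} must be tracked carefully; everything is a level-graded version of the $k=l=0$ computation of \cite{DGG}, but the presence of the $2k$ and $2l$ through-strings makes the diagrammatic identities more delicate. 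Once this compatibility is checked, the three-way identification and its canonicity follow formally by composing the two isomorphisms above.
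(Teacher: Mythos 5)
Your first step, (i) $\cong$ (ii), is correct and is in substance the paper's own argument: the paper builds the map $K$ sending $s\in\us{m}{\prod}AP^{=m}_{\vlon k,\eta l}$ to the functional $a\mapsto\delta^{2k}\omega_{\vlon k}(a\circ\bar s_m)$ and inverts it by Riesz representation on each finite-dimensional $AP^{\leq m}_{\eta l,\vlon k}$, which uses exactly the ingredients you invoke (Proposition \ref{pdip}, finite-dimensionality, and $\bigl[AP^{=m}_{\vlon k,\eta l}\bigr]^{*}=AP^{=m}_{\eta l,\vlon k}$). The genuine gap is in your second step, (ii) $\cong$ (iii).

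Your reconstruction map takes $c_m$ to be the $Q^m_{\vlon k,\eta l}$-representative of the partial sum, $c_m:=(\psi^m_{\vlon k,\eta l})^{-1}\bigl(\us{m\geq n}{\sum}a_n\bigr)$, and asserts that ``the coherence relations of $CC_{\vlon k,\eta l}$ then hold by construction.'' They do not, and this is precisely where the paper's proof does real work. In the paper (surjectivity of its map $L: CC_{\vlon k,\eta l}\to(AP_{\eta l,\vlon k})^{\#}$), the commutativity constraint attached to $s$ is \emph{not} the family $x_m:=(\psi^m_{\vlon k,\eta l})^{-1}(\bar s_m)$ but the twisted family $c_m:=\delta^{-m}\bigl[\,x_m\t{ with }z_{\eta m}\t{ inserted into its }m\t{ wrap strings}\,\bigr]$. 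The twist is forced by a mismatch between two pairings: the graded decomposition $AP_{\vlon k,\eta l}=\us{m}{\oplus}AP^{=m}_{\vlon k,\eta l}$ and the partial sums live on the side of the trace pairing $\omega_{\vlon k}$, whereas the defining relations of $CC_{\vlon k,\eta l}$ express invariance of the purely diagrammatic pairing $x\mapsto L_c(\psi^m_{\eta l,\vlon k}(x))$ under the equivalence $\sim$ of Lemma \ref{psi-eq}. Equation \ref{phiLc}, which converts $\delta^{2k}\omega_{\vlon k}\bigl(\psi^m_{\eta l,\vlon k}(x)\circ\psi^m_{\vlon k,\eta l}(x_m)\bigr)$ into the closed-diagram pairing against $c_m$, is exactly the bridge between the two, and it is where the orthonormal-basis sum hidden in $\gamma_{\vlon k}$, the element $z_{\eta m}$, and the normalization $\delta^{-m}$ enter (using $x_m\in Q^m_{\vlon k,\eta l}$). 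If you drop the twist, the off-diagonal $CC$-relations for your family become identities that would require $z_{\eta m}$ to act as a scalar, or at least to factorize across the $m-n$ added wrap strings relative to $z_{\eta n}$; by Fact \ref{factz} this element is central, positive and invertible but in general not scalar and not factorizing, so your family fails the coherence relations in general.

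There is also a structural point you lose by trying to connect (ii) and (iii) directly. Even for the correctly twisted family, the paper does not verify membership in $CC_{\vlon k,\eta l}$ ``by construction'': it pairs both sides of the desired relation against an arbitrary $x$, rewrites each side via Equation \ref{phiLc} as the value of the globally defined functional $\vphi$ on two presentations $\psi^n_{\eta l,\vlon k}(\cdot)=\psi^m_{\eta l,\vlon k}(\cdot)$ of one affine morphism, and then uses non-degeneracy of the action of trace tangles. In other words, the dual space (i) is not merely one vertex of the triangle; it is the device that makes the passage between (ii) and (iii) provable, which is why the paper proves (ii) $\cong$ (i) first and then (iii) $\cong$ (i) using it. Your route discards that device and would have to rebuild it; in addition, the injectivity of your forward map $CC_{\vlon k,\eta l}\ni c\mapsto\bigl(\t{proj}_{AP^{=m}_{\vlon k,\eta l}}\psi^m_{\vlon k,\eta l}(c_m)\bigr)_m$ is never addressed (the paper obtains injectivity of $L$ from non-degeneracy of the trace tangles).
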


\begin{proof}
Let $s \in \us{m \in \N_{\vlon , \eta} } \prod AP^{=m}_{\vlon k , \eta l}$.
For each $m \in \N_{\vlon , \eta}$, set $\bar s_m := \us{m\geq n \in \N_{\vlon , \eta} } \sum s_n \in AP^{\leq m}_{\vlon k , \eta l}$.
Define $K_s : AP_{\eta l, \vlon k} \ra \C$ by
\[
AP_{\eta l, \vlon k } \supset AP^{\leq m}_{\eta l, \vlon k } \ni a \os{K_s}{\longmapsto} \delta^{2k} \omega_{\vlon k} (a \circ \bar s_m) = \delta^{2l} \omega_{\eta l} (\bar s_m \circ a) \in \C .
\]
For well-definedness of $K_s$, note that for all $a \in AP^{\leq m}_{\eta l , \vlon k}$ and  $m \leq t \in \N_{\vlon,\eta}$, we have $\omega_{\vlon k} (a \circ \bar s_m) = \omega_{\vlon k} (a \circ \bar s_t)$ (using  	 orthogonality of $AP^{\leq m}_{\vlon k , \eta l}$ and $s_n$ when $n \geq m$).
It is easy to see that $K_s$ is linear.
Now, $K_s = 0$ implies $\bar s_m = 0$ for all $m \in \N_{\vlon , \eta}$ using positivity of $\lab \cdot , \cdot \rab_{\vlon k}$ (see Lemma \ref{pdip}); thus, $K : \us{m \in \N_{\vlon , \eta} }{\prod} AP^{=m}_{\vlon k, \eta l} \ra (AP_{\eta l, \vlon k})^{\#}$ is an injective linear map.
For surjectivity, consider $\vphi$ in the dual space.
Since $AP^{\leq m}_{\eta l , \vlon k}$ is finite dimensional, therefore, by positivity of the inner product, there exists unique $\bar s_m \in AP^{\leq m}_{ \vlon k , \eta l }$ such that $\left. \vphi \right|_{AP^{\leq m}_{\eta l , \vlon k}} = \delta^{2l} \left. \omega_{\eta l} (\bar s_m \circ \cdot) \right|_{AP^{\leq m}_{\eta l , \vlon k}} = \delta^{2k} \left. \omega_{\vlon k} (\cdot \circ \bar s_m) \right|_{AP^{\leq m}_{\eta l , \vlon k}}$.
Set $s_m := \bar s_m - \bar s_{m-2}$ for all $m \in \N_{\vlon , \eta}$ where $s_{m-2} = 0$ when $m < 2$.
Now, $s_m \in AP^{\leq m}_{\vlon k,\eta l} \perp AP^{= t}_{\vlon k,\eta l}$ for all $m < t \in \N_{\vlon,\eta}$.
If $a \in AP^{\leq m-2}_{\eta l , \vlon k} \subset AP^{\leq m}_{\eta l , \vlon k}$, then $\omega_{\vlon k} (a \circ \bar s_{m-2}) = \delta^{-2k} \vphi (a) = \omega_{\vlon k} (a \circ \bar s_m)$.
Therefore, $s_m \in AP^{= m}_{\vlon k,\eta l}$ and thereby, $\vphi = K_s$.

Let $c \in CC_{\vlon k , \eta l}$.
Consider the map $\mcal S_{\eta l , \vlon k } \supset \mcal T_{\vlon (k+l+m)} (P) \ni T \mapsto P_{
\psfrag{c}{$c_m$}
\psfrag{x}{$P_T$}
\psfrag{2k}{$2k$}
\psfrag{2l}{$2l$}
\psfrag{2m}{$m$}
\includegraphics[scale=0.15]{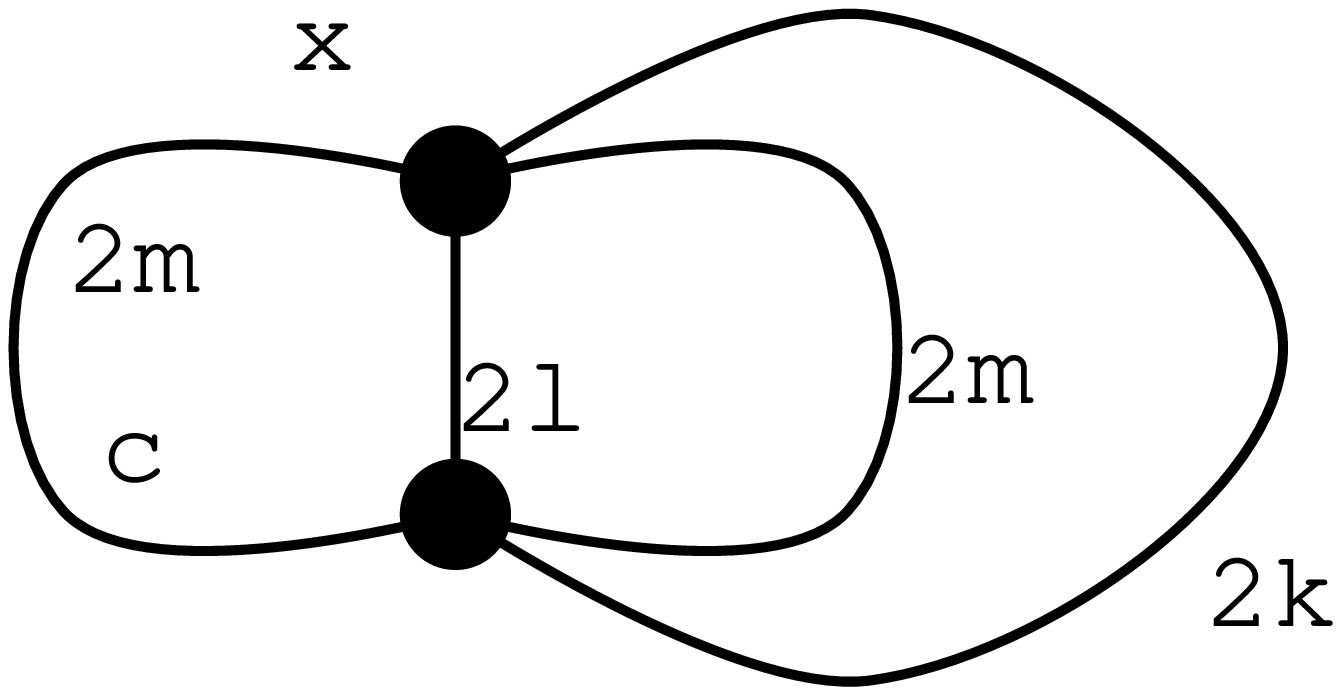}
} \in \C$; by the defining equation of the commutativity constraint $c$, this map becomes invariant under $\sim$.
Thus, by Remark \ref{apmap}, we get a linear functional $ AP_{\eta l , \vlon k} \ni a = \psi^m_{\eta l , \vlon k } (x) \os{L_c}{\longmapsto} P_{
\psfrag{c}{$c_m$}
\psfrag{x}{$x$}
\psfrag{2k}{$2k$}
\psfrag{2l}{$2l$}
\psfrag{2m}{$m$}
\includegraphics[scale=0.15]{figures/straffmor/phic.eps}
} \in \C$.
If $L_c = 0$, then by non-degeneracy of the action of trace tangles, $c_m$ must become zero.
This implies that $L: CC_{\vlon k , \eta l} \ra (AP_{\eta l, \vlon k})^{\#}$ is an injective linear map.
Next, we will prove that $L$ is surjective.
Let $\vphi \in (AP_{\eta l ,\vlon k} )^\#$ and $s$ be its associated element in $\us{m \in \N_{\vlon , \eta} } \prod AP^{=m}_{\vlon k , \eta l}$.
We consider $\bar s_m \in AP^{\leq m}_{\vlon k , \eta l}$ for $m\in \N_{\vlon , \eta}$ as defined above.
By Lemma \ref{wlogx11}, there exists a unique element $x_m \in Q^m_{\vlon k , \eta l}$ such that $\psi^m_{\vlon k,\eta l} (x_m) = \bar s_m$.
Set $c_m := \delta^{-m} P_{
\psfrag{eta}{$\eta$}
\psfrag{z}{$z_{\eta m}$}
\psfrag{2k}{$2k$}
\psfrag{2l}{$2l$}
\psfrag{m}{$m$}
\psfrag{c}{$x_m$}
\includegraphics[scale=0.15]{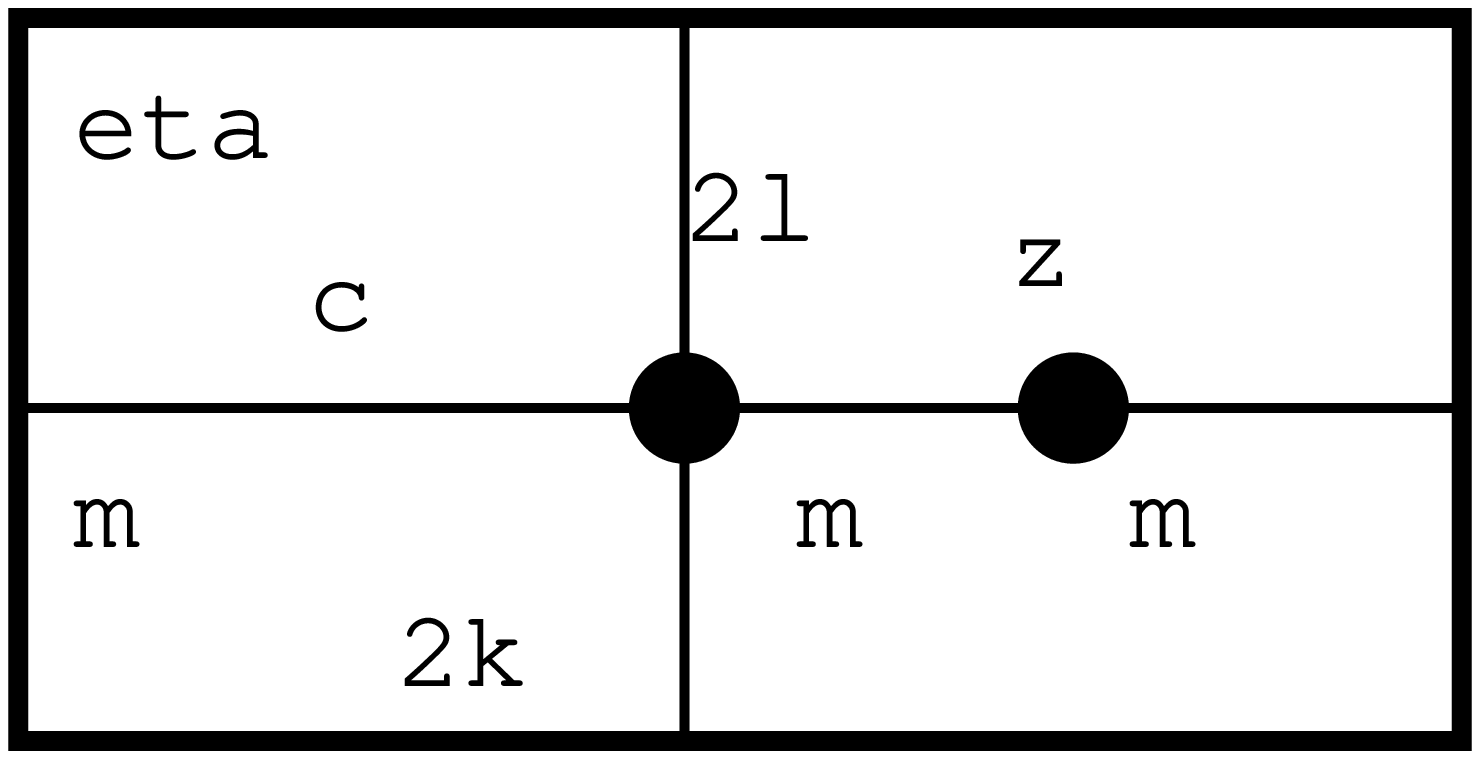}
}$.
So, for all $m\in \N_{\vlon , \eta}$, $x \in P_{\vlon (k+l+m)}$, we get
\begin{equation}\label{phiLc}
\vphi ( \psi^m_{\eta l , \vlon k} (x)) = \delta^{2k} \omega_{\vlon k} (\psi^m_{\eta l , \vlon k} (x) \circ \bar s_m) = \delta^{2k} \omega_{\vlon k} (\psi^m_{\eta l , \vlon k} (x) \circ \psi^m_{\vlon k , \eta l } (x_m)) =  P_{
\psfrag{c}{$c_m$}
\psfrag{x}{$x$}
\psfrag{2k}{$2k$}
\psfrag{2l}{$2l$}
\psfrag{2m}{$m$}
\includegraphics[scale=0.15]{figures/straffmor/phic.eps}
}
\end{equation}
where the last equality follows from the definitions of $\omega_{\vlon k}$ and $z_{\eta m}$, and from the fact that $x_m \in Q^m_{\vlon k, \eta l}$.
We now apply the above equation and obtain
\[
P_{
\psfrag{c}{$c_n$}
\psfrag{x}{$x$}
\psfrag{2k}{$2k$}
\psfrag{2l}{$2l$}
\psfrag{m}{$m$}
\psfrag{n}{$n$}
\psfrag{w}{$w$}
\;\;\;\; \includegraphics[scale=0.15]{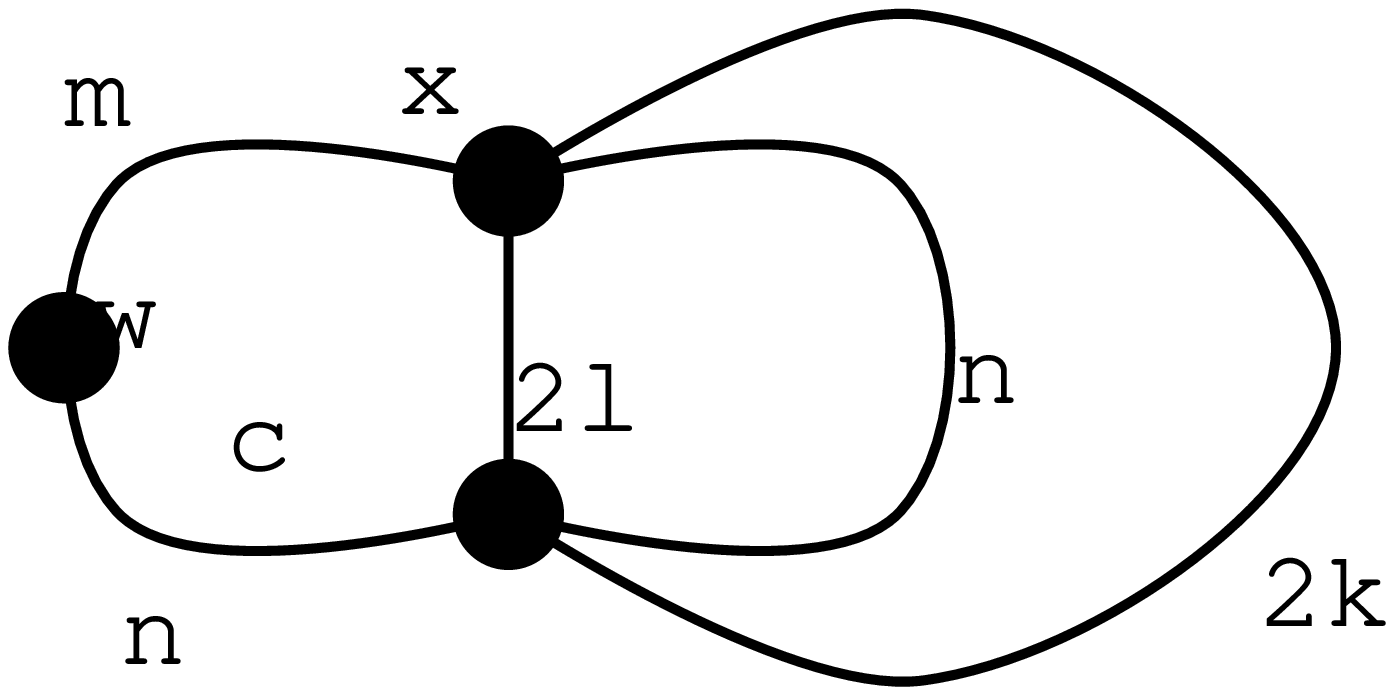}
} = \vphi ( \psi^n_{\eta l, \vlon k} ( P_{
\psfrag{eta}{$\vlon$}
\psfrag{w}{$w$}
\psfrag{2l}{$2k$}
\psfrag{2k}{$2l$}
\psfrag{m}{$m$}
\psfrag{n}{$n$}
\psfrag{cm}{}
\psfrag{cn}{$x$}
\includegraphics[scale=0.15]{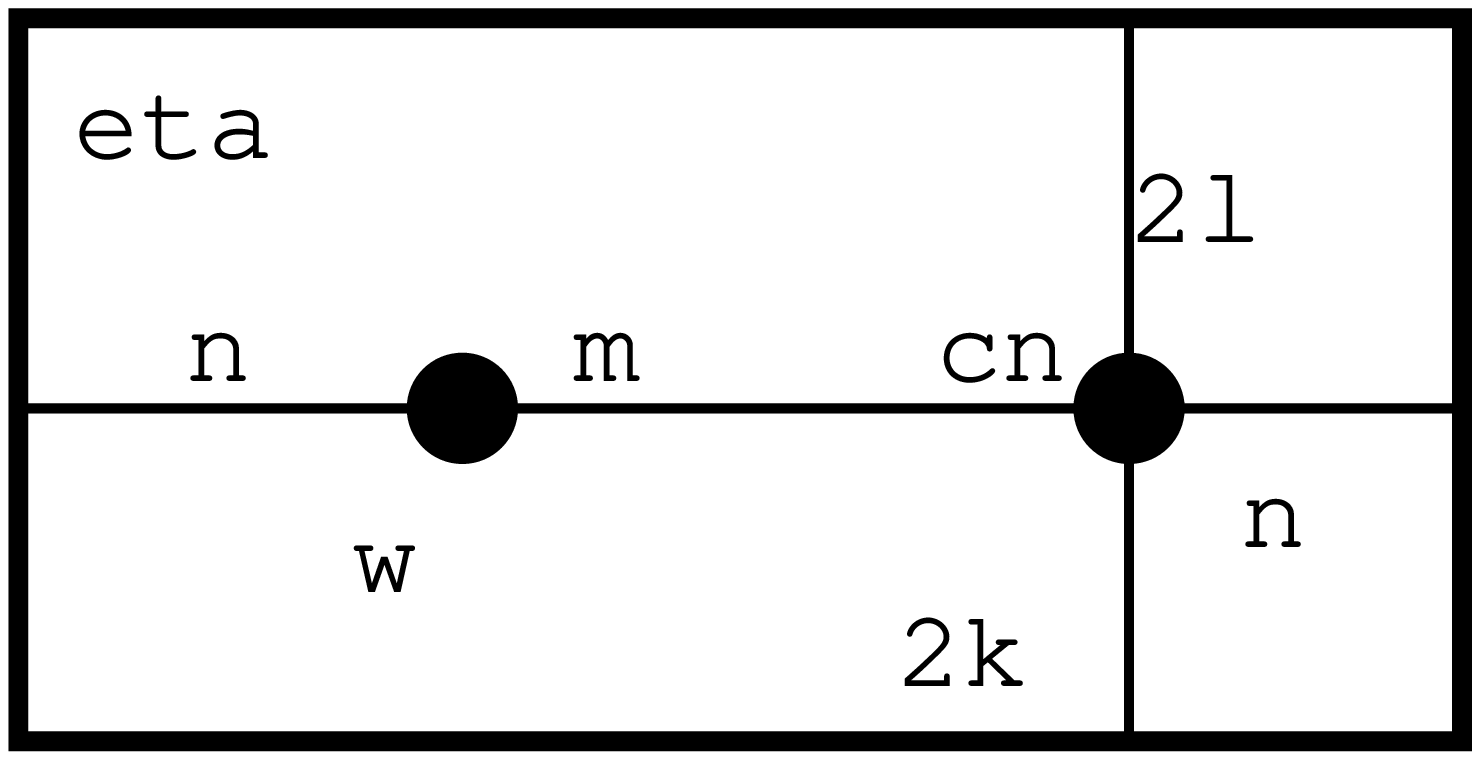}
})) = \vphi ( \psi^m_{\eta l, \vlon k} ( P_{
\psfrag{eta}{$\vlon$}
\psfrag{w}{$w$}
\psfrag{2l}{$2k$}
\psfrag{2k}{$2l$}
\psfrag{m}{$m$}
\psfrag{n}{$n$}
\psfrag{cm}{$x$}
\includegraphics[scale=0.15]{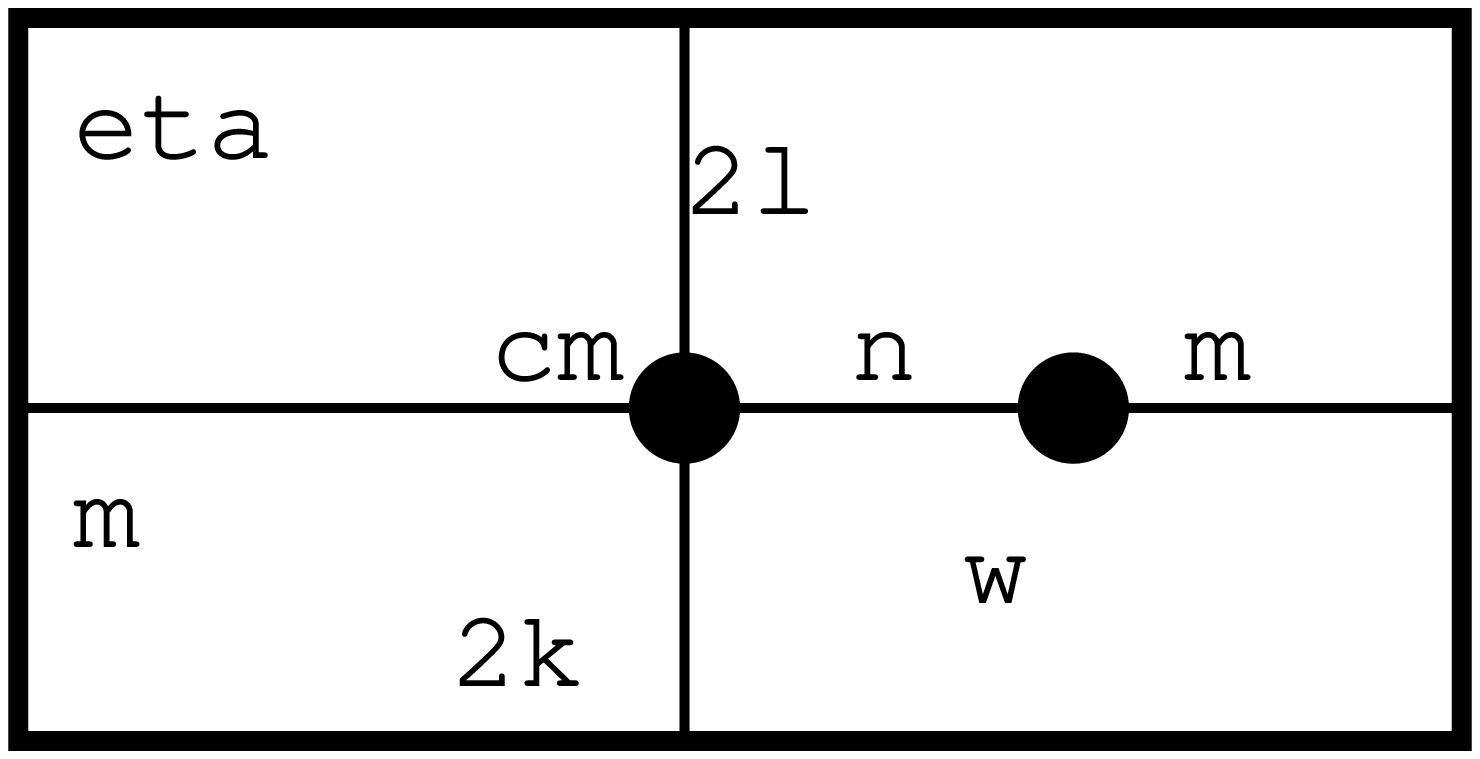}
})) = P_{
\psfrag{c}{$c_m$}
\psfrag{x}{$x$}
\psfrag{2k}{$2k$}
\psfrag{2l}{$2l$}
\psfrag{m}{$m$}
\psfrag{n}{$n$}
\psfrag{w}{$w$}
\includegraphics[scale=0.15]{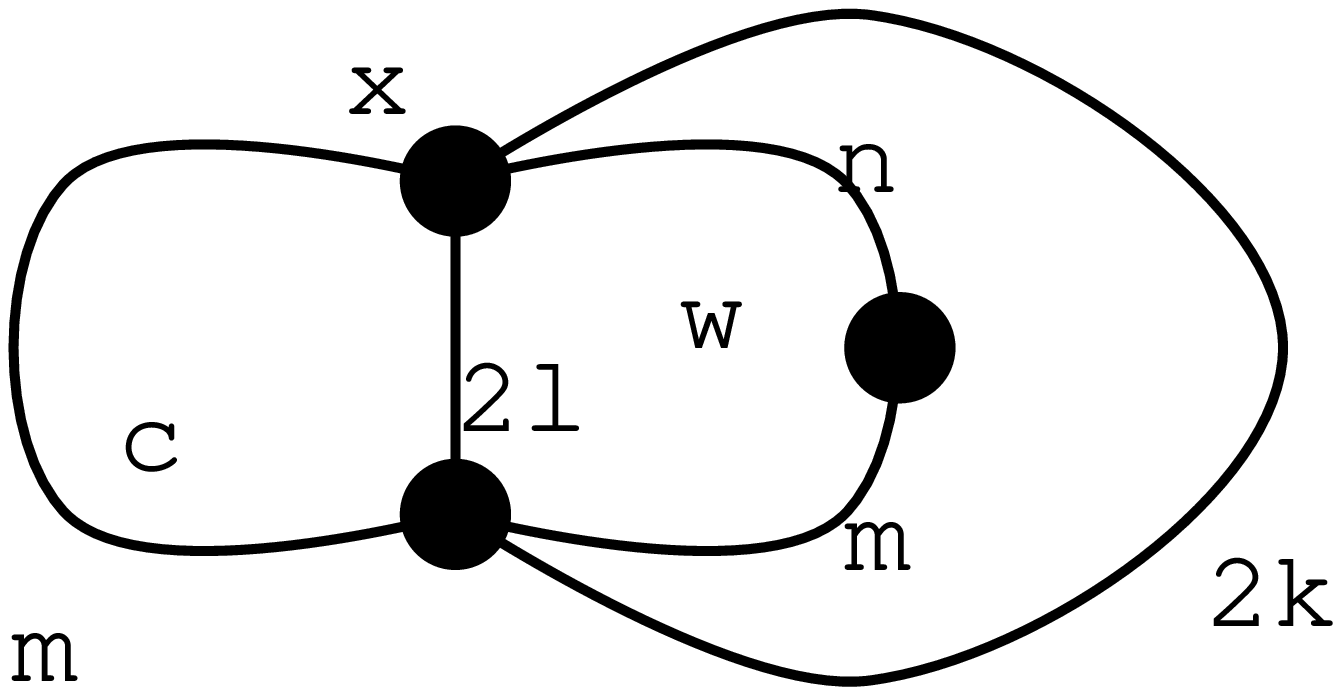}
}
\]
for all $m,n \in \N_{\vlon , \eta}$, $w \in P_{\eta \frac{m+n}{2} }$ and $x \in P_{\vlon (k+l+ \frac{m+n}{2}) }$.
Again, by non-degeneracy of the action of trace tangle, we may conclude that $c_m$'s indeed satisfy the equation of commutativity constraint.
Now, Equation \ref{phiLc} tells us that $\vphi = L_c$.
\end{proof}
The above proposition indicates the possibility of a deeper connection between the space of affine morphisms and the commutativity constraints that appear in the center construction on the bimodule category associated to the planar algebra $P$; this will be more apparent in the latter sections.

\section{Fusion of affine modules}\label{fusion}
Most of this section is devoted towards giving a monoidal structure on the category of Hilbert affine $P$-modules; the {\em commutativity constraint valued inner product} plays a key role in this.  We then introduce a contravariant, involutive, $\C$-linear endofunctor on $AP$ (referred as the {\em antipode}) which helps us in defining the {\em contragradient} of a Hilbert affine $P$-module. And finally the section concludes with a discussion on the braiding on $AP$.

Given a $*$-affine $P$-module $V$, we may consider the {\em commutativity constraint valued} (henceforth, abbreviated as $CC$-valued) inner product $c: V_{\vlon k} \times V_{\eta l} \ra CC_{\vlon k, \eta l}$ defined in the following way:
\[
\begin{array}{cccccl}
V_{\vlon k} \times V_{\eta l} & \lra & (AP_{\eta l, \vlon k})^{\#} & \os{L^{-1}}{\lra} & CC_{\vlon k , \eta l} & \\
\cc90{$\in$} & & \cc90{$\in$} & & \cc90{$\in$}\\
(\xi, \zeta) & \longmapsto & \lab \xi , \bullet \zeta \rab & \longmapsto & c (\xi , \zeta) & = \{ c_m (\xi , \zeta) \}_{m \in \N_{\vlon , \eta}}
\end{array}
\]
where $L$ is the isomorphism obtained in the proof of Proposition \ref{cma}.
Alternately, $c : V_{\vlon k} \times V_{\eta l} \ra CC_{\vlon k , \eta l}$ can be uniquely defined by the relation:
\[
\lab \xi , V_{\psi^m_{\eta l , \vlon k} (x)} \zeta \rab = P_{\!\!\!\!\!
\psfrag{c}{$c_m (\xi , \zeta)$}
\psfrag{x}{$x$}
\psfrag{+}{$+$}
\psfrag{2k}{$2k$}
\psfrag{2l}{$2l$}
\psfrag{m}{$m$}
\includegraphics[scale=0.15]{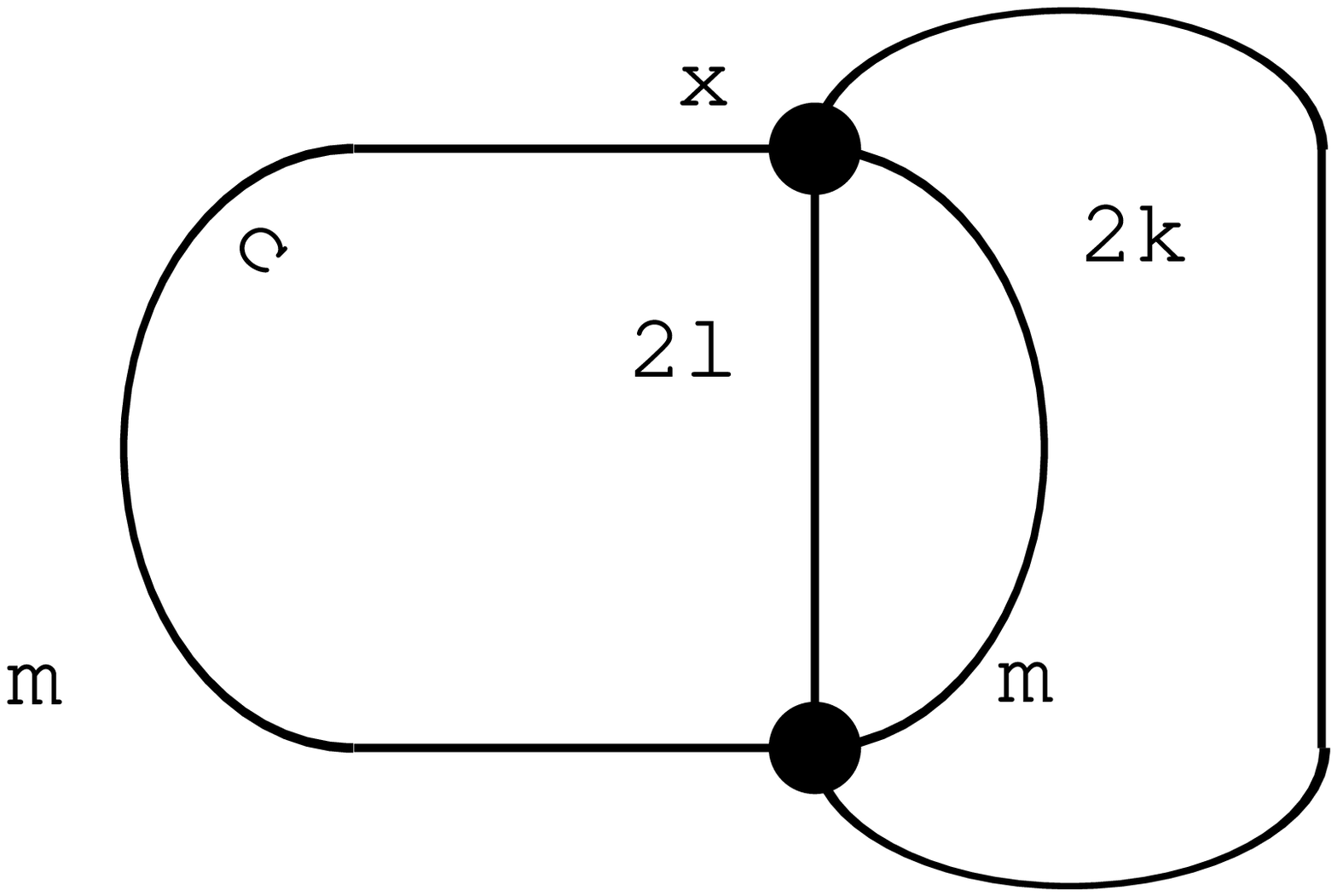}
} \t{ for all } \xi \in V_{\vlon k}, \, \zeta \in V_{\eta l}, \, m \in \N_{\vlon , \eta},  \, x \in P_{\vlon (k+l+m)}.
\]
In the next lemma, we give some basic properties of the CC-valued inner product.
\begin{lem}\label{ccprop}
For a Hilbert affine $P$-module $V$, we have the following.

(i) $c : V_{\vlon k} \times V_{\eta l} \ra CC_{\vlon k , \eta l}$ is linear (resp., conjugate linear) in second (resp., first) variable. 

(ii) $P_{\,
\psfrag{c}{$c_m (\xi , \zeta)$}
\psfrag{c*}{}
\psfrag{x}{$x$}
\psfrag{eta}{$\eta$}
\psfrag{2k}{$2k$}
\psfrag{2l}{$2l$}
\psfrag{m}{$m$}
\includegraphics[scale=0.15]{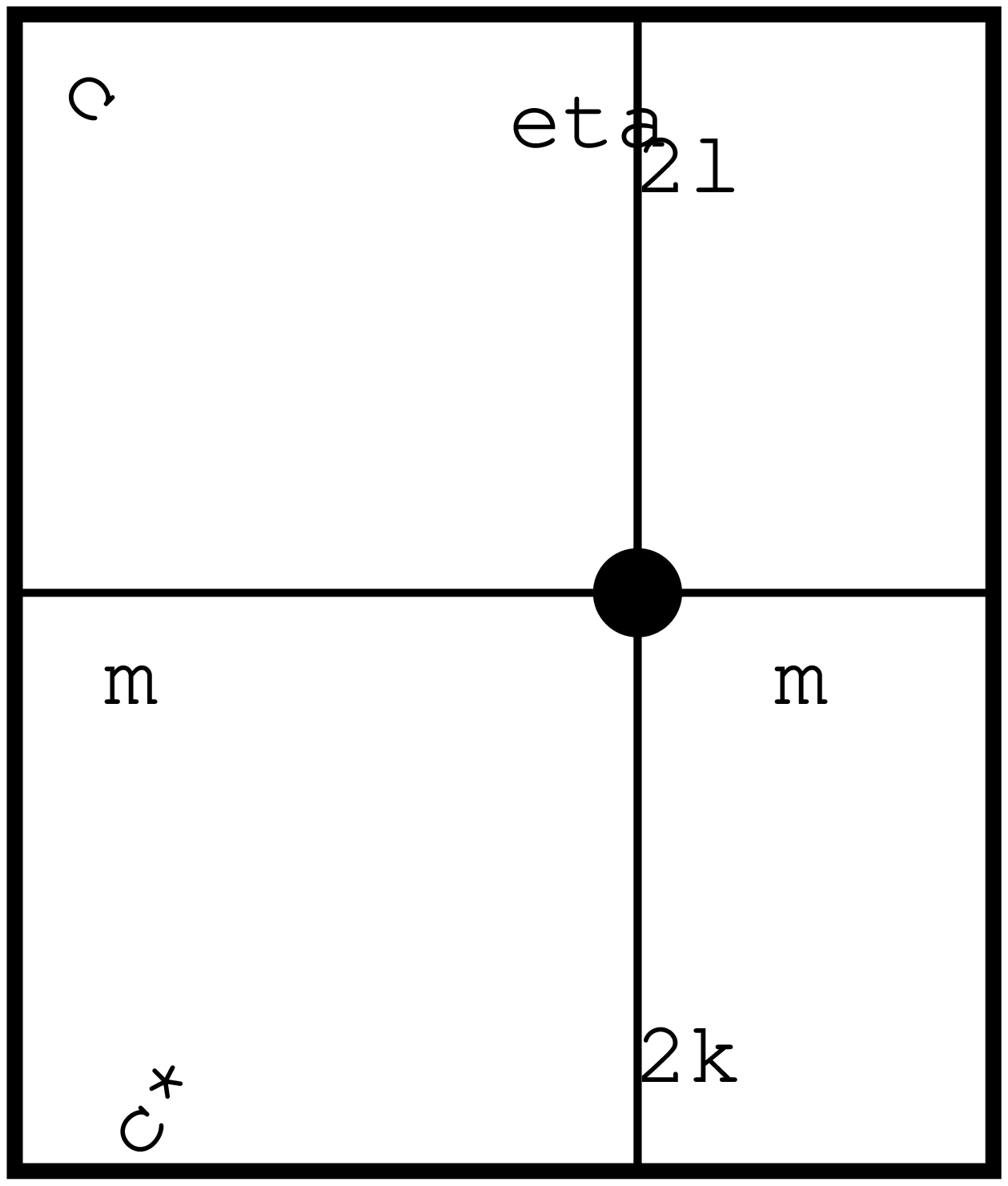}
} = c_m (\xi , \zeta) = P_{\,
\psfrag{c}{}
\psfrag{c*}{$c^*_m (\zeta , \xi)$}
\psfrag{x}{$x$}
\psfrag{eta}{$\eta$}
\psfrag{2k}{$2k$}
\psfrag{2l}{$2l$}
\psfrag{m}{$m$}
\includegraphics[scale=0.15]{figures/fusion/cflip.eps}
}$ for all $\xi \in V_{\vlon k}$, $\zeta \in V_{\eta l}$, $m\in \N_{\vlon , \eta}$.

(iii) $c_s (\xi , V_{\psi^n_{\eta l ,\nu m} (x)} \zeta) = P_{\,
\psfrag{c}{$c_{s+n} (\xi , \zeta)$}
\psfrag{x}{$x$}
\psfrag{+}{$+$}
\psfrag{2k}{$2k$}
\psfrag{2l}{$2l$}
\psfrag{2n}{$2m$}
\psfrag{m}{$n$}
\psfrag{s}{$s$}
\psfrag{nu}{$\nu$}
\includegraphics[scale=0.15]{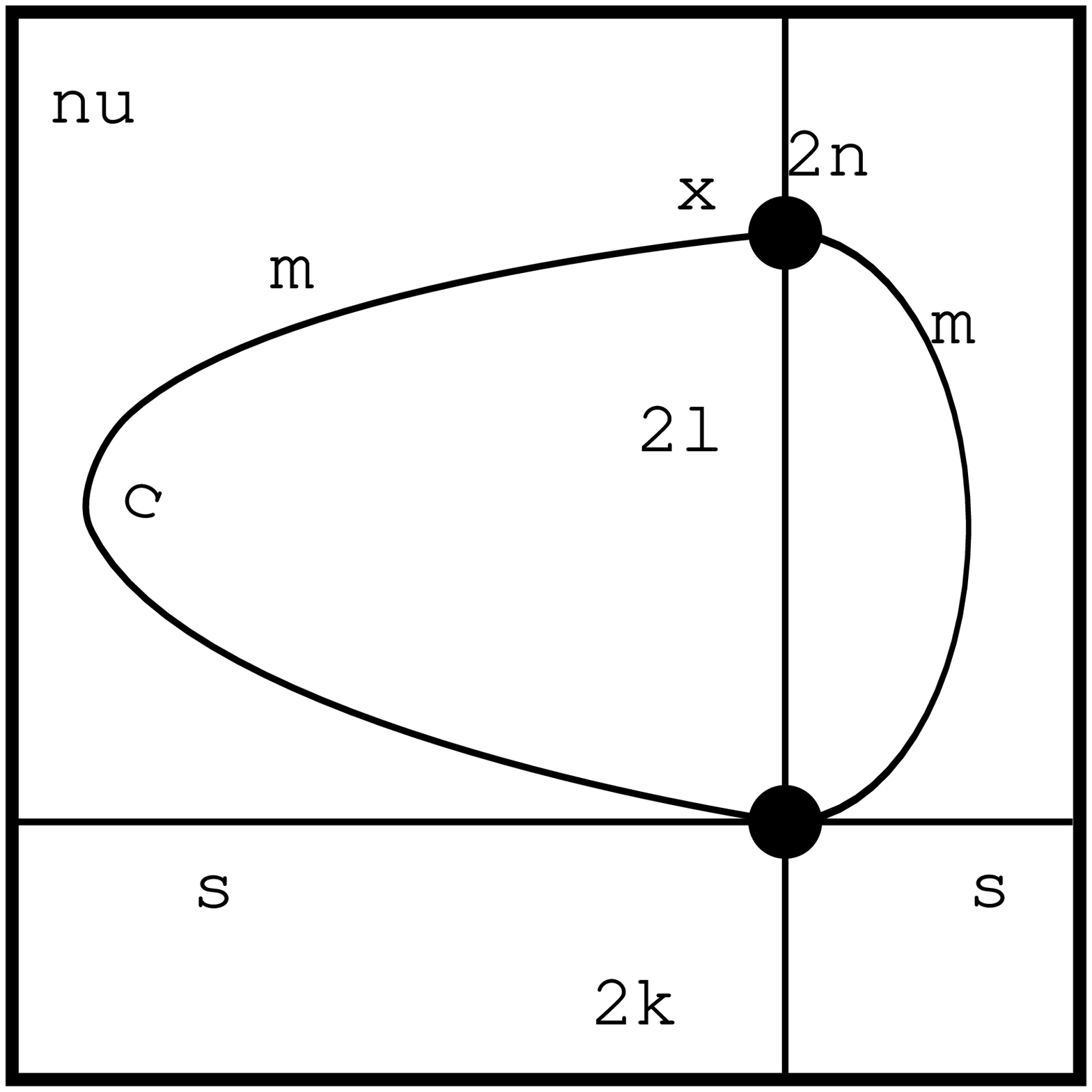}
}$ for all $\xi \in V_{\vlon k}$, $\zeta \in V_{\eta l}$, $n \in \N_{\nu , \eta}$, $x \in P_{\nu (l+m+n)}$, $s \in \N_{\vlon , \nu}$.

(iv) $c_s (V_{\psi^n_{\vlon k, \nu m} (x)} \xi , \zeta) = P_{\,
\psfrag{c}{$c_{s+m} (\xi , \zeta)$}
\psfrag{x}{$x^*$}
\psfrag{+}{$+$}
\psfrag{2k}{$2k$}
\psfrag{2l}{$2l$}
\psfrag{2n}{$2m$}
\psfrag{m}{$n$}
\psfrag{s}{$s$}
\psfrag{nu}{$\eta$}
\includegraphics[scale=0.15]{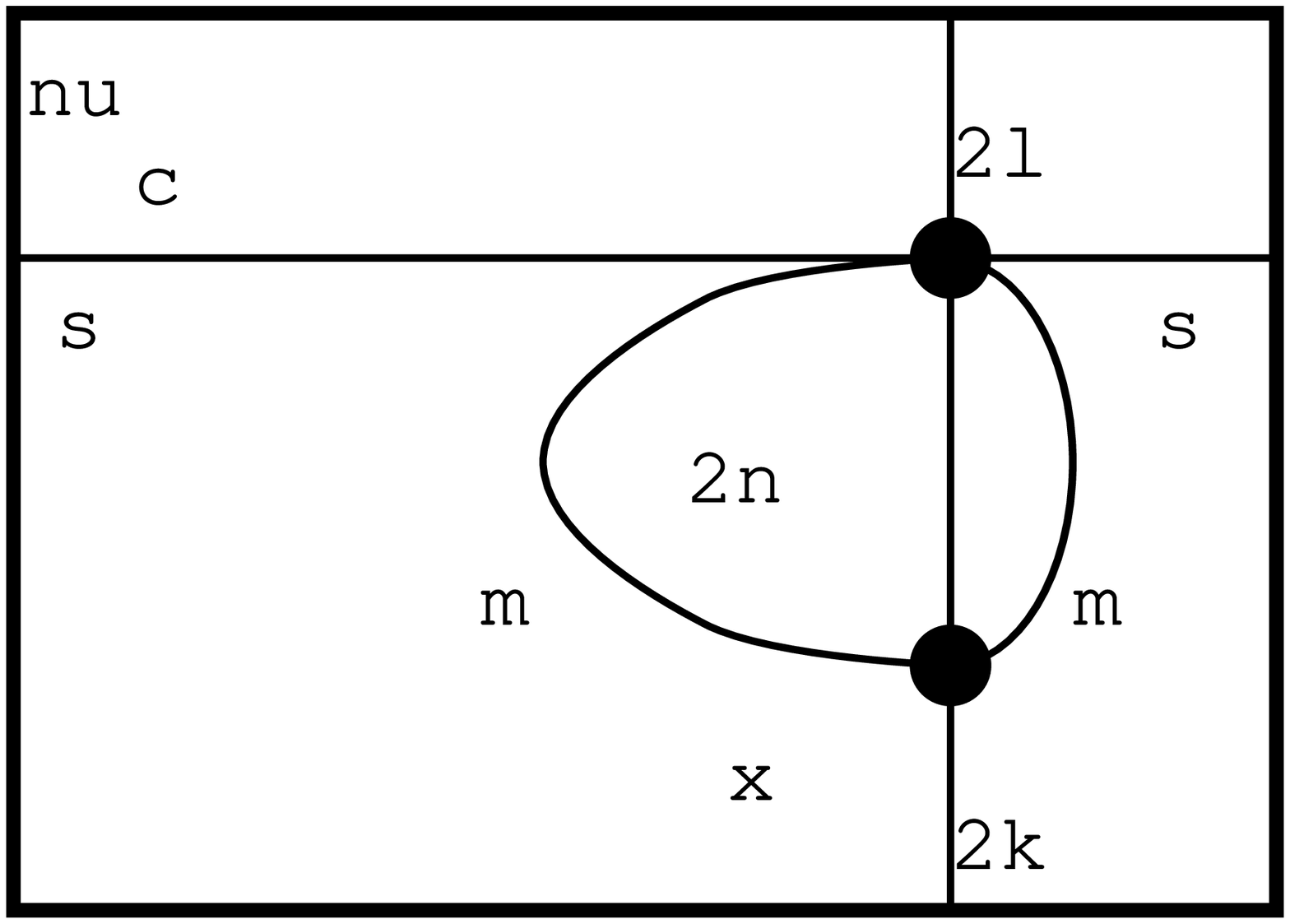}
}$ for all $\xi \in V_{\vlon k}$, $\zeta \in V_{\eta l}$, $n \in \N_{\vlon , \nu}$, $x \in P_{\nu (k+m+n)}$, $s \in \N_{\nu , \eta}$.

(v) If $t_{i,j} = P_{\,
\psfrag{2m}{$2m$}
\psfrag{2ki}{$2k_i$}
\psfrag{2kj}{$2k_j$}
\psfrag{ni}{$n_i$}
\psfrag{nj}{$n_j$}
\psfrag{c}{$c_{n_i + n_j} (\xi_j , \xi_i)$}
\psfrag{xi}{$x_i$}
\psfrag{xj}{$x^*_j$}
\psfrag{eta}{$\eta$}
\includegraphics[scale=0.15]{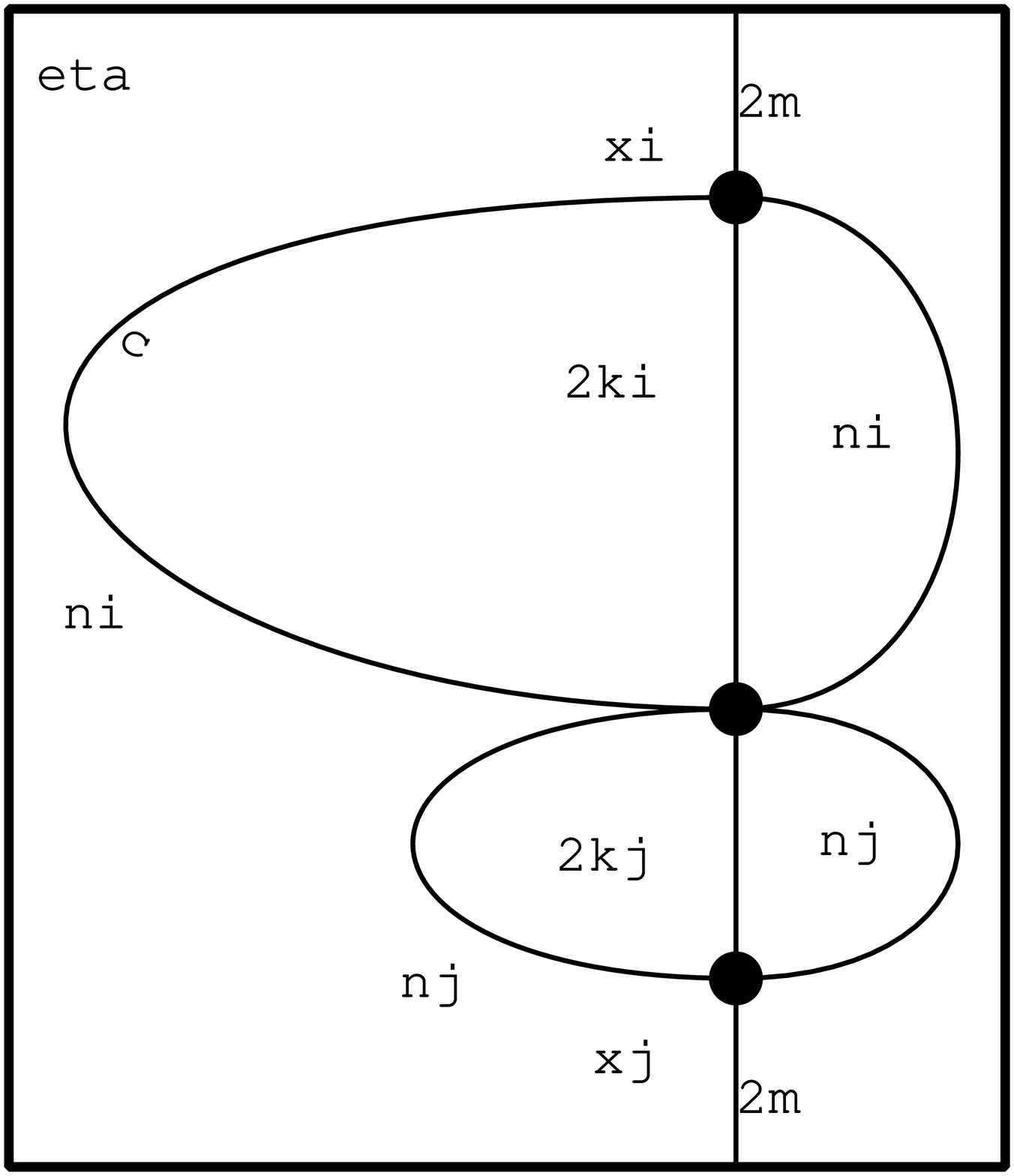}
} \in P_{\eta 2m}$ for all $\xi_i \in V_{\vlon_i k_i}$, $x_i \in P_{\eta (k_i + m + n_i)}$, $k_i \in \N_0$, $n_i \in \N_{\vlon_i , \eta}$ where $i$ lies in a finite set $I$, then $t := \us{i,j \in I}{\sum} E_{i,j} \otimes t_{i,j}$ is positive in the finite-dimensional $C^*$-algebra $M_I \otimes P_{\eta 2m}$ (where $M_I$ denotes the space of complex matrices whose rows and columns are indexed by $I$).

(vi) If $f : V \ra W$ is a morphism of Hilbert affine $P$-modules, then $c_s (\xi , f(\zeta)) = c_s (f^* (\xi) , \zeta )$ for all $\xi \in V_{\vlon k}$, $\zeta \in V_{\eta l}$, $s \in \N_{\vlon , \eta}$.
\end{lem}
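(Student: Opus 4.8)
The plan is to deduce all six assertions from the single characterizing identity of the $CC$-valued inner product, namely $L_{c(\xi,\zeta)} = \lab \xi, V_{\bullet}\zeta\rab$ (equivalently, the displayed trace-pairing identity that defines $c$), together with the functoriality and $*$-property of $V$ and the composition rule for the maps $\psi$; only the positivity (v) will require genuine analytic input.

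First I would dispose of (i) and (vi). For (i), the map $(\xi,\zeta)\mapsto \lab\xi, V_{\bullet}\zeta\rab \in (AP_{\eta l,\vlon k})^{\#}$ is linear in $\zeta$ and conjugate linear in $\xi$, since each $V_a$ is linear and $\lab\cdot,\cdot\rab_{\vlon k}$ is sesquilinear; as $c = L^{-1}\circ(\,\cdot\,)$ with $L$ a linear isomorphism (Proposition \ref{cma}), the same variance passes to $c$. For (vi), if $f\colon V\ra W$ is a morphism then $W_a\circ f = f\circ V_a$ for every affine morphism $a$, and taking adjoints gives $V_{a^*}\circ f^* = f^*\circ W_{a^*}$; hence for $x\in P_{\vlon(k+l+s)}$ one computes $\lab \xi, W_{\psi^s_{\eta l,\vlon k}(x)}f(\zeta)\rab = \lab \xi, f\,V_{\psi^s_{\eta l,\vlon k}(x)}\zeta\rab = \lab f^*(\xi), V_{\psi^s_{\eta l,\vlon k}(x)}\zeta\rab$. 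Thus the functionals representing $c_s(\xi,f(\zeta))$ and $c_s(f^*(\xi),\zeta)$ coincide, and injectivity of $L$ (equivalently, non-degeneracy of the trace-pairing) forces the two commutativity constraints to agree.

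Next, (ii), (iii) and (iv) are the covariance properties, all obtained by feeding algebraic identities into the characterizing relation. For (ii), I would apply the $*$-module identity $\lab\xi, V_a\zeta\rab = \oline{\lab\zeta, V_{a^*}\xi\rab}$ to $a=\psi^m_{\eta l,\vlon k}(x)$; since $*$ on affine tangles is realized by the inside-out reflection, $(\psi^m_{\eta l,\vlon k}(x))^{*} = \psi^m_{\vlon k,\eta l}(x^*)$ up to the rotation visible in the picture, and comparing the two trace-pairings yields both the self-flip identity and the relation between $c_m(\xi,\zeta)$ and the reflected $c_m^*(\zeta,\xi)$. For (iii) and (iv) the engine is the composition rule of \cite{Gho}: the product $\psi^s_{\nu m,\vlon k}(y)\circ\psi^n_{\eta l,\nu m}(x)$ is again of the form $\psi^{s+n}_{\eta l,\vlon k}(\,\cdot\,)$, the label being the diagram obtained by stacking $y$ over $x$. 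Substituting $V_{\psi^s_{\nu m,\vlon k}(y)}V_{\psi^n_{\eta l,\nu m}(x)} = V_{\psi^s_{\nu m,\vlon k}(y)\circ\psi^n_{\eta l,\nu m}(x)}$ into the characterizing relation and reading off the resulting picture produces the stated diagram; (iv) is the mirror computation on the first slot, where the reflection converts the label into $x^*$.

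Finally (v), which is the heart of the lemma and the step I expect to be the main obstacle. The plan is to reduce the general entry to a level-zero constraint and then prove positivity operator-theoretically. Set $\zeta_i := V_{\psi^{n_i}_{\vlon_i k_i,\eta m}(x_i)}\xi_i\in V_{\eta m}$. Using (iii) and (iv) to absorb $x_i$ and $x_j^*$ into the module vectors (and (ii) to present the result in the standard order), one identifies $t_{i,j} = c_0(\zeta_i,\zeta_j)\in P_{\eta 2m}$, so that $t = \us{i,j}{\sum}E_{i,j}\ot c_0(\zeta_i,\zeta_j)$. By the standard criterion for positivity in $M_I\ot P_{\eta 2m}$, it suffices to show $\us{i,j}{\sum}a_i^*\,c_0(\zeta_i,\zeta_j)\,a_j\ge 0$ for all $a_i\in P_{\eta 2m}$. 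The $n=0$ cases of (iii) and (iv) say that $c_0(\cdot,\cdot)$ is compatible with the $P_{\eta 2m}$-action through $\psi^0_{\eta m,\eta m}$, namely $c_0(\beta, V_{\psi^0_{\eta m,\eta m}(p)}\alpha) = c_0(\beta,\alpha)\,p$ and $p^*\,c_0(\beta,\alpha) = c_0(V_{\psi^0_{\eta m,\eta m}(p)}\beta,\alpha)$; combined with (i) this collapses the sum to $c_0(\mu,\mu)$ with $\mu := \us{i}{\sum}V_{\psi^0_{\eta m,\eta m}(a_i)}\zeta_i$. It then remains to prove $c_0(\mu,\mu)\ge 0$ in $P_{\eta 2m}$. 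Here I would invoke that $\psi^0_{\eta m,\eta m}\colon P_{\eta 2m}\ra AP_{\eta m,\eta m}$ is a unital $*$-homomorphism and $\gamma_{\eta m}$ is the trace-preserving conditional expectation (the corollaries to Proposition \ref{pdip}): $c_0(\mu,\mu)$ is self-adjoint by (ii), and for every positive $y\in P_{\eta 2m}$ we have $\psi^0_{\eta m,\eta m}(y)\ge 0$, hence $V_{\psi^0_{\eta m,\eta m}(y)}\ge 0$ as an operator, so that the characterizing relation gives $\lab\mu, V_{\psi^0_{\eta m,\eta m}(y)}\mu\rab\ge 0$; since this pairing is (up to the rotation built into the picture) the faithful trace pairing of $P_{\eta 2m}$ and $y$ ranges over all positive elements, $c_0(\mu,\mu)\ge 0$. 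The genuine difficulty, demanding the most care, is precisely this passage from the scalar positivity of $\lab\cdot,\cdot\rab_{\eta m}$ (Proposition \ref{pdip}) to positivity at the level of matrices over $P_{\eta 2m}$ — in particular, getting the diagrammatic bookkeeping of the reduction right so that $t$ appears as a genuine Gram matrix rather than its partial transpose.
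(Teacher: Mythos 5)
Your handling of (i)--(iv) and (vi) is exactly what the paper intends (its proof just says these follow "from the definition of the CC-valued inner product and faithfulness of the action of trace tangles"), so the real point of comparison is (v), where your route differs in organization while running on the same engine. The paper's proof is a single computation in a concrete faithful representation: it embeds $M_I \otimes P_{\eta 2m}$ into $\mcal L\bigl(l^2(I)\otimes L^2(P_{\eta 2m},\tau)\bigr)$ via GNS for the picture trace $\tau$, and for $y=\us{i}{\sum}\hat e_i\otimes \hat y_i$ computes $\lab y,ty\rab=\us{i,j}{\sum}\tau(y_i^* t_{i,j}y_j)$, which by (iii) and (iv) equals $\us{i,j}{\sum}\lab V_{a_i}\xi_i,V_{a_j}\xi_j\rab=\norm{\us{i}{\sum}V_{a_i}\xi_i}^2\geq 0$, where $a_i$ is the affine morphism whose label absorbs both $x_i$ and $y_i^*$; since $P_{\eta 2m}$ is finite dimensional these $y$ exhaust the GNS space, and faithfulness gives $t\geq 0$. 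You instead invoke the abstract criterion that $t\geq 0$ in $M_I\otimes A$ iff $\us{i,j}{\sum}a_i^* t_{i,j}a_j\geq 0$ for all $a_i\in A$, collapse that sum to $c_0(\mu,\mu)$ by the same absorption through (iii)/(iv), and then prove the strictly stronger, Hilbert-$C^*$-module-style inequality $c_0(\mu,\mu)\geq 0$ in $P_{\eta 2m}$, using (ii) for self-adjointness, positivity of $V_{\psi^0_{\eta m,\eta m}(y)}$ for $y\geq 0$ (legitimate, since $\psi^0_{\eta m,\eta m}$ is a unital $*$-homomorphism by the corollary to Proposition \ref{pdip}), and the fact that a self-adjoint element whose faithful-trace pairing with every positive element is nonnegative must itself be positive. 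Both proofs hinge on the identical trick of feeding the matrix entries into the affine action and quoting positivity of the inner product on $V_{\eta m}$; yours costs two extra (standard) lemmas and buys the operator inequality $c_0(\mu,\mu)\geq 0$, of which the paper only ever uses the trace.

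One caution about the step you flag yourself. The Gram-versus-partial-transpose bookkeeping is harmless in the paper's version: the double sum $\us{i,j}{\sum}\tau(y_i^* t_{i,j}y_j)$ is a norm squared whichever way the indices come out, since swapping $i$ and $j$ merely conjugates the sum. In your version the orientation genuinely matters: if the diagrammatics yielded $t_{i,j}=c_0(\zeta_j,\zeta_i)$ rather than $c_0(\zeta_i,\zeta_j)$, the collapse to $c_0(\mu,\mu)$ fails as written, and the transpose of a positive matrix over a noncommutative $C^*$-algebra need not be positive. This is repairable: by (ii), $t$ and the Gram matrix differ entrywise by the half-rotation combined with $*$ and transposition, and transpose tensored with a $*$-anti-automorphism of $P_{\eta 2m}$ (which the half-rotation is, since the full rotation is trivial in an ordinary planar algebra) is a $*$-anti-automorphism of $M_I\otimes P_{\eta 2m}$, hence positivity-preserving. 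But this repair must be spelled out; without it your argument establishes positivity of a matrix that is a priori only a partial transpose away from the one in the statement.
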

\begin{proof}
Statements (i) - (iv) and (vi) can easily be deduced from the definition of the CC-valued inner product and faithfulness of the action of trace tangles.
We will now prove (v).
For this, consider the faithful embedding $M_I \otimes P_{\eta 2m} \hookrightarrow \mcal L \left(l^2 (I) \otimes L^2(P_{\eta 2m} , \tau) \right)$ where $\tau = P_{TR^r_{\eta 2m}} : P_{\eta 2m} \ra \C$.
For any $y = \us{i \in I}{\sum} \hat{e}_i \otimes \hat{y}_i$ where $\{ \hat e_i \}_{i \in I}$ is the standard orthonormal basis of $l^2 (I)$ and $y_i$'s lie in $P_{\eta 2m}$, we have $\lab y , t y \rab = \us{i,j \in I}{\sum} \tau (y^*_i t_{i,j} y_j) = \us{i,j \in I}{\sum} \lab V_{a_i} \xi_i , V_{a_j} \xi_j \rab = \norm{ \us{i \in I}{\sum} V_{a_i} \xi_i}^2 \geq 0$ where $a_i := \psi^{n_i}_{\vlon_i k_i, \eta m} (P_{\,
\psfrag{2m}{$2m$}
\psfrag{2ki}{$2k_i$}
\psfrag{ni}{$n_i$}
\psfrag{xi}{$x_i$}
\psfrag{yi}{$y^*_i$}
\psfrag{eta}{$\eta$}
\includegraphics[scale=0.15]{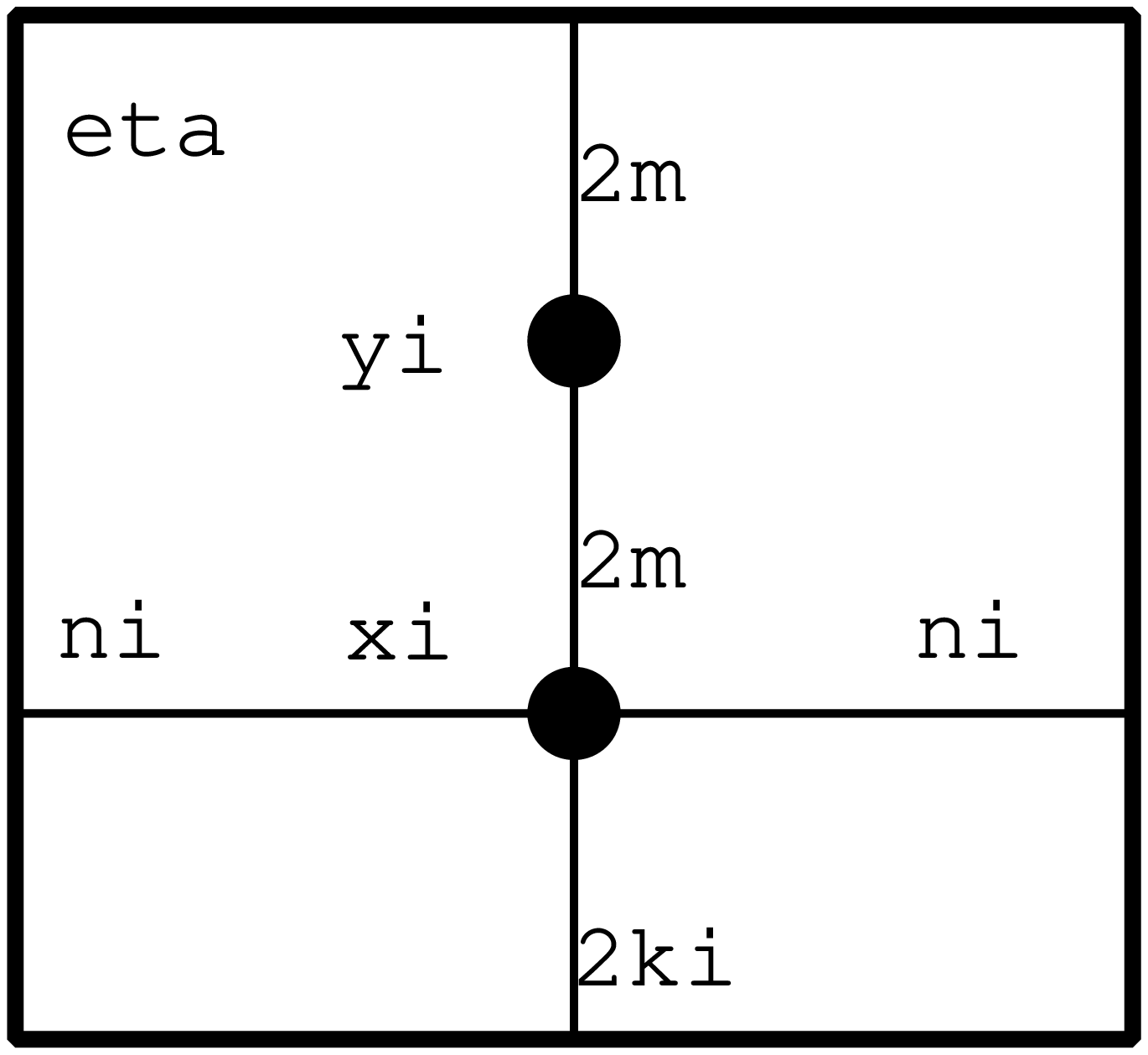}
})$ and where the second equality follows from parts (iii) and (iv).
\end{proof}
Let $V$ and $W$ be two Hilbert affine $P$-modules.
For every $\eta m \in \t{Col}$, consider the space 
\[
U_{\eta m} := \us{\vlon = \pm}{\us{k , l \in \N_0}{\oplus} } \left(V_{\vlon k} \otimes AP_{\vlon (k+l) , \eta m} \otimes W_{\vlon l}\right)
\]

whose element $\xi \otimes a \otimes \zeta$ will be denoted by $\fusion{\xi}{a}{\zeta}$.
We will define a sesquilinear form on $U_{\eta m}$ in the following way:
\[
\left\lab
\fusion{\xi_1}{a_1}{\zeta_1} , \fusion{\xi_2}{a_2}{\zeta_2}
\right\rab \; \; \; := \; \; \; P_{ \!\!\!\!\!\!
\psfrag{n1}{$n_1$}
\psfrag{n2}{$n_2$}
\psfrag{n1+2}{$n_1 + n_2$}
\psfrag{2k1}{$2k_1$}
\psfrag{2k2}{$2k_2$}
\psfrag{2l1}{$2l_1$}
\psfrag{2l2}{$2l_2$}
\psfrag{x2}{$x_2$}
\psfrag{x1*}{$x^*_1$}
\psfrag{c1}{$c_{n_1 + n_2} (\xi_1 , \xi_2)$}
\psfrag{c2}{$c_{n_1 + n_2} (\zeta_1 , \zeta_2\!)$}
\psfrag{2m}{$2m$}
\includegraphics[scale=0.15]{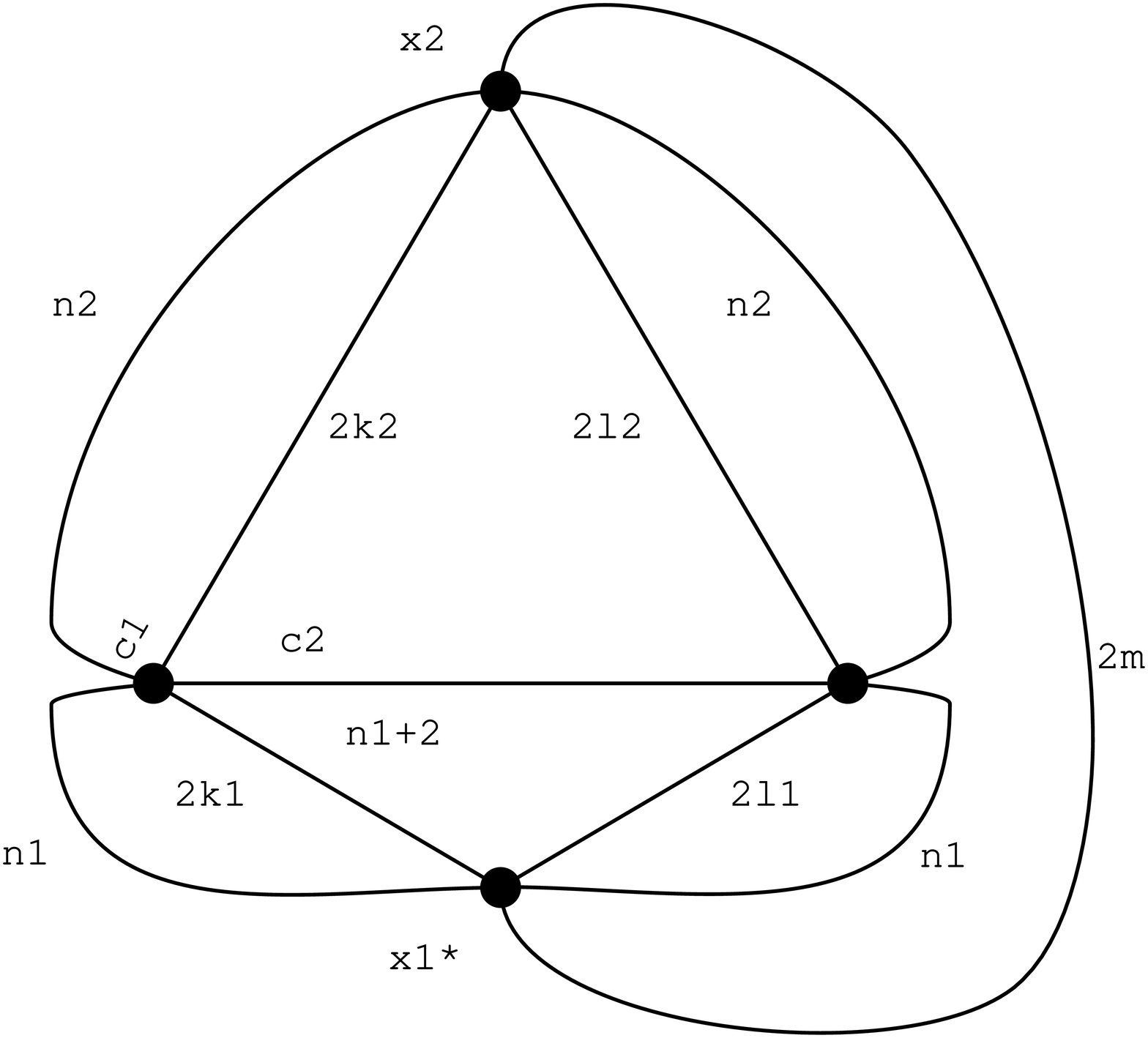}
}\]
for all $\xi_i \in V_{\vlon_i k_i}$, $\zeta_i \in W_{\vlon_i l_i}$, $a_i = \psi^{n_i}_{\vlon_i (k_i + l_i) , \eta m} (x_i)$ where $x_i \in P_{\eta (k_i +l_i + m + n_i)}$ and $i = 1,2$.
Well-definedness follows from Remark \ref{apmap} while linearity (resp., conjugate-linearity) in the second (resp., first) variable comes from Lemma \ref{ccprop} (i). Also, Lemma \ref{ccprop} (ii) implies that $\lab \cdot , \cdot \rab$ is Hermitian; however,  positive semi-definiteness of $\lab \cdot , \cdot \rab$ requires  further understanding of the structure of $U_{\eta m}$'s.
\begin{lem}\label{fuswloglem}
Given $\xi \in V_{\vlon k}$, $\zeta \in W_{\vlon l}$ and $a \in AP_{\vlon (k+l) , \eta m}$, there exist $k' , l' \in \N_0$, $\xi' \in V_{\eta k'}$, $\zeta' \in W_{\eta l'}$, $a' \in AP^{=0}_{\eta (k'+l') , \eta m}$ ($= \t{Range } \psi^0_{\eta (k' + l'), \eta m}$) such that $\lab \cdot , \xi \os{a}{\otimes} \zeta \rab = \lab \cdot , \xi' \os{a'}{\otimes} \zeta' \rab$ and $\lab \xi \os{a}{\otimes} \zeta , \cdot \rab = \lab \xi' \os{a'}{\otimes} \zeta' , \cdot \rab$.
\end{lem}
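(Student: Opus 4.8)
The plan is first to cut the statement down to a single functional identity, then to remove the ``winding index'' of $a$ by a topological re-routing that is absorbed into the module actions on $\xi$ and $\zeta$. Since the form $\lab \cdot , \cdot \rab$ on $U_{\eta m}$ is Hermitian (Lemma \ref{ccprop}(ii)), one has $\lab \fusion{\xi}{a}{\zeta} , w \rab = \overline{\lab w , \fusion{\xi}{a}{\zeta} \rab}$ for all $w \in U_{\eta m}$, so the two displayed identities are equivalent and it suffices to produce $\xi' , \zeta' , a'$ with $\lab \cdot , \fusion{\xi}{a}{\zeta} \rab = \lab \cdot , \fusion{\xi'}{a'}{\zeta'} \rab$. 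By Remark \ref{psi-remark} I would fix a presentation $a = \psi^{n}_{\vlon (k+l) , \eta m}(x)$ with $n \in \N_{\vlon , \eta}$ and $x \in P_{\eta (k+l+m+n)}$; the entire content of the lemma is then that this presentation can be replaced by one with winding index $0$ and outer-matching sign $\eta$, at the cost of enlarging the colours $k , l$.

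The construction I have in mind exploits that $\xi$ and $\zeta$ are genuine module vectors. In the affine tangle $\Psi^{n}_{\vlon(k+l),\eta m}$ the $n$ ``extra'' strands wind around the central hole; topologically each such strand may instead be re-routed so that it passes through the discs carrying $\xi$ and $\zeta$. Re-routing unwinds $\Psi^{n}_{\vlon(k+l),\eta m}(x)$ into $\psi^{0}_{\eta(k'+l'),\eta m}(x')$ for suitable $k' \geq k$, $l' \geq l$ and a reorganisation $x'$ of $x$, while simultaneously acting on the strands attached to $\xi$ and $\zeta$ by the unitary rotation/inclusion affine morphisms $\rho , \rho'$ of Figure \ref{aff-tangles}. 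Thus I set $\xi' := V_{\rho}\,\xi$ and $\zeta' := W_{\rho'}\,\zeta$ and let $a' := \psi^{0}_{\eta(k'+l'),\eta m}(x') \in AP^{=0}_{\eta(k'+l'),\eta m}$. Crucially, the parity constraint $n \in \N_{\vlon , \eta} = 2\N_0 + \delta_{\vlon , -\eta}$ ensures that removing all $n$ windings flips the sign exactly by $(-)^{n}$, so that $\xi'$ lands in $V_{\eta k'}$ and $\zeta'$ in $W_{\eta l'}$ as required; in particular no separate sign-normalisation step is needed.

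To verify the functional identity I would evaluate both sides against an arbitrary $\fusion{\xi_1}{a_1}{\zeta_1}$ with $a_1 = \psi^{n_1}_{\vlon_1(k_1+l_1),\eta m}(x_1)$, and compare the two defining pictures. On the modified side the two entries are $c_{n_1}(\xi_1 , V_{\rho}\,\xi)$ and $c_{n_1}(\zeta_1 , W_{\rho'}\,\zeta)$; pulling $\rho$ and $\rho'$ off the module vectors via the covariance relations of the $CC$-valued inner product (Lemma \ref{ccprop}(iii)--(iv)) reinstates the original windings in the combined diagram and deposits exactly the planar data packaged into $x'$. Since re-routing is an affine isotopy, the resulting closed picture is equal to the original one carrying $c_{n_1+n}(\xi_1 , \xi)$, $c_{n_1+n}(\zeta_1 , \zeta)$ and $x$, giving the claimed equality of functionals.

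The delicate point, and the step I expect to be the main obstacle, is the bookkeeping in the last paragraph: one must check that the re-routing is realised by a single pair $(\rho , \rho')$ (rather than forcing an orthonormal-basis resolution that would split $\fusion{\xi}{a}{\zeta}$ into a sum), that the colours $k' , l'$ and the element $x'$ are produced consistently, and that the covariance relations reassemble $x'$, $\rho$, $\rho'$ back into $x$ on the nose. Here the non-degeneracy of the action of the trace tangles and the unitarity of the rotation action (Remark \ref{tromega}), both already used repeatedly in Section \ref{straffmor}, are what make the identity exact rather than merely approximate or up-to-scalar.
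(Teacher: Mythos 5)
Your proposal is correct and follows essentially the same route as the paper: the paper also reduces to one identity via Hermitianness, writes $a = \psi^n_{\vlon(k+l),\eta m}(x)$, absorbs the $n$ winding strands into the module vectors by a single pair of ``inclusion'' affine morphisms $u_{\vlon k,n} \in AP_{\vlon k, \eta(k+n)}$, $u_{\vlon l,n}$ (so $k'=k+n$, $l'=l+n$, $\xi' = V_{u_{\vlon k,n}}\xi$, $\zeta' = W_{u_{\vlon l,n}}\zeta$, $a' = \psi^0_{\eta(k'+l'),\eta m}(x')$ for a rearranged $x'$), and verifies the equality of functionals against arbitrary $\fusion{\xi_1}{a_1}{\zeta_1}$ using Lemma \ref{ccprop}(iii). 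The bookkeeping you flag as the delicate point indeed goes through with a single pair of morphisms, exactly as you anticipated, with no orthonormal-basis resolution needed.
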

\begin{proof}
Since $\lab \cdot , \cdot \rab$ is Hermitian, it is enough to show only one equation, namely the first one.
Suppose $x \in P_{\eta (k+l+m+n)}$ such that $a = \psi^n_{\vlon (k+l) , \eta m} (x)$.
Set\\
$k' = k+n$ and $\xi' = V_{u_{\vlon k , n} } \xi \in V_{\eta k'}$,\\
 $l' = l+n$ and $\zeta' = W_{ u_{\vlon l , n} } \zeta \in W_{\eta l'}$, and\\
$a' = \psi^0_{\eta (k' + l'), \eta m} ( x' ) \in AP^{=0}_{\eta (k' + l'), \eta m}$\\
where $x' := P_{
\psfrag{eta}{$\eta$}
\psfrag{2m}{$2m$}
\psfrag{2k}{$2k$}
\psfrag{2l}{$2l$}
\psfrag{n}{$n$}
\psfrag{x}{$x$}
\includegraphics[scale=0.15]{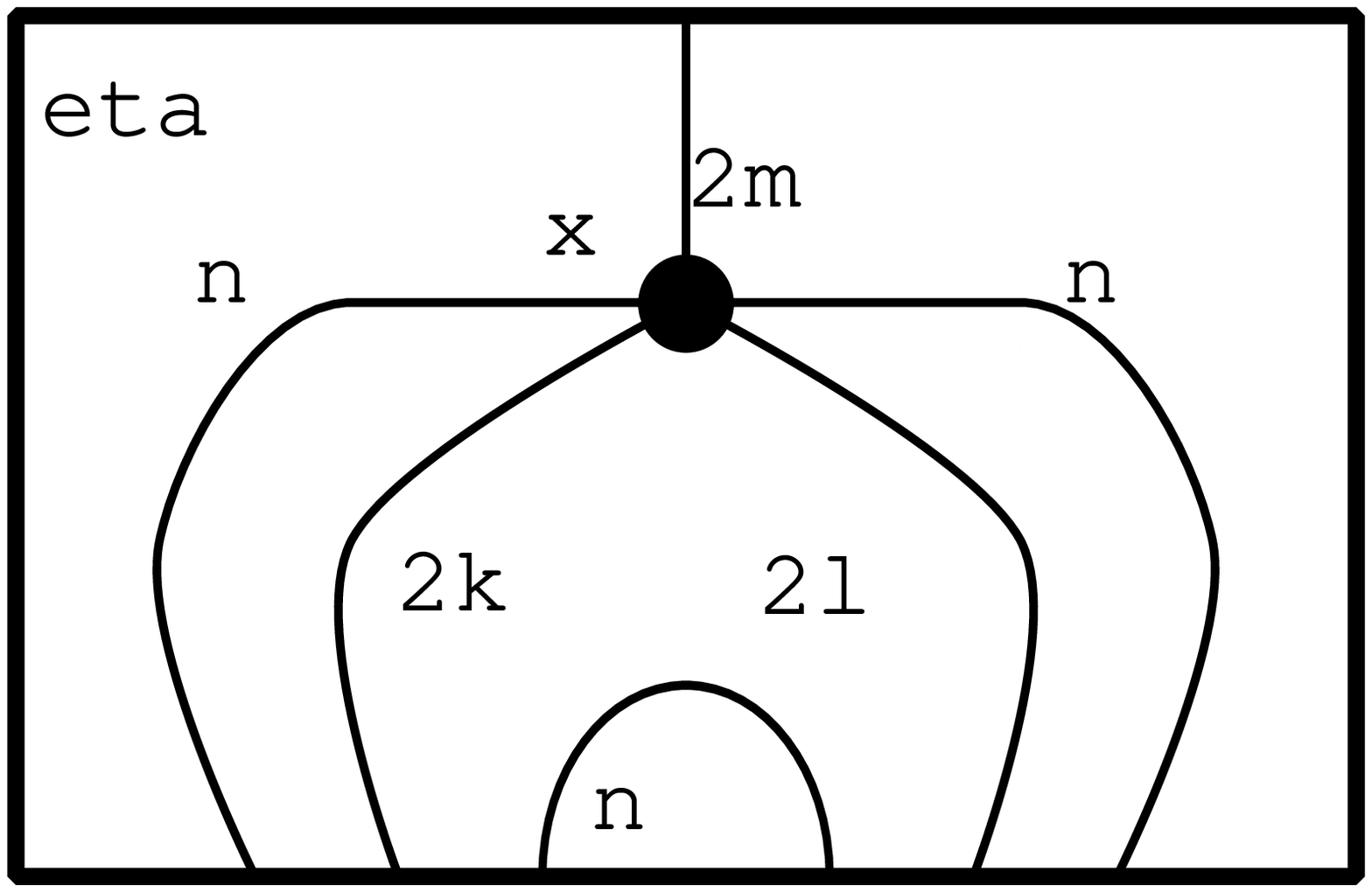}
}$ and $u_{\vlon k , n} = 
\psfrag{2k}{$2k$}
\psfrag{n}{$n$}
\psfrag{e}{$\vlon$}
\psfrag{eta}{$\eta$}
\includegraphics[scale=0.15]{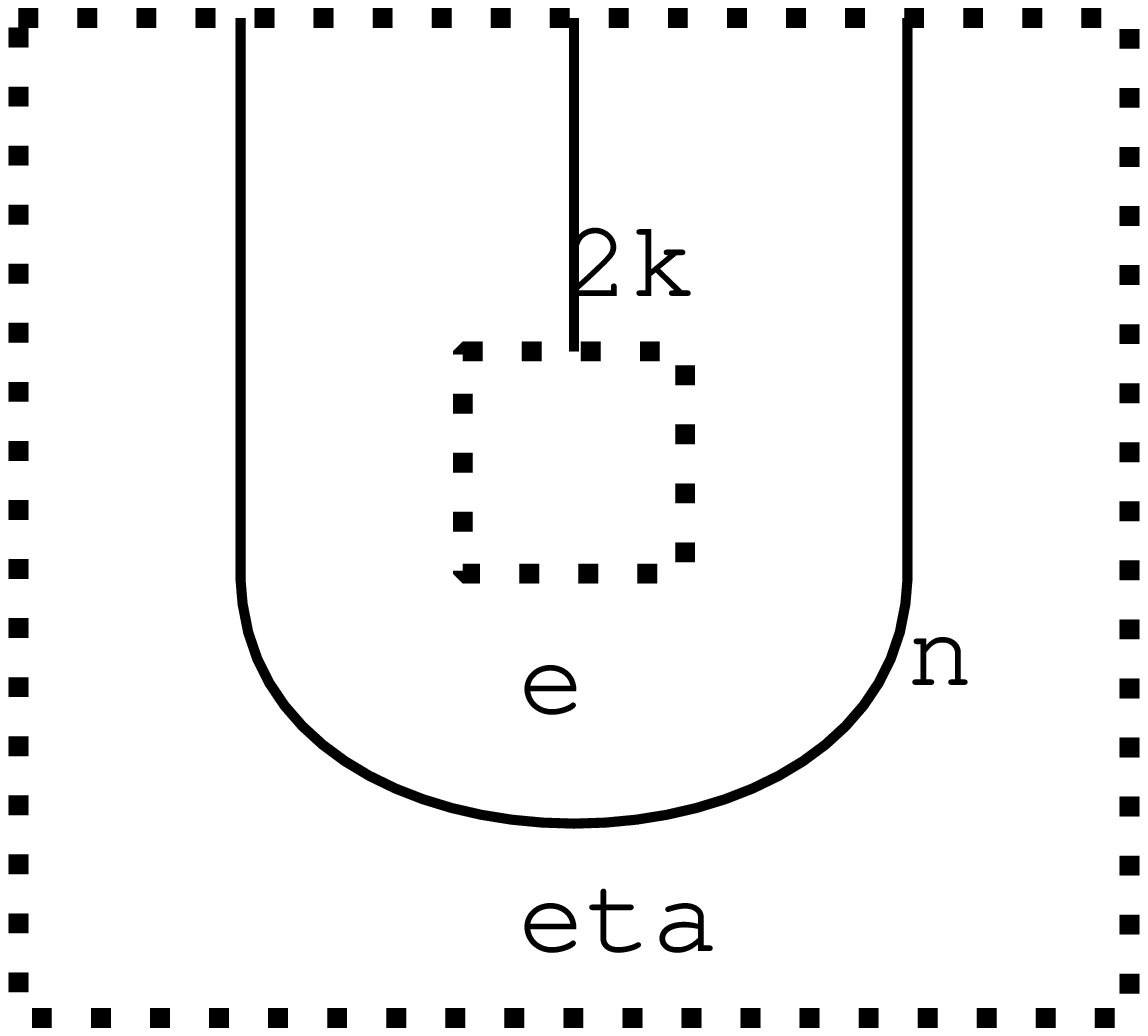}
\in AP_{\vlon k,\eta (k+n)}$
\comments{
$u_{\vlon k , n} = \psi^n_{\vlon k, \eta (k+n)} (1_{P_{\eta 2(k+n)}} ) = \psi^n_{\vlon k, \eta (k+n)} ( P_{
\psfrag{2k}{$2k$}
\psfrag{n}{$n$}
\psfrag{eta}{$\eta$}
\includegraphics[scale=0.15]{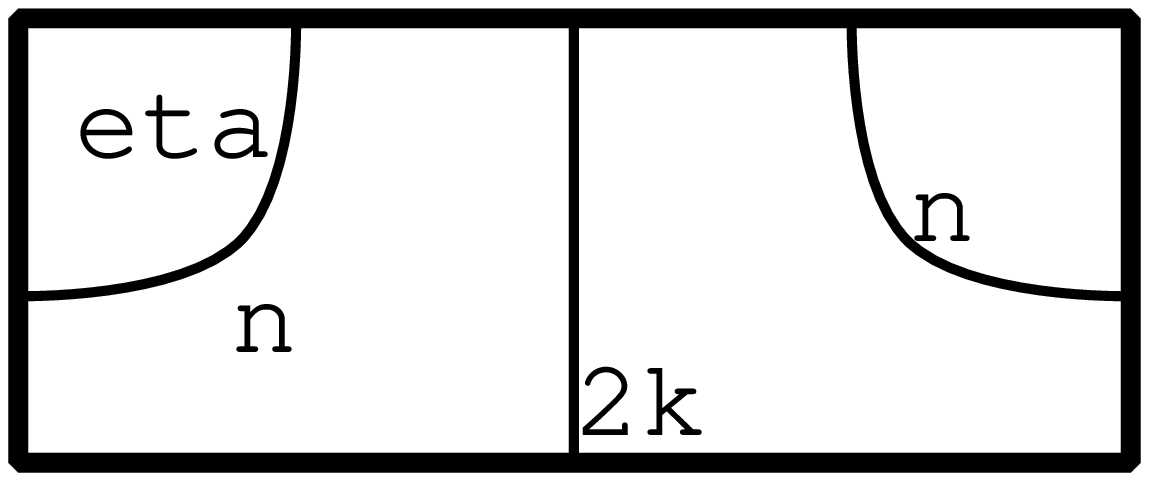}
})$}
.\\
Applying Lemma \ref{ccprop} (iii), one can easily obtain
\[
P_{\,
\psfrag{x}{$x$}
\psfrag{x'}{$x'$}
\psfrag{eta}{$\eta$}
\psfrag{n}{$n$}
\psfrag{n1}{$n_1$}
\psfrag{n+n1}{$n+n_1$}
\psfrag{2k1}{$2k_1$}
\psfrag{2k}{$2k$}
\psfrag{2k'}{$2k'$}
\psfrag{2l1}{$2l_1$}
\psfrag{2l}{$2l$}
\psfrag{2l'}{$2l'$}
\psfrag{x1}{$x_1$}
\psfrag{c1}{$c_{n_1} (\xi_1 , \xi')$}
\psfrag{c2}{$c_{n_1} (\zeta_1 , \zeta')$}
\psfrag{2m}{$2m$}
\includegraphics[scale=0.15]{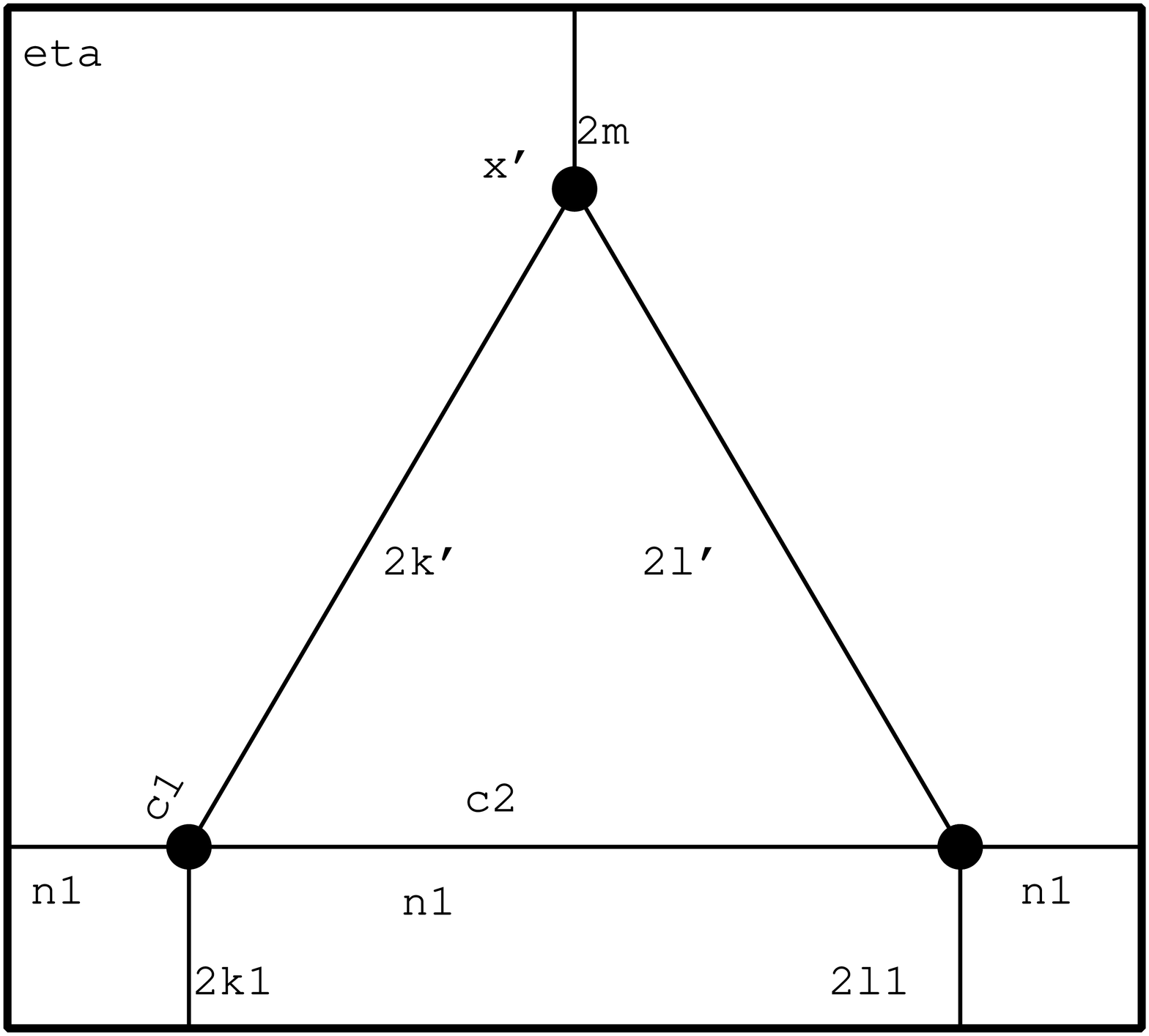}
} = P_{\,
\psfrag{x}{$x$}
\psfrag{x'}{$x'$}
\psfrag{eta}{$\eta$}
\psfrag{n}{$n$}
\psfrag{n1}{$n_1$}
\psfrag{n+n1}{$n+n_1$}
\psfrag{2k1}{$2k_1$}
\psfrag{2k}{$2k$}
\psfrag{2k'}{$2k'$}
\psfrag{2l1}{$2l_1$}
\psfrag{2l}{$2l$}
\psfrag{2l'}{$2l'$}
\psfrag{x1}{$x_1$}
\psfrag{c1}{$c_{n+n_1} (\xi_1 , \xi)$}
\psfrag{c2}{$c_{n+n_1} (\zeta_1 , \zeta)$}
\psfrag{2m}{$2m$}
\includegraphics[scale=0.15]{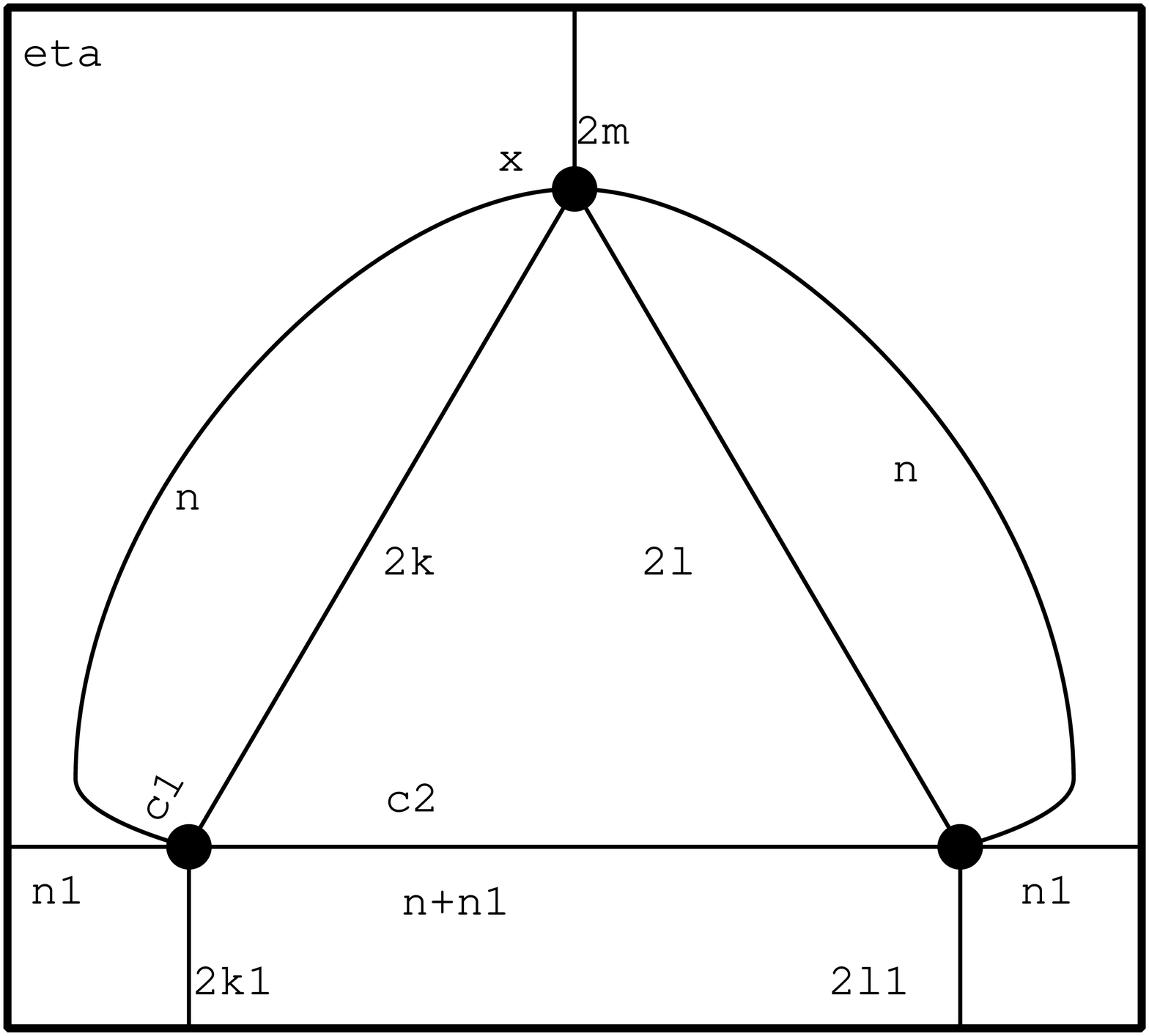}
} \t{ for all } \xi_1 \in V_{\vlon_1 k_1} , \zeta_1 \in W_{\vlon_1 l_1}, n_1 \in \N_{\vlon_1 , \eta}.
\]
From the above equation and the definition of the $\lab \cdot , \cdot \rab$, we get $\lab \xi_1 \os{a_1}{\otimes} \zeta_1 , \xi' \os{a'}{\otimes} \zeta' \rab = \lab \xi_1 \os{a_1}{\otimes} \zeta_1 , \xi \os{a}{\otimes} \zeta \rab$ for all $\xi_1 \in V_{\vlon_1 k_1} , \zeta_1 \in W_{\vlon_1 l_1}, a_1 = \psi^{n_1}_{\vlon_1 (k_1 +l_1) , \eta m} (x_1) \in AP_{\vlon_1 (k_1 +l_1) , \eta m}$ where $n_1 \in \N_{\vlon_1 , \eta}$ and $x_1 \in P_{\eta (k_1 + l_1 + m + n_1)}$.
\end{proof}
\begin{lem}\label{wlogsamekl}
Given $u \in U_{\eta m}$, there exist $k, l \in \N$, a finite set $I$, $\tilde \xi_i \in V_{\eta k}$, $\tilde \zeta_i \in W_{\eta l}$ and $\tilde a_i \in AP^{=0}_{\eta (k+l), \eta m}$ for $i \in I$ such that $\lab \cdot , u \rab = \lab \cdot , \tilde u \rab$ and $\lab u , \cdot  \rab = \lab \tilde u , \cdot \rab$ where $\tilde{u} = \us{i \in I}{\sum} \tilde \xi_i \os{\tilde a_i}{\otimes} \tilde \zeta_i$.
\end{lem}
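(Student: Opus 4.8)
The plan is to bring an arbitrary $u$ into the stated normal form in two stages: first arrange that every summand carries the common sign $\eta$ and lies in $AP^{=0}$, and then raise all the $V$-legs to one common level $k$ and all the $W$-legs to one common level $l$. Since the form $\lab\cdot,\cdot\rab$ on $U_{\eta m}$ is Hermitian (by Lemma \ref{ccprop}(ii)), it is enough throughout to verify the single identity $\lab\cdot,u\rab=\lab\cdot,\tilde u\rab$; the companion identity $\lab u,\cdot\rab=\lab\tilde u,\cdot\rab$ is then automatic by conjugation, exactly as in the proof of Lemma \ref{fuswloglem}.

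First I would write $u=\sum_{p}\fusion{\xi_p}{a_p}{\zeta_p}$ as a finite sum of elementary tensors with $\xi_p\in V_{\vlon_p k_p}$, $\zeta_p\in W_{\vlon_p l_p}$ and $a_p\in AP_{\vlon_p(k_p+l_p),\eta m}$ (this is just the definition of $U_{\eta m}$ as a direct sum). Applying Lemma \ref{fuswloglem} to each summand replaces $\fusion{\xi_p}{a_p}{\zeta_p}$, without altering the functional it induces in either variable, by $\fusion{\xi'_p}{a'_p}{\zeta'_p}$ with $\xi'_p\in V_{\eta k'_p}$, $\zeta'_p\in W_{\eta l'_p}$ and $a'_p\in AP^{=0}_{\eta(k'_p+l'_p),\eta m}$. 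Hence $\lab\cdot,u\rab=\lab\cdot,\sum_p\fusion{\xi'_p}{a'_p}{\zeta'_p}\rab$, and every leg of the new sum now carries the sign $\eta$ while every middle factor is the image of $\psi^0$.

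It remains to promote the individual levels $(k'_p,l'_p)$ to a single pair $(k,l)$ with $k\geq\max_p k'_p$ and $l\geq\max_p l'_p$ (and $k,l\geq 1$). For each $p$ I would fix morphisms $R^p_\xi=\psi^0_{\eta k'_p,\eta k}(y_p)\in AP_{\eta k'_p,\eta k}$ and $R^p_\zeta=\psi^0_{\eta l'_p,\eta l}(w_p)\in AP_{\eta l'_p,\eta l}$, set $\tilde\xi_p:=V_{R^p_\xi}\xi'_p\in V_{\eta k}$ and $\tilde\zeta_p:=W_{R^p_\zeta}\zeta'_p\in W_{\eta l}$, and then define a new middle factor $\tilde a_p=\psi^0_{\eta(k+l),\eta m}(\tilde x_p)\in AP^{=0}_{\eta(k+l),\eta m}$ whose label $\tilde x_p$ is obtained from the original label of $a'_p$ by inserting $y_p$ and $w_p$. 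Because $R^p_\xi$ and $R^p_\zeta$ have level $0$, Lemma \ref{ccprop}(iii) identifies $c_{n_1}(\xi_1,V_{R^p_\xi}\xi'_p)$ (resp.\ the $\zeta$-term) with the picture $c_{n_1}(\xi_1,\xi'_p)$ decorated by $y_p$ (resp.\ $w_p$); choosing $\tilde x_p$ to contract these decorations back reproduces, for every test vector $\fusion{\xi_1}{a_1}{\zeta_1}$ in the first slot, the defining picture of $\lab\fusion{\xi_1}{a_1}{\zeta_1},\fusion{\xi'_p}{a'_p}{\zeta'_p}\rab$. Summing over $p$ then gives $\tilde u=\sum_p\fusion{\tilde\xi_p}{\tilde a_p}{\tilde\zeta_p}$ of the required shape, with $I$ the index set of the $p$'s.

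The genuinely delicate point, and the step I expect to be the main obstacle, is the level-raising of the previous paragraph, specifically the bookkeeping of colours and parities needed to choose $y_p$, $w_p$ and $\tilde x_p$. The sign-preserving inclusions raise a leg only by an even number of strands, whereas Lemma \ref{fuswloglem} lands the various summands at levels $k'_p$ of possibly different parities; reconciling these forces $y_p$ (and $w_p$) to be not a bare inclusion but an inclusion carrying a partial turn-back when $k-k'_p$ (resp.\ $l-l'_p$) is odd. Verifying that the compensating label $\tilde x_p$ can then be chosen, uniformly in the first-slot test vector, so as to return the original form is the one honestly diagrammatic computation; it rests entirely on the transformation rules for the $CC$-valued inner product under the module action recorded in Lemma \ref{ccprop}(iii) and (iv).
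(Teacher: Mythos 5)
Your proposal is correct and follows essentially the same route as the paper's own proof: reduce via Lemma \ref{fuswloglem} to a sum of elementary tensors with common sign $\eta$ and middle factors in the range of $\psi^0$, choose common levels $k \geq \max_i k_i$ and $l \geq \max_i l_i$, raise the two legs by level-zero inclusion morphisms while compensating inside the middle factor, and verify equality of the induced functionals against test vectors using Lemma \ref{ccprop}(iii) (with the second identity following by Hermiticity). The only place you over-complicate is the ``delicate'' parity case-distinction: it dissolves, since the paper uniformly takes the inclusions to be the identity-labelled morphisms $\psi^0_{\eta k_i,\eta k}\bigl(1_{P_{\eta(k+k_i)}}\bigr)$, which diagrammatically consist of $2k_i$ through-strands plus $k-k_i$ turn-backs for \emph{any} value of $k-k_i$ (even or odd), and compensates with $\tilde a_i := \delta^{(k_i+l_i)-(k+l)}\, a_i \circ \psi^0_{\eta(k+l),\eta(k_i+l_i)}(\,\cdot\,)$, whose label carries the mirror turn-backs and whose prefactor absorbs the closed loops created when the two families of turn-backs meet.
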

\begin{proof}
By Lemma \ref{fuswloglem}, we may assume $u = \us{i \in I}{\sum} \xi_i \os{a_i}{\otimes} \zeta_i$ for $\xi_i \in V_{\eta k_i}$, $\zeta_i \in W_{\eta l_i}$, $a_i = \psi^0_{\eta (k_i + l_i) , \eta m} (x_i) \in AP^{=0}_{\eta (k_i + l_i) , \eta m}$ where $i$ runs over a finite set $I$. Choose $k \geq \t{max} \{k_i : i \in I\}$ and $l \geq \t{max} \{l_i : i \in I\}$, and set\\
$\tilde \xi_i := V_{ \psi^0_{\eta k_i , \eta k} (1_{P_{\eta (k + k_i)}}) } \xi_i = V_{ \psi^0_{\eta k_i , \eta k} (P_{\,
\psfrag{2ki}{$2k_i$}
\psfrag{2li}{$$}
\psfrag{k-ki}{$k-k_i$}
\psfrag{eta}{$\eta$}
\includegraphics[scale=0.15]{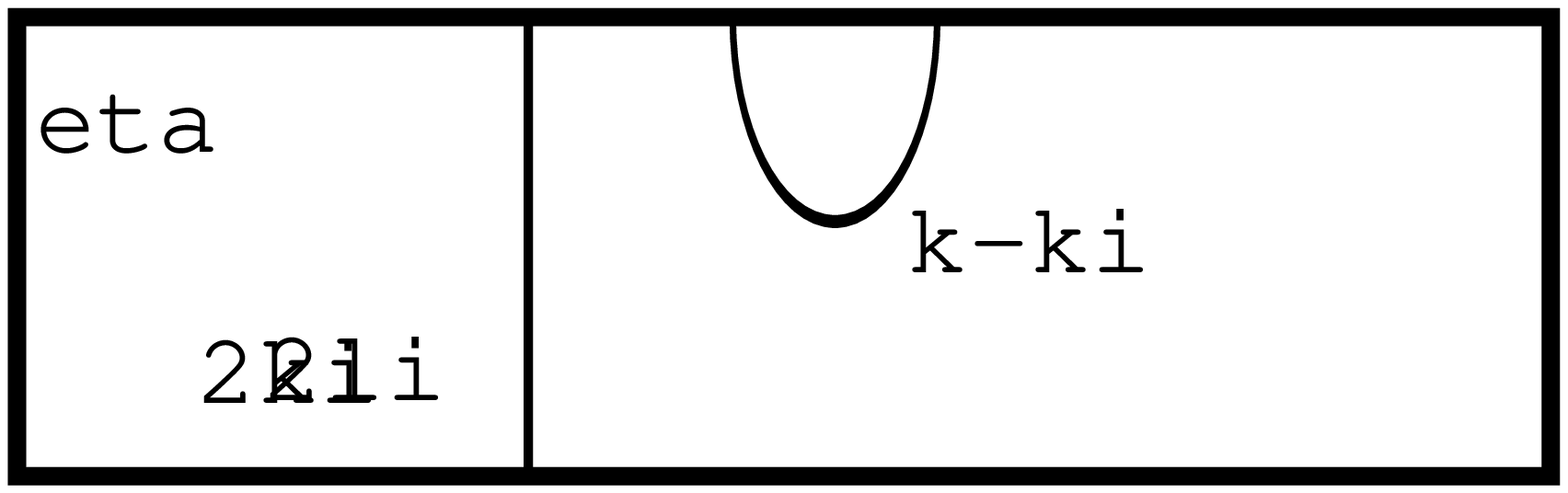}
}) } \xi_i \in V_{\eta k}$,\\
$\tilde \zeta_i := W_{ \psi^0_{\eta l_i , \eta l} (1_{P_{\eta (l + l_i)}}) } \zeta_i = W_{ \psi^0_{\eta l_i , \eta l} ( P_{\,
\psfrag{2ki}{$$}
\psfrag{2li}{$2l_i$}
\psfrag{k-ki}{$l-l_i$}
\psfrag{eta}{$\eta$}
\includegraphics[scale=0.15]{figures/fusion/1etak+ki.eps}
}) } \zeta_i \in W_{\eta l}$, and\\
\begin{tabular}{rl}
$\tilde a_i :=$ & $\delta^{(k_i+l_i)-(k+l)} a_i \circ \psi^0_{\eta (k+l) , \eta (k_i + l_i)} \; ( P_{\,
\psfrag{2ki}{$2k_i$}
\psfrag{2li}{$2l_i$}
\psfrag{k-ki}{$k \! - \! k_i$}
\psfrag{l-li}{$l \! - \! l_i$}
\psfrag{eta}{$\eta$}
\includegraphics[scale=0.15]{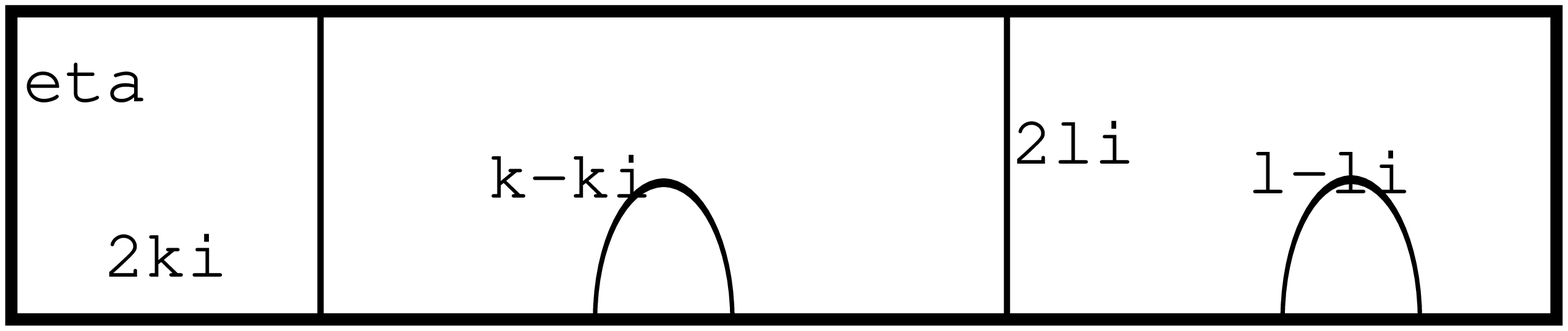}
})$\\
$=$ & $\delta^{(k_i+l_i)-(k+l)} \psi^0_{\eta (k+l) , \eta m} ( P_{\,
\psfrag{xi}{$x_i$}
\psfrag{2m}{$2m$}
\psfrag{2ki}{$2k_i$}
\psfrag{2li}{$2l_i$}
\psfrag{k-ki}{$k \! - \! k_i$}
\psfrag{l-li}{$l \! - \! l_i$}
\psfrag{eta}{$\eta$}
\includegraphics[scale=0.15]{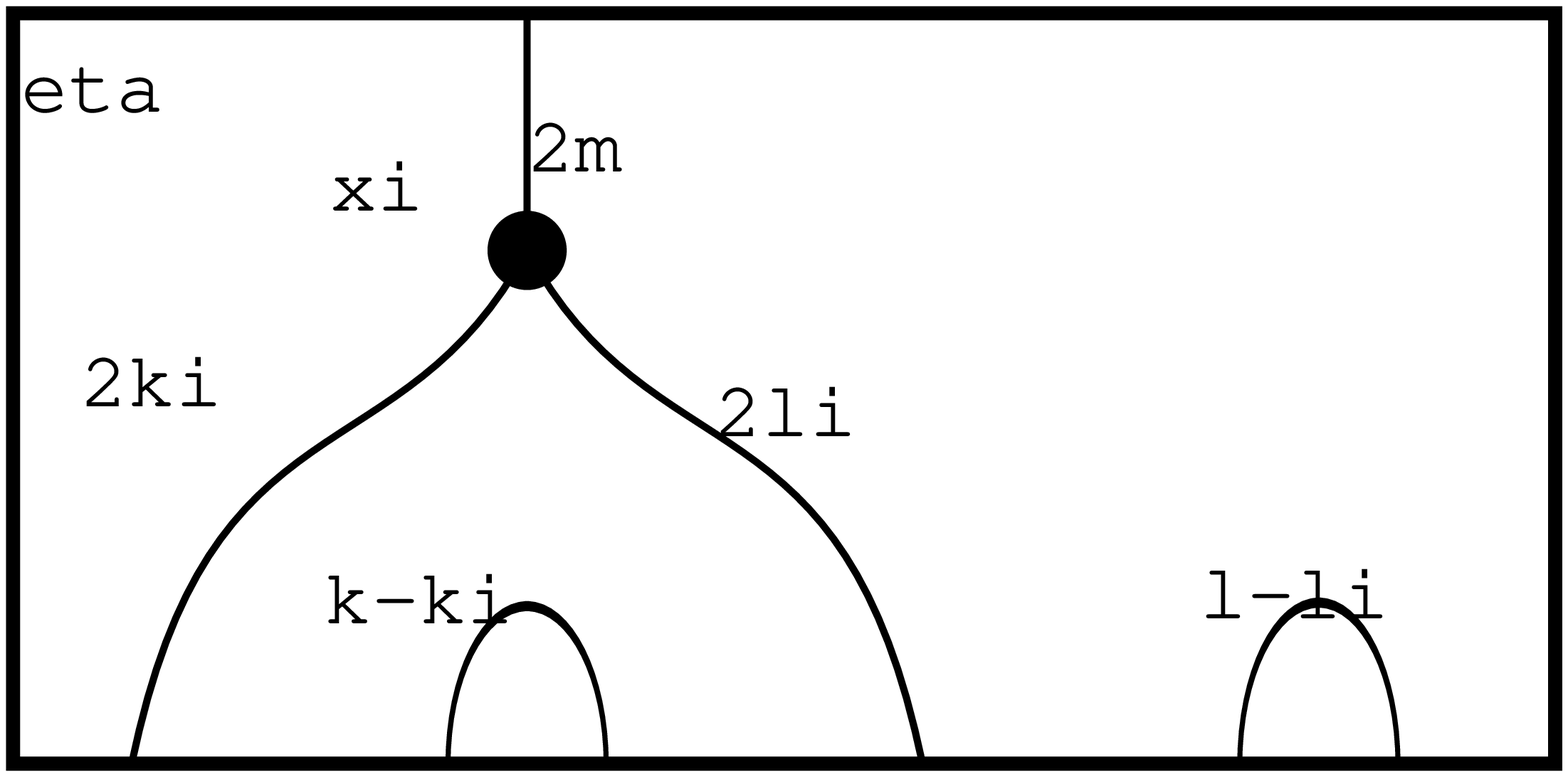}
})\in AP_{\eta (k+l) , \eta m}$.
\end{tabular}\\
Applying Lemma \ref{ccprop} (iii), we get $\lab \cdot , \xi_i \os{a_i}{\otimes} \zeta_i \rab = \lab \cdot , \tilde \xi_i \os{\tilde a_i}{\otimes} \tilde \zeta_i \rab$, and thereby, $\lab \xi_i \os{a_i}{\otimes} \zeta_i , \cdot \rab = \lab \tilde \xi_i \os{\tilde a_i}{\otimes} \tilde \zeta _i, \cdot \rab$.
This gives the desired result.
\end{proof}
\begin{prop}\label{ipsd}
$\lab \cdot , \cdot \rab$ is positive semi-definite.
\end{prop}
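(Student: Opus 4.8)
The plan is to normalise $u$ first via Lemma \ref{wlogsamekl}, and then to exhibit $\langle u,u\rangle$ as the value of a \emph{positive} element of a matrix algebra over $AP_{\eta k,\eta k}$ in the $*$-representation of that algebra afforded by $V$ on $V_{\eta k}$. Since the form is Hermitian (Lemma \ref{ccprop}(ii)), it suffices to show $\langle u,u\rangle\geq 0$ for an arbitrary single $u\in U_{\eta m}$. By Lemma \ref{wlogsamekl} I may assume
\[
u=\us{i\in I}{\sum}\fusion{\xi_i}{a_i}{\zeta_i},\qquad \xi_i\in V_{\eta k},\ \zeta_i\in W_{\eta l},\ a_i=\psi^0_{\eta(k+l),\eta m}(x_i),
\]
for fixed $k,l\in\N_0$, a finite index set $I$, and $x_i\in P_{\eta(k+l+m)}$; in particular every sign is $\eta$ and every level $n_i$ vanishes, which is exactly the simplification that makes the inner-product diagram manageable.

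Next I would separate the two tensor factors. Expanding bilinearly, $\langle u,u\rangle=\us{i,j\in I}{\sum}\langle \fusion{\xi_i}{a_i}{\zeta_i},\fusion{\xi_j}{a_j}{\zeta_j}\rangle$. In the defining diagram of $\langle\cdot,\cdot\rangle$ the only place the $V$-data enters is the label $c_0(\xi_i,\xi_j)$ sitting on $2k$ strands; erasing that label leaves a closed planar picture built solely from $c_0(\zeta_i,\zeta_j)$, $x_i^*$ and $x_j$ (with the $m$-legs joined by the ambient trace) and carrying $2k$ free strands, hence an element $p_{ij}\in P_{\eta 2k}$. By the defining relation of the $CC$-valued inner product for $V$, the quantity $\langle \xi_i,V_{\psi^0_{\eta k,\eta k}(p_{ij})}\xi_j\rangle$ is precisely the closed diagram obtained by reinserting $c_0(\xi_i,\xi_j)$ into those $2k$ strands of $p_{ij}$, so
\[
\langle u,u\rangle=\us{i,j\in I}{\sum}\langle \xi_i,V_{\psi^0_{\eta k,\eta k}(p_{ij})}\xi_j\rangle=\langle \Xi,(\t{id}_{M_I}\otimes V)(d)\,\Xi\rangle,
\]
where $\Xi=\us{i}{\sum}\hat e_i\otimes\xi_i\in\ell^2(I)\otimes V_{\eta k}$ and $d:=\us{i,j}{\sum}E_{ij}\otimes\psi^0_{\eta k,\eta k}(p_{ij})\in M_I\otimes AP_{\eta k,\eta k}$.

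It then remains to prove $d\geq 0$ in the $C^*$-algebra $M_I\otimes AP_{\eta k,\eta k}$: granting this, $V$ restricts to a $*$-representation of $AP_{\eta k,\eta k}$ on the Hilbert space $V_{\eta k}$, so the amplification $\t{id}_{M_I}\otimes V$ is a $*$-homomorphism, carries $d$ to a positive operator, and forces $\langle u,u\rangle\geq 0$. Since $\psi^0_{\eta k,\eta k}:P_{\eta 2k}\to AP_{\eta k,\eta k}$ is an inclusion of unital $*$-algebras (the Corollary following Proposition \ref{pdip}), the map $\t{id}_{M_I}\otimes\psi^0_{\eta k,\eta k}$ is a $*$-homomorphism, and it suffices to show that $\us{i,j}{\sum}E_{ij}\otimes p_{ij}$ is positive in $M_I\otimes P_{\eta 2k}$. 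This is exactly what Lemma \ref{ccprop}(v) delivers for the module $W$: taking the external boundary to be $2k$ (in place of $2m$), the vectors $\zeta_i\in W_{\eta l}$, the levels $0$, and the labels $x_i$ as the auxiliary data, and using Lemma \ref{ccprop}(ii) to reconcile the order of the arguments of $c_0$, the family $\{p_{ij}\}$ coincides with the family $\{t_{ij}\}$ of Lemma \ref{ccprop}(v), which assembles to a positive element. I expect the genuine obstacle to be precisely this last identification, i.e. the diagrammatic bookkeeping that verifies, after the $m$-legs are closed by the ambient trace, that $\{p_{ij}\}$ matches the hypotheses of Lemma \ref{ccprop}(v) for $W$ on the nose — the correct routing of strands and the argument-reversal in the commutativity constraint — whereas the remaining steps (reduction, the $*$-algebra inclusion $\psi^0$, and the $*$-representation property of $V$) are formal.
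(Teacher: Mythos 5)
Your overall strategy is exactly the paper's: reduce via Lemma \ref{wlogsamekl} to $u=\sum_{i\in I}\xi_i\os{a_i}{\otimes}\zeta_i$ with all vectors at colour $\eta$ and all levels zero, strip the $V$-label off the inner-product diagram to get a matrix $\left[p_{ij}\right]$ over $P_{\eta 2k}$ carrying the $W$-data, prove that matrix is positive, and feed it back through $V$. Your last step --- writing $\lab u,u\rab=\lab \Xi,(\t{id}_{M_I}\otimes V)(d)\,\Xi\rab$ with $d=(\t{id}_{M_I}\otimes\psi^0_{\eta k,\eta k})\big(\sum_{i,j}E_{ij}\otimes p_{ij}\big)$ and invoking that $*$-homomorphisms send positive elements to positive operators --- is a clean structural repackaging of what the paper does by hand: the paper takes the positive square root $s$ of $t=\sum_{i,j}E_{i,j}\otimes t_{i,j}$ in $M_I\otimes P_{\eta 2k}$ and exhibits $\lab u,u\rab$ as $\sum_{j'}\norm{\sum_i V_{a_{i,j'}}\xi_i}^2$, which is the same computation. (One harmless slip: $AP_{\eta k,\eta k}$ is only a unital $*$-algebra with a faithful positive trace, not a priori a $C^*$-algebra; but your argument needs only the algebraic factorization $d=e^*e$ pushed through the $*$-homomorphism $\t{id}_{M_I}\otimes V$ into bounded operators on $\ell^2(I)\otimes V_{\eta k}$, so nothing is lost.)

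The one genuine gap is the step you flagged yourself, and it is more than strand bookkeeping: the family $\{p_{ij}\}$ cannot coincide on the nose with a Lemma \ref{ccprop}(v) family having external boundary $2k$, vectors $\zeta_i\in W_{\eta l}$, levels $0$ and labels $x_i$. With those parameters the lemma requires labels in $P_{\eta(l+k)}$, whereas $x_i\in P_{\eta(k+l+m)}$, and the $2m$ legs of $x_i$ contracted directly against those of $x_j^*$ are not part of any admissible datum of the lemma. Nor can you take levels $n_i=m$: for $\vlon_i=\eta$ the levels lie in $\N_{\eta,\eta}=2\N_0$, and even when $m$ is even the lemma's picture would route those legs through $c_{2m}(\zeta_j,\zeta_i)$ rather than contract them directly. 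The repair is precisely the extra ingredient the paper invokes: apply Lemma \ref{ccprop}(v) with external boundary $2(k+m)$ --- now the parameters do match, with labels $x_i\in P_{\eta(l+(k+m))}$ and levels $0$ --- to conclude that the matrix $\left[\tilde t_{ij}\right]$, where $\tilde t_{ij}$ is your picture with the $m$-legs left free, is positive in $M_I\otimes P_{\eta 2(k+m)}$; then close those legs by applying $\t{id}_{M_I}\otimes E$, where $E:P_{\eta 2(k+m)}\to P_{\eta 2k}$ is the trace-preserving conditional expectation, and use that $E$ is completely positive. This yields positivity of $\sum_{i,j}E_{ij}\otimes p_{ij}$ in $M_I\otimes P_{\eta 2k}$, after which the rest of your argument goes through and agrees with the paper's proof.
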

\begin{proof}
Let $u \in U_{\eta m}$.
By Lemma \ref{wlogsamekl}, it is enough to show $\lab u , u \rab \geq 0$ for $u = \us{i \in I}{\sum} \xi_i \os{a_i}{\otimes} \zeta_i$ for $\xi_i \in V_{\eta k}$, $\zeta_i \in W_{\eta l}$, $a_i = \psi^0_{\eta (k + l) , \eta m} (x_i) \in AP^{=0}_{\eta (k + l) , \eta m}$ where $i$ runs over a finite set $I$.
Set $u_i := \xi_i \os{a_i}{\otimes} \zeta_i$.
Note that 
$\lab u_j , u_i \rab = P_{\!\!\!\!\!\!\!\!\!\!\!\!\!
\psfrag{xi}{$x_i$}
\psfrag{x*j}{$x^*_j$}
\psfrag{2m}{$2m$}
\psfrag{2k}{$2k$}
\psfrag{2l}{$2l$}
\psfrag{c1}{$c_0 (\xi_j , \xi_i)$}
\psfrag{c2}{$c_0 (\zeta_j , \zeta_i)$}
\psfrag{eta}{$\eta$}
\includegraphics[scale=0.15]{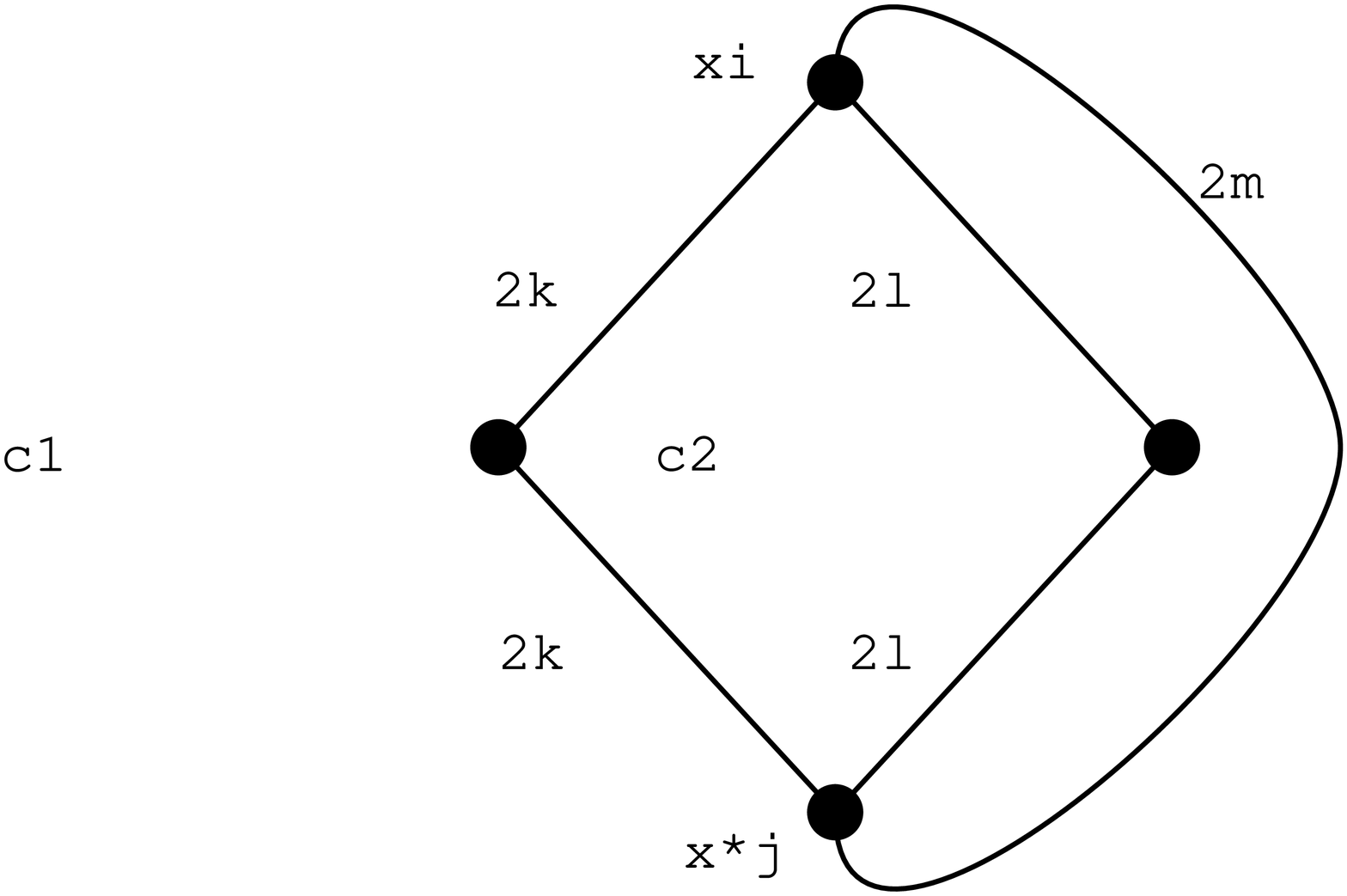}
} = P_{\!\!\!\!
\psfrag{2k}{$2k$}
\psfrag{c1}{$c_0 (\xi_j , \xi_i)$}
\psfrag{tij}{$t_{i,j}$}
\includegraphics[scale=0.15]{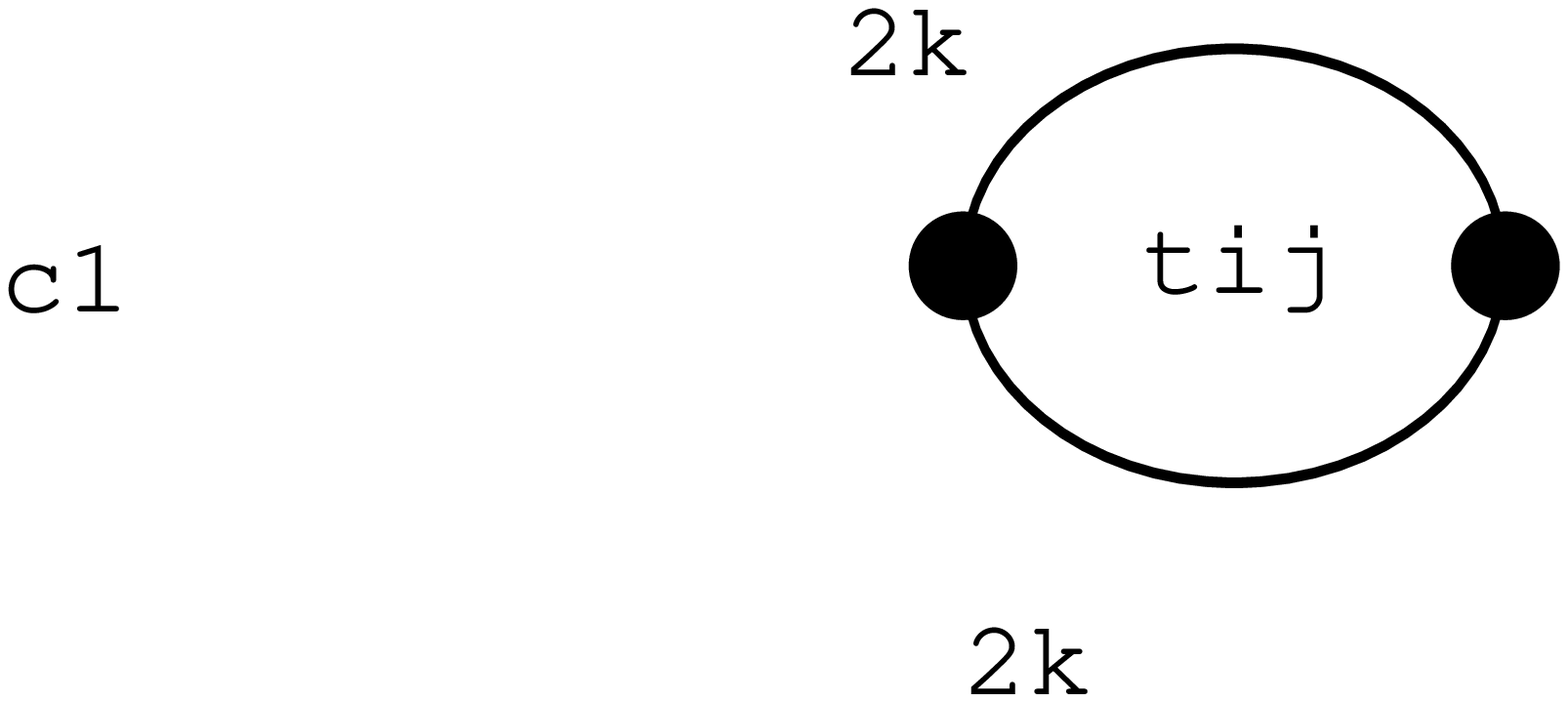}
}$ where $t_{i,j} := P_{
\psfrag{xi}{$x_i$}
\psfrag{x*j}{$x^*_j$}
\psfrag{2m}{$2m$}
\psfrag{2k}{$2k$}
\psfrag{2l}{$2l$}
\psfrag{c1}{$c_0 (\xi_j , \xi_i)$}
\psfrag{c2}{$c_0 (\zeta_j , \zeta_i)$}
\psfrag{eta}{$\eta$}
\includegraphics[scale=0.15]{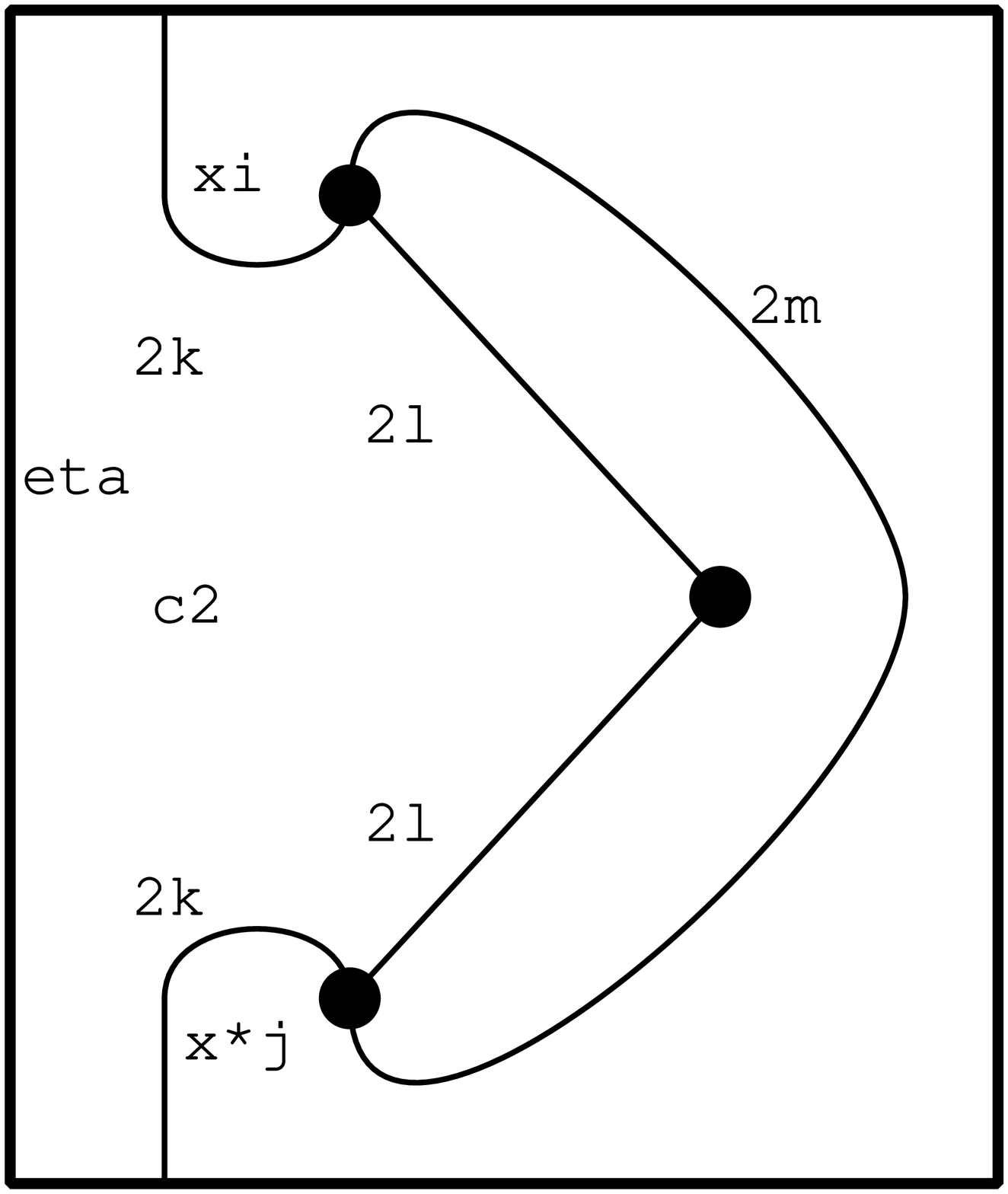}
}\in P_{\eta 2k}$ 
Now, by Lemma \ref{ccprop} (iv) and complete positivity of the conditional expectation from $P_{\eta 2(k+m)}$ to $P_{\eta 2k}$, we may conclude that $t := \us{i,j \in I}{\sum} E_{i,j} \otimes t_{i,j}$ is positive in $M_I \otimes P_{\eta 2k}$.
Let $s := \us{i,j \in I}{\sum} E_{i,j} \otimes s_{i,j}$ be the positive square root of $t$.
Thus, $\lab u , u \rab = \us{i,j \in I}{\sum} \lab u_j , u_i \rab = \us{j' \in I}{\sum} \; \us{i,j \in I}{\sum} P_{\!\!\!\!\!\!\!\!
\psfrag{s}{$s_{i,j'}$}
\psfrag{s*}{$s^*_{j,j'}$}
\psfrag{2k}{$2k$}
\psfrag{c1}{$c_0 (\xi_j , \xi_i)$}
\includegraphics[scale=0.15]{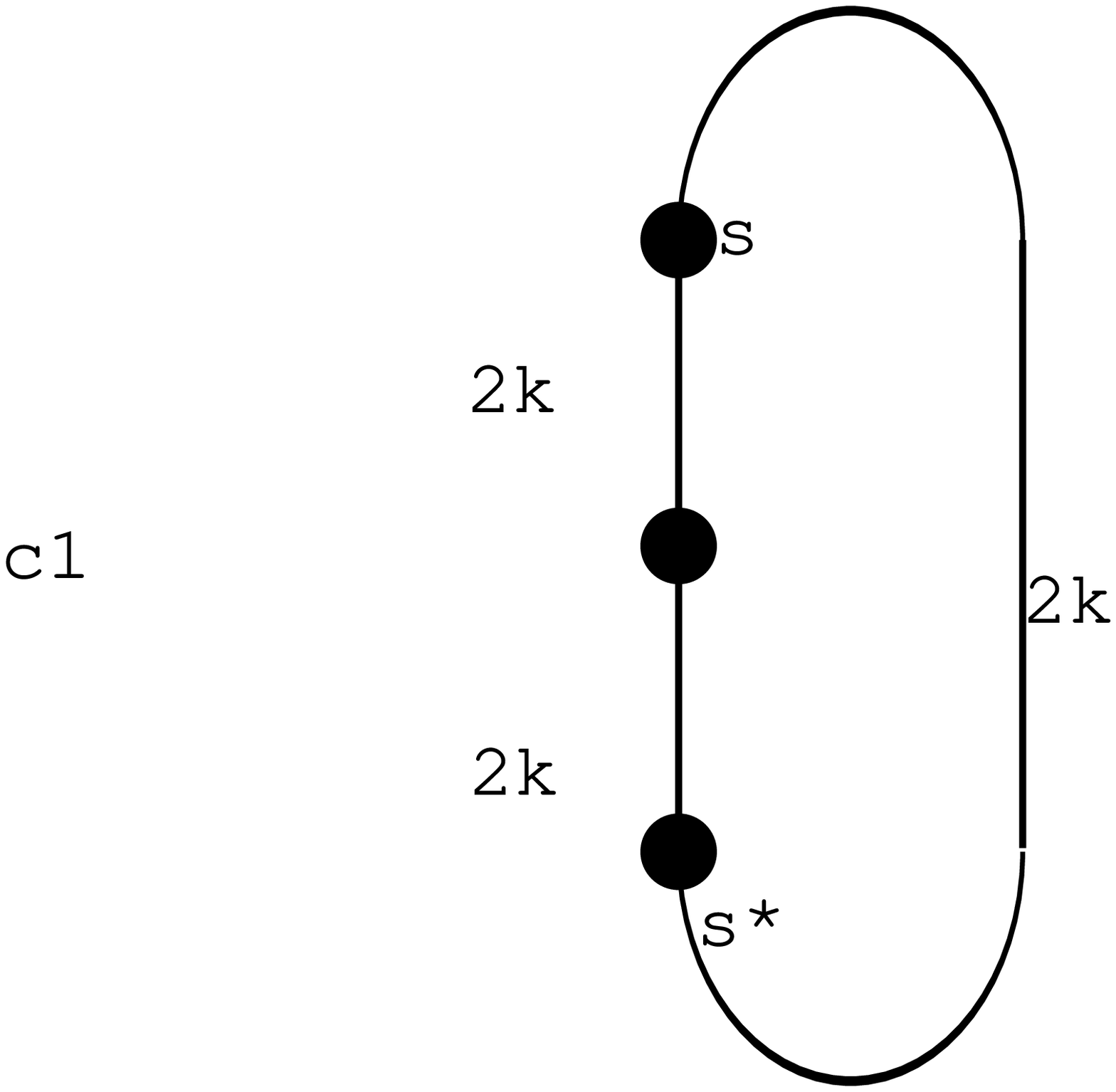}
}$ where the second equality follows from $s^2 = t$ and self-adjointness of $s$.
Now, for all $j' \in I$, applying Lemma \ref{ccprop} (iv) and positive of the action of the trace tangle, we get $\us{i,j \in I}{\sum} P_{\!\!\!\!\!\!\!\!
\psfrag{s}{$s_{i,j'}$}
\psfrag{s*}{$s^*_{j,j'}$}
\psfrag{2k}{$2k$}
\psfrag{c1}{$c_0 (\xi_j , \xi_i)$}
\includegraphics[scale=0.15]{figures/fusion/ujuis.eps}
} \geq 0$.
Hence, $\lab u , u \rab \geq 0$.
\end{proof}
We consider the null subspace with respect to $\lab \cdot , \cdot \rab$ in $U_{\eta m}$ and let $\tilde U_{\eta m}$ be the quotient where the quotient map is written as $U_{\eta m} \ni \xi \os{a}{\otimes} \zeta \mapsto \xi \os{a}{\btimes} \zeta \in \tilde U_{\eta m}$.
\begin{rem}
From Lemma \ref{wlogsamekl}, we get
\[
\tilde U_{\eta m} = \t{span} \left\{ \left( \xi \os{\psi^0_{\eta(k+l) , \eta m} (x)}{\btimes} \zeta \right) :  k,l \in \N_0 , \xi\in V_{\eta k}, \zeta \in W_{\eta l}, x\in P_{\eta (k+l+m)} \right\}.
\]
In fact, a closer look at the proof of Lemma \ref{wlogsamekl} implies that we could take $k = l$ in the spanning set. We will simplify the notation $\left( \xi \os{\psi^0_{\eta(k+l) , \eta m} (x)}{\btimes} \zeta \right)$ and just write $\xi \os{x}{\btimes} \zeta$ to denote it.
\end{rem}
We now proceed towards defining the action of affine morphisms on $\tilde U_{\eta m}$. The collection $\{U_{\eta m}\}_{\eta m \in \t{Col}}$ has an obvious action of affine morphisms, namely, $AP_{\eta m , \nu n} \times U_{\eta m} \ni (b , \xi \os{a}{\otimes} \zeta \mapsto b \cdot (\xi \os{a}{\otimes} \zeta) := (\xi \os{b\circ a}{\otimes} \zeta) \in U_{\nu n}$.
In order to induce this action on $\{\tilde U_{\eta m}\}_{\eta m \in \t{Col}}$, we need to check that the null subspace of $\lab \cdot , \cdot \rab$ is preserved under the action.
We will prove this as well as the boundedness of the action by proving the following proposition.
\begin{prop}
For all $b \in AP_{\eta m , \nu n}$, there exists $M \in (0 , \infty)$ satisfying $\lab b \cdot u , b \cdot u \rab \leq M \lab u, u \rab$ for every $u \in U_{\eta m}$.
\end{prop}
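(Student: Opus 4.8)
The plan is to combine the reduction of Lemma~\ref{wlogsamekl} with the positive square-root device from the proof of Proposition~\ref{ipsd}, and to locate the bounding constant in the (automatically bounded) actions of $V$ and $W$. Since $\lab\cdot,\cdot\rab$ is sesquilinear and, by Lemma~\ref{wlogsamekl}, every $u\in U_{\eta m}$ has the same inner products as some $\us{i\in I}{\sum}\fusion{\xi_i}{a_i}{\zeta_i}$ with $\xi_i\in V_{\eta k}$, $\zeta_i\in W_{\eta k}$, $a_i=\psi^0_{\eta 2k,\eta m}(x_i)\in AP^{=0}_{\eta 2k,\eta m}$ and $I$ finite, it suffices to bound $\lab b\cdot u,\, b\cdot u\rab$ for such $u$. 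Writing $b=\psi^p_{\eta m,\nu n}(w)$ with $w\in P_{\nu(m+n+p)}$, we have $b\cdot u=\us{i}{\sum}\fusion{\xi_i}{b\circ a_i}{\zeta_i}$ and $b\circ a_i=\psi^p_{\eta 2k,\nu n}(y_i)$, where $y_i\in P_{\nu(2k+n+p)}$ is the labelled picture obtained by inserting $x_i$ and $w$ into a single tangle; note that $y_i$ depends linearly on $x_i$ and carries exactly one copy of $w$.

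Next I would record both quantities as trace pairings of positive elements, which is the step that avoids working in the (possibly incomplete) algebra $AP_{\eta m,\eta m}$ and its C${}^*$-norm. Exactly as in Proposition~\ref{ipsd}, expanding $\lab u,u\rab=\us{i,j}{\sum}\lab u_j,u_i\rab$ via the defining formula presents it as a trace pairing $(\t{Tr}_I\otimes\tau)\big(C\,t\big)$, where $C:=\us{i,j}{\sum}E_{i,j}\otimes c_0(\xi_j,\xi_i)$ and $t:=\us{i,j}{\sum}E_{i,j}\otimes t_{i,j}$ both lie in $M_I\otimes P_{\eta 2k}$ and are positive by Lemma~\ref{ccprop}(v) (the first with trivial labels, the second as in Proposition~\ref{ipsd}). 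Applying Lemma~\ref{fuswloglem} to each summand of $b\cdot u$ rewrites it, without changing inner products, in level-zero form $\us{i}{\sum}\fusion{\xi_i'}{a_i'}{\zeta_i'}$ with $\xi_i'=V_{u_{\eta k,p}}\xi_i$, $\zeta_i'=W_{u_{\eta k,p}}\zeta_i$ and $a_i'=\psi^0(y_i')$; the same expansion then gives $\lab b\cdot u,\,b\cdot u\rab=(\t{Tr}_I\otimes\tau)\big(C'\,t'\big)$ with $C':=\us{i,j}{\sum}E_{i,j}\otimes c_0(\xi_j',\xi_i')$ and $t'$ assembled from the $y_i'$ and the $c_0(\zeta_j',\zeta_i')$, again both positive.

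Finally I would invoke monotonicity of the trace pairing: since $\t{Tr}_I\otimes\tau$ is a trace, $(\t{Tr}_I\otimes\tau)(AB)\ge 0$ whenever $A,B\ge 0$, so the pairing is monotone in each positive slot. The estimate then reduces to dominating the $w$-inserted, level-raised pair $(C',t')$ by a scalar multiple of $(C,t)$: here the inclusion actions enter, since $\xi_i'=V_{u_{\eta k,p}}\xi_i$ and $\zeta_i'=W_{u_{\eta k,p}}\zeta_i$ are images under the fixed operators $V_{u_{\eta k,p}},W_{u_{\eta k,p}}$, which are bounded because $V,W$ are Hilbert affine $P$-modules (closed graph theorem), while $t\mapsto t'$ merely inserts the fixed label $w$. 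Organising these into a single $u$-independent completely positive map and bounding its effect by a constant $M=M(b)$ built from $\norm{V_{u_{\eta k,p}}}$, $\norm{W_{u_{\eta k,p}}}$ and $\norm{w}$ gives $(\t{Tr}_I\otimes\tau)(C't')\le M\,(\t{Tr}_I\otimes\tau)(Ct)$, that is, $\lab b\cdot u,\,b\cdot u\rab\le M\lab u,u\rab$. \textbf{The main obstacle} is precisely this last comparison: because inserting $w$ and raising the level change the colours of the $P$-algebras in which $C$ and $t$ live, one cannot compare $C'$ with $C$ (or $t'$ with $t$) directly, and the required factorisation must be carried out pictorially, verifying that the $w$-dependence and the level change decouple uniformly from the data $\{\xi_i,\zeta_i,x_i\}$ so that one constant $M$, depending only on $b$, works simultaneously for all $u$. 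Positivity itself is not the difficulty, being already supplied by Proposition~\ref{ipsd} and Lemma~\ref{ccprop}(v).
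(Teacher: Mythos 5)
There is a genuine gap, and you have flagged it yourself: the ``main obstacle'' you defer at the end is precisely the content of the proposition, and the route you sketch for closing it does not work. Your plan is to assemble the constant $M$ from $\norm{V_{u_{\eta k,p}}}$, $\norm{W_{u_{\eta k,p}}}$ and $\norm{w}$. But the level $k$ is not fixed: it is produced by Lemma \ref{wlogsamekl} applied to the particular vector $u$, so the operators $V_{u_{\eta k,p}}$ and $W_{u_{\eta k,p}}$ --- and hence any constant built from their norms --- depend on $u$. To rescue this you would need a bound on $\norm{V_{u_{\eta k,p}}}$ that is uniform in $k$, which is nowhere established and is not obvious: $u^*_{\eta k,p}\circ u_{\eta k,p}$ is the affine morphism consisting of $2k$ through strings together with noncontractible loops encircling the annulus, and its action on $V_{\eta k}$ carries no a priori $k$-independent norm estimate. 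Moreover, even granting such bounds, the comparison of the two pairings is never actually carried out: after inserting $w$ and raising the level, your $C'$ and $t'$ live in $M_I\otimes P_{\nu 2(k+p)}$ while $C$ and $t$ live in $M_I\otimes P_{\eta 2k}$, so monotonicity of a trace pairing in each positive slot cannot by itself produce the inequality, exactly as you concede.

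The paper's proof closes the estimate by a separation-of-variables device that avoids operator norms of module actions altogether. Writing $b=\psi^{n'}_{\eta m,\nu n}(y)$ and reducing $u$ to level-zero form (your first steps agree with this), one shows that $\lab b\cdot u , b\cdot u\rab = f(\tilde y)$, where $\tilde y \in P_{\nu 2(m+n')}$ is a single element built from $y$ alone, independent of $u$, and $f: P_{\nu 2(m+n')} \ra \C$ is a linear functional built from the data of $u$ (the $x_i$'s and the constraints $c_{2n'}(\xi_j,\xi_i)$, $c_{2n'}(\zeta_j,\zeta_i)$). Two points then finish the proof: (1) $f$ is a \emph{positive} functional on the finite-dimensional unital $C^*$-algebra $P_{\nu 2(m+n')}$ --- proved by taking the positive square root $s$ of an arbitrary positive $t$, inserting $s$ into an affine morphism $b'$, and invoking Proposition \ref{ipsd} for $\us{i}{\sum}\, \xi_i \otimes (b'\circ a_i) \otimes \zeta_i$; and (2) $f(1_{P_{\nu 2(m+n')}}) = \delta^{n'}\lab u,u\rab$, computed by collapsing the two caps of $n'$ strings against each other, which is legitimate precisely because $c(\xi_j,\xi_i)$ and $c(\zeta_j,\zeta_i)$ lie in the spaces of commutativity constraints. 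The standard $C^*$-inequality $f(\tilde y)\leq \norm{\tilde y}\, f(1)$ then yields $M=\delta^{n'}\norm{\tilde y}$, manifestly independent of $u$. It is this functional-versus-element split --- $u$-dependence absorbed into a positive functional, $b$-dependence into a fixed element whose $C^*$-norm is the constant --- that your proposal is missing, and without it the argument does not close.
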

\begin{proof}
There exists $n' \in \N_{\eta , \nu}$ and $y \in P_{\nu (m+n+n')}$ such that $b = \psi^{n'}_{\eta m , \nu n} (y)$.
Without loss of generality, we may assume $u = \us{i \in I}{\sum} \left( \xi_i \os{\psi^0_{\eta (k+l) , \eta m} (x_i)}{\otimes} \zeta_i \right)$ for some finite set $I$, $k , l \in \N_0$, $\xi_i \in V_{\eta k}$, $\zeta_i \in W_{\eta l}$, $x_i \in P_{\eta (k+l+m)}$.
To see this, one needs to first go through the arguments in the proofs of Lemma \ref{fuswloglem} and \ref{wlogsamekl}, and show that for all $\xi \in V_{\vlon k}$, $\zeta \in W_{\vlon k}$ and $a \in AP_{\vlon (k+l) , \eta m}$, there exists large enough $k', l' \in \N_0$, $\xi' \in V_{\eta k'}$, $\zeta' \in W_{\eta l'}$ and $a' \in AP^{=0}_{\eta (k'+l') , \eta m}$ such that $\lab \cdot , \xi \os{b \circ a}{\otimes} \zeta \rab = \lab \cdot , \xi' \os{b \circ a'}{\otimes} \zeta' \rab$ and $\lab \xi \os{b \circ a}{\otimes} \zeta , \cdot \rab = \lab \xi' \os{b \circ a'}{\otimes} \zeta' , \cdot \rab$.
Now, 
\[
\lab b \cdot u , b \cdot u \rab = \us{i,j \in I}{\sum} \lab \xi_j \os{b \circ a_j}{\otimes} \zeta_j , \xi_i \os{b \circ a_i}{\otimes} \zeta_i \rab = \us{i,j \in I}{\sum} P_{ \!\!\!\!\!\!
\psfrag{2n}{$2n$}
\psfrag{2n'}{$2n'$}
\psfrag{n'}{$n'$}
\psfrag{2k}{$2k$}
\psfrag{2l}{$2l$}
\psfrag{xi}{$x_i$}
\psfrag{xj*}{$x^*_j$}
\psfrag{y}{$y$}
\psfrag{y*}{$y^*$}
\psfrag{c1}{$c_{2n'} (\xi_j , \xi_i)$}
\psfrag{c2}{$c_{2n'} (\zeta_j , \zeta_i)$}
\psfrag{2m}{$2m$}
\includegraphics[scale=0.15]{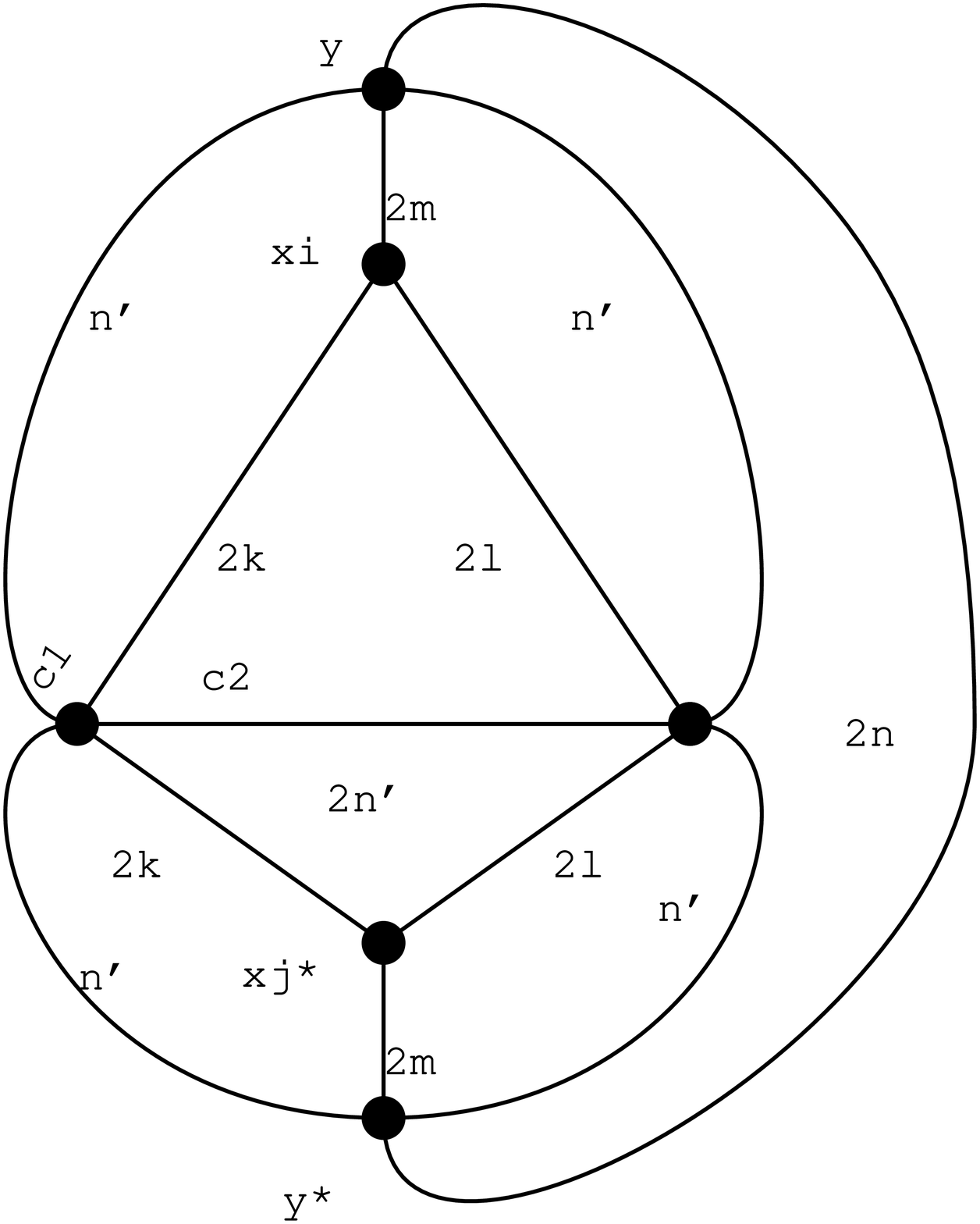}
} = f(\tilde y)
\]
where $\tilde y = P_{\,
\psfrag{2n}{$2n$}
\psfrag{2n'}{$2n'$}
\psfrag{n'}{$n'$}
\psfrag{2k}{$2k$}
\psfrag{2l}{$2l$}
\psfrag{nu}{$\nu$}
\psfrag{xj*}{$x^*_j$}
\psfrag{y}{$y$}
\psfrag{y*}{$y^*$}
\psfrag{c1}{$c_{2n'} (\xi_j , \xi_i)$}
\psfrag{c2}{$c_{2n'} (\zeta_j , \zeta_i)$}
\psfrag{2m}{$2m$}
\includegraphics[scale=0.15]{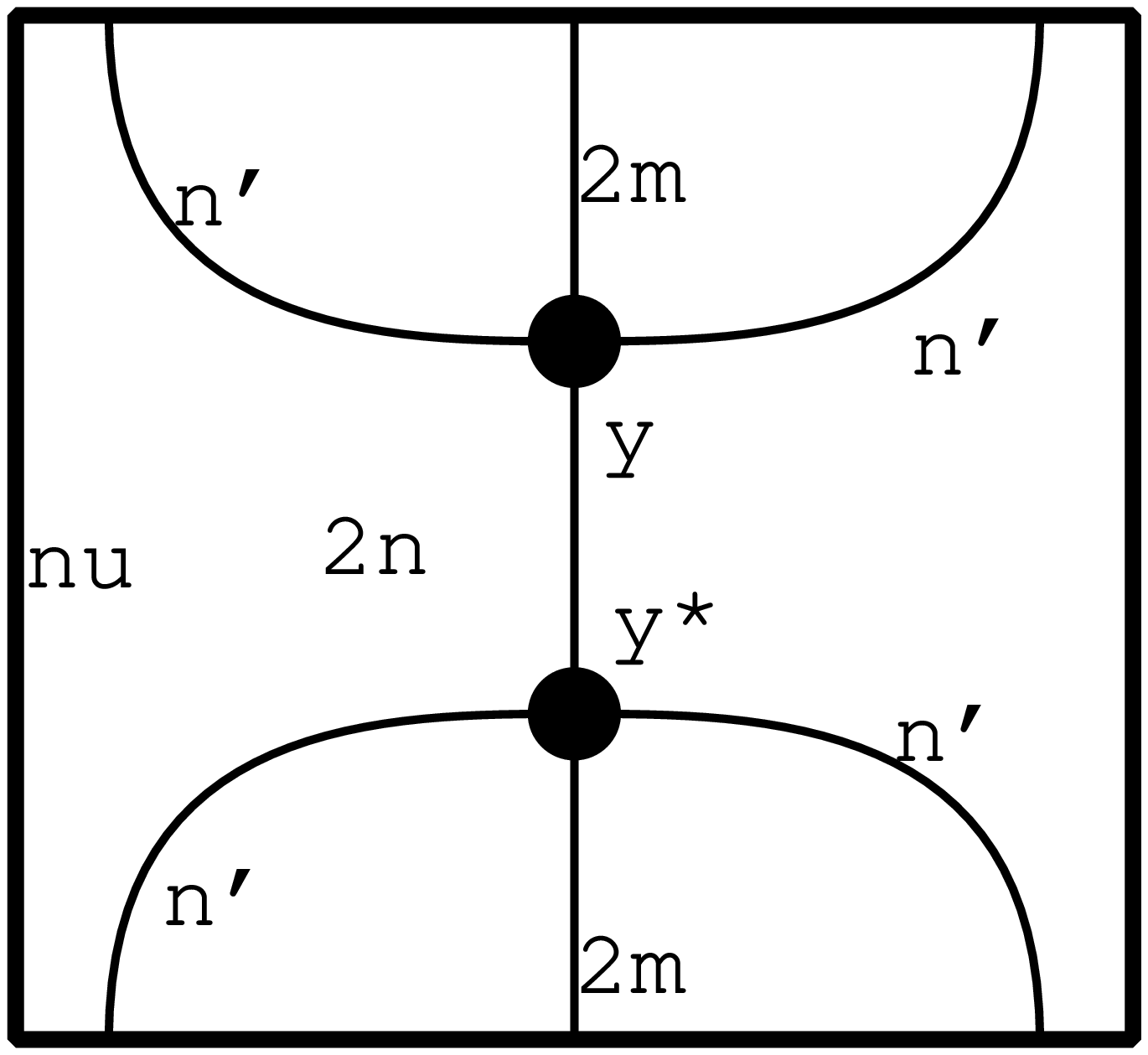}
} \in P_{\nu 2(m+n')}$ and $f : P_{\nu 2(m+n')} \ra \C$ is given by $\us{i,j\in I}{\sum} P_{ \!\!\!\!\!\!
\psfrag{2n}{$2n$}
\psfrag{2n'}{$2n'$}
\psfrag{n'}{$n'$}
\psfrag{2k}{$2k$}
\psfrag{2l}{$2l$}
\psfrag{xi}{$x_i$}
\psfrag{xj*}{$x^*_j$}
\psfrag{nu}{$\nu$}
\psfrag{y*}{$y^*$}
\psfrag{c1}{$c_{2n'} (\xi_j , \xi_i)$}
\psfrag{c2}{$c_{2n'} (\zeta_j , \zeta_i)$}
\psfrag{2m}{$2m$}
\includegraphics[scale=0.15]{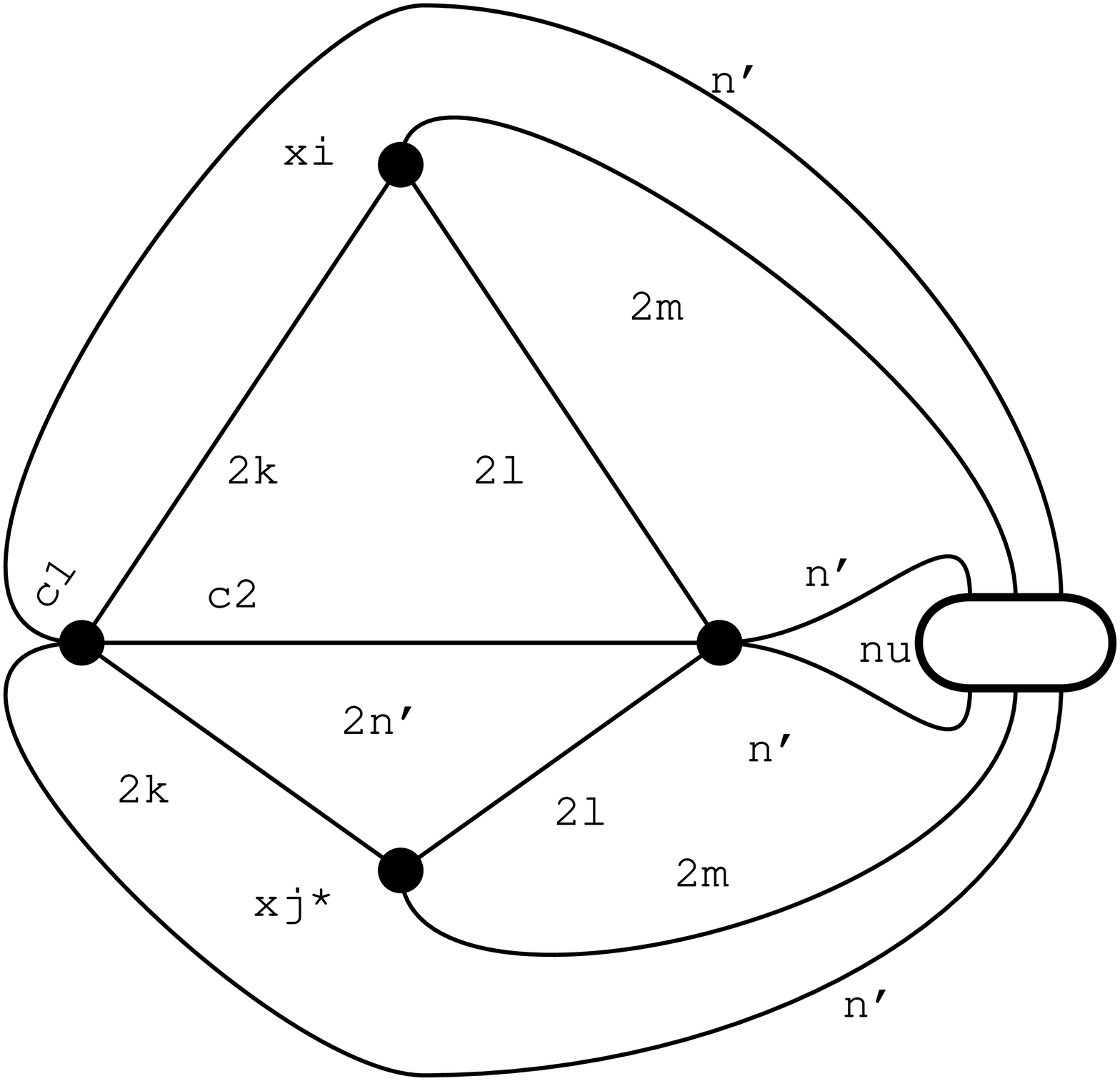}
}$ 
\vskip 2mm
We will first prove that $f$ is positive.
Let $t$ be a positive element of $P_{\nu 2(m+n')}$ and $s$ be its positive square root.
Note that $f(t) = \us{i,j \in I}{\sum} P_{ \!\!\!\!\!\!
\psfrag{2n}{$2(m+n')$}
\psfrag{2n'}{$2n'$}
\psfrag{n'}{$n'$}
\psfrag{2k}{$2k$}
\psfrag{2l}{$2l$}
\psfrag{xi}{$x_i$}
\psfrag{xj*}{$x^*_j$}
\psfrag{y}{$s$}
\psfrag{c1}{$c_{2n'} (\xi_j , \xi_i)$}
\psfrag{c2}{$c_{2n'} (\zeta_j , \zeta_i)$}
\psfrag{2m}{$2m$}
\includegraphics[scale=0.15]{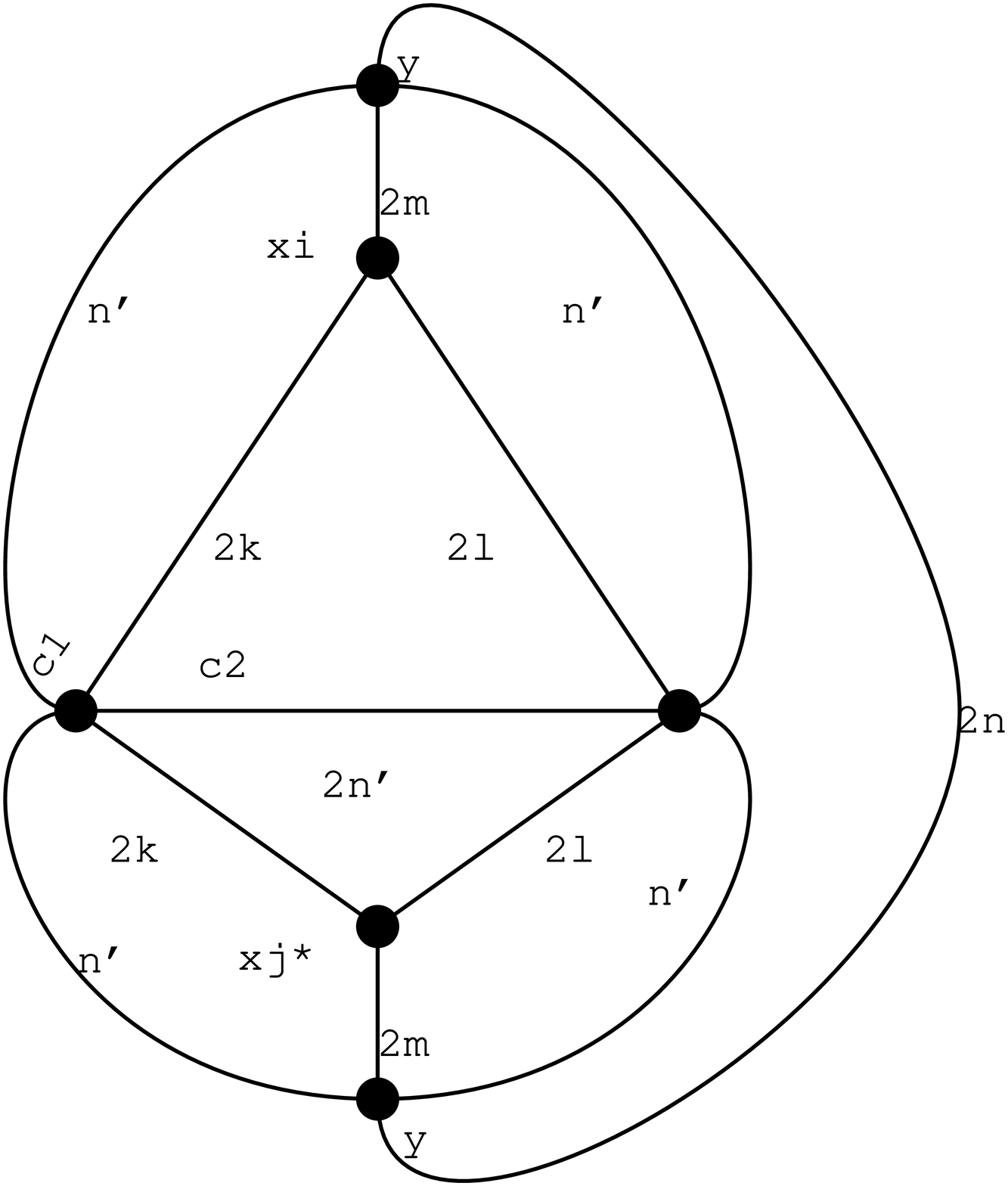}
}\!\!\!\!\!\!\!\!\!\!\!\!\! = \us{i, j \in I}{\sum} \left\lab \left( \xi_j \os{b' \circ a_j}{\otimes} \zeta_j \right) , \left( \xi_i \os{b' \circ a_i}{\otimes} \zeta_i \right) \right\rab$ where $a_i = \psi^0_{\eta (k+l) , \eta m} (x_i)$ for all $i\in I$ and $b' := \psi^{n'}_{\eta m , \nu (m+n')} (P_{
\psfrag{2n}{$2(m+n')$}
\psfrag{2n'}{$2n'$}
\psfrag{n'}{$n'$}
\psfrag{nu}{$\nu$}
\psfrag{s}{$s$}
\psfrag{xi}{$x_i$}
\psfrag{xj*}{$x^*_j$}
\psfrag{y}{$s$}
\psfrag{c1}{$c_{2n'} (\xi_j , \xi_i)$}
\psfrag{c2}{$c_{2n'} (\zeta_j , \zeta_i)$}
\psfrag{2m}{$2m$}
\includegraphics[scale=0.15]{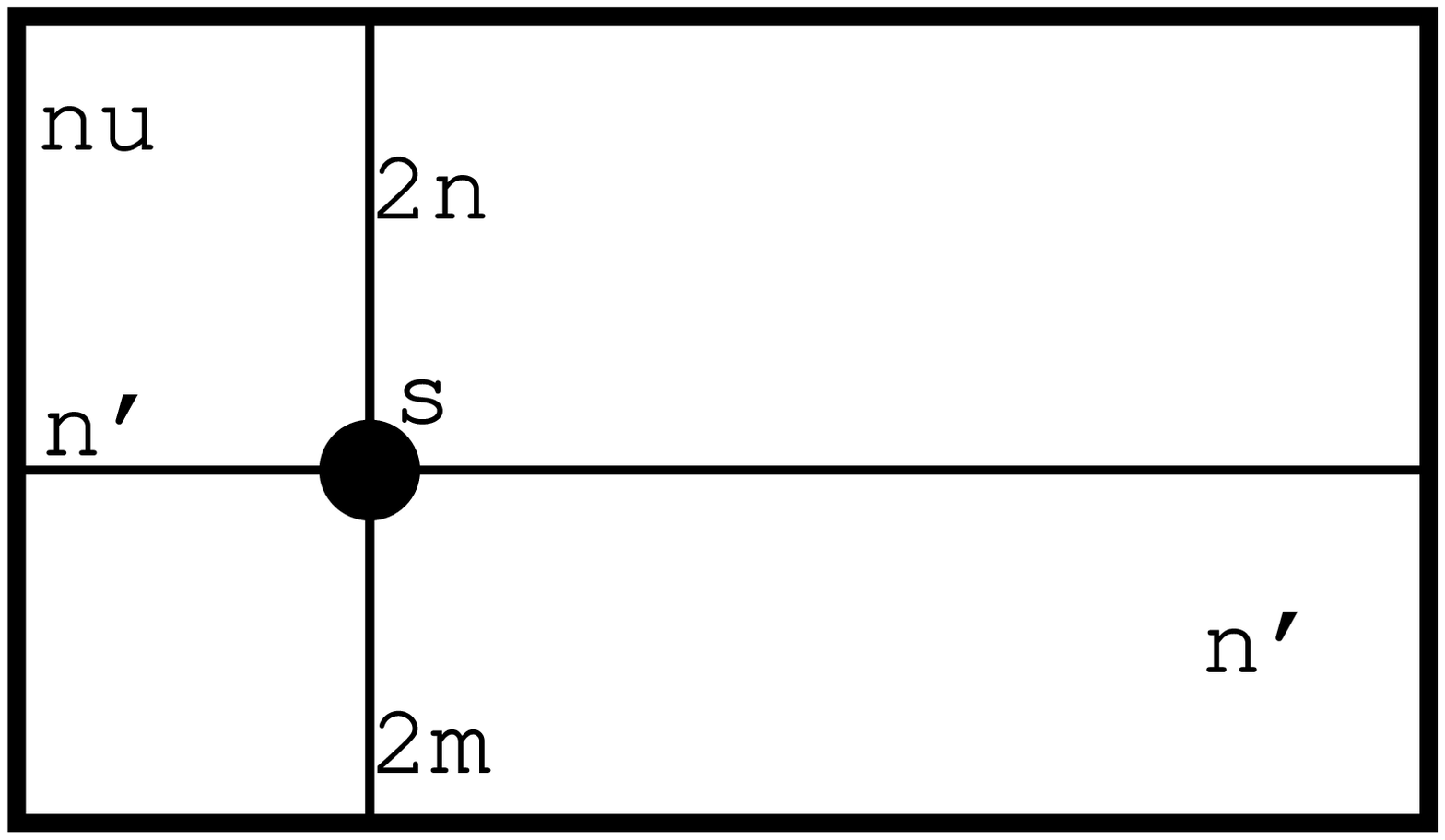}
})$.
By Proposition \ref{ipsd}, we get $f(t) \geq 0$.
Using positivity of the linear functional $f$ on the unital $C^*$-algebra $P_{\nu 2(m+n')}$, we get
\[
\lab b \cdot u , b \cdot u \rab = f(\tilde y) \leq \norm{\tilde y} f(1_{P_{\nu 2(m+n')}}) = \norm{\tilde y} \us{i,j\in I}{\sum} P_{ \!\!\!\!\!\!
\psfrag{2n}{$2n$}
\psfrag{2n'}{$2n'$}
\psfrag{n'}{$n'$}
\psfrag{2k}{$2k$}
\psfrag{2l}{$2l$}
\psfrag{xi}{$x_i$}
\psfrag{xj*}{$x^*_j$}
\psfrag{nu}{$\nu$}
\psfrag{y*}{$y^*$}
\psfrag{c1}{$c_{2n'} (\xi_j , \xi_i)$}
\psfrag{c2}{$c_{2n'} (\zeta_j , \zeta_i)$}
\psfrag{2m}{$2m$}
\includegraphics[scale=0.15]{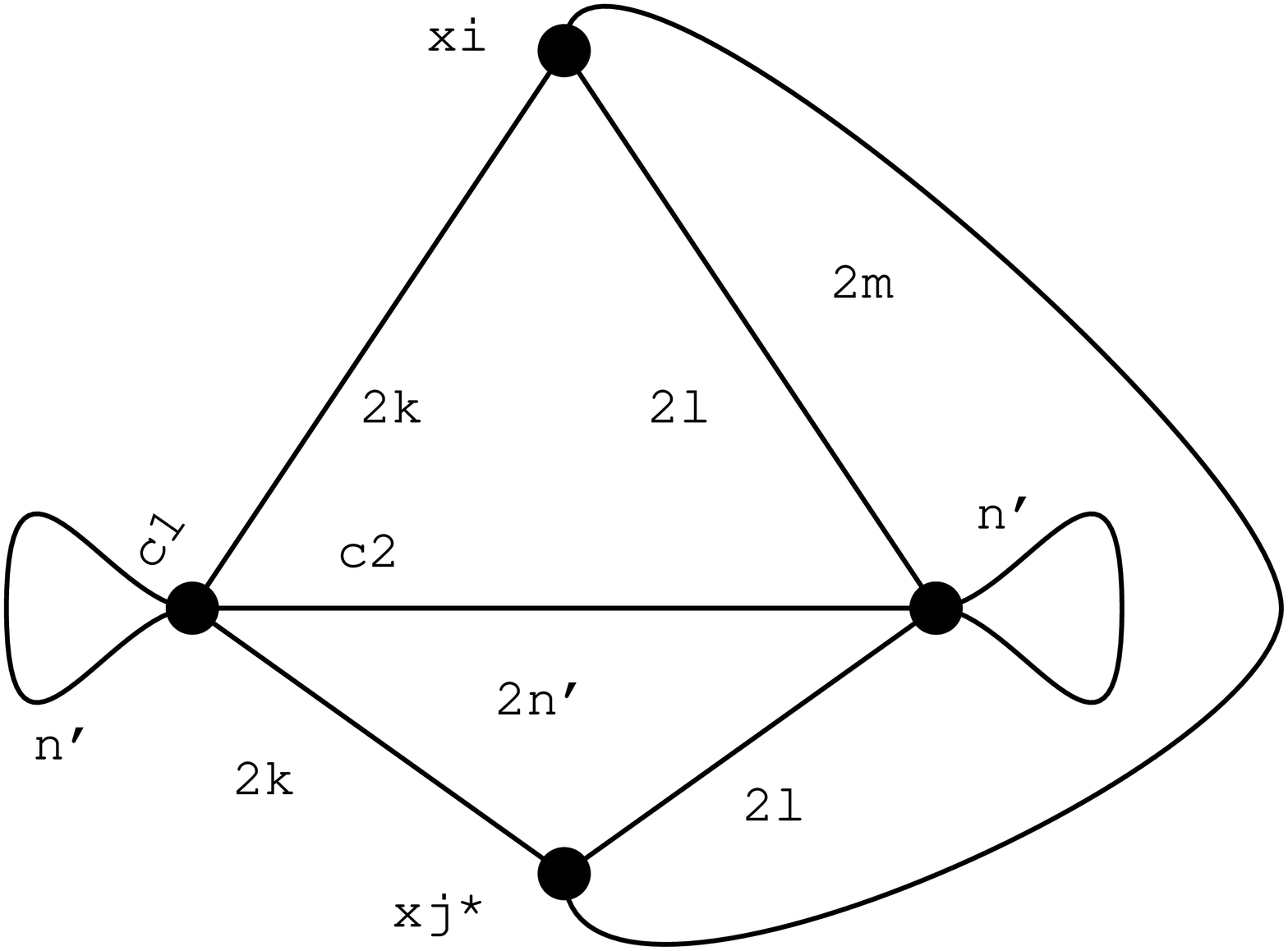}
} = \delta^{n'} \norm{\tilde y} \lab u , u \rab
\]
The second equality is acheived by collapsing the  left and the right caps (of $n'$ strings each) against each other which is made possible by $c(\xi_j , \xi_i) \in CC_{\eta k , \eta k}$ and $c(\zeta_j , \zeta_i) \in CC_{\eta l , \eta l}$; this produces a multiplicative factor of $\delta ^{n'}$; finally, one has to use the definition of $\lab \cdot , \cdot \rab$ to show that the remaining thing is indeed $\lab u , u \rab$.
Now, $\tilde y$ and $n'$ are completely independent of the choice of $u$ in $U_{\eta m}$.
Hence, we have the required inequality.
\end{proof}
Thus, the action $\tilde U_{\eta m} \ni \xi \os{a}{\btimes} \zeta \os {\tilde U_b} \longmapsto \xi \os{b\circ a}{\btimes} \zeta \in \tilde U_{\nu n}$ is well-defined and bounded for all $b \in AP_{\eta m , \nu n}$.
It is almost immediate that the action preserves $*$.
Hence, $\tilde U$ is a bounded $*$-affine $P$-module.
\vskip 2mm
We will use the symbol $V \btimes W$ to denote the completion of $\tilde U$, which then becomes a Hilbert affine $P$-module.
We would like to show that $\btimes$ extends to a functor but before that, we have to specify our category.
The obvious thing to consider would be Hilbert affine $P$-modules as objects and a morphism $f : V \ra W$ would be a natural transformation such that $f_{\vlon k} : V_{\vlon k} \ra W_{\vlon k}$ is bounded for all $\vlon k \in \t{Col}$.
However, from Proposition \ref{fXgbdd}, it will be apparent that the fusion of morphisms might not satisfy the extra condition of boundedness.
The reason is that $\{\norm{f_{\vlon k}}\}_{\vlon k \in \t{Col}}$ might not be bounded.
It is easy to see that $\norm{f_{+k}} = \norm{f_{-k}}$ and $\norm{f_{\pm (k-1)}} \leq \norm{f_{\pm k}}$  for all $k \in \N$.
Set $\norm{f} := \us{k \in \N}{\t{sup}} \norm{f_{\pm k}}$ which may very well be $\infty$.
With little effort, one can show that $\norm{f} < \infty$ if both $V$ and $W$ have finite supports; on the other hand, finiteness of the support might not be preserved under $\btimes$. Keeping this in mind, we define our ideal category in the following way.
\begin{defn}
$\mcal H APM$ will denote the category whose objects are Hilbert affine $P$-modules and morphisms are natural transformations $f : V \ra W$ having $\norm{f} < \infty$.
\end{defn}
The canonical dense subset $\tilde U_{\eta m}$ of $(V \btimes W)_{\eta m}$ will be denoted by $(V\btimes W)^o_{\eta m}$.
Let $f:V \ra V'$ and $g:W \ra W'$ be morphisms of Hilbert affine $P$-modules.
From Lemma \ref{ccprop} (vi), it easily follows $\left\lab \left(\xi' \os {a'} \btimes \zeta'\right) , \left( f(\xi) \os {a} \btimes g(\zeta) \right) \right\rab = \left\lab \left(f^*(\xi') \os {a'} \btimes g^*(\zeta') \right) , \left(\xi \os {a} \btimes \zeta \right) \right\rab$ for all $\xi \in V_{\vlon k}$, $\zeta \in W_{\vlon l}$, $a \in AP_{\vlon (k+l) , \eta m}$, $\xi' \in V_{\vlon' k'}$, $\zeta' \in W_{\vlon' l'}$, $a' \in AP_{\vlon' (k'+l') , \eta m}$.
This gives a well-defined linear map $(V\btimes W)^o \ni (\xi \os a \btimes \zeta) \os{f\btimes g} \longmapsto (f(\xi) \os a \btimes g(\zeta)) \in (V' \btimes W')^o$.
\begin{prop}\label{fXgbdd}
$f\btimes g$ is norm bounded.
\end{prop}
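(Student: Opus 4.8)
The plan is to prove the quantitative estimate $\norm{f \btimes g} \leq \norm{f}\,\norm{g}$, which in particular shows $f \btimes g$ is bounded (and hence a morphism of $\mcal{H}APM$). First I would factor $f \btimes g$ through the intermediate module $V \btimes W'$ as the composite of densely-defined maps
\[
(V \btimes W)^o \os{\t{id}_V \btimes g}{\lra} (V \btimes W')^o \os{f \btimes \t{id}_{W'}}{\lra} (V' \btimes W')^o ,
\]
so it suffices to establish the two one-sided bounds $\norm{f \btimes \t{id}_{W}} \leq \norm{f}$ and $\norm{\t{id}_V \btimes g} \leq \norm{g}$ (for any $W$, $V$). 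These are interchanged by swapping the roles of $(V,\xi)$ and $(W,\zeta)$, which is a symmetry of the defining picture of $\lab \cdot , \cdot \rab$ on $U_{\eta m}$; so I would only treat $f \btimes \t{id}_W$ in detail.

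Fix $u \in (V \btimes W)^o_{\eta m} = \tilde U_{\eta m}$. Since $f \btimes \t{id}_W$ is already known to be well-defined on $\tilde U_{\eta m}$, I may replace $u$ by any representative of its class; by Lemma \ref{wlogsamekl} I would take $u = \us{i \in I}{\sum} \xi_i \os{a_i}{\btimes} \zeta_i$ with a common pair $k,l \in \N_0$, vectors $\xi_i \in V_{\eta k}$, $\zeta_i \in W_{\eta l}$, and $a_i = \psi^0_{\eta(k+l),\eta m}(x_i) \in AP^{=0}_{\eta(k+l),\eta m}$. Exactly as in the proof of Proposition \ref{ipsd}, the norm then factors as $\lab u , u \rab = \us{i,j \in I}{\sum} P_{(c_0(\xi_j,\xi_i),\, t_{i,j})}$, where $t = \us{i,j}{\sum} E_{i,j} \otimes t_{i,j} \in M_I \otimes P_{\eta 2k}$ is the positive matrix built solely from the $\zeta$'s and the $x$'s (and $P_{(\cdots)}$ denotes the corresponding picture). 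The crucial observation is that, because $f$ is applied only to the $V$-legs while $\t{id}_W$ fixes the $W$-legs, the \emph{same} matrix $t$ governs the transformed vector:
\[
\lab (f \btimes \t{id}_W) u ,\, (f \btimes \t{id}_W) u \rab = \us{i,j \in I}{\sum} P_{(c_0(f(\xi_j),f(\xi_i)),\, t_{i,j})},
\]
now with the commutativity constraint computed in $V'$.

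To finish I would take the positive square root $s = \us{i,j}{\sum} E_{i,j} \otimes s_{i,j}$ of $t$ and write $t_{i,j} = \us{j'}{\sum} s_{i,j'} s^*_{j,j'}$. The mechanism of Lemma \ref{ccprop}(iv)--(v) then realizes both expressions as genuine sums of Hilbert-space norms: setting $\beta_{i,j'} := \psi^0_{\eta k,\eta k}(s_{i,j'}) \in AP_{\eta k,\eta k}$ and $\Xi_{j'} := \us{i}{\sum} V_{\beta_{i,j'}} \xi_i \in V_{\eta k}$, one gets $\lab u,u \rab = \us{j'}{\sum} \norm{\Xi_{j'}}^2$ and likewise $\lab (f \btimes \t{id}_W) u ,\, (f \btimes \t{id}_W) u \rab = \us{j'}{\sum} \norm{\us{i}{\sum} V'_{\beta_{i,j'}} f(\xi_i)}^2$. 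Naturality of $f$ gives $V'_{\beta_{i,j'}} f = f \circ V_{\beta_{i,j'}}$, whence $\us{i}{\sum} V'_{\beta_{i,j'}} f(\xi_i) = f(\Xi_{j'})$, so each summand satisfies $\norm{f(\Xi_{j'})}^2 \leq \norm{f}^2 \norm{\Xi_{j'}}^2$. Summing over $j'$ yields $\lab (f \btimes \t{id}_W) u ,\, (f \btimes \t{id}_W) u \rab \leq \norm{f}^2 \lab u, u \rab$, i.e. $\norm{f \btimes \t{id}_W} \leq \norm{f}$, finite precisely because $f \in \mcal{H}APM$. The analogous argument, now extracting the square root from the $V$-legs and invoking naturality of $g$, gives $\norm{\t{id}_V \btimes g} \leq \norm{g}$, and the factorization completes the proof.

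The main obstacle is the content of the second and third paragraphs: verifying that one and the same positive matrix $t_{i,j}$ controls both $\lab u,u\rab$ and $\lab (f\btimes\t{id}_W)u,(f\btimes\t{id}_W)u\rab$, and that passing to the square root $s$ converts the abstract $CC$-valued pairing into honest inner products of the vectors $V_{\beta_{i,j'}}\xi_i$ (resp. $V'_{\beta_{i,j'}}f(\xi_i)$), so that naturality of $f$ can be applied. Everything else is routine bookkeeping with the reductions of Lemmas \ref{fuswloglem} and \ref{wlogsamekl} together with the pictorial identities of Lemma \ref{ccprop}.
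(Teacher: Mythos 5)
Your proposal is correct and takes essentially the same route as the paper: the paper likewise reduces to representatives with common $k,l$ and $AP^{=0}$-labels, forms the positive matrix $t$ from the legs that are not being transformed, takes its positive square root to turn the $CC$-valued pairing into honest Hilbert-space norms, and then uses naturality and boundedness of $f$ — indeed the paper bounds $\norm{(f\btimes g)u}$ by $\norm{f}\,\norm{(\t{id}_V\btimes g)u}$ and then treats the second variable "by a similar technique", which is exactly your factorization through $V\btimes W'$. The only cosmetic difference is that the paper's square-root entries enter the picture as $\psi^0_{\eta k,\eta k}(\rho(s_{i,j}))$ with $\rho$ the $180$-degree rotation tangle, a pictorial bookkeeping detail your sketch leaves implicit.
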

\begin{proof}
Consider an element $u = \us{i \in I}{\sum} \xi_i \os{x_i}\btimes \zeta_i$ in $(V\btimes W)^o_{\eta m}$ where $I$ is a finite set, $\xi_i \in V_{\eta k}$, $\zeta_i \in W_{\eta l}$, $x_i \in P_{\eta (k+l+m)}$ for $i\in I$.
Set $u_i := \xi_i \os{x_i}\btimes \zeta_i$, $\xi'_i := f(\xi_i)$, $\zeta'_i := g(\zeta_i)$, $u'_i := (f \btimes g) u_i = \xi'_i \os{x_i}\btimes \zeta'_i$ for $i \in I$.
Note that $\lab u'_j , u'_i \rab = P_{\!\!\!\!\!\!\!\!\!\!\!\!\!
\psfrag{xi}{$x_i$}
\psfrag{x*j}{$x^*_j$}
\psfrag{2m}{$2m$}
\psfrag{2k}{$2k$}
\psfrag{2l}{$2l$}
\psfrag{c1}{$c_0 (\xi'_j , \xi'_i)$}
\psfrag{c2}{$c_0 (\zeta'_j , \zeta'_i)$}
\psfrag{eta}{$\eta$}
\includegraphics[scale=0.15]{figures/fusion/ujui.eps}
} = P_{\!\!\!\!
\psfrag{2k}{$2k$}
\psfrag{c1}{$c_0 (\xi'_j , \xi'_i)$}
\psfrag{tij}{$t_{i,j}$}
\includegraphics[scale=0.15]{figures/fusion/ujui2.eps}
}$ where $t_{i,j} := P_{
\psfrag{xi}{$x_i$}
\psfrag{x*j}{$x^*_j$}
\psfrag{2m}{$2m$}
\psfrag{2k}{$2k$}
\psfrag{2l}{$2l$}
\psfrag{c1}{$c_0 (\xi_j , \xi_i)$}
\psfrag{c2}{$c_0 (\zeta'_j , \zeta'_i)$}
\psfrag{eta}{$\eta$}
\includegraphics[scale=0.15]{figures/fusion/fustij.eps}
}\in P_{\eta 2k}$.
Now, by Lemma \ref{ccprop} (iv) and complete positivity of the conditional expectation from $P_{\eta 2(k+m)}$ to $P_{\eta 2k}$, we may conclude that $t := \us{i,j \in I}{\sum} E_{i,j} \otimes t_{i,j}$ is positive in $M_I \otimes P_{\eta 2k}$.
Let $s := \us{i,j \in I}{\sum} E_{i,j} \otimes s_{i,j}$ be the positive square root of $t$.
If $\rho : P_{\eta 2k} \ra P_{\eta 2k}$ denotes the action of the $180$ degrees rotation tangle (from $\eta 2k$ to $\eta 2k$), that is, the marked points shift by $2k$ places, and $a_{i,j} = \psi^0_{\eta k, \eta k} (\rho (s_{i,j}))$ for $i,j \in I$, then 
\begin{align*}
\norm{(f\btimes g) u}^2 & = \us{i,j \in I}{\sum} \lab u'_j , u'_i \rab = \us{j' \in I}{\sum} \; \us{i,j \in I}{\sum} P_{\!\!\!\!\!\!\!\!
\psfrag{s}{$s_{i,j'}$}
\psfrag{s*}{$s^*_{j,j'}$}
\psfrag{2k}{$2k$}
\psfrag{c1}{$c_0 (\xi'_j , \xi'_i)$}
\includegraphics[scale=0.15]{figures/fusion/ujuis.eps}
} = \us{j' \in I}\sum \norm{\us{i \in I}\sum V'_{a_{i,j'} } (\xi'_i)}^2 = \us{j' \in I}\sum \norm{\us{i \in I}\sum V'_{a_{i,j'} } f(\xi_i)}^2\\
& = \us{j' \in I}\sum \norm{f\left( \us{i \in I}\sum V'_{a_{i,j'} } (\xi_i) \right) }^2 \leq \norm{f}^2 \us{j' \in I}\sum \norm{\us{i \in I}\sum V'_{a_{i,j'} } (\xi_i) }^2 = 
\norm{f}^2 \us{i,j \in I}{\sum} P_{\!\!\!\!\!\!\!\!\!\!\!\!\!
\psfrag{xi}{$x_i$}
\psfrag{x*j}{$x^*_j$}
\psfrag{2m}{$2m$}
\psfrag{2k}{$2k$}
\psfrag{2l}{$2l$}
\psfrag{c1}{$c_0 (\xi_j , \xi_i)$}
\psfrag{c2}{$c_0 (\zeta'_j , \zeta'_i)$}
\psfrag{eta}{$\eta$}
\includegraphics[scale=0.15]{figures/fusion/ujui.eps}
}.
\end{align*}
So, we have proved $\norm{\us{i \in I}\sum \left( f(\xi_i) \os{x_i}\btimes g(\zeta_i) \right)} \leq \norm{f} \norm{\us{i \in I}\sum \left( \xi_i \os{x_i}\btimes g(\zeta_i) \right)}$.
Applying similar technique on the second variable of $\btimes$, we will be able to show $\norm{\us{i \in I}\sum \left( \xi_i \os{x_i}\btimes g(\zeta_i) \right)} \leq \norm{g} \norm{\us{i \in I}\sum \left( \xi_i \os{x_i}\btimes \zeta_i \right)}$. Thus, $\norm{ \left. (f \btimes g) \right|_{(V\btimes W)_{\eta m}} } \leq \norm{f} \norm{g}$ which is independent of $m$.
\end{proof}
Composition of morphisms and identity morphisms are trivially preserved by $\btimes$.
Hence, $\btimes : \mcal H APM \times \mcal H APM \ra \mcal H APM$ is indeed a functor.
$\mcal HAPM$ does not appear to be strict with $\btimes$ as the tensor.
The associativity constraint is given by:
\[
\left[ (U\btimes V) \btimes W\right]^0_{\eta n'} \ni \left( \left(\xi \os{x}{\btimes} \zeta\right) \os{y}{\btimes} \omega \right) = \left( \left( \xi \os{1_{P_{\eta 2(k+l)}}}\btimes \zeta \right) \os{s}{\btimes} \omega \right) \mapsto \left( \xi \os{s}{\btimes} \left( \zeta \os{1_{P_{\eta 2(l+n)}}}\btimes \omega \right) \right) \in \left[ U \btimes \left( V \btimes W \right) \right]^0_{\eta n'}
\]
where $s := P_{\,
\psfrag{x}{$x$}
\psfrag{y}{$y$}
\psfrag{2k}{$2k$}
\psfrag{2l}{$2l$}
\psfrag{2m}{$2m$}
\psfrag{2n}{$2n$}
\psfrag{2n'}{$2n'$}
\psfrag{eta}{$\eta$}
\includegraphics[scale=0.15]{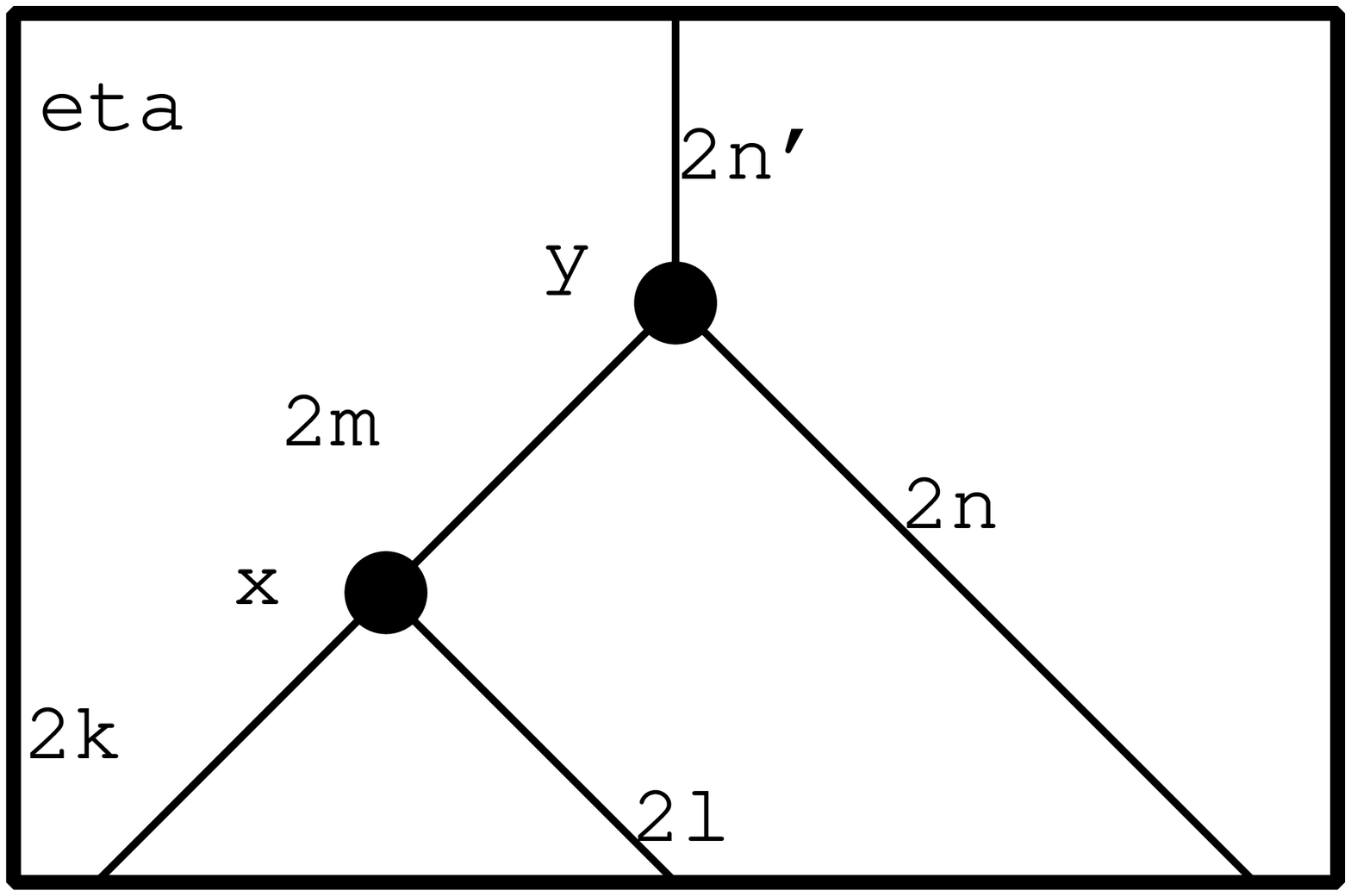}
}$ for $\xi \in U_{\eta k}$, $\zeta \in V_{\eta l}$, $\omega \in W_{\eta n}$, $x \in P_{\eta (k+l+m)}$, $y\in P_{\eta (m+n+n')}$.
It is easy to verify that the above map is an isometry and also has an inverse, namely,
\[
\left[ U \btimes \left( V \btimes W \right) \right]^0_{\eta n'} \ni \left(\xi \os{y}{\btimes} \left( \zeta \os{x}{\btimes} \omega \right) \right) = \left( \xi \os{t}{\btimes} \left(\zeta  \os{1_{P_{\eta 2(l+m)}}}\btimes \omega \right) \right) \mapsto \left( \left( \xi \os{1_{P_{\eta 2(k+l)}}}\btimes \zeta \right) \os{t}{\btimes} \omega \right) \in \left[ (U\btimes V) \btimes W\right]^0_{\eta n'}
\]
where $t := P_{\,
\psfrag{x}{$x$}
\psfrag{y}{$y$}
\psfrag{2k}{$2k$}
\psfrag{2l}{$2l$}
\psfrag{2m}{$2m$}
\psfrag{2n}{$2n$}
\psfrag{2n'}{$2n'$}
\psfrag{eta}{$\eta$}
\includegraphics[scale=0.15]{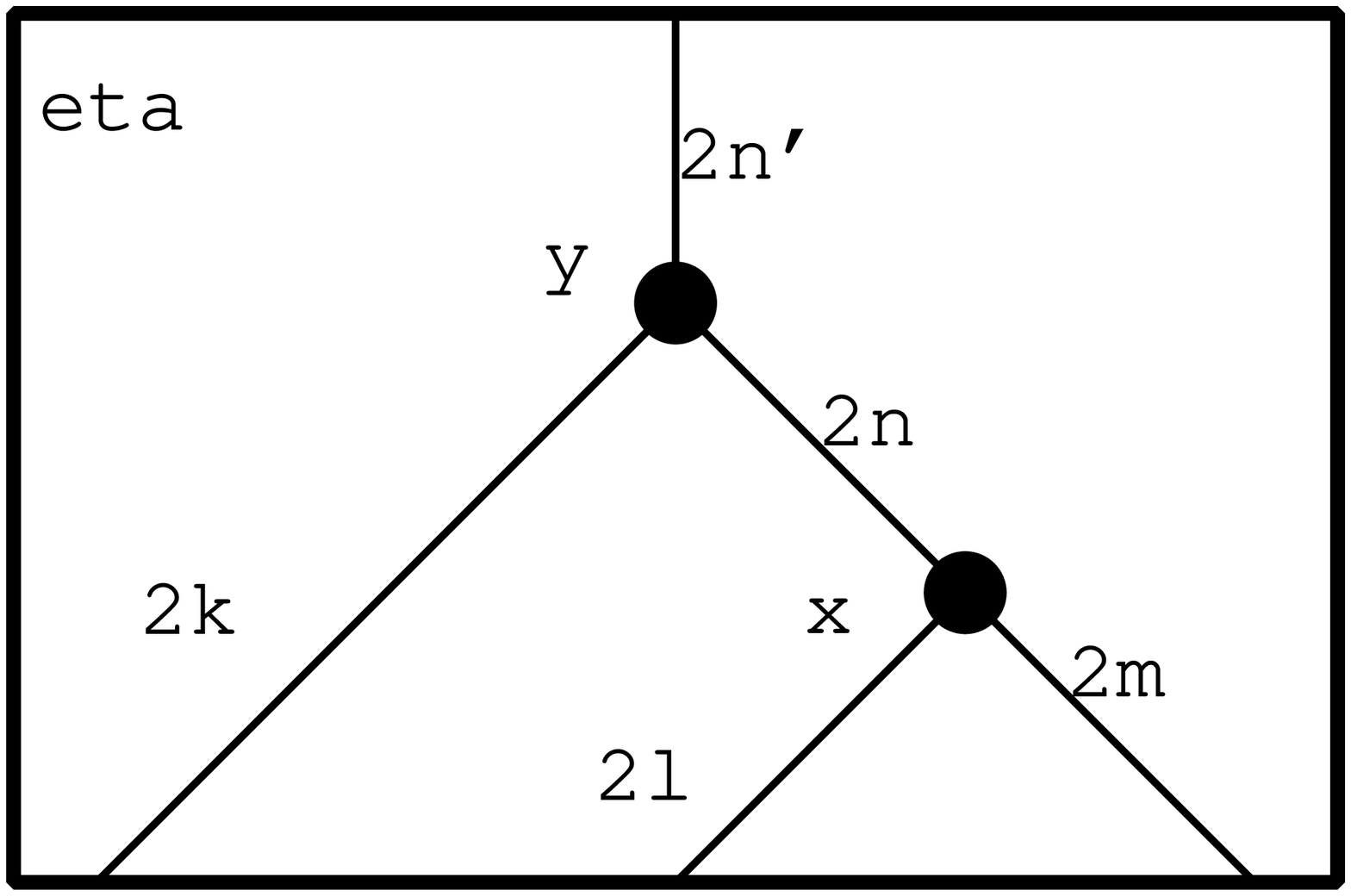}
}$ for $\xi \in U_{\eta k}$, $\zeta \in V_{\eta l}$, $\omega \in W_{\eta m}$, $x \in P_{\eta (l+m+n)}$, $y\in P_{\eta (k+n+n')}$; thus, it is a unitary between the dense subspaces.
Linearity of the above map with respect to the action of affine morphisms, follows from the following lemma whose proof is completely routine.

\begin{lem}\label{fusCCip}
The $\t{CC}$-valued inner product of $V\btimes W$ is given by:
\[
c_{n'} \left( \xi_1 \os{a_1}\btimes \zeta_1 , \xi_2 \os{a_2}\btimes \zeta_2 \right) =P_{\,
\psfrag{x1}{$x^*_1$}
\psfrag{x2}{$x_2$}
\psfrag{}{$$}
\psfrag{2k1}{$2k_1$}
\psfrag{2k2}{$2k_2$}
\psfrag{2l1}{$2l_1$}
\psfrag{2l2}{$2l_2$}
\psfrag{2m1}{$2m_1$}
\psfrag{2m2}{$2m_2$}
\psfrag{n1}{$n_1$}
\psfrag{n2}{$n_2$}
\psfrag{n'}{$n'$}
\psfrag{n'n1n2}{$n'+n_1+n_2$}
\psfrag{cv}{$c_{n'+n_1+n_2} (\xi_1,\xi_2)$}
\psfrag{cw}{$c_{n'+n_1+n_2} (\zeta_1,\zeta_2)$}
\psfrag{eta}{$\eta_2$}
\includegraphics[scale=0.15]{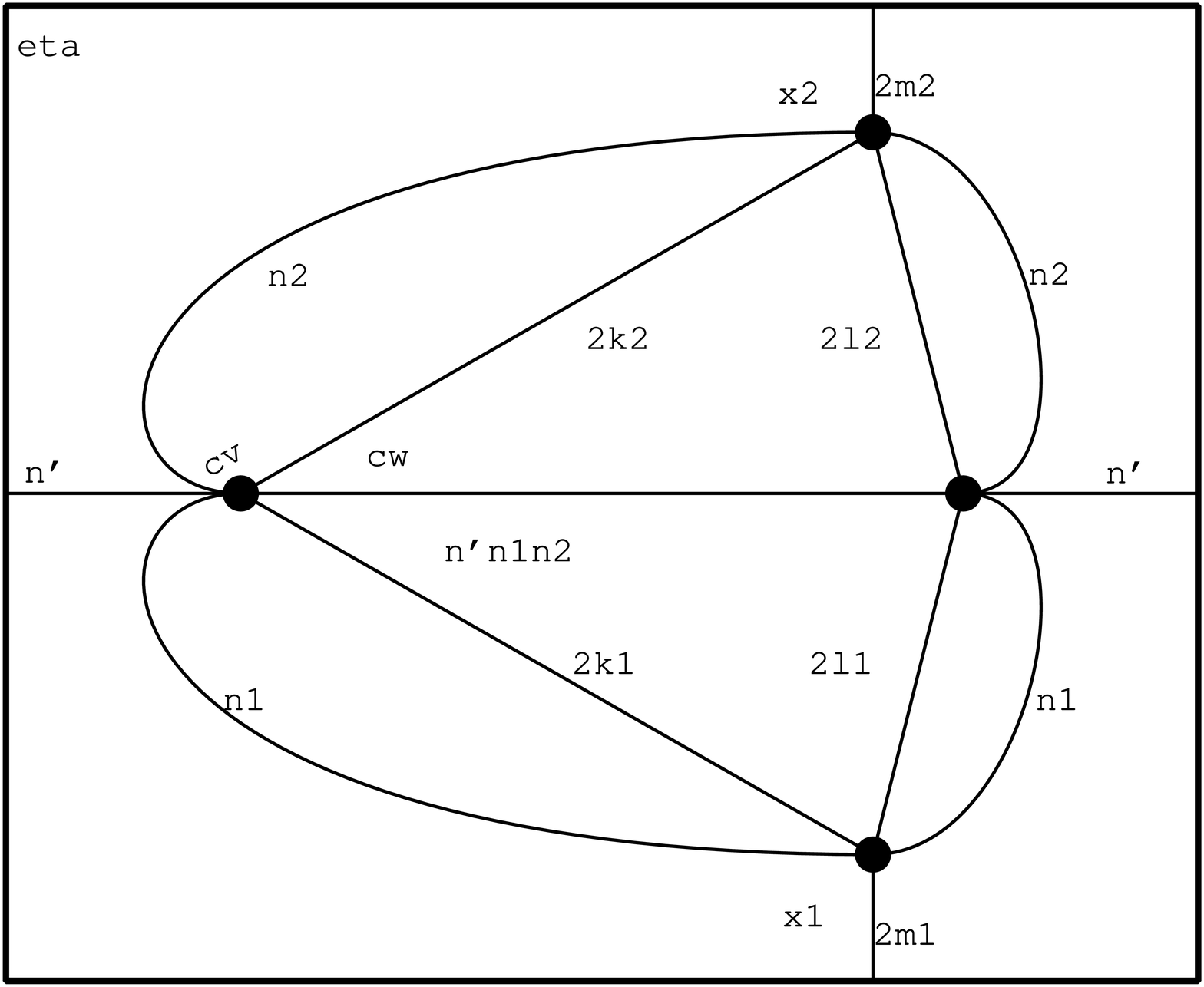}
}
\]
where $\xi_i \in V_{\vlon_i k_i}$, $\zeta_i \in W_{\vlon_i l_i}$, $a_i = \psi^{n_i}_{\vlon_i (k_i + l_i) , \eta_i m_i} (x_i) \in AP_{\vlon_i (k_i + l_i) , \eta_i m_i}$ for $i = 1, 2$, and $n' \in \N_{\eta_1, \eta_2}$.
\end{lem}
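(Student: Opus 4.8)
The plan is to verify that the displayed formula satisfies the relation that \emph{defines} the CC-valued inner product of the module $V\btimes W$, and then to invoke uniqueness. Recall from the start of this section that, for $\Xi_1 \in (V\btimes W)_{\eta_1 m_1}$ and $\Xi_2 \in (V\btimes W)_{\eta_2 m_2}$, the family $c_{n'}(\Xi_1,\Xi_2)$ is the \emph{unique} element of $CC_{\eta_1 m_1,\eta_2 m_2}$ whose pairing against an arbitrary $x\in P_{\eta_1(m_1+m_2+n')}$ through a trace tangle reproduces the inner product $\lab \Xi_1 , (V\btimes W)_{\psi^{n'}_{\eta_2 m_2,\eta_1 m_1}(x)}\Xi_2\rab$. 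Thus it suffices to feed the claimed right-hand side $\tilde c_{n'}$ of the lemma into this trace-tangle picture and check that the result agrees with the left-hand inner product; non-degeneracy of the action of the trace tangle then forces $c_{n'}(\Xi_1,\Xi_2)=\tilde c_{n'}$.

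First I would unfold the left-hand side. Writing $\Xi_i=\xi_i\os{a_i}{\btimes}\zeta_i$, the action on a fused vector merely composes on the middle $AP$-slot, so $(V\btimes W)_{\psi^{n'}_{\eta_2 m_2,\eta_1 m_1}(x)}\Xi_2=\xi_2\os{\psi^{n'}_{\eta_2 m_2,\eta_1 m_1}(x)\circ a_2}{\btimes}\zeta_2$. Since $a_2=\psi^{n_2}_{\vlon_2(k_2+l_2),\eta_2 m_2}(x_2)$, the diagrammatic composition of affine tangles presents this composite at level $n_2+n'$, stacking $x$ onto $x_2$. Substituting into the definition of the fusion form $\lab\cdot,\cdot\rab$ on $U$ then yields a single closed $P$-labelled picture containing $x_1^*$, the stacked label, and the two CC-valued factors $c_{n_1+n_2+n'}(\xi_1,\xi_2)\in CC_{\vlon_1 k_1,\vlon_2 k_2}$ and $c_{n_1+n_2+n'}(\zeta_1,\zeta_2)\in CC_{\vlon_1 l_1,\vlon_2 l_2}$, the level $n_1+n_2+n'$ arising precisely from the additivity of affine-tangle levels under composition.

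The remaining work is purely diagrammatic. Using Lemma \ref{ccprop} (iii) (and its first-variable analogue (iv)) to absorb the $n'$ extra strings coming from $x$ into the levels of the two CC-valued factors, I would rearrange the resulting tangle until it visibly factors as the trace-tangle pairing of $\tilde c_{n'}$ with $x$, which is exactly the content of the lemma. I expect the only genuine care to lie in the bookkeeping of the affine-tangle levels and in checking that $\tilde c_{n'}$ indeed lands in $CC_{\eta_1 m_1,\eta_2 m_2}$, i.e. that it satisfies the commutativity-constraint equations; this is inherited from $c(\xi_1,\xi_2)$ and $c(\zeta_1,\zeta_2)$ already being commutativity constraints, spliced together as in the picture. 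Each individual step being routine, uniqueness then completes the proof.
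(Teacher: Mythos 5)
Your proposal is correct and coincides with what the paper intends: the paper supplies no argument at all, stating only that the proof is ``completely routine,'' and the routine verification it has in mind is exactly yours --- characterize $c_{n'}$ for $V \btimes W$ by its defining trace-tangle relation, unfold the action of $\psi^{n'}_{\eta_2 m_2, \eta_1 m_1}(x)$ as composition on the middle $AP$-slot (so that affine levels add and the label of $x$ stacks onto $x_2$), substitute the definition of the fusion form, and conclude by non-degeneracy of the trace pairing. One small remark: your appeal to Lemma~\ref{ccprop}~(iii)/(iv) is superfluous, since level-additivity under composition already places both CC-valued factors at level $n'+n_1+n_2$, leaving only a planar isotopy to identify the two pictures.
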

\noindent The identity object in $\mcal HAPM$ with respect to $\btimes$ is the planar algebra $P$ itself as a Hilbert affine $P$-module.
$P$ will also be referred as the {\em trivial Hilbert affine $P$-module} where the Hilbert space at level $\vlon k$ is the space $P_{\vlon k}$ with the inner product coming from the action of trace tangles (without any normalization),  and the action of an affine morphism is obtained from the action of a corresponding affine tangle treated as a regular semi-labelled tangle.
The left and the right unit constraints (which are also unitaries) are the following:
\begin{align*}
[P \btimes V]^0_{\eta m} \ni (\xi \os{a}{\btimes} \zeta)& \longmapsto \left( a \circ \psi^0_{\vlon l , \vlon (k+l)} \left( P_{\,
\psfrag{xi}{$\xi$}
\psfrag{y}{$y$}
\psfrag{2k}{$2k$}
\psfrag{2l}{$2l$}
\psfrag{e}{$\vlon$}
\includegraphics[scale=0.15]{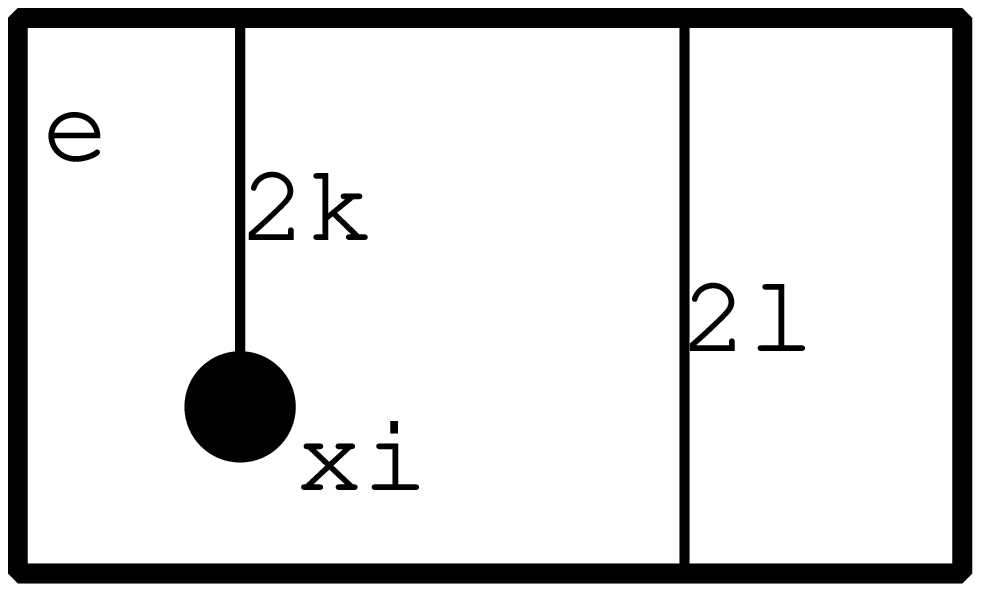}
} \right) \right) \cdot \zeta \in V_{\eta m} \t{, and}\\
[V \btimes P]^0_{\eta m} \ni (\zeta \os{a}{\btimes} \xi)& \longmapsto \left( a \circ \psi^0_{\vlon l , \vlon (l+k)} \left( P_{\,
\psfrag{xi}{$\xi$}
\psfrag{y}{$y$}
\psfrag{2k}{$2k$}
\psfrag{2l}{$2l$}
\psfrag{e}{$\vlon$}
\includegraphics[scale=0.15]{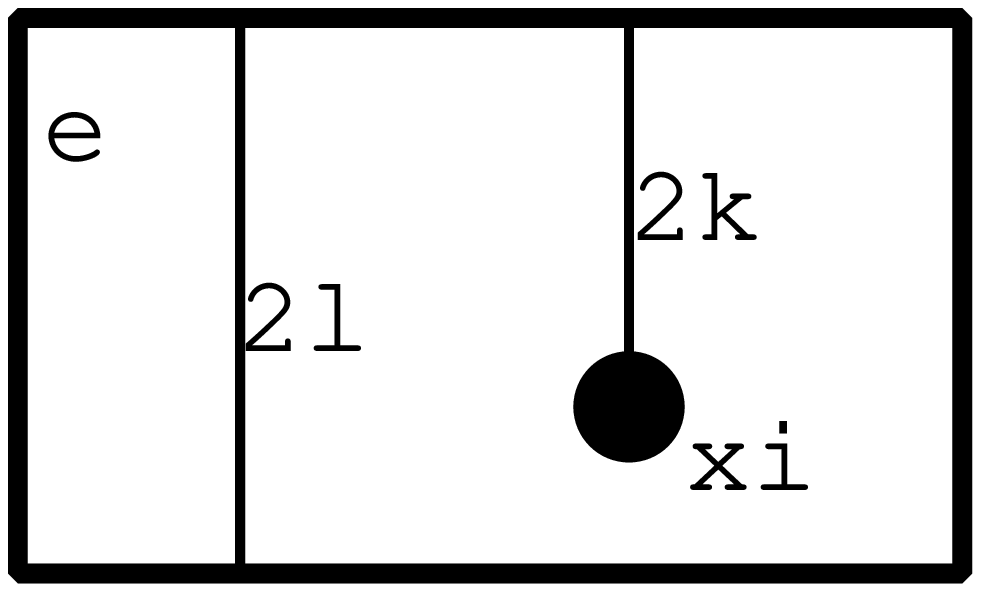}
} \right) \right) \cdot \zeta \in V_{\eta m}
\end{align*}
for $\xi \in P_{\vlon k}$, $\zeta \in V_{\vlon l}$, $a \in AP_{\vlon (k+l), \eta m}$.
\vskip 2mm
We will now define the {\em contragradient} of a Hilbert affine $P$-module.
For this, we introduce an {\em antipode functor} $S:AP \ra AP$ given by $\t{ob} (AP) \ni \vlon k \os{S}{\longmapsto} \vlon k \in \t{ob} (AP)$ and
\[
AP_{\vlon k, \eta l} \ni \psi^m_{\vlon k, \eta l} (x) \os{S}{\longmapsto} \psi^m_{\eta l , \vlon k} (P_{\,
\psfrag{x}{$x$}
\psfrag{2k}{$2k$}
\psfrag{2l}{$2l$}
\psfrag{m}{$m$}
\psfrag{e}{$\vlon$}
\includegraphics[scale=0.15]{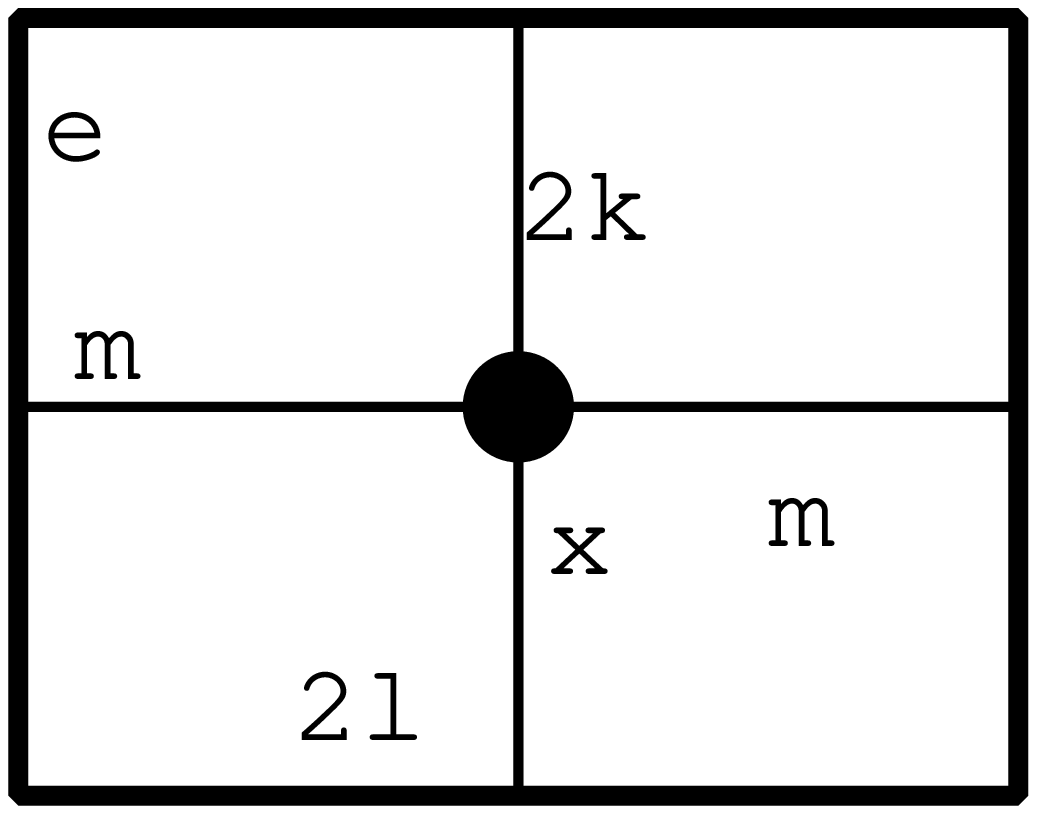}
}) \in AP_{\eta k , \vlon k} \t{ for all } m \in \N_{\vlon ,\eta} \t{ and } x \in P_{\eta (k+l+m)}.
\]
Well-definedness of $S$ at the level of morphisms, follows from Remark \ref{apmap}.
Note that $S$ is a contravariant, involutive, $\C$-linear functor; also, $S \circ * = * \circ S$.
\begin{defn}
Given a Hilbert affine $P$-module $V$, we define its {\em contragradient} (denoted by $\ol{V}$) by:\\
(i) $\ol{V}_{\vlon k} := \{\ol{\xi} : \xi \in V_{\vlon k} \}$ is the conjugate Hilbert space with $V_{\vlon k} \ni \xi \mapsto \ol{\xi} \in \ol{V}_{\vlon k}$ as the conjugate-linear unitary, and\\
(ii) $\ol{V}_a (\ol{\xi}) := \ol{V_{S(a^*)} (\xi )}$ for all $a\in AP_{\vlon k, \eta l}$, $\xi \in V_{\vlon k}$, $\vlon k , \eta l \in \t{Col}$.
\end{defn}
A natural question to ask right after the above definition, is whether the contragredient gives us a duality in $\mcal HAPM$.
Clearly, $\ol{\ol{V}}$ is isometrically isomorphic to $V$.
However, to have a full-fledged duality, one needs evaluation and coevaluation maps which might not exist (as norm bounded maps) in general.
In Section \ref{conj}, we will see that in a particular full subcategory of $\mcal HAPM$, the congradient indeed gives a structure of duality.
\vskip 2mm
We end this section by exhibiting a braiding on $\mcal HAPM$ which will be implemented by the {\em braiding affine morphism} defined in Figure \ref{affbraid}.
\begin{figure}[h]
\psfrag{e}{$\vlon$}
\psfrag{lhs}{$b_{\vlon,k,l} =$}
\psfrag{lhsinv}{$b^{-1}_{\vlon,k,l} =$}
\psfrag{rhs}{$\in AP_{\vlon (k+l) , \vlon (k+l)}$}
\psfrag{}{$$}
\psfrag{}{$$}
\psfrag{}{$$}
\psfrag{}{$$}
\psfrag{2k}{$2k$}
\psfrag{2l}{$2l$}
\psfrag{m}{$m$}
\psfrag{e}{$\vlon$}
\includegraphics[scale=0.15]{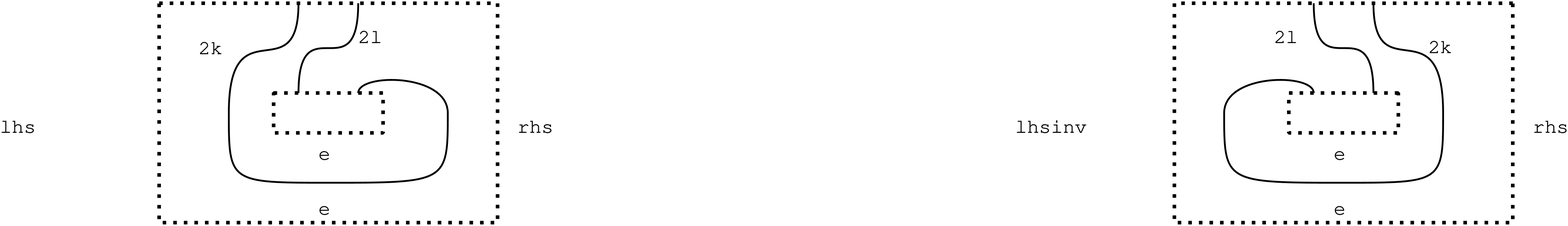}
\caption{Braiding affine morphism.}
\label{affbraid}
\end{figure}
\begin{prop}\label{braiding}
There exists a braiding $c$ on $\mcal H APM$ given by
\[
\left[ V \btimes W \right]^0_{\vlon m} \ni (\xi \os a \btimes \zeta ) \os{c_{V,W}} \longmapsto (\zeta \os {a \circ b_{\vlon,k,l}} \btimes V_{r_{\vlon k}} \, \xi ) \in \left[ W \btimes V \right]^0_{\vlon m}
\]
where $V$ and $W$ are two Hilbert affine $P$-modules, $a \in AP_{\vlon (k+l) , \vlon m}$, $\xi \in V_{\vlon k}$, $\zeta \in W_{\vlon l}$ and $r_{\vlon k}$ is the affine morphism
\psfrag{e}{$\vlon$}
\psfrag{lhs}{$b_{\vlon,k,l} =$}
\psfrag{lhsinv}{$b^{-1}_{\vlon,k,l} =$}
\psfrag{rhs}{$\in AP_{\vlon (k+l) , \vlon (k+l)}$}
\psfrag{}{$$}
\psfrag{}{$$}
\psfrag{}{$$}
\psfrag{}{$$}
\psfrag{2k}{$2k$}
\psfrag{2l}{$2l$}
\psfrag{m}{$m$}
\psfrag{e}{$\vlon$}
\includegraphics[scale=0.15]{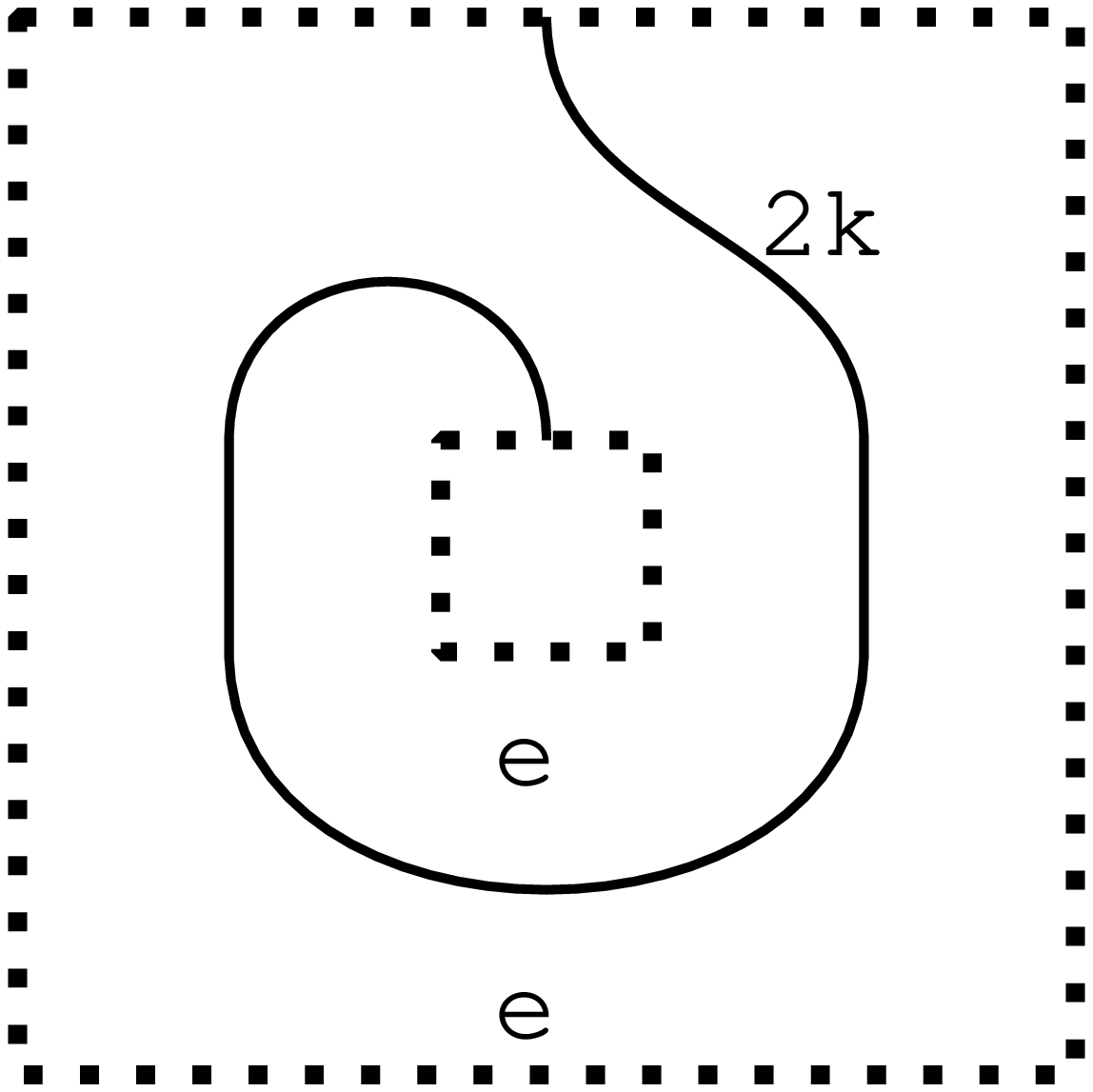}
$\in AP_{\vlon k, \vlon k}$.
\end{prop}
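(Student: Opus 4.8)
The plan is to show that the displayed assignment defines, for all Hilbert affine $P$-modules $V$ and $W$, a unitary natural isomorphism $c_{V,W} : V \btimes W \ra W \btimes V$ in $\mcal HAPM$ obeying the two hexagon axioms with respect to the associativity constraint constructed after Proposition \ref{fXgbdd}. By Remark \ref{eaff} it is enough to work in the $\vlon$-affine picture, so I may assume the internal colors share the sign of the external color $\vlon m$; the formula is manifestly a well-defined linear map on the algebraic space $U_{\vlon m}$, since $a \mapsto a \circ b_{\vlon, k, l}$, the vector map $V_{r_{\vlon k}}$, and the transposition of the two legs are each linear. The substance is therefore to pass to the quotient $\tilde U_{\vlon m} = [V\btimes W]^0_{\vlon m}$, and for this I will prove the isometry identity directly.

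First I would establish
\[
\left\lab c_{V,W}(u_1), c_{V,W}(u_2) \right\rab_{W\btimes V} = \lab u_1, u_2 \rab_{V\btimes W}, \qquad u_i = \xi_i \os{a_i} \btimes \zeta_i .
\]
Because the null space of $\lab \cdot, \cdot \rab$ is exactly its radical, any isometric assignment automatically annihilates it, giving well-definedness on $\tilde U_{\vlon m}$ for free. To prove the identity I expand the left-hand side with the inner-product formula on $U_{\vlon m}$: after braiding, the first (now $W$-)leg contributes $c(\zeta_1, \zeta_2)$, the second (now $V$-)leg contributes $c(V_{r_{\vlon k_1}} \xi_1, V_{r_{\vlon k_2}} \xi_2)$, and the labels $x_i$ are modified by the braid tangle $b_{\vlon, k_i, l_i}$. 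I then simplify using two ingredients: Lemma \ref{ccprop} (iii)--(iv) to reel the rotations $V_{r_{\vlon k_i}}$ back into the commutativity constraint, rewriting $c(V_{r} \xi_1, V_{r} \xi_2)$ in terms of $c(\xi_1, \xi_2)$; and the defining slide relation of $CC_{\vlon k, \vlon k}$, which lets the braid strings be pulled through the constraint $c(\xi_1, \xi_2)$. This is the step I expect to be the genuine obstacle, and it is precisely the ``tweaking'' flagged in the introduction: pulling the pair of pants through the braid introduces a framing discrepancy on the $V$-leg, and the rotation $r_{\vlon k}$ contributes exactly the opposite twist, so that after cancellation the two pictures coincide and the map is isometric rather than merely bounded or unbounded.

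Next I would write down the inverse by replacing $b_{\vlon, k, l}$ with $b^{-1}_{\vlon, k, l}$ (Figure \ref{affbraid}) and $r_{\vlon k}$ with the reverse rotation; running the same computation shows this is an isometry onto $[V \btimes W]^0$ and a two-sided inverse on the dense subspaces, so $c_{V,W}$ is a surjective isometry with $\norm{c_{V,W}} = 1$ and extends to a unitary $V \btimes W \ra W \btimes V$. Affine $P$-linearity is then immediate from associativity of composition in $AP$: for $b' \in AP_{\vlon m, \nu n}$ one has $b' \cdot (\zeta \os{a \circ b_{\vlon, k, l}} \btimes V_{r_{\vlon k}} \xi) = \zeta \os{b' \circ (a \circ b_{\vlon, k, l})} \btimes V_{r_{\vlon k}} \xi$, which equals the braiding of $b' \cdot u$; since $c_{V,W}$ is unitary it lies in $\mcal HAPM$. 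Naturality in both variables follows by inspection, since $f \btimes g$ moves only the Hilbert-space vectors while $b_{\vlon,k,l}$, $r_{\vlon k}$ and the transposition move only the affine datum and the ordering, giving $c_{V',W'} \circ (f \btimes g) = (g \btimes f) \circ c_{V,W}$.

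Finally I would verify the two hexagon identities relating $c$ to the associativity constraint. On the dense subspaces these reduce to an equality of affine tangles: the single braid produced by $c_{U, V \btimes W}$ (respectively $c_{U \btimes V, W}$), transported across the associators, must match the two successive braids $(\t{id} \btimes c) \circ (c \btimes \t{id})$. I expect this to follow from the standard topological fact that braiding a cabled bundle of strings equals braiding its sub-bundles one after another, together with the corresponding compatibility of the rotations $r_{\vlon k}$ under cabling; once the isometry step has pinned down the correct normalization, this last part is essentially bookkeeping.
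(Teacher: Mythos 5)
Your treatment of unitarity coincides with the paper's own proof: the paper expands $\left\lab c_{V,W}(u_1),c_{V,W}(u_2)\right\rab$ using the fusion $CC$-valued inner product of Lemma \ref{fusCCip} together with Lemma \ref{ccprop}(iii)--(iv), and then slides the braid strands through the constraints $c(\xi_1,\xi_2)$, $c(\zeta_1,\zeta_2)$ exactly as you propose; your observation that an isometric assignment on $U_{\vlon m}$ automatically kills the null space is a legitimate route to well-definedness, and your explicit inverse (replace $b_{\vlon,k,l}$ by $b^{-1}_{\vlon,k,l}$) versus the paper's density argument for surjectivity is a cosmetic difference. The intertwining with the $AP$-action and naturality in $V,W$ are indeed immediate, as both you and the paper assert.

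The genuine gap is the hexagon step, which you dismiss as ``essentially bookkeeping'' via the standard fact that braiding a cabled bundle of strings equals braiding its sub-bundles successively. That fact concerns honest braids of tangles; here the braiding is not a tangle alone but the pair (affine tangle $b_{\vlon,k,l}$ wrapping the annulus, vector-level rotation $V_{r_{\vlon k}}$), and the two sides of each hexagon do not even apply the same rotations. Concretely, in $\left(\t{id}_V\btimes c_{U,W}\right)\circ\left(c_{U,V}\btimes\t{id}_W\right)=c_{U,V\btimes W}$ the composite applies $U_{r_{\vlon k}}$ \emph{twice} to the $U$-leg while the single braiding applies it \emph{once}; in the other hexagon the composite rotates the $U$- and $V$-legs separately while $c_{U\btimes V,W}$ applies the cabled rotation to the fused vector via $(U\btimes V)_{r_{\vlon (k+l)}}$. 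Reconciling these discrepancies (which are compensated by how the wrapped braid strands in the middle data differ) is not a topological cabling statement: it requires the algebraic sliding relations in the fusion quotient that let one trade rotations and wrapping strands across $\btimes$. This is precisely what the paper supplies before touching the hexagons: following the proof of Lemma \ref{fuswloglem}, it rewrites $c_{V,W}\left(\xi\os{a}{\btimes}\zeta\right)=W_{u_{\vlon l,2k}}\zeta\os{\tilde a}{\btimes}\xi$ with $\tilde a=a\circ\psi^0_{\vlon(3k+l),\vlon(k+l)}\left(1_{P_{\vlon(4k+2l)}}\right)$, a form in which $r_{\vlon k}$ has been absorbed entirely, and only then do both hexagons reduce to checkable tangle identities together with the cabling identity for the inclusions $u_{\vlon l,2k}$. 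Without this reformulation (or an equivalent identity), your final step does not go through as stated; it is the key idea of that half of the proof, not bookkeeping.
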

\begin{proof}
We begin with showing that the above is unitary, that is, (a) inner product preserving and (b) surjective.
Once (a) is proved, (b) easily follows from the fact that  $[V \btimes W]^0$ is dense in $V \btimes W$.

Let $\xi_i \in V_{\vlon k_i}$, $\zeta_i \in W_{\vlon l_i}$, $x_i \in P_{\vlon(k_i+l_i+m+n_i)}$, $a_i = \psi^{n_i}_{\vlon (k_i+l_i), \vlon m} (x_i)$ for $i= 1 , 2$, and $k=k_1+k_2$, $n=n_1+n_2$. Using (a) the formula of $CC$-valued inner product of fusion of affine modules obtained in Lemma \ref{fusCCip}, and (b) Lemma \ref{ccprop} (iii), (iv),  we can express the scalar $\left\lab (\zeta_1 \os {a_1 \circ b_{\vlon,k_1,l_1}} \btimes V_{r_{\vlon k_1}} \, \xi_1 ) , (\zeta_2 \os {a_2 \circ b_{\vlon,k_2,l_2}} \btimes V_{r_{\vlon k_2}} \, \xi_2 ) \right\rab $ as:
\[
P_{\!\!\!\!\!\!\!\!\!\!\!\!\!\!\!\!\!\!\!\!\!\!\!\!\!
\psfrag{e}{$\vlon$}
\psfrag{cv}{$c_{4k+n}(\xi_1,\xi_2\!)$}
\psfrag{cw}{$c_{2k + n}(\zeta_1,\zeta_2)$}
\psfrag{4kn1n2}{$2k+n$}
\psfrag{}{$$}
\psfrag{}{$$}
\psfrag{x1}{$x^*_1$}
\psfrag{x2}{$x_2$}
\psfrag{2k1}{$2k_1$}
\psfrag{2k2}{$2k_2$}
\psfrag{2l1}{$2l_1$}
\psfrag{2l2}{$2l_2$}
\psfrag{2m}{$2m$}
\psfrag{n1}{$n_1$}
\psfrag{n2}{$n_2$}
\includegraphics[scale=0.15]{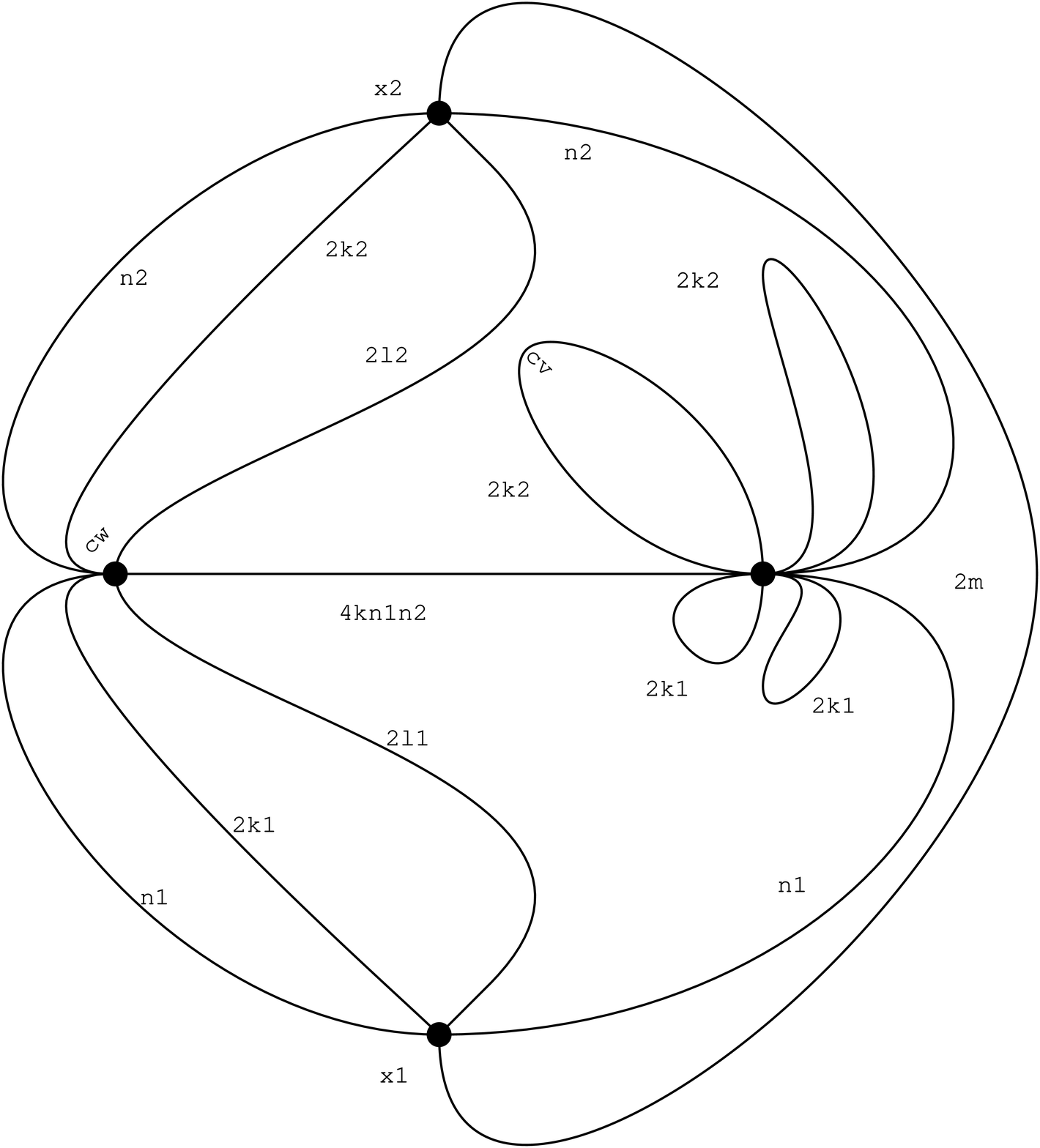}
}
=\;\;\;\;\;\;\;\;\;\;\;\;\;\;\;\;\;\;P_{\!\!\!\!\!\!\!\!\!\!\!\!\!\!\!\!\!\!\!\!\!\!\!\!\!
\psfrag{e}{$\vlon$}
\psfrag{cv}{$c_n(\xi_1,\xi_2)$}
\psfrag{cw}{$c_{2k + n}(\zeta_1,\zeta_2)$}
\psfrag{4kn1n2}{$2k_1+2k_2+n_1+n_2$}
\psfrag{n1n2}{$n$}
\psfrag{}{$$}
\psfrag{}{$$}
\psfrag{x1}{$x^*_1$}
\psfrag{x2}{$x_2$}
\psfrag{2k1}{$2k_1$}
\psfrag{2k2}{$2k_2$}
\psfrag{2l1}{$2l_1$}
\psfrag{2l2}{$2l_2$}
\psfrag{2m}{$2m$}
\psfrag{n1}{$n_1$}
\psfrag{n2}{$n_2$}
\includegraphics[scale=0.15]{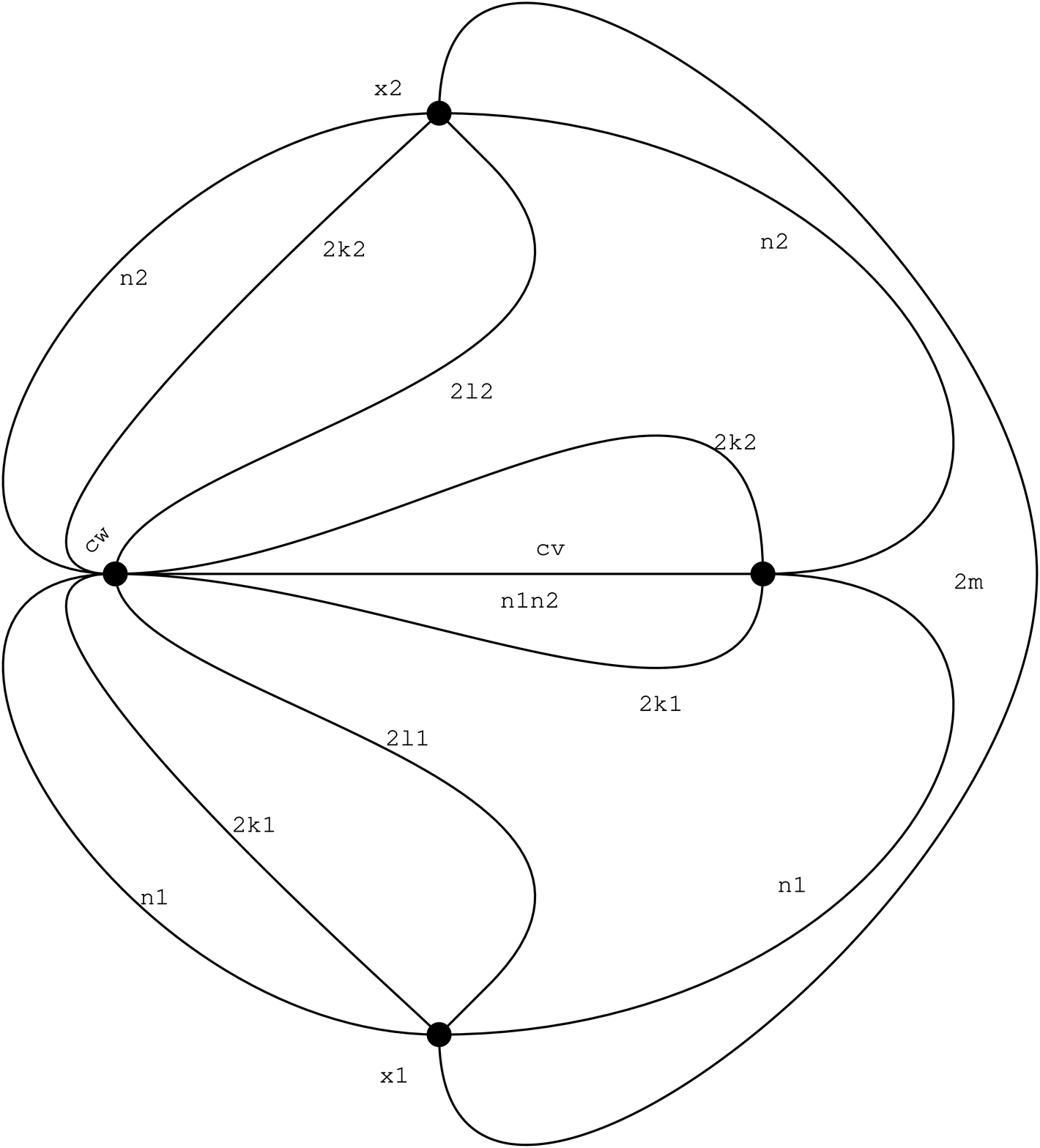}
}
\]
$= \;\;\;\;\;\;\;\;\;\;\;\;\;\;\;\;\;\;P_{\!\!\!\!\!\!\!\!\!\!\!\!\!\!\!\!\!\!\!\!\!\!\!\!\!
\psfrag{e}{$\vlon$}
\psfrag{cv}{$c_n(\xi_1,\xi_2)$}
\psfrag{cw}{$c_n(\zeta_1,\zeta_2)$}
\psfrag{4kn1n2}{$2k_1+2k_2+n_1+n_2$}
\psfrag{n}{$n$}
\psfrag{}{$$}
\psfrag{}{$$}
\psfrag{x1}{$x^*_1$}
\psfrag{x2}{$x_2$}
\psfrag{2k1}{$2k_1$}
\psfrag{2k2}{$2k_2$}
\psfrag{2l1}{$2l_1$}
\psfrag{2l2}{$2l_2$}
\psfrag{2m}{$2m$}
\psfrag{n1}{$n_1$}
\psfrag{n2}{$n_2$}
\includegraphics[scale=0.15]{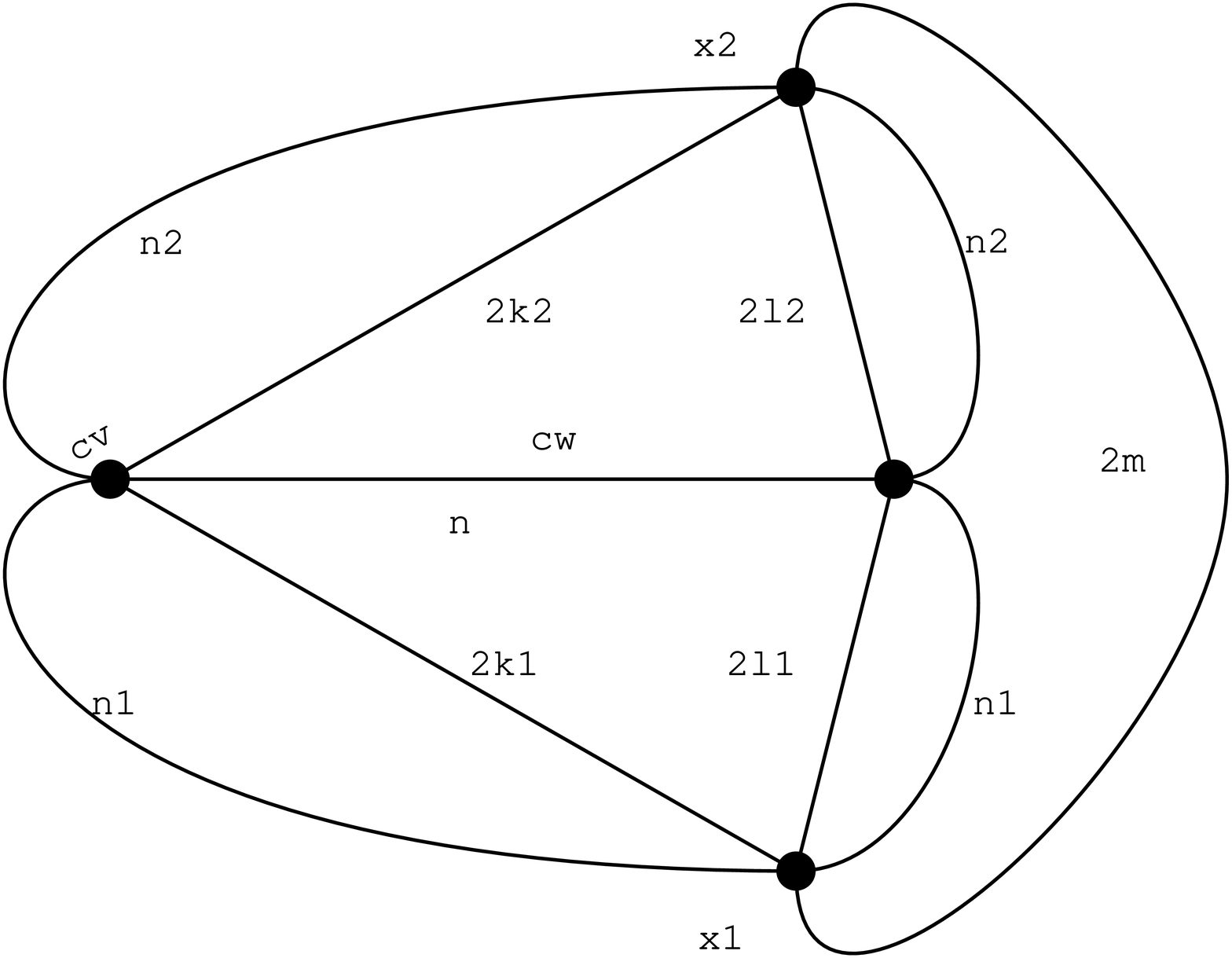}
}
= \left\lab (\xi_1 \os {a_1} \btimes \zeta_1 ) , (\xi_2 \os {a_2} \btimes \zeta_2 ) \right\rab$.

\noindent Thus, $c_{V,W}$ is a unitary.
Naturality of $c_{V,W}$ in $V$ and $W$, is straight-forward.

It remains to show (a)  $\left( \t{id}_V \btimes c_{U,W} \right) \circ \left( c_{U,V} \btimes \t{id}_W \right) = c_{U,V\btimes W}$ and (b) $\left( c_{U,W} \btimes \t{id}_W \right) \circ \left( \t{id}_U \btimes c_{V,W} \right) = c_{U \btimes V , W}$ where we hide the associativity constraints.
In order to show this, it will be useful to view $c_{U,V}$ in a slightly different way.
Note that $c_{V,W} \left(\xi \os a \btimes \zeta \right) = (W \btimes V)_a \left( \zeta \os {b_{\vlon ,k,l}} \btimes V_{r_{\vlon k}} \xi \right) $ for $a \in AP_{\vlon (k+l) , \vlon m}$, $\xi \in V_{\vlon k}$, $\zeta \in W_{\vlon l}$.
Recall the `inclusion' affine morphism $u_{\vlon k , n} = 
\psfrag{2k}{$2k$}
\psfrag{n}{$n$}
\psfrag{e}{$\vlon$}
\psfrag{eta}{$\eta$}
\includegraphics[scale=0.15]{figures/fusion/uincl.eps}
\in AP_{\vlon k,\eta (k+n)}$ appearing in the proof of Lemma \ref{fuswloglem}.
Following the steps in the proof of Lemma \ref{fuswloglem}, we get
\begin{align*}
\zeta \os {b_{\vlon ,k,l}} \btimes V_{r_{\vlon k}} \xi
= W_{u_{\vlon l , 2k}} \zeta \os {\!\!\!\!P_{
\psfrag{2k}{$2k$}
\psfrag{2l}{$2l$}
\psfrag{e}{$\vlon$}
\psfrag{eta}{$\eta$}
\includegraphics[scale=0.15]{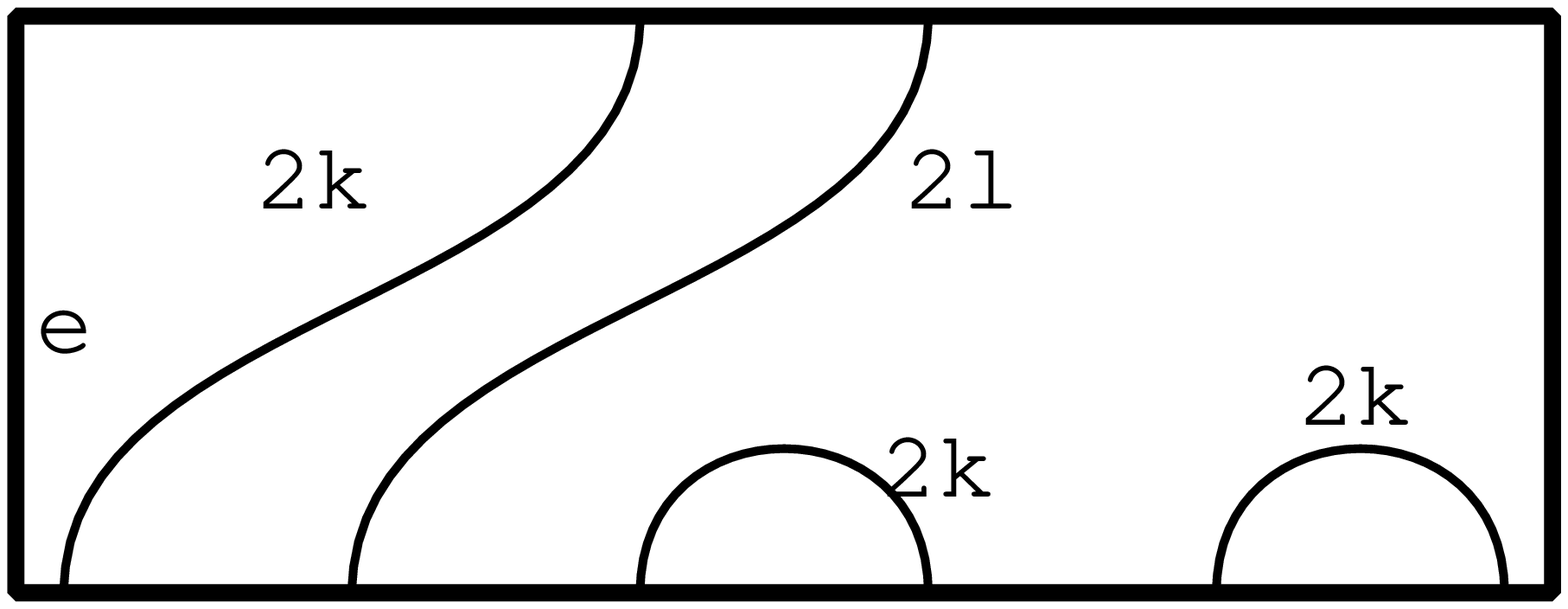}
}} \btimes V_{u_{\vlon k , 2k} \circ r_{\vlon k}} \xi
= & \; W_{u_{\vlon l , 2k}} \zeta \os {\!\!\!\!\!\!\!\!\!\!\!\!\!\!\!\!\!\!\!\!\!\!\!\!\!P_{
\psfrag{2k}{$2k$}
\psfrag{2l}{$2l$}
\psfrag{e}{$\vlon$}
\psfrag{eta}{$\eta$}
\includegraphics[scale=0.15]{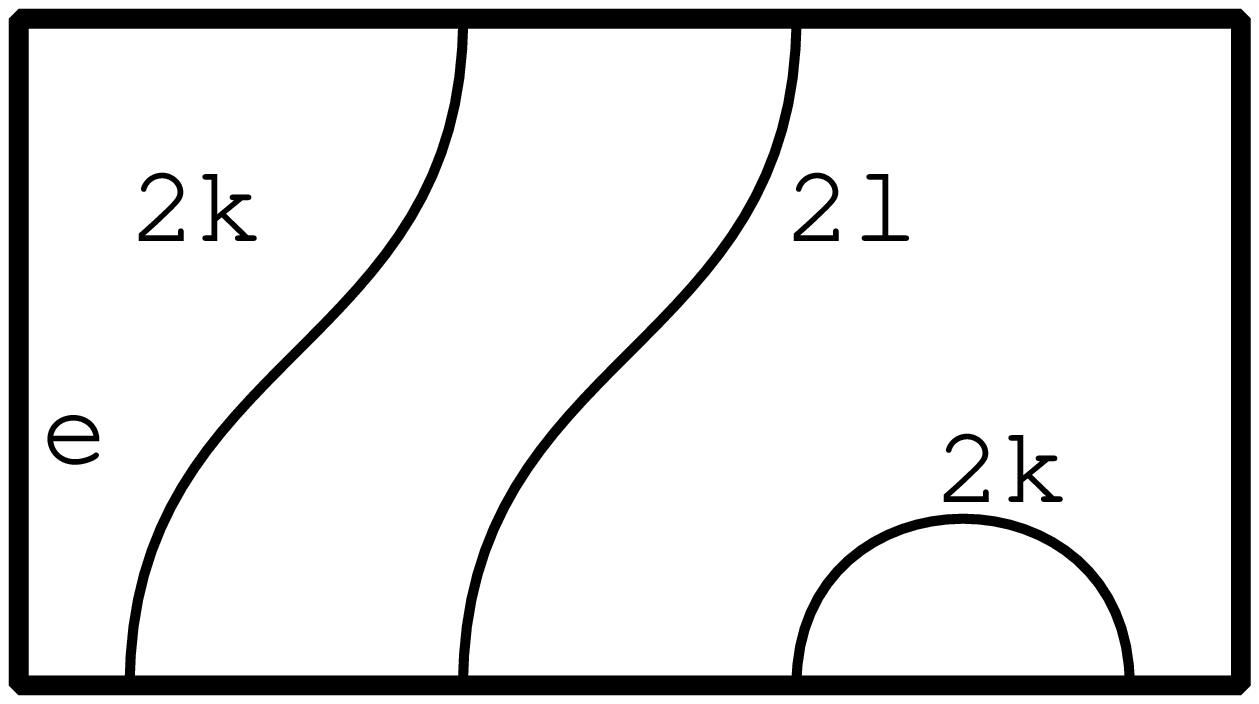}
}} \btimes V_{\psi^0_{\vlon 3k , \vlon k} (1_{P_{\vlon 4k}}) \circ u_{\vlon k , 2k} \circ r_{\vlon k}} \xi\\
= & \; W_{u_{\vlon l , 2k}} \zeta \os {\!\!\!\!\!\!\!\!\!\!\!\!\!\!\!\!\!\!\!\!\!\!\!\!\!P_{
\psfrag{2k}{$2k$}
\psfrag{2l}{$2l$}
\psfrag{e}{$\vlon$}
\psfrag{eta}{$\eta$}
\includegraphics[scale=0.15]{figures/fusion/b2.eps}
}}  \btimes \xi.
\end{align*}

Thus,
\[
c_{V,W} \left(\xi \os a \btimes \zeta \right)
= W_{u_{\vlon l , 2k}} \zeta \os {\tilde a } \btimes \xi \t{ where } \tilde a = a \circ \psi^0_{\vlon (3k+l) , \vlon (k+l)} (1_{P_{\vlon (4k+2l)}}).
\]

In the rest of the proof, for simplicity of calculations, we will hide the associativity constraints and replace them by identities.
Using this reformulation of $c_{V,W}$, equation (b) becomes almost immediate when we evaluate both sides on elements (in a dense subset of $(U\btimes (V \btimes W))_{\vlon n}$) of the form  $\omega \os x  \btimes \left( \xi \os{1_{P_{\vlon 2(l+m)}}} \btimes \zeta \right) = \left( \omega  \os{1_{P_{\vlon 2(k+l)}}} \btimes \xi  \right)\os x \btimes  \zeta $ for $\omega \in U_{\vlon k}$, $ \xi \in V_{\vlon l}$, $\zeta \in W_{\vlon m}$, $x \in P_{\vlon (k+l+m+n)}$ (as could be seen from the discussion of associativity constraint preceeding Lemma \ref{fusCCip}). 
For equation (a), observe that
\begin{align*}
\left( \omega  \os{1_{P_{\vlon 2(k+l)}}} \btimes \xi  \right) \os{x}{\scalebox{1}[1]{$\btimes$}}  \zeta
\os {c_{U,V} \btimes \t{id}_W} \longmapsto
& \; \scalebox{2}[4]{(}V_{u_{\vlon l , 2k}} \xi \os {\!\!\!\!\!\!\!\!\!\!\!\!\!\!\!\!\!\!\!\!\!\!\!\!\!P_{
\psfrag{2k}{$2k$}
\psfrag{2l}{$2l$}
\psfrag{e}{$\vlon$}
\psfrag{eta}{$\eta$}
\includegraphics[scale=0.15]{figures/fusion/b2.eps}
}}  \btimes \omega \scalebox{2}[4]{)} \os{x}{\scalebox{1}[1]{$\btimes$}} \zeta\\
= & \;\;\;\; \os {\!\!\!\!\!\!\!\!\!\!\!\!\!\!P_{
\psfrag{x}{$x$}
\psfrag{2k}{$2k$}
\psfrag{2l}{$2l$}
\psfrag{2m}{$2m$}
\psfrag{2n}{$2n$}
\psfrag{e}{$\vlon$}
\includegraphics[scale=0.15]{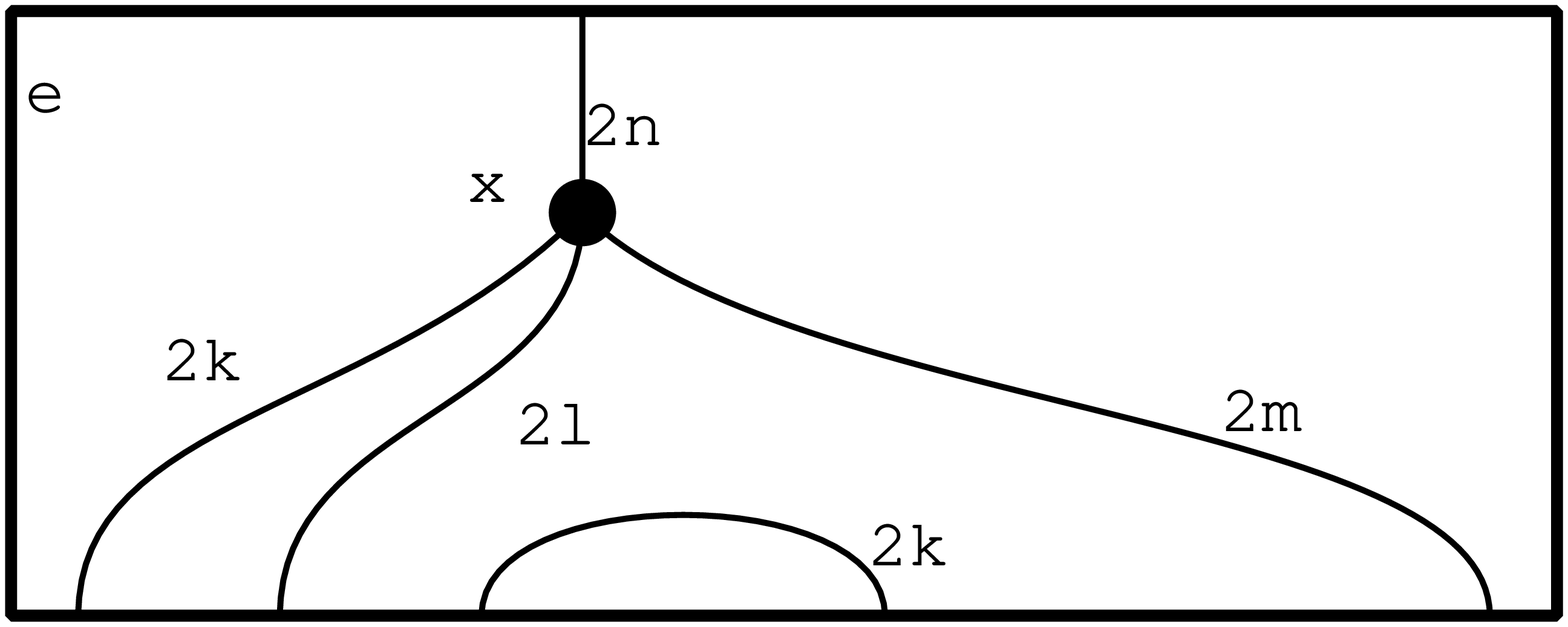}
}}
{V_{u_{\vlon l , 2k}} \xi\,  \scalebox{1.5}[3]{$\btimes$}\!\! \left(\omega \!\! \os {1_{P_{\vlon 2(k+m)}}} \btimes\!\! \zeta \right)}
\os {\t{id}_V \btimes c_{U,W}} \longmapsto
\os{\!\!\!\!\!\!\!\!\!\!\!\!\!\!\!P_{
\psfrag{x}{$x$}
\psfrag{2k}{$2k$}
\psfrag{2l}{$2l$}
\psfrag{2m}{$2m$}
\psfrag{2n}{$2n$}
\psfrag{e}{$\vlon$}
\includegraphics[scale=0.15]{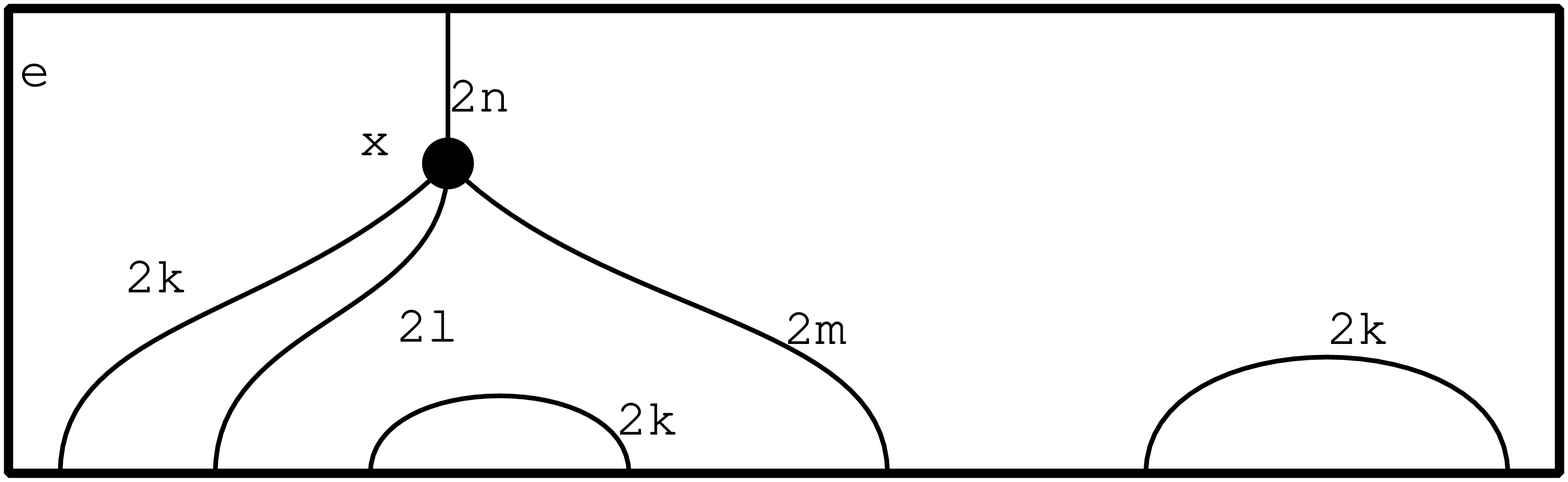}
}}
{V_{u_{\vlon l , 2k}} \xi\;\;  \scalebox{1.5}[3]{$\btimes$}\!\! \left(W_{u_{\vlon m , 2k}} \zeta \os {1_{P_{\vlon 2(3k+m)}}} \btimes \omega \right)}.
\end{align*}
After applying appropriate associativity constraint, the resultant can be written as:
\[
\os {\;\;\;\;\;\;\;P_{
\psfrag{x}{$x$}
\psfrag{2k}{$2k$}
\psfrag{2l}{$2l$}
\psfrag{2m}{$2m$}
\psfrag{2n}{$2n$}
\psfrag{e}{$\vlon$}
\includegraphics[scale=0.15]{figures/fusion/b4.eps}
}}
{\left( V_{u_{\vlon l , 2k}} \xi \os {1_{P_{\vlon 2(4k+l+m)}}} \btimes  W_{u_{\vlon m , 2k}} \zeta \right)\!\! \scalebox{1.5}[3]{$\btimes$} \omega }
\;\; = \os {\;\;\;\;P_{
\psfrag{P}{$^P$}
\psfrag{x}{$x$}
\psfrag{2k}{$2k$}
\psfrag{2l}{$2l$}
\psfrag{2m}{$2m$}
\psfrag{2n}{$2n$}
\psfrag{e}{$\vlon$}
\includegraphics[scale=0.15]{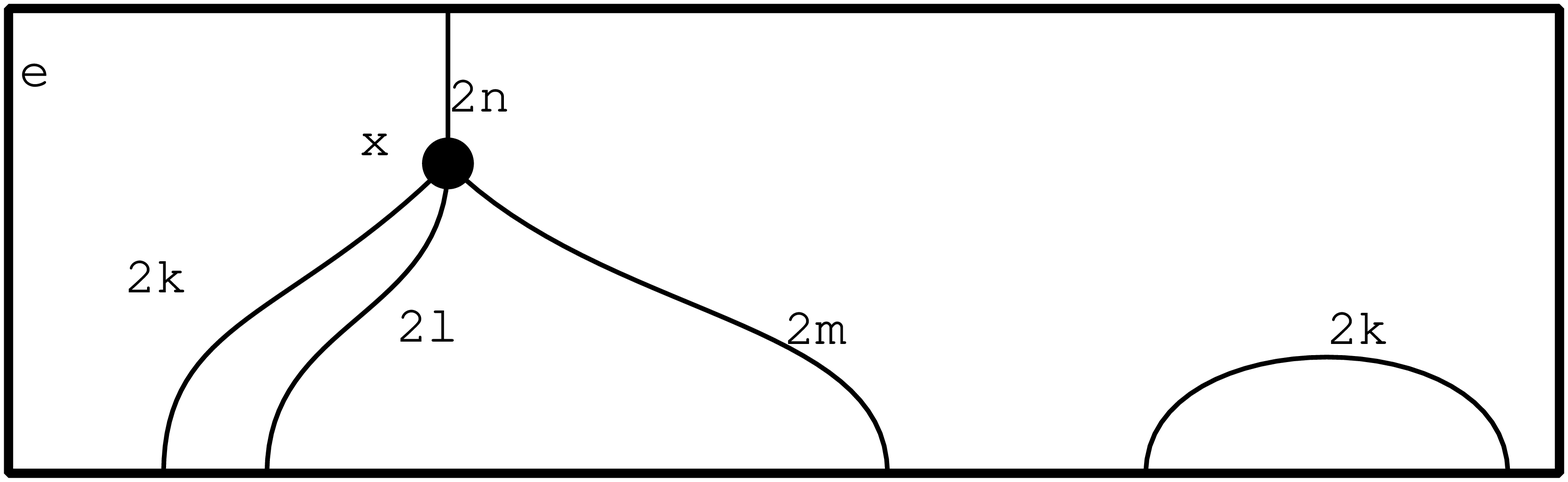}
}}
{\scalebox{2}[4]{(} \os {P_{
\psfrag{P}{$^P$}
\psfrag{x}{$x$}
\psfrag{2k}{$2k$}
\psfrag{2l}{$2l$}
\psfrag{2m}{$2m$}
\psfrag{2n}{$2n$}
\psfrag{e}{$\vlon$}
\includegraphics[scale=0.15]{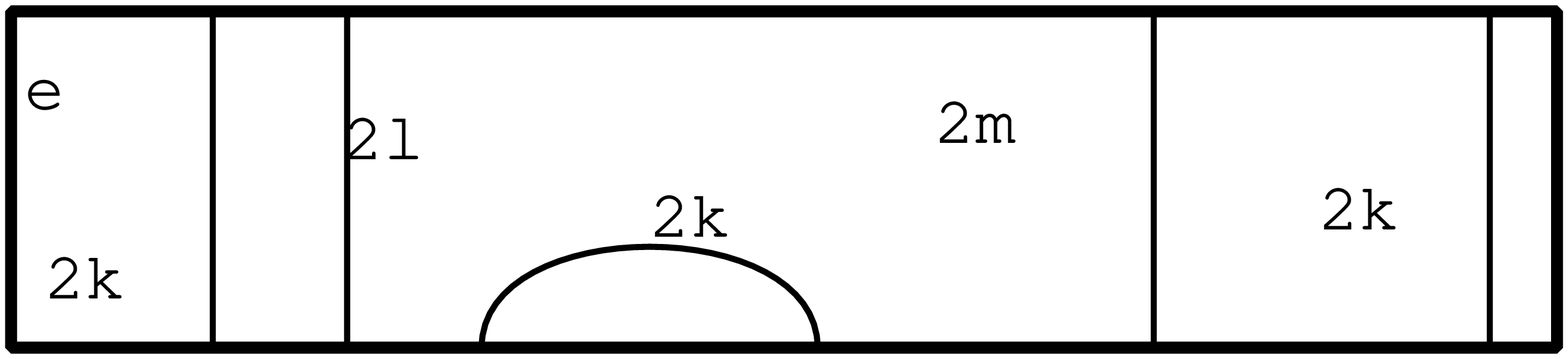}
}}
{V_{u_{\vlon l , 2k}} \xi \; \btimes \;\;\; W_{u_{\vlon m , 2k}} \zeta} \scalebox{2}[4]{)} \! \scalebox{2}[4]{$ \btimes$} \omega
}.
\]
Again, appealing to the proof of Lemma \ref{fuswloglem}, it is easy to verify that
\[
\os {\!\!\!\!\!\!\!\!\!\!P_{
\psfrag{2k}{$2k$}
\psfrag{2l}{$2l$}
\psfrag{2m}{$2m$}
\psfrag{2n}{$2n$}
\psfrag{e}{$\vlon$}
\includegraphics[scale=0.15]{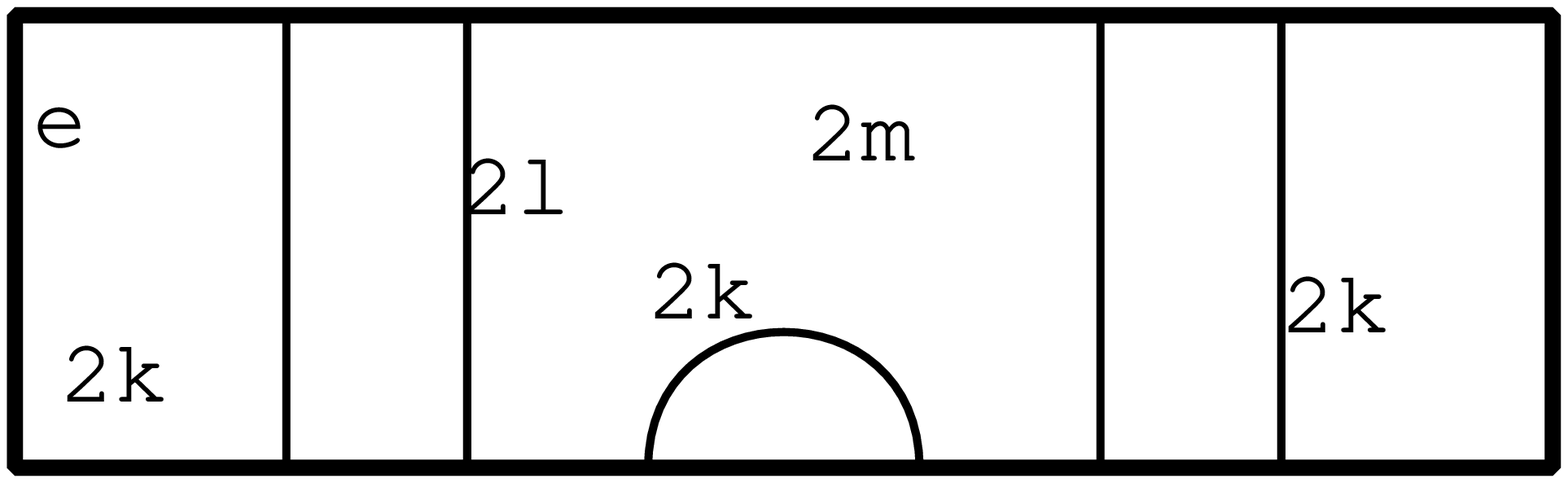}
}}
{V_{u_{\vlon l , 2k}} \xi  \btimes W_{u_{\vlon m , 2k}} \zeta} = (V\btimes W)_{u_{\vlon (l+m), 2k}} \left( \xi \os {1_{P_{\vlon 2(l+m)}}}\btimes \zeta \right).
\]
So, we finally get
\begin{align*}
\left( \omega  \os{1_{P_{\vlon 2(k+l)}}} \btimes \xi  \right)\os x \btimes  \zeta
\os {\left( \t{id}_V \btimes c_{U,W} \right) \circ \left( c_{U,V} \btimes \t{id}_W \right)} \longmapsto & \os{\;\;\;\;\;\;\;\;\;\;\;\;\;\;\;\;\;\;\;\;\;\;\;\;\;\;\;\;\;\;\;\;\;\;\;\;\;P_{
\psfrag{x}{$x$}
\psfrag{2}{$2(k+l+m)$}
\psfrag{2k}{$2k$}
\psfrag{2m}{$2m$}
\psfrag{2n}{$2n$}
\psfrag{e}{$\vlon$}
\includegraphics[scale=0.15]{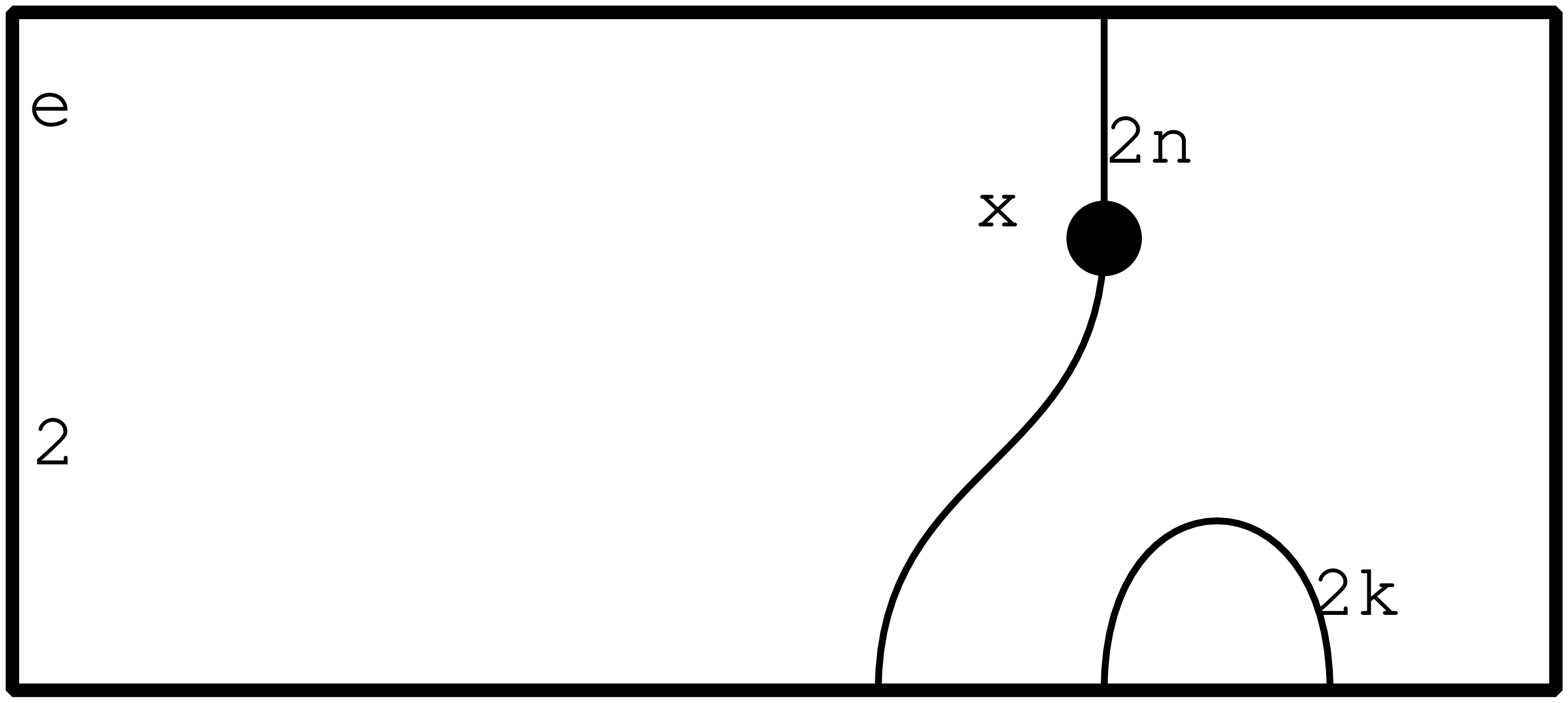}
}} {(V\btimes W)_{u_{\vlon (l+m), 2k}} \left( \xi \os {1_{P_{\vlon 2(l+m)}}}\btimes \zeta \right) \scalebox{1.5}[3]{$\btimes$} \omega}\\
= & \; c_{U,V \btimes W} \left( \omega \os x  \btimes \left( \xi \os{1_{P_{\vlon 2(l+m)}}} \btimes \zeta \right) \right). 
\end{align*}
\end{proof}

\section{Walker's conjecture}\label{conj}
We will first describe the Drinfeld center of the $N$-$N$-bimodule category appearing in the standard invariant of an extremal finite index subfactor $N \subset M$.
Thereafter, we will build a functor from this center to the locally finite Hilbert affine $P$-modules where $P$ is the subfactor planar algebra associated to $N \subset M$.
After restriction of the co-domain with some finiteness criterion, this functor becomes an equivalence.

We begin with setting up some notations.
Let $f \us{m}{\odot} g := P_{
\psfrag{f}{$f$}
\psfrag{g}{$g$}
\psfrag{2k-m}{$2k-m$}
\psfrag{2l-m}{$2l-m$}
\psfrag{m}{$m$}
\psfrag{e}{$\vlon$}
\includegraphics[scale=0.15]{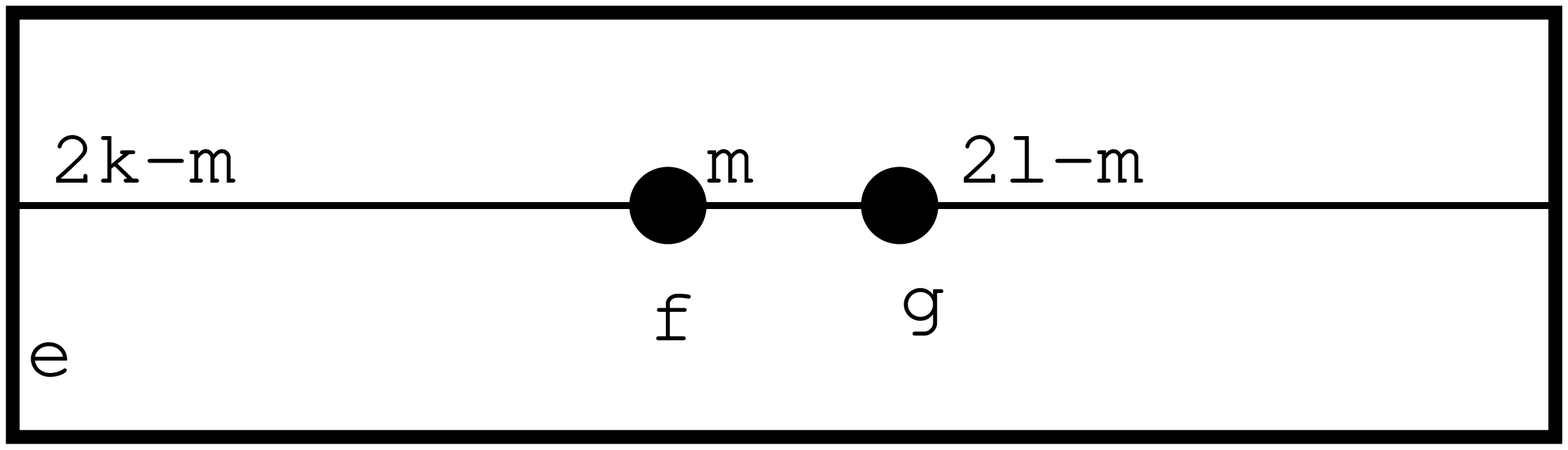}
} \in P_{\vlon (k+l-m)}$ for $f \in P_{\vlon k}$, $g\in P_{\vlon l}$, and  $m \leq \t{min}\{k,l\}$.
Instead of the $N$-$N$-bimodule category, we will work with an equivalent category $\mcal C$ defined by:

\noindent {\em Object:} $\t{ob} (\mcal C) := \{ p \in \mscr P (P_{+2k} ) : k \in \N_0\}$,

\noindent {\em Morphisms:} $\mcal C (p,q) := \left\{ f \in P_{+(k+l)} : f \us{2k}{\odot} p = f = q \us{2l}{\odot} f \right\}$ where $p \in \mscr P (P_{+2k})$, $q \in \mscr P (P_{+2l})$,

\noindent {\em Composition of morphisms:} $f \circ g := f \us{2l}{\odot} g \in \mcal C (p,r)$ where $f \in \mcal C(q,r)$, $g \in \mcal C(p,q)$, $p \in \mscr P (P_{+2k})$, $q \in \mscr P (P_{+2l})$, $r \in \mscr P (P_{+2m})$,

\noindent {\em Identity morphism:} $p$ is the identity of $\mcal C (p,p)$ where $p \in \mscr P (P_{+2k})$.

Note that $*$-structure on $P$ induces a $*$ structure on $\mcal C$.
We will always assume $\delta := \sqrt{ [M:N]} >1$ so that direct sums exists in $\mcal C$ and thereby avoiding any extra linearization of $\mcal C$. Thus, $\mcal C$ becomes a semisimple $\C$-linear $*$-category.

We consider the monoidal structure on $\mcal C$ given by the obvious functor $\otimes : \mcal C \times \mcal C \ra \mcal C$ ($p \otimes q := P_{
\psfrag{p}{$p$}
\psfrag{q}{$q$}
\psfrag{2k}{$2k$}
\psfrag{2l}{$2k$}
\psfrag{2m}{$2l$}
\psfrag{2n}{$2l$}
\psfrag{+}{$+$}
\includegraphics[scale=0.15]{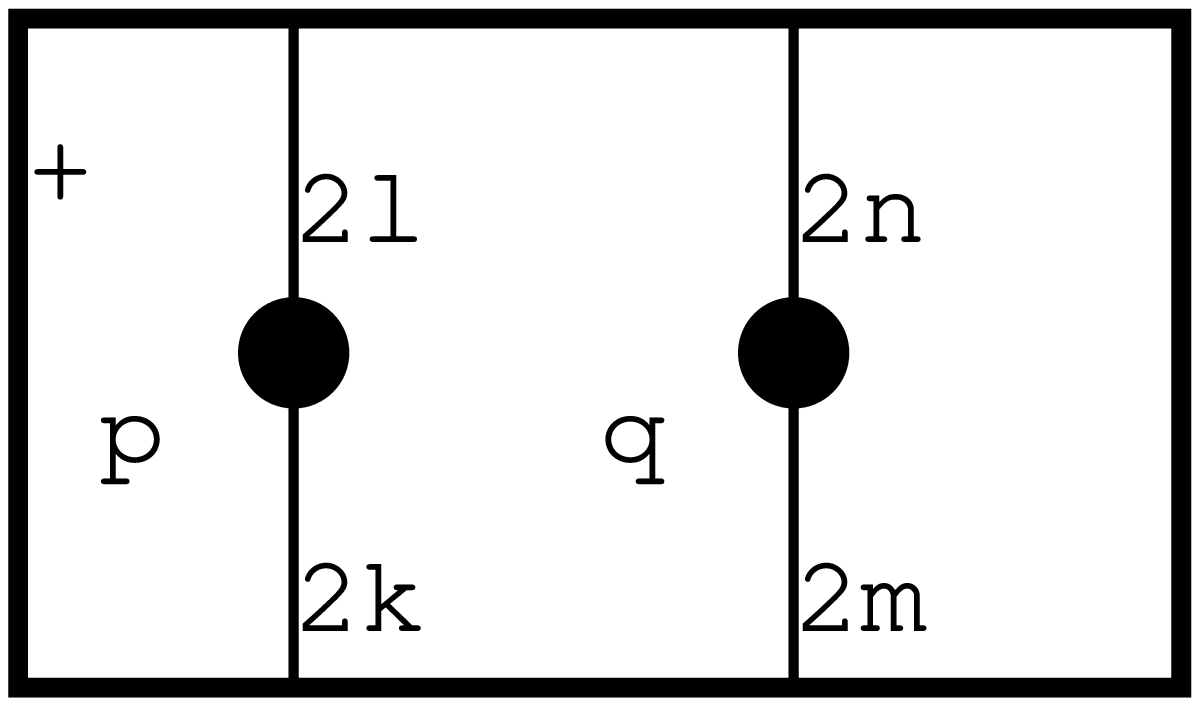}
} \in \mscr P (P_{+2(k+l)})$ and $f \otimes g := P_{
\psfrag{p}{$f$}
\psfrag{q}{$g$}
\psfrag{2k}{$2k$}
\psfrag{2l}{$2l$}
\psfrag{2m}{$2m$}
\psfrag{2n}{$2n$}
\psfrag{+}{$+$}
\includegraphics[scale=0.15]{figures/z2ap/poq.eps}
} \in \mcal C (p \otimes q, r\otimes s)$ for $p \in \mscr P (P_{+2k}),q \in \mscr P (P_{+2l}),r \in \mscr P (P_{+2m}), s \in \mscr P (P_{+2n}), f \in \mcal C(p,q), g \in \mcal C(r,s)$) where the unit object is $1_{\mcal C} := 1_{P_{+0}}$, and the associative and unit constraints are trivial (and thereby, $\mcal C$ becomes strict).

We now proceed towards the center of $\mcal C$, which we denote by $\mcal {ZC}$.
(See any standard textbook (for instance, \cite{Kas}) for the general definition.)
Objects of $\mcal {ZC}$ are pairs $(p,c)$'s for $p \in \t{ob} (\mcal C)$ and natural isomorphism $c : \cdot \otimes p \ra p \otimes \cdot$ satisfying $c_{q \otimes r} = (c_q \otimes 1_r) \circ (1_q \otimes c_r)$ for all $q,r \in \t{ob} (\mcal C)$ and $c_{1_{P_{+0}}} = p$.
Since we are working with a $*$-category, it will be relevant to consider unitary commutativity constraints  (see \cite{Mug}); henceforth, all our commutativity constraints will be natural unitaries.
{(In fact, M\"{u}ger had considered the unitary Drinfeld center in \cite[$\S 6$]{Mug} and had proved that for a fusion category, the unitary Drinfeld center is  equivalent to the usual Drinfeld center.)
}Such a $c$ will be referred as {\em unitary commutativity constraint}. Morphism $f \in \mcal {ZC} ((p,c),(q,d))$ is an element  $f \in \mcal C (p,q)$ which is compatible with $c$ and $d$, that is,
\begin{equation}\label{fcd}
(f \otimes 1_r) \circ c_r = d_r \circ (1_r \otimes f) \t{ for all } r \in \t{ob}(\mcal C).
\end{equation}
\begin{rem}\label{cprop}
Using naturality, we can rebuild any commutativity constraint $c : \cdot \otimes p \ra p \otimes \cdot$ only from the information $c_{2m} := c_{1_{+2m}}$ for all $m \in \N_0$ where $1_{+2m}$ is the identity of $P_{+2m}$; in this way, we get an element  $\{ c_{2m} \}_{m \in \N_0}$ of $CC_{+k,+k}$ (defined in Proposition \ref{cma}).
Further, by the relation $c_{q \otimes r} = (c_q \otimes 1_r) \circ (1_q \otimes c_r)$ for all $q,r \in \t{ob} (\mcal C)$, it is enough to know $c_2$.
In particular, $c_{2(l+m)} = P_{
\psfrag{cl}{$c_{2l}$}
\psfrag{cm}{$c_{2m}$}
\psfrag{+}{$+$}
\psfrag{2k}{$2k$}
\psfrag{2l}{$2l$}
\psfrag{2m}{$2m$}
\includegraphics[scale=0.15]{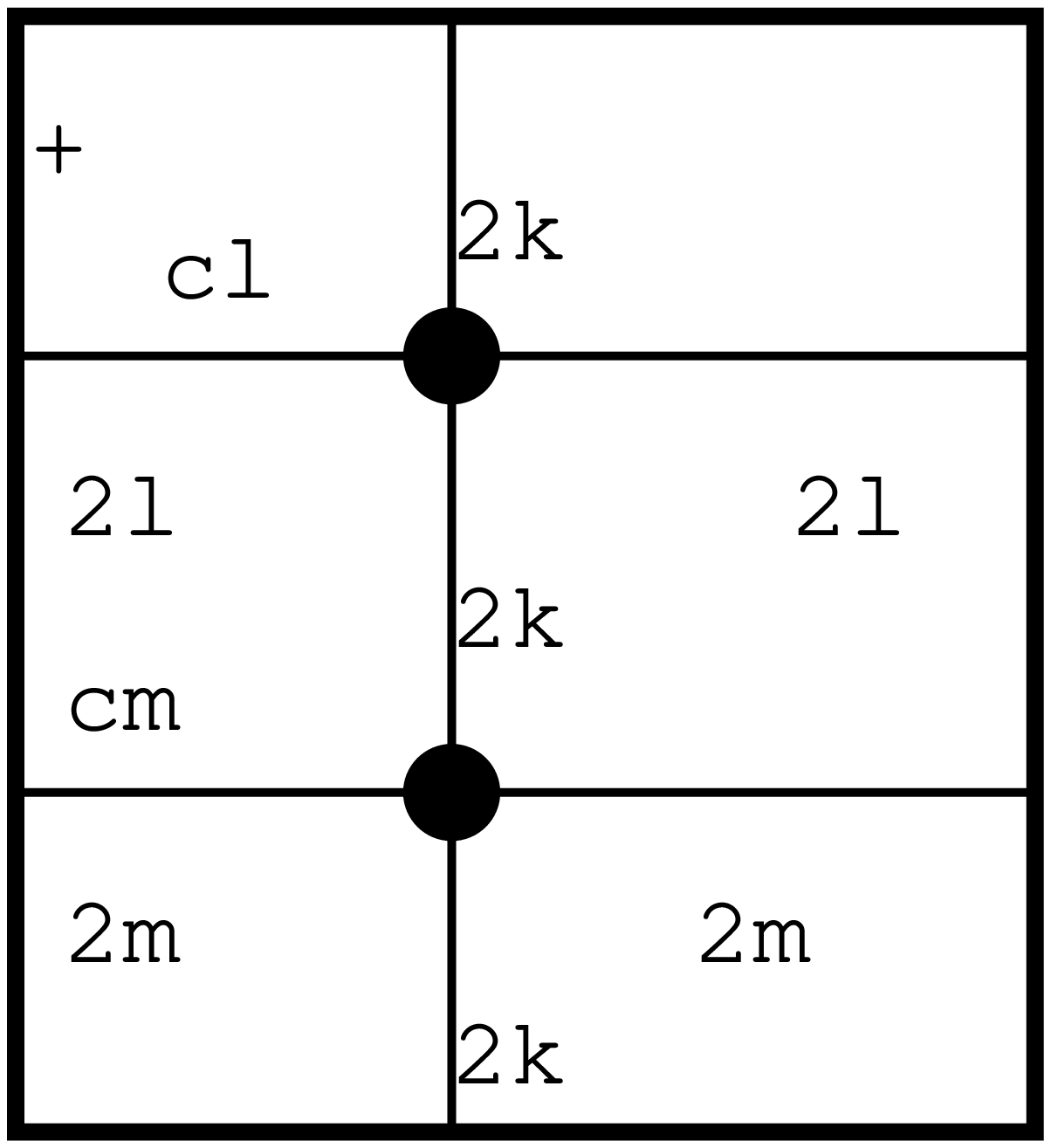}
}$ for all $l,m \in \N_0$; this relation will be referred as {\em grouping relation}.
Using (i) grouping relation for $l=m$, (ii) unitarity of $c_{2l} \in \mcal C (1_{+2l} \otimes p , p \otimes 1_{+2l})$ and (iii) $c_0 = p$, we get $c_{2l} = P_{
\psfrag{cl}{$c^*_{2l}$}
\psfrag{+}{$+$}
\psfrag{2k}{$2k$}
\psfrag{2l}{$2l$}
\includegraphics[scale=0.15]{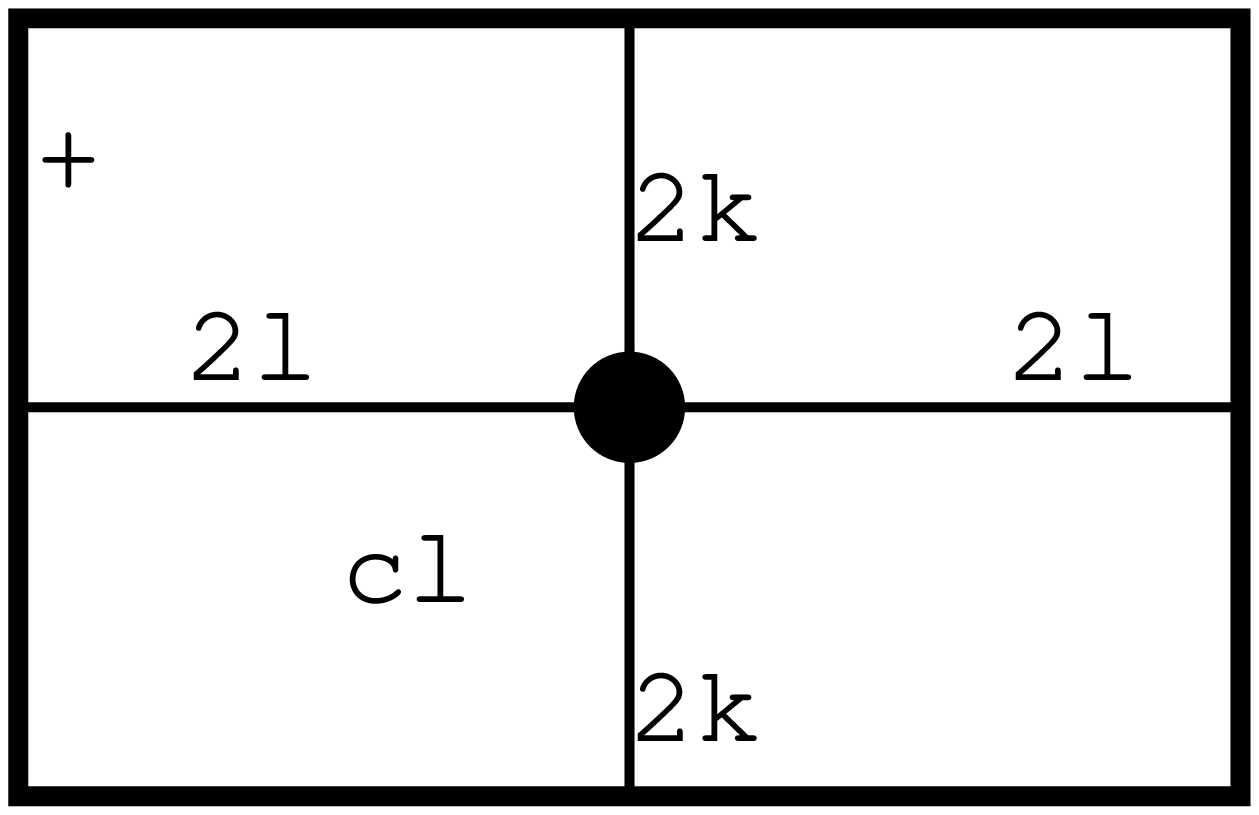}
}$ (referred as {\em $*$-relation}) for $l\in \N_0$.
{Conversely, for any $d \in CC_{+l , +l}$ satisfying the grouping and $*$-relations, $d_0 \in \mscr P (P_{+2l})$ and $d$ extends to a unique unitary commutativity constraint $d : \cdot \otimes d_0 \ra d_0 \otimes \cdot$.}
\end{rem}
We now look at the tensor structure on $\mcal {ZC}$. If $(p,c),(q,d) \in \t{ob} (\mcal{ZC})$ where $p\in \mscr P (P_{+2k})$ and $q\in \mscr P (P_{+2l})$, then set $e_{2m} := P_{
\psfrag{c}{$c_{2m}$}
\psfrag{d}{$d_{2m}$}
\psfrag{+}{$+$}
\psfrag{2k}{$2k$}
\psfrag{2l}{$2l$}
\psfrag{2m}{$2m$}
\psfrag{2n}{$2n$}
\includegraphics[scale=0.15]{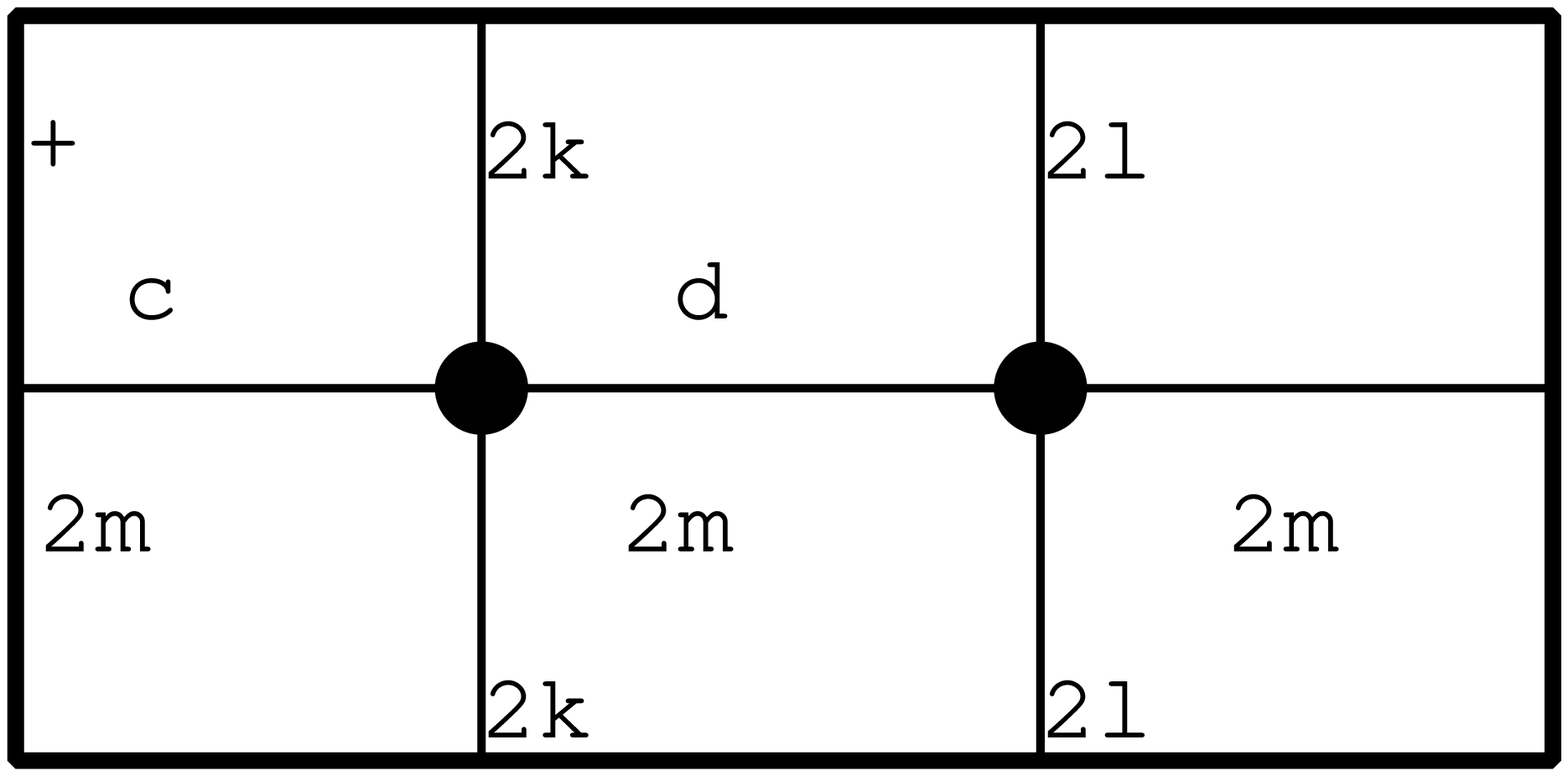}
}$.
Note that $e_{2m}$'s give rise to a commutativity constraint $e : \cdot \otimes (p\otimes q) \ra (p\otimes q) \otimes \cdot$ and thereby, $(p\otimes q , e) \in \t{ob} (\mcal{ZC})$.
\label{ZCtensor} Define $(p,c) \otimes (q,d) := (p\otimes q, e)$.
In this way, $\otimes$ gives a bifunctor on $\mcal{ZC}$.
The identity object of $\mcal{ZC}$ is  $(1_{\mcal C}, i)$ where $i_{2k} := 1_{P_{+2k}}$ for all $k\in \N_0$.
The associativity and unit constraints will be trivial for $\otimes$.
So, $\mcal{ZC}$ is strict.

Fix $(p,c) \in \t{ob} (\mcal {ZC})$.
Suppose $p \in \mscr P (P_{+2k})$.
We will construct a $+$-affine $P$-module $V$.

\noindent {\em The vector spaces:} Set 
\comments{
$V_{-0} := \t{Range } P_{
\psfrag{p}{$p$}
\psfrag{+}{$+$}
\psfrag{-}{$-$}
\psfrag{2k}{$2k$}
\psfrag{2k-2}{$2k\!-\!2$}
\includegraphics[scale=0.15]{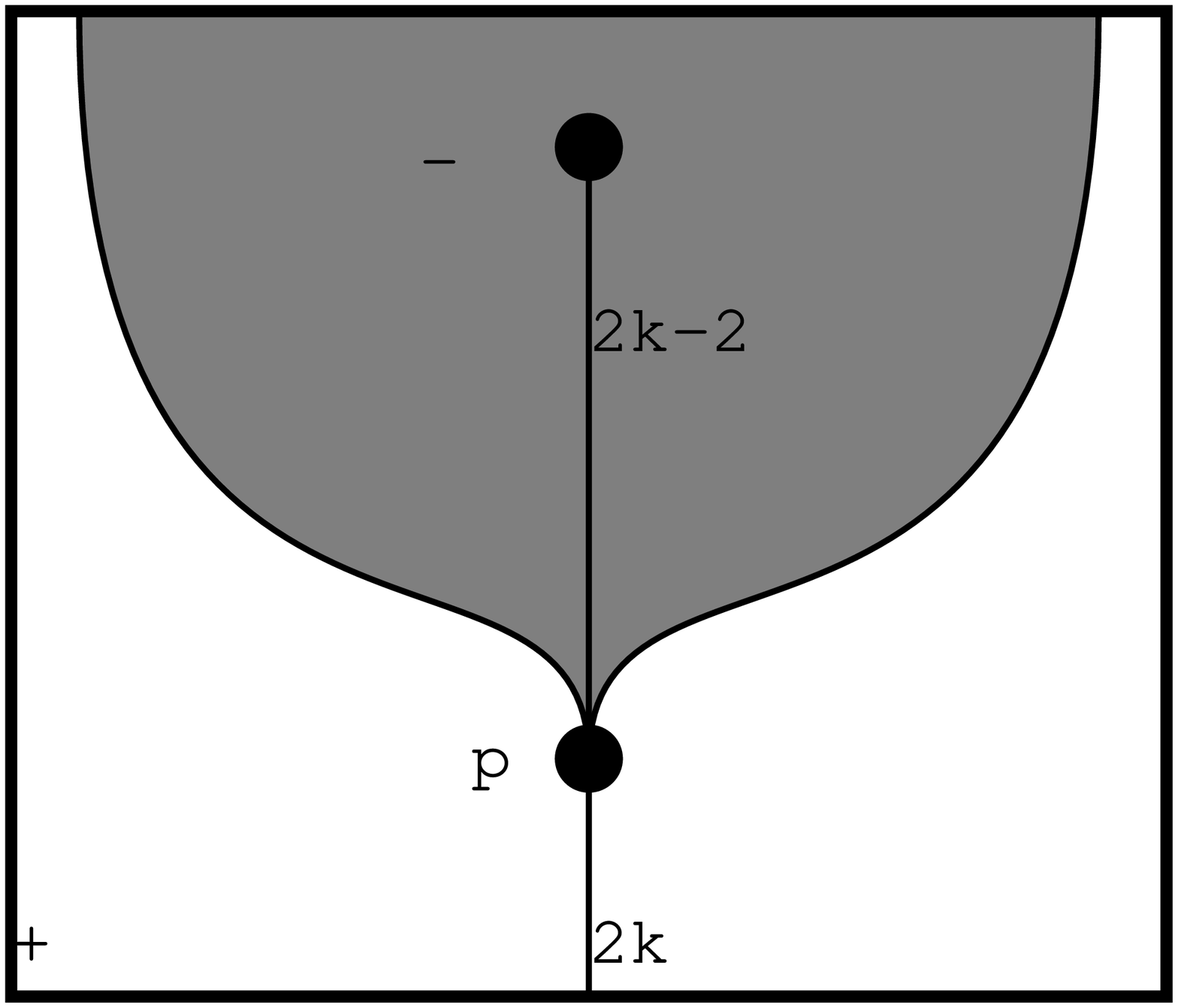}
}$
}
$V_{-0} := \t{span}\{ \us{l\in \N}{\cup} \t{Range } P_{
\psfrag{c}{$c_{2l}$}
\psfrag{+}{$+$}
\psfrag{-}{$-$}
\psfrag{2k}{$2k$}
\psfrag{2l-1}{$2l-1$}
\includegraphics[scale=0.15]{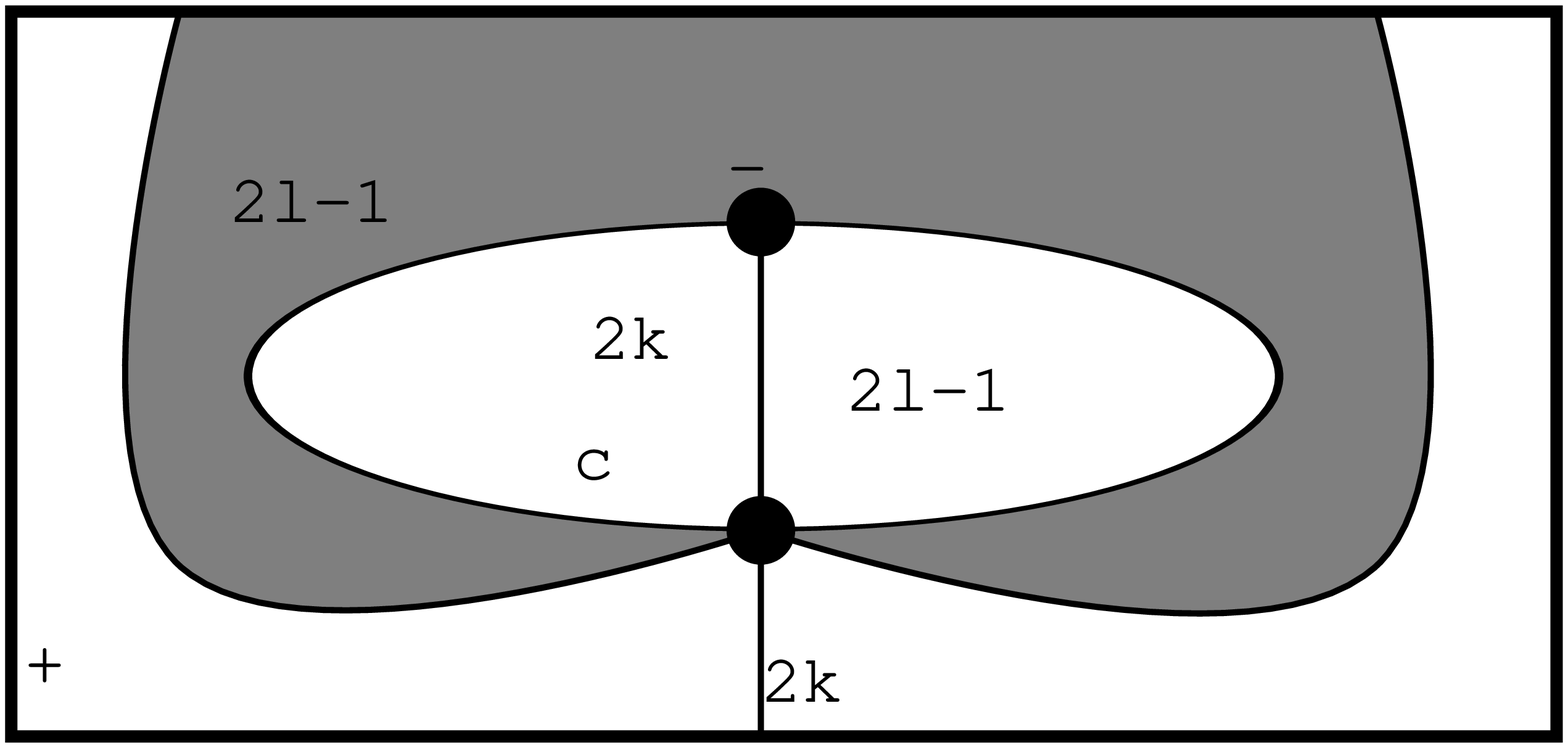}
}\}$
and $V_{+ l} := P_{+(k+l)} \us {2k} \odot p$ for all $l \in \N_0$.
Note that $V_{-0} \subset V_{+1}$; moreover, decomposing
using group relation of $\{c_{2m}\}_{m\in \N_0}$, it is enough to consider $l=1$ in the definition of $V_{-0}$, that is, $V_{-0} := \t{Range } P_{
\psfrag{c}{$\;c_{2}$}
\psfrag{+}{$+$}
\psfrag{-}{$-$}
\psfrag{2k}{$2k$}
\psfrag{2l-1}{}
\includegraphics[scale=0.15]{figures/z2ap/v-0.eps}
}$.

In order to define the action of affine morphism on these vector spaces, we will now go over some prerequisites.
\begin{lem}
For all $a = \psi^{2n}_{+l,+m} (x) \in AP_{+l,+m}$, $v \in V_{+l}$, $l,m \in \N_0$, the element $a \tr v := P_{
\psfrag{c}{$c_{2n}$}
\psfrag{+}{$+$}
\psfrag{x}{$x$}
\psfrag{v}{$v$}
\psfrag{2k}{$2k$}
\psfrag{2l}{$2l$}
\psfrag{2m}{$2m$}
\psfrag{2n}{$2n$}
\includegraphics[scale=0.15]{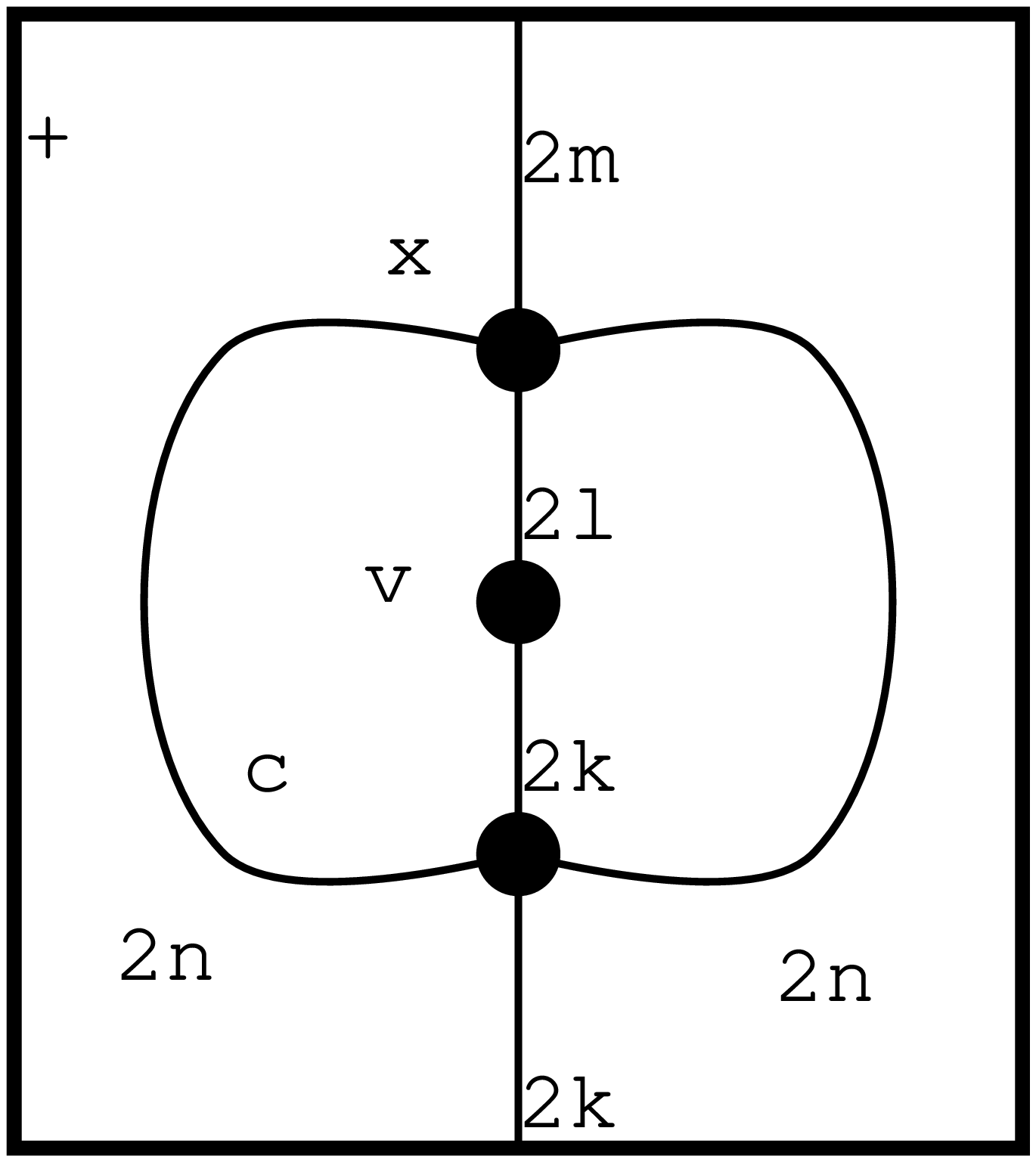}
}$ in $V_{+m}$ is independent of the choice of $n \in \N_0$ and $x \in P_{l+m+2n}$. 
\end{lem}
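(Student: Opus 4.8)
The plan is to obtain $a \tr v$ as the image of $a$ under the map induced by Remark \ref{apmap}, thereby reducing the asserted independence to the two hypotheses of that remark. Fix $v \in V_{+l}$ and define a map $f_v : \mcal S_{+l,+m} \ra V_{+m}$ as follows: for $T \in \mcal T_{+(l+m+2n)}(P)$, let $f_v(T)$ be the element obtained from the picture in the statement by using the label $P_T$ in place of $x$ and the exponent $2n$ determined by the level of $T$. Then $f_v(I_{+(l+m+2n)}(x))$ is exactly the picture defining $a \tr v$, because $P_{I(x)} = x$. By Remark \ref{apmap}, once $f_v$ is shown to be (i) invariant under $\sim$ and (ii) compatible with the action of $P$, it descends to a well-defined linear map $AP_{+l,+m} \ni \psi^{2n}_{+l,+m}(x) \mapsto f_v(I(x)) \in V_{+m}$; this map is precisely $a \mapsto a \tr v$, so its well-definedness is exactly the assertion of the lemma.

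Condition (ii) is immediate. The displayed picture is a single $P$-labelled affine tangle into whose distinguished disc the label $P_T$ is inserted multilinearly; hence $\sum_i \alpha_i f_v(T_i)$ is the same picture carrying the label $\sum_i \alpha_i P_{T_i}$, and this vanishes whenever $\sum_i \alpha_i P_{T_i} = 0$.

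The essential point is condition (i). The equivalence $\sim$ is generated by the move that converts a level-$m'$ representative into a level-$n'$ one by capping off an auxiliary tangle $S$ of $\frac{m'+n'}{2}$ strings that encircles the annulus. Performing this move inside $f_v(T)$ drags the winding strings across the region occupied by $p$ and the inserted commutativity constraints, and this is exactly where the Drinfeld-center structure of $(p,c)$ enters. By Remark \ref{cprop} the family $\{c_{2n}\}_n$ belongs to $CC_{+k,+k}$ and satisfies the grouping relation, which together say that a capped $S$-labelled string of color $+\frac{m'+n'}{2}$ may be transported from one side of $p$ to the other at the cost of replacing $c_{n'}$ by $c_{m'}$. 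Splitting the constraints according to the grouping relation and then passing the capped label through $p$ via the defining equation of $CC_{+k,+k}$ (Proposition \ref{cma}) transforms the level-$m'$ picture into the level-$n'$ one, so $f_v$ respects the generator of $\sim$, and hence all of $\sim$.

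This matching of the topological winding move against the half-braiding relations of $c$ is the main obstacle; the remaining membership $a \tr v \in V_{+m} = P_{+(k+m)} \us{2k}\odot p$ is routine, following from $v \us{2k}\odot p = v$ together with the fact that the inserted $c_{2n}$ carries $p$ on the relevant outer boundary, so that the output is $p$-absorbing.
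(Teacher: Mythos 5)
Your proposal is correct and takes essentially the same route as the paper's proof, which likewise defines the map $\mcal T_{+(l+m+2n)}(P) \ni T \mapsto$ (the picture with $P_T$ inserted) on $\mcal S_{+l,+m}$, deduces invariance under $\sim$ from naturality of $c$ (which is precisely the $CC_{+k,+k}$ membership you cite via Remark \ref{cprop}), verifies the kernel-preservation condition, and concludes by Remark \ref{apmap}. The only cosmetic difference is that your additional appeal to the grouping relation is not strictly needed: the defining equation of $CC_{+k,+k}$ alone, applied with $w = P_S$ for the encircling tangle $S$, handles the generating move of $\sim$.
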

\begin{proof}
Consider the map $\mcal S_{+l ,+m} \supset \mcal T_{+(l+m+2n)} (P) \ni T \os{\vphi}{\mapsto} P_{
\psfrag{c}{$c_{2n}$}
\psfrag{+}{$+$}
\psfrag{x}{$\!\!\!\!P_T$}
\psfrag{v}{$v$}
\psfrag{2k}{$2k$}
\psfrag{2l}{$2l$}
\psfrag{2m}{$2m$}
\psfrag{2n}{$2n$}
\includegraphics[scale=0.15]{figures/z2ap/a.v.eps}
} \in V_{+m}$.
Naturality of $c$ implies that $\vphi$ is invariant under $\sim$.
Thus, by Remark \ref{apmap}, we have a well defined linear map from $\mcal A_{+l, +m} (P) \ni A \os{\tilde \vphi}{\mapsto} P_{
\psfrag{c}{$c_{2n}$}
\psfrag{+}{$+$}
\psfrag{x}{$\!\!\!\!P_X$}
\psfrag{v}{$v$}
\psfrag{2k}{$2k$}
\psfrag{2l}{$2l$}
\psfrag{2m}{$2m$}
\psfrag{2n}{$2n$}
\includegraphics[scale=0.15]{figures/z2ap/a.v.eps}
} \in V_{+m}$ where $A = \Psi^{2n}_{+l,+m} (X)$ for some $n \in \N_0$, $X \in \mcal P_{l+m+2n} (P)$.
Also, $\tilde \vphi (A) = 0$ whenever $P_X = 0$.
Hence, we have the required result.
\end{proof}
Let $d$ \label{d} denote the affine tangle
\psfrag{+}{$+$}
\psfrag{-}{$-$}
\includegraphics[scale=0.15]{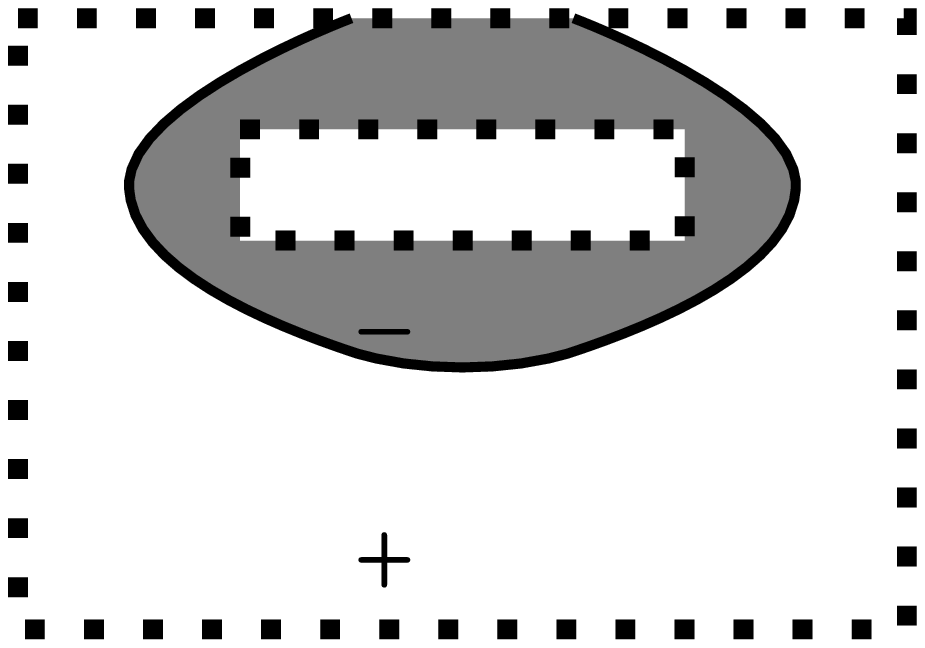}
$: -0 \ra +1$.

\noindent {\em Action of affine morphisms:}
\begin{align*}
(a) & \; AP_{+l,+m} \times V_{+l} \ni (a,v) \mapsto V_a(v) := a \tr v \in V_{+m} \t{ for all } l,m \in \N_0,\\
(b) & \; AP_{+l,-0} \times V_{+l} \ni (a,v) \mapsto V_a(v) := \delta^{-1/2} (d \circ a) \tr v \in V_{-0} \t{ for all } l \in \N_0,\\
(c) & \; AP_{-0,+l} \times V_{-0} \ni (a,v) \mapsto V_a(v) := \delta^{-1/2} (a \circ d^*) \tr v \in V_{+l} \t{ for all } l \in \N_0,\\
(d) & \; AP_{-0,-0} \times V_{-0} \ni (a,v) \mapsto V_a(v) := \delta^{-1} (d \circ a \circ d^*) \tr v \in V_{-0}.
\end{align*}
In (b), (c) and (d), we use the fact $V_{-0} \subset V_{+1}$.
\begin{lem}
The action preserves composition.
\end{lem}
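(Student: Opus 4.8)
The plan is to reduce the statement to two isolated facts and then run through a short case analysis. Write $\tr$ for the operation introduced in the preceding well-definedness lemma, which acts on $AP_{+l,+m}\times V_{+l}\ra V_{+m}$ for $l,m\in\N_0$. A composable pair of affine morphisms $a\colon c_1\ra c_2$, $a'\colon c_2\ra c_3$ has each of $c_1,c_2,c_3$ either of the form $+l$ (call it \emph{plus type}, abbreviated $P$) or equal to $-0$ (abbreviated $M$), so that verifying $V_{a'\circ a}=V_{a'}\circ V_a$ splits into the eight cases indexed by the types of $(c_1,c_2,c_3)$. First I would isolate the two ingredients that make every case go through: \textbf{(A)} the operation $\tr$ is compatible with composition of plus-type affine morphisms, i.e. $(a'\circ a)\tr v=a'\tr(a\tr v)$ for all $a\in AP_{+l,+m}$, $a'\in AP_{+m,+p}$ and $v\in V_{+l}$; and \textbf{(B)} the affine-tangle identity $d^*\circ d=\delta\,\t{id}_{-0}$ in $AP_{-0,-0}$.

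For \textbf{(A)} I would argue exactly as in the preceding lemma. Writing $a=\psi^{2n}_{+l,+m}(x)$ and $a'=\psi^{2n'}_{+m,+p}(x')$, the vector $a'\tr(a\tr v)$ is a closed $P$-labelled picture carrying the two commutativity constraints $c_{2n'}$ (outer) and $c_{2n}$ (inner); composition of affine tangles plugs $a$ into $a'$, so that $a'\circ a=\psi^{2(n+n')}_{+l,+p}(x'')$ where $x''$ is obtained by gluing $x$ and $x'$ along $2m$ strands. Naturality of $c$ lets me drag the gluing strands and the label $x$ through the constraint, and the grouping relation recombines the two nested constraints $c_{2n}$ and $c_{2n'}$ into the single constraint $c_{2(n+n')}$ appearing in $(a'\circ a)\tr v$. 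This is the step I expect to be the main obstacle, since it is the only genuinely topological computation and requires keeping the constraint bookkeeping straight. For \textbf{(B)}, a direct inspection of the tangle $d\colon -0\ra +1$ shows that composing it with its adjoint $d^*\colon +1\ra -0$ glues the two along their common $+1$-boundary and closes a single loop; the loop contributes the modulus $\delta$, leaving $A1_{-0}$, whence $d^*\circ d=\delta\,\t{id}_{-0}$.

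With \textbf{(A)} and \textbf{(B)} available, I would finish by the case analysis, using the inclusion $V_{-0}\subset V_{+1}$ silently so that any intermediate vector produced by clause (b) or (d) is a legitimate argument for the plus-type action. When the intermediate object $c_2$ is of plus type (the cases $(P,P,P),(P,P,M),(M,P,P),(M,P,M)$), no factor $d^*\circ d$ ever arises: each composite is rewritten using \textbf{(A)} alone, and the normalising powers of $\delta^{\pm1/2}$ and $\delta^{-1}$ carried by (b),(c),(d) already combine to precisely the power demanded by the clause defining $V_{a'\circ a}$. When $c_2=-0$ (the cases $(P,M,P),(P,M,M),(M,M,P),(M,M,M)$), the composite $V_{a'}\circ V_a$ produces the factor $a'\circ(d^*\circ d)\circ a$ sandwiched inside a single $\tr$, and \textbf{(B)} replaces $d^*\circ d$ by $\delta\,\t{id}_{-0}$, contributing one extra $\delta$ that upgrades the product of normalising constants to the prescribed one.

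Concretely, in the representative case $(P,M,P)$ (where $a\in AP_{+l,-0}$, $a'\in AP_{-0,+p}$, so $a'\circ a\in AP_{+l,+p}$ and clauses (b),(c),(a) apply), combining the two $\delta^{-1/2}$ prefactors, then \textbf{(A)}, then \textbf{(B)} gives
\[
V_{a'}(V_a(v))=\delta^{-1}\big(a'\circ d^*\circ d\circ a\big)\tr v=\delta^{-1}\cdot\delta\,(a'\circ a)\tr v=(a'\circ a)\tr v=V_{a'\circ a}(v),
\]
using linearity of $a\mapsto a\tr v$ in the middle equality. The remaining three $M$-intermediate cases are verbatim the same after inserting the appropriate $\delta^{-1/2}$ (from (b) and (c)) or $\delta^{-1}$ (from (d)) prefactors around the $d,d^*$ occurrences and absorbing the $\delta$ supplied by \textbf{(B)}; in each the surviving power of $\delta$ matches the clause governing $V_{a'\circ a}$, completing the verification that the action preserves composition.
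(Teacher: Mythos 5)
Your proof is correct and follows essentially the same route as the paper: the plus-type compositions are handled via naturality of $c$ and the grouping relation, and the compositions through $-0$ are reduced to plus-type ones using $d^*\circ d=\delta\, A1_{-0}$, exactly as in the paper's computation $V_a(V_b(v))=\delta^{-1}(a\circ d^*\circ d\circ b)\tr v=(a\circ b)\tr v$. You merely make explicit (the contractible-loop identity for $d^*\circ d$ and the eight-case bookkeeping) what the paper leaves implicit.
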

\begin{proof}
Using the {grouping relation given in}  Remark \ref{cprop}(i), it is easy to verify, $V_{a \circ b} (v) = V_a (V_b (v))$ for $a \in AP_{+m,+n}$, $b\in AP_{+l,+m}$ and $v \in V_{+l}$ where $l,m,n \in \N_0$.
Similar arguments would work for other cases of composition over $+$-signed colors.

We now consider a composition of over a $-$-signed color.
Suppose $a \in AP_{-0,+m}$, $b \in AP_{+l , -0}$ for $l,m\in \N_0$ and $v \in V_{+l}$. Observe that $V_a( V_b (v)) = \delta^{-1} (a \circ d^* \circ d \circ b) \tr v = (a \circ b) \tr v = V_{a \circ b} (v)$.
Other instances of composition over $-$-signed colors can be deduced in the same way.
\end{proof}
Thus, $V$ becomes a locally finite affine $P$-module.
We equip $V_{+l}$ with the inner product $\lab \cdot , \cdot \rab_V$, induced by the picture trace on $P_{+(k+l)}$ and $V_{-0}$ with that on $P_{+(k+1)}$.
\begin{lem}
The action preserves $*$.
\end{lem}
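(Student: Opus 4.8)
The plan is to reduce the whole statement to the single case of the action of $AP_{+l,+m}$ (definition (a) of the action), and then to prove that case by a pictorial computation.

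First I would note that cases (b), (c), (d) follow formally from case (a). The inner product on $V_{-0}$ is, by construction, the restriction of the one on $V_{+1}$ (both induced by the picture trace, on $P_{+(k+1)}$), and the actions in (b)--(d) are obtained from the $+$-color action $\tr$ by pre- and post-composing with the fixed affine tangles $d$ and $d^*$ and the scalars $\delta^{-1/2},\delta^{-1}$. For example, for $a\in AP_{+l,-0}$, $\xi\in V_{-0}\subset V_{+1}$ and $\zeta\in V_{+l}$, using $(d\circ a)^*=a^*\circ d^*$,
\[
\lab \xi , V_a\zeta \rab = \delta^{-1/2} \lab \xi , (d\circ a)\tr \zeta\rab = \delta^{-1/2}\lab (a^*\circ d^*)\tr \xi , \zeta\rab = \lab V_{a^*}\xi , \zeta\rab ,
\]
where the middle equality is the case-(a) statement applied to $d\circ a\in AP_{+l,+1}$ and the last uses $V_{a^*}\xi=\delta^{-1/2}(a^*\circ d^*)\tr\xi$ (definition (c)). The case $AP_{-0,+l}$ is the adjoint of this, and $AP_{-0,-0}$ follows identically from $(d\circ a\circ d^*)^*=d\circ a^*\circ d^*$ together with the scalar $\delta^{-1}$. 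Hence only case (a) remains.

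For case (a), fix $a=\psi^{2n}_{+l,+m}(x)$ with $x\in P_{+(l+m+2n)}$; I would prove the stronger operator identity $V_a^*=V_{a^*}$, where $V_a^*$ is the Hilbert-space adjoint of the insertion operator $a\tr(-)\colon V_{+l}\to V_{+m}$, since this is equivalent to $\lab\xi,V_a\zeta\rab=\lab V_{a^*}\xi,\zeta\rab$. Because the inner products on the $V_{+l}$ are restrictions of the picture traces and $a\tr(-)$ is defined by inserting the labels $c_{2n}$, $x$ (and the copies of $p$ absorbed by the vectors) into a fixed planar picture, its adjoint for these traces is computed by reflecting the labelled picture, using sphericity of $P$. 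Reflecting the picture of $a\tr(-)$ turns the affine part into the picture of $a^*=\psi^{2n}_{+m,+l}(\tilde x)$, where $\tilde x$ is the inside-out reflection of $x$ defining $*$ on affine tangles, and turns the single box $c_{2n}$ into $c_{2n}^*$. Invoking the $*$-relation of Remark \ref{cprop} rewrites $c_{2n}^*$ as a box $c_{2n}$ on the correct side, and the defining equation of $c\in CC_{+k,+k}$ (naturality) lets me isotope $p$ into place, so that the reflected picture is exactly the one defining $a^*\tr(-)$; this yields $V_a^*=V_{a^*}$.

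I expect the main obstacle to be precisely the bookkeeping around the constraint box: one must check that after reflecting the defining picture and applying the $*$-relation, the box $c_{2n}$ reappears on exactly the side it occupies in the definition of $a^*\tr(-)$, with the distinguished boundary segments matching and $x$ correctly converted to $\tilde x$. I would control this by tracking the distinguished boundary segment of each disc through the reflection and normalising the constraint boxes via the grouping and $*$-relations of Remark \ref{cprop}.
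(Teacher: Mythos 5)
Your proposal is correct and follows essentially the same route as the paper: the paper proves the $+$-colored case by exactly your pictorial computation (definition of the trace inner product, the $*$-relation of Remark \ref{cprop} applied to the constraint box, and sphericality of $P$), and then disposes of the $-0$-colored cases by the same formal reduction you give, e.g. $\lab w , V_a v \rab = \delta^{-1/2} \lab w , (d \circ a) \tr v \rab = \delta^{-1/2} \lab (a^* \circ d^*) \tr w , v \rab = \lab V_{a^*} w , v \rab$ for $a \in AP_{+l,-0}$. The only difference is the order of the two steps, which is immaterial.
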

\begin{proof}
For $a = \psi^{2n}_{+l,+m} (x) \in AP_{+l,+m}$, $v \in V_{+l}$, $w\in V_{+m}$, note that
\[
\lab w , a \tr v \rab_V = P_{
\psfrag{c}{$c^*_{2n}$}
\psfrag{+}{$+$}
\psfrag{x}{$x$}
\psfrag{v}{$v$}
\psfrag{w}{$w^*$}
\psfrag{2k}{$2k$}
\psfrag{2l}{$2l$}
\psfrag{2m}{$2m$}
\psfrag{2n}{$2n$}
\includegraphics[scale=0.15]{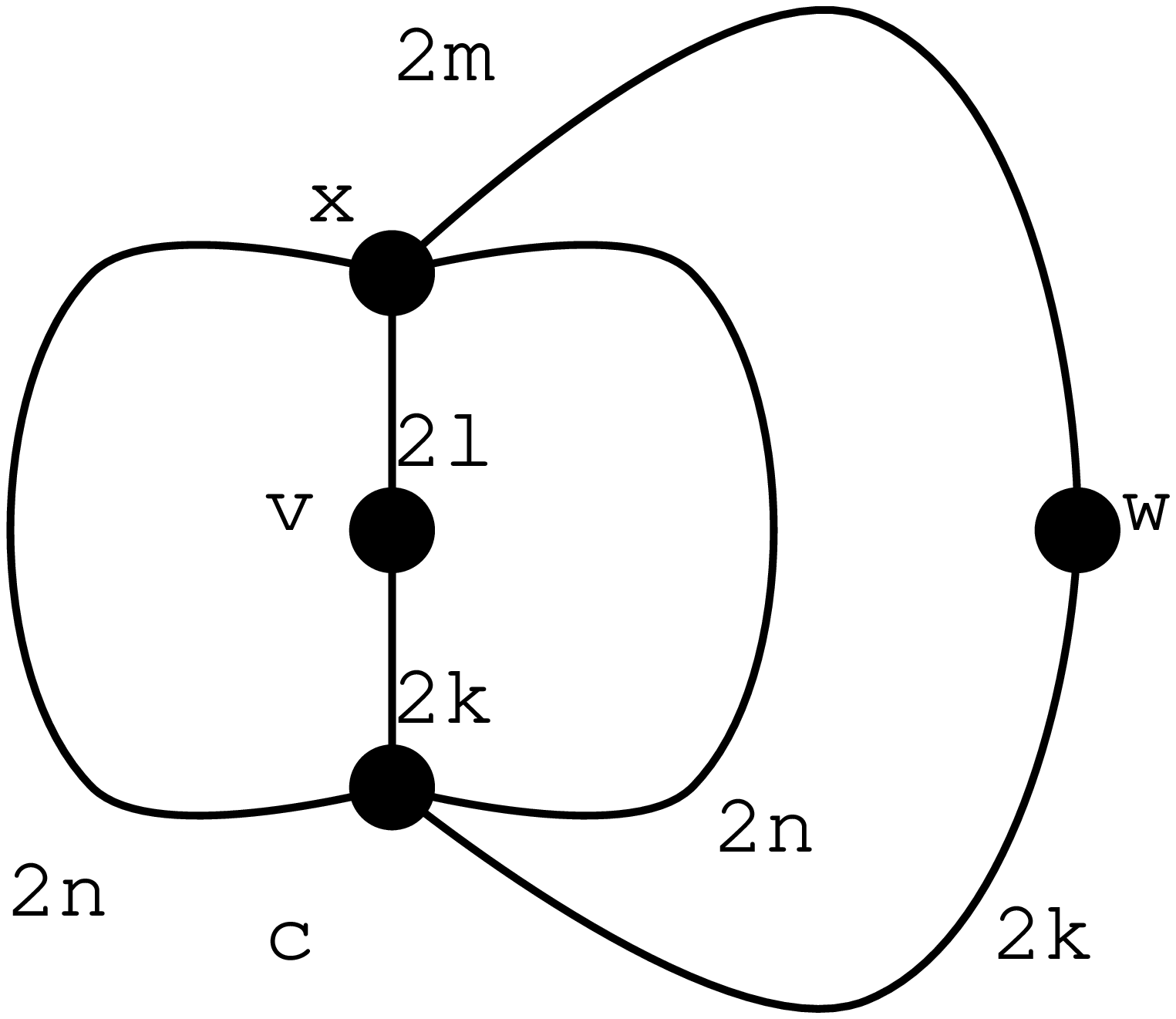}
} = P_{
\psfrag{c}{$c^*_{2n}$}
\psfrag{+}{$+$}
\psfrag{x}{$x$}
\psfrag{v}{$v$}
\psfrag{w}{$w^*$}
\psfrag{2k}{$2k$}
\psfrag{2l}{$2l$}
\psfrag{2m}{$2m$}
\psfrag{2n}{$2n$}
\includegraphics[scale=0.15]{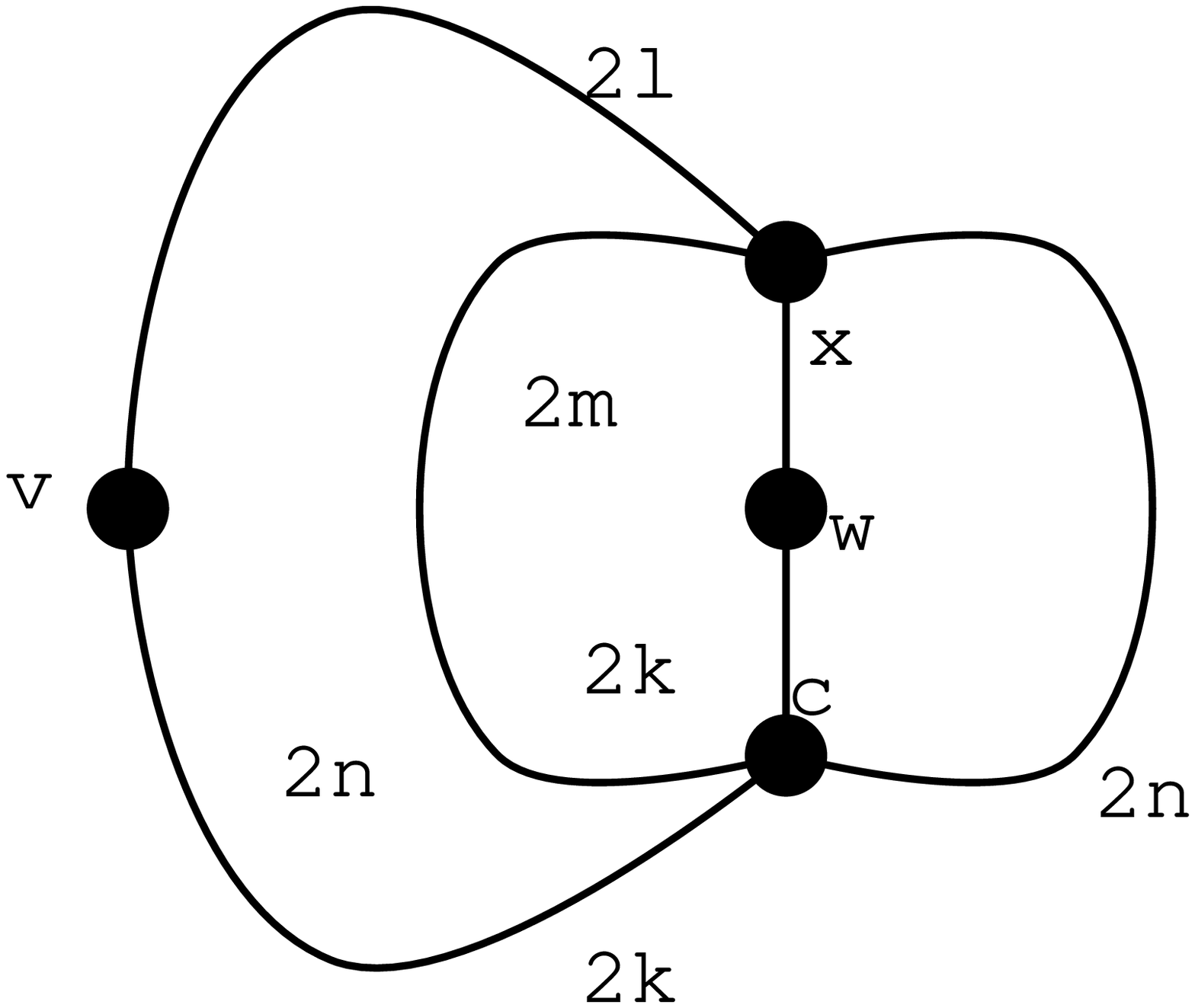}
} = \lab \psi^{2n}_{+m,+l} (x^*) \tr w , v \rab_V = \lab a^* \tr w , v \rab_V
\]
where the first equality follows from the definition of $\lab \cdot , \cdot \rab_V$ and the second identity in Remark \ref{cprop}, and in the second equality, using sphericality of $P$. This proves $\lab w , a \tr v \rab_V = \lab a^* \tr w , v \rab_V$ for both input and output colors of $a$ being $+$-signed.

We will only show one case of the input and output colors of $a$ being $-$-signed and other can be deduced in the same method. For $a \in AP_{+l ,-0}$, we get
\[
\lab w , a \tr v \rab_V = \delta^{-1/2} \lab w , (d \circ a) \tr v \rab_V = \delta^{-1/2} \lab (a^* \circ d^*) \tr w , v \rab_V = \lab a^* \tr w , v \rab_V.
\]
\end{proof}
This makes $V$ into a locally finite Hilbert $+$-affine $P$-module.
Henceforth, we will denote $V$ by $V(p,c)$.
\begin{rem}\label{ccipvpc}
{The $CC$-valued inner products of $V(p,c)$ are given by}:
\begin{align*}
& c_{2n}(w , v) = P_{
\psfrag{c}{$c_{2n}$}
\psfrag{+}{$+$}
\psfrag{v}{$v$}
\psfrag{w}{$w^*$}
\psfrag{2k}{$2k$}
\psfrag{2l}{$2l$}
\psfrag{2m}{$2m$}
\psfrag{2n}{$2n$}
\includegraphics[scale=0.15]{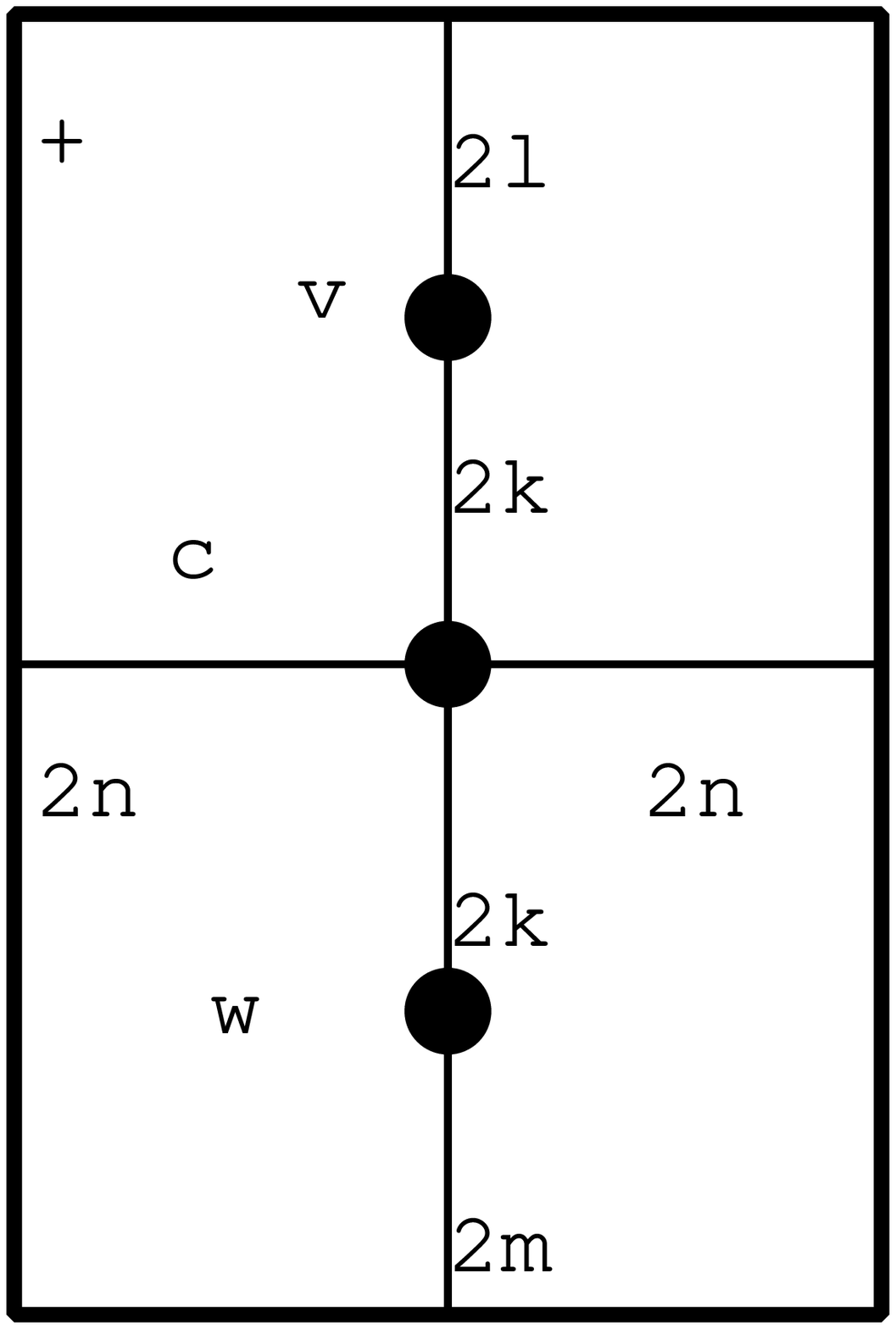}
}, && c_{2n-1}(w , x) = \delta^{ - \frac 1 2} P_{
\psfrag{c}{$c_{2n}$}
\psfrag{+}{$-$}
\psfrag{x}{$x$}
\psfrag{w}{$w^*$}
\psfrag{2k}{$2k$}
\psfrag{2l}{$2l$}
\psfrag{2m}{$2m$}
\psfrag{2n}{$2n\!-\!1$}
\includegraphics[scale=0.15]{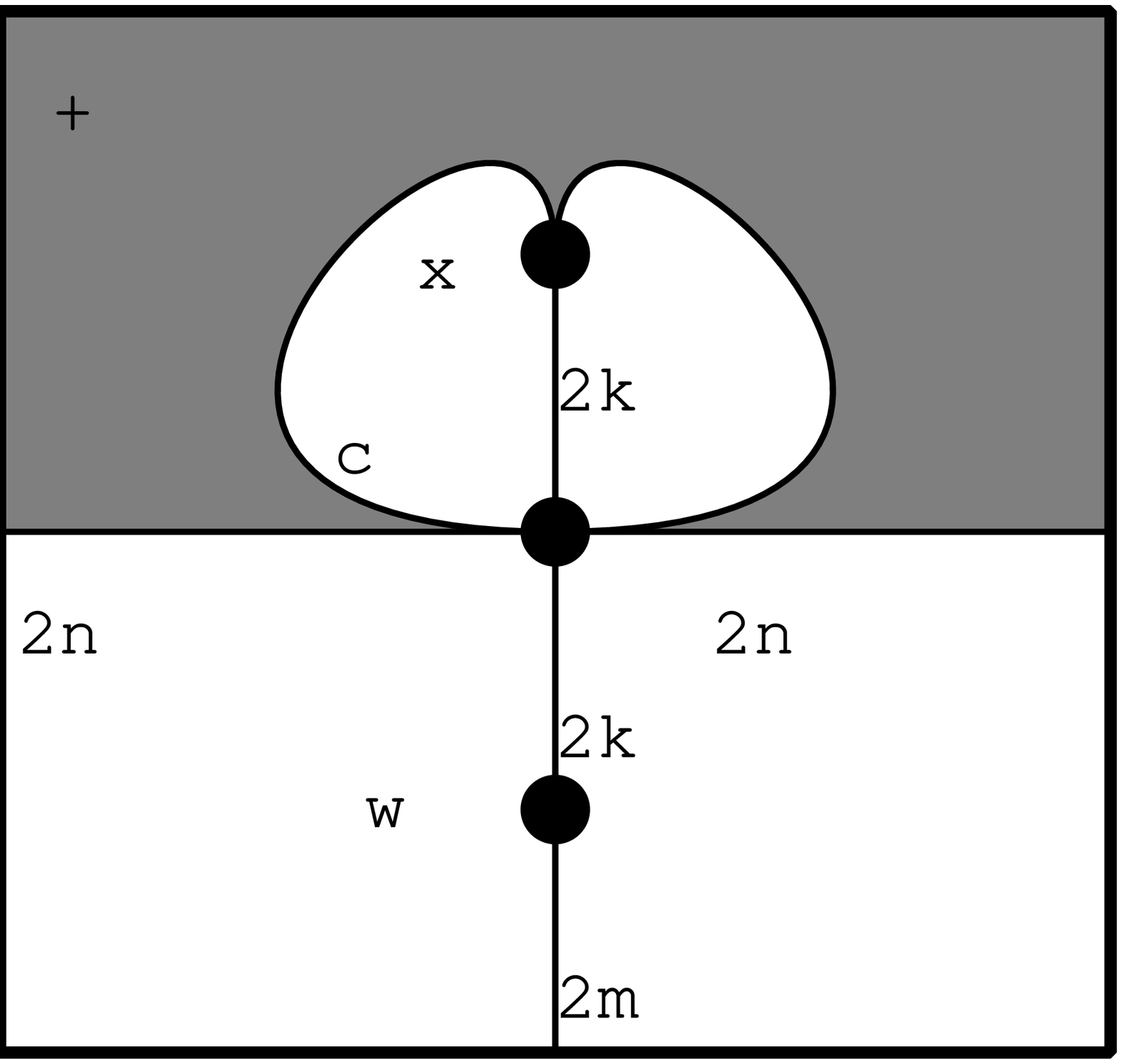}
},\\
& c_{2n-1}(y , v) = \delta^{ - \frac 1 2} P_{
\psfrag{c}{$c_{2n}$}
\psfrag{+}{$+$}
\psfrag{v}{$v$}
\psfrag{y}{$y^*$}
\psfrag{2k}{$2k$}
\psfrag{2l}{$2l$}
\psfrag{2m}{$2m$}
\psfrag{2n}{$2n\!-\!1$}
\includegraphics[scale=0.15]{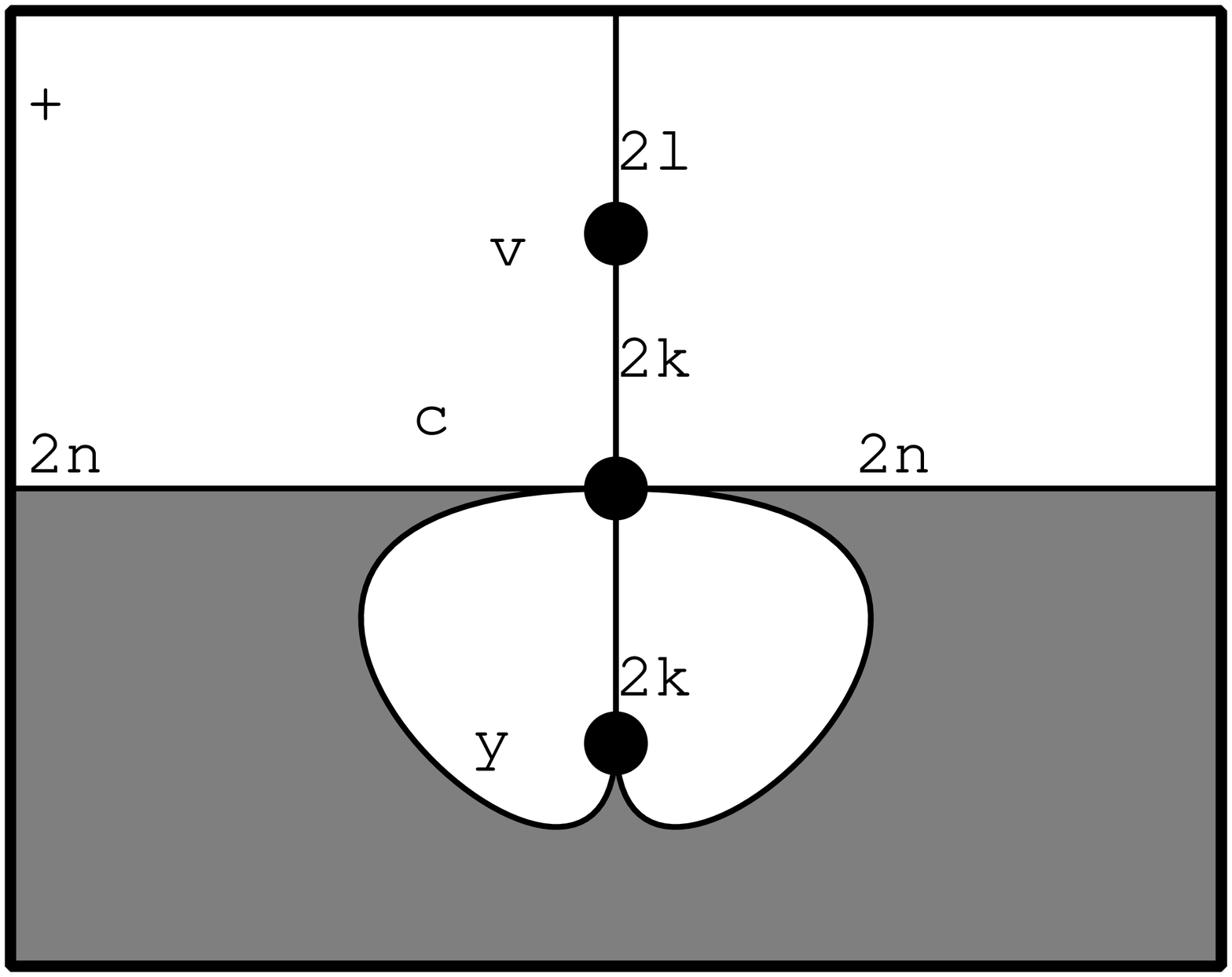}
}, && c_{2n-2}(y , x) = \delta^{ - 1} P_{
\psfrag{c}{$c_{2n}$}
\psfrag{+}{$-$}
\psfrag{x}{$x$}
\psfrag{y}{$y^*$}
\psfrag{2k}{$2k$}
\psfrag{2l}{$2l$}
\psfrag{2m}{$2m$}
\psfrag{2n}{$2n\!-\!2$}
\includegraphics[scale=0.15]{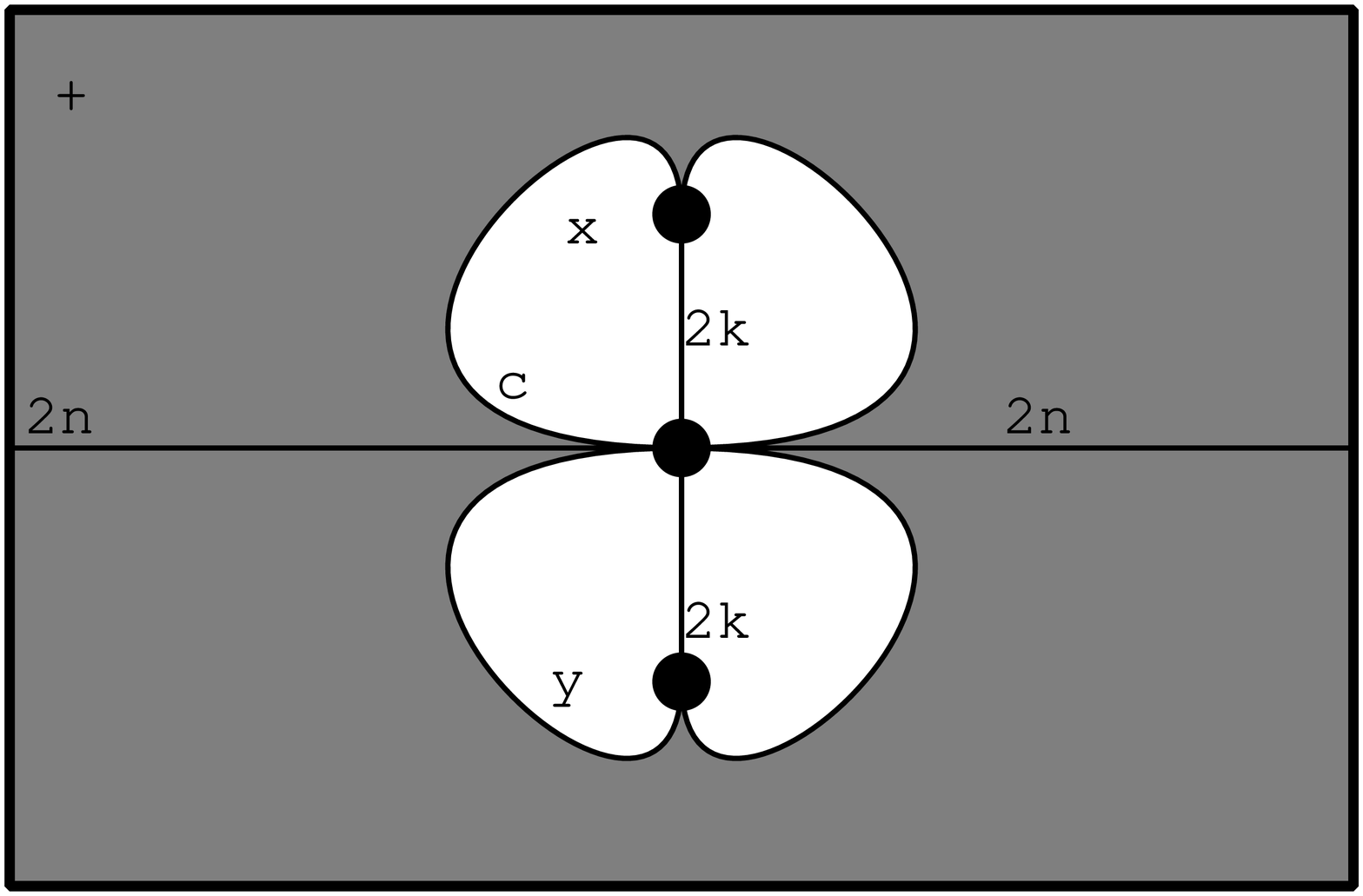}
},
\end{align*}
for all $v\in V_{+2l} (p,c)$, $w \in V_{+2m} (p,c)$, $x,y \in V_{-0} (p,c)$.
\end{rem}
\begin{lem}\label{fusbasis}
Suppose $(p,c) , (q,d) \in\t{ob}(\mcal{ZC})$.
If $(r,e) := (p,c) \otimes (q,d)$ (as defined on Page \pageref{ZCtensor}) and $W:= V(p,c) \btimes V(q,d)$, then
$
W_{+m} = \{ p \os x \btimes q : x \in V_{+m} (r,e) \}$  and $ W_{-0} =\{p \os a \btimes q : a \circ \psi^0_{+(k+l),+(k+l)} (r) = a \in \t{Range } \psi^1_{+(k+l) , -0} \}.
$
Thus, $W$ becomes locally finite.
\end{lem}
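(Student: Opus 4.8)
The plan is to fix the target colour and exhibit, for each $m$, an isometric isomorphism between a weight space of the module $V(r,e)$ and a dense (hence, by finite dimensionality, the whole) weight space of $W=V(p,c)\btimes V(q,d)$, implemented by $x\mapsto p\os{x}{\btimes} q$. Two structural facts drive everything. First, since $c_0=p$ and $d_0=q$, the modules are \emph{generated by their distinguished projections}: using the action formula $a\tr v$ with $c_{2n}$ inserted, one reads off $V_{+s}(p,c)=\{V_{\psi^0_{+k,+s}(x_1)}\,p:x_1\in P_{+(k+s)}\}$ (as this is exactly $P_{+(k+s)}\us{2k}{\odot}p$), and likewise $V_{+s}(q,d)=\{V_{\psi^0_{+l,+s}(x_2)}\,q\}$. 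Second, by Remark \ref{ccipvpc} the level-zero $CC$-inner products of the generators are $c_0(p,p)=p$ and $c_0(q,q)=q$.

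For $W_{+m}$ I would first prove surjectivity of $x\mapsto p\os{x}{\btimes}q$ onto $W^o_{+m}$. By the Remark following Proposition \ref{ipsd}, $W^o_{+m}=\tilde U_{+m}$ is spanned by elements $\xi\os{x'}{\btimes}\zeta$ with $\xi\in V_{+s}(p,c)$, $\zeta\in V_{+s}(q,d)$, $x'\in P_{+(2s+m)}$. Writing $\xi=V_{\psi^0_{+k,+s}(x_1)}\,p$ and $\zeta=V_{\psi^0_{+l,+s}(x_2)}\,q$ and invoking the balancing identities of Lemma \ref{ccprop}(iii),(iv) exactly as in the proofs of Lemma \ref{fuswloglem} and Lemma \ref{wlogsamekl}, I can transport $x_1,x_2$ into the central slot, collapsing $\xi\os{x'}{\btimes}\zeta$ to a single $p\os{x}{\btimes}q$ with $x\in P_{+(k+l+m)}$; summing over the finite spanning set keeps this form. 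Thus every element of $W^o_{+m}$ is of the form $p\os{x}{\btimes}q$.

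The inner product step is then routine: applying the definition of $\lab\cdot,\cdot\rab$ on $U_{+m}$ (equivalently Lemma \ref{fusCCip} with $n_1=n_2=0$) together with $c_0(p,p)=p$ and $c_0(q,q)=q$, the scalar
\[
\lab p\os{x_1}{\btimes}q\,,\,p\os{x_2}{\btimes}q\rab
\]
is the closed diagram obtained from $x_1^{*}$ and $x_2$ by capping $2k$ strings through $p$, $2l$ strings through $q$, and the remaining $2m$ strings directly. This is precisely the picture-trace pairing of $x_1,x_2$ inside $V_{+m}(r,e)=P_{+(k+l+m)}\us{2(k+l)}{\odot}r$ with $r=p\otimes q$. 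Hence the kernel of $x\mapsto p\os{x}{\btimes}q$ is $\{x:x\us{2(k+l)}{\odot}r=0\}$, and the map descends to an isometric isomorphism $V_{+m}(r,e)\os{\sim}{\ra}W^o_{+m}$, giving $W^o_{+m}=\{p\os{x}{\btimes}q:x\in V_{+m}(r,e)\}$. As $V(r,e)$ is locally finite, $V_{+m}(r,e)$ is finite dimensional, so $W^o_{+m}$ is already complete and equals $W_{+m}$.

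For $W_{-0}$ the same three moves apply after inserting the affine tangle $d:-0\ra+1$: the reduction produces elements $p\os{a}{\btimes}q$ with $a\in AP_{+(k+l),-0}$, the $c_{2n-1}$-formulas of Remark \ref{ccipvpc} compute the inner products, and the image is identified with $V_{-0}(r,e)\subset V_{+1}(r,e)$. The reduction forces $a$ into $\t{Range }\psi^1_{+(k+l),-0}$ (minimal level, matching the single cap of $d$), while the condition $a\circ\psi^0_{+(k+l),+(k+l)}(r)=a$ is the exact analogue of $x\us{2(k+l)}{\odot}r=x$ from the $+m$ case. Finite dimensionality of $V_{-0}(r,e)$ yields the stated description of $W_{-0}$ and, with the $+m$ case, the local finiteness of $W$. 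I expect the surjectivity/balancing step to be the main obstacle: justifying that the module actions on the outer legs $\xi,\zeta$ can be threaded through the definition of the fusion inner product and into the central affine-morphism leg so as to collapse $\xi\otimes a\otimes\zeta$ to $p\otimes(\cdot)\otimes q$ requires a delicate application of the $CC$-valued relations of Lemma \ref{ccprop}(iii),(iv), and the level/sign bookkeeping is heaviest in the $-0$ case.
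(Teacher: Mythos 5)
Your proposal is correct and follows essentially the same route as the paper: reduce to level-zero spanning elements (Lemma \ref{fuswloglem} / the remark after Proposition \ref{ipsd}), absorb the leg vectors into the middle slot leaving $p$ and $q$ by testing fusion inner products against a spanning set via Lemma \ref{fusCCip} and Remark \ref{ccipvpc} (the paper does this by writing the absorbed element $x'$ explicitly rather than phrasing it as a balancing move), and recover $W_{-0}$ from $W_{+1}$ by composing with the tangle $d$. The isometric identification with $V_{+m}(r,e)$ that you include is not needed for the lemma itself and is carried out in the paper only later, in the proof of the main theorem, but this is a difference of packaging, not of method.
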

\begin{proof}
Consider $\xi \os a \btimes \zeta \in W_{+m}$.
Without loss of generality, (using Lemma \ref{fuswloglem}) we may assume $\xi \in V_{+k'}(p,c)$, $\zeta \in V_{+l'} (q,d)$ and $a = \psi^{0}_{+(k'+l') , +m} (x)$.
Set $x' := P_{
\psfrag{c}{$c_{2n}$}
\psfrag{d}{$d_{2n}$}
\psfrag{+}{$+$}
\psfrag{x}{$x$}
\psfrag{xi}{$\xi$}
\psfrag{zeta}{$\zeta$}
\psfrag{2k}{$2k$}
\psfrag{2k'}{$2k'$}
\psfrag{2l}{$2l$}
\psfrag{2l'}{$2l'$}
\psfrag{2m}{$2m$}
\psfrag{2n}{$2n$}
\includegraphics[scale=0.15]{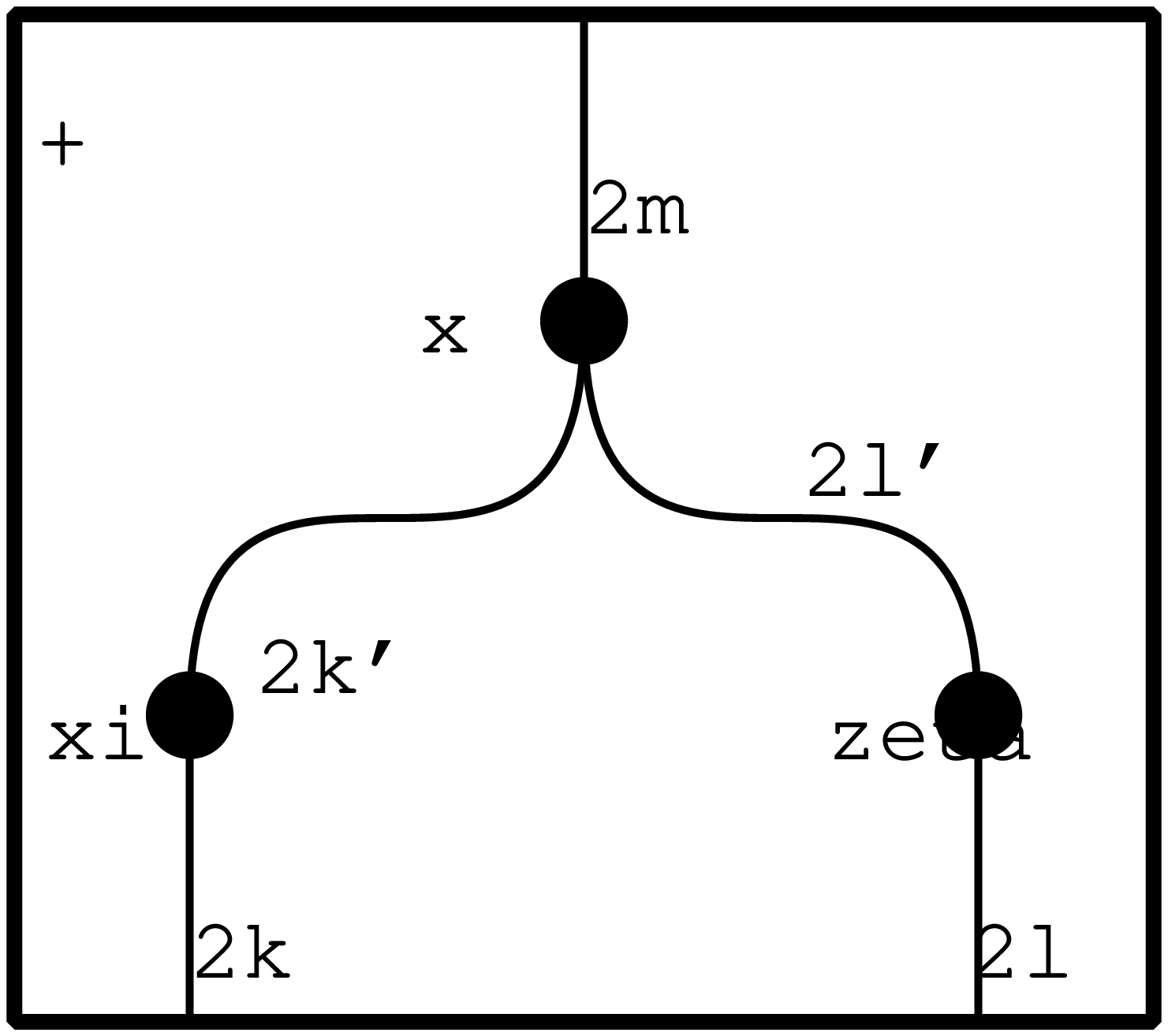}
} \in V_{+m} (r,e)$.
Now, for all $\xi_1 \in V_{+k_1} (p,c)$, $\zeta_1 \in V_{+l_1} (q,d)$, $x_1 = P_{+(k_1+l_1 +m)}$, $k_1,l_1 \in \N_0$, we have
\[
\left\lab \xi_1 \os {x_1} \btimes \zeta_1 , \xi \os a \btimes \zeta \right\rab = P_{\!\!\!\!\!\!\!
\psfrag{c}{$c_{2n\!+\!2n_1}$}
\psfrag{d}{$d_{2n\!+\!2n_1}$}
\psfrag{+}{$+$}
\psfrag{x}{$x$}
\psfrag{x1}{$x^*_1$}
\psfrag{xi}{$\xi$}
\psfrag{xi1}{$\xi^*_1$}
\psfrag{zeta}{$\zeta$}
\psfrag{zeta1}{$\zeta^*_1$}
\psfrag{2k}{$2k$}
\psfrag{2k1}{$2k_1$}
\psfrag{2k'}{$2k'$}
\psfrag{2l}{$2l$}
\psfrag{2l1}{$2l_1$}
\psfrag{2l'}{$2l'$}
\psfrag{2m}{$2m$}
\psfrag{2n}{$2n$}
\psfrag{2n1}{$2n_1$}
\psfrag{2n+2n1}{$2n\!+\!2n_1$}
\includegraphics[scale=0.15]{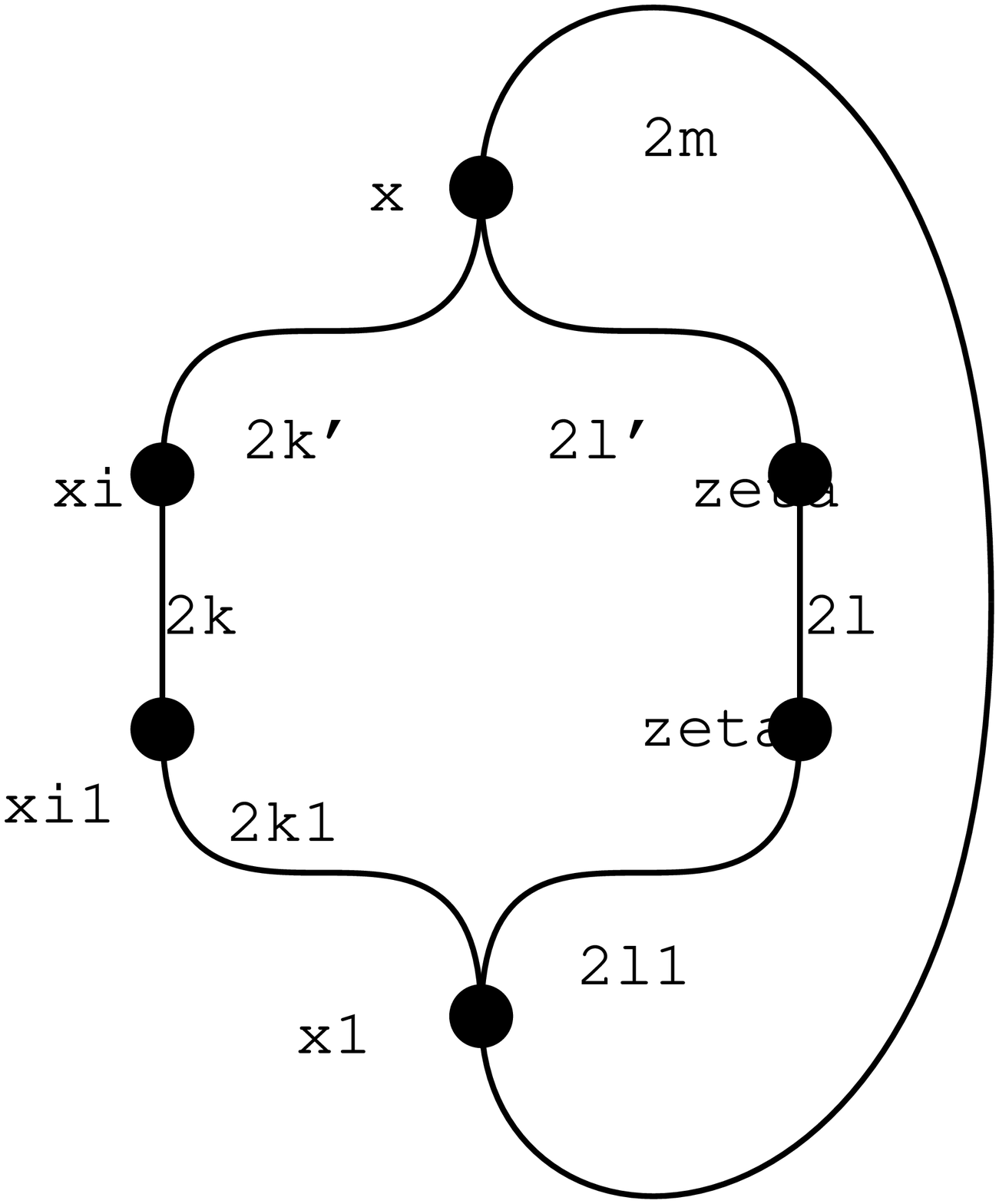}
} = \left\lab \xi_1 \os {x_1} \btimes \zeta_1 , p \os {x'} \btimes q \right\rab
\]
where we use Lemma \ref{fusCCip} and Remark \ref{ccipvpc} to acheive the equalities.
Thus, $\xi \os a \btimes \zeta = p \os {x'} \btimes q$.

We now prove the second part.
We use the affine morphism $d \in AP_{-0,+1}$ (defined on Page \pageref{d}) to get $W_{-0} = W_{d^* \circ d} (W_{-0}) = W_{d^*} (W_{+1}) = \left\{ \left( p \os{d^* \circ b} \btimes q\right) : b = \psi^0_{+(k+l),+1} (x) \t{ for } x \in V_{+1} (r,e) \right\}$.
Clearly, $d^* \circ b \in \t{Range } \psi^1_{+(k+l),-0}$, and using $x \in V_{+1}(r,e) = \left(P_{+(k+l+1)} \us {2k+2l} \odot r\right)$, we get the relation $d^* \circ b \circ \psi^0_{+(k+l),+(k+l)} (r) = d^* \circ b$.
\end{proof}
Related to these modules $V(p,c)$'s, we introduce the following
definition. 
\begin{defn}\label{finPsupp}
A Hilbert affine $P$-module $V$ is said to have {\em finite $P$-support} if there exists $k \in \N_0$ such that $V_{\pm l} = \overline{\t{span } V_{AP^{=0}_{\pm k, \pm l}} (V_{\pm k})}$ for all $l \in \N_0$.
\end{defn}
The condition in the above definition is only relevant when $l > k$ because the equality trivially holds for $l \leq k$.
\begin{rem}\label{findepfinPsupp}
If $\t{depth} (P) < \infty$, then all Hilbert affine $P$-modules have finite $P$-support.
\end{rem}
Theorem 6.11 in \cite{Gho}, tells us that all Hilbert affine $P$-modules can have weight at most half the depth, and thereby, must have finite support when $P$ is finite depth.
However, finite $P$-support is stronger than finite support.
For this, we look at the proof of \cite[Proposition 6.8]{Gho}, {from which we get the identity $AP_{\vlon l , \vlon l} = AP^{=0}_{\vlon k, \vlon l} \circ AP_{\vlon l , \vlon k}$ for all $k > \frac 1 2 \t{depth}(P)$, $l \in \N_0$, $\vlon = \pm$.
So, if $V$ is a Hilbert affine $P$-module, then $V_{\vlon l} = V_{AP^{=0}_{\vlon k , \vlon l}} (V_{\vlon k})$.}

We consider the full subcategory $\mcal D$ of $\mcal HAPM$ whose objects are locally finite and have finite $P$-support.
\begin{rem}
  $\mcal D$ is closed under taking $\btimes$, and hence, it is a braided tensor *-category.
\end{rem}
\begin{lem}
{ $V(p,c)$ has finite $P$-support, that is, $\t{Ind } V(p,c) \in \t{ob} (\mcal D)$ where $\t{Ind}$ is the induction of Hilbert $+$-affine $P$-modules as defined in the proof of Remark \ref{eaff}.}
\end{lem}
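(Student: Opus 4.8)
The plan is to show that $V(p,c)$ has finite $P$-support, taking as the witnessing parameter the very integer $k$ for which $p \in \mscr P(P_{+2k})$; since $V(p,c)$ has already been shown to be a locally finite Hilbert $+$-affine $P$-module, this is all that is needed to conclude $\t{Ind }V(p,c) \in \t{ob}(\mcal D)$. The guiding idea is that $V(p,c)$ is, essentially by construction, generated at level zero from its $\pm k$ components: each $V_{+l}$ is literally an image of $p$ under the level-zero affine morphisms, so the content of the lemma is mostly a matter of organizing this observation across all colours.

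First I would settle the $+$-colours for $l \geq k$. Unwinding the definition of $a \tr v$ for $a = \psi^0_{+k,+l}(x)$ — that is, with internal parameter $n = 0$, so that no commutativity constraint enters the picture — one reads off $V_{\psi^0_{+k,+l}(x)}(v) = x \us{2k}{\odot} v$ for all $x \in P_{+(k+l)}$ and $v \in V_{+k}$. Now $p \in V_{+k} = P_{+2k}\us{2k}{\odot}p$, so taking $v = p$ and letting $x$ range over $P_{+(k+l)}$ produces exactly $P_{+(k+l)}\us{2k}{\odot}p = V_{+l}$ by definition; the reverse inclusion $x \us{2k}{\odot} v \in V_{+l}$ follows from associativity of the gluing $\us{2k}{\odot}$ together with $v \in P_{+2k}\us{2k}{\odot}p$. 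As $AP^{=0}_{+k,+l} = \t{Range }\psi^0_{+k,+l}$ and all the spaces involved are finite dimensional, this yields $V_{+l} = V_{AP^{=0}_{+k,+l}}(V_{+k})$ with no closure required. For $l \leq k$ the required equality is automatic, as already noted after Definition \ref{finPsupp}.

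Next I would transport this to the remaining colours using the induction recipe of Remark \ref{eaff}. For $a \in AP_{-k,-l}$ the action on $(\t{Ind }V)_{-k} = V_k$ is the original $+$-action of the conjugate $r_{-l}\circ a \circ r_{-k}^{-1} \in AP_{+k,+l}$, where each $r_{\nu s}$ is $A1_{\nu s}$ or $AR_{\nu s}$; since the rotation tangles are invertible and lie at level zero (introducing no through-strands or closed loops), conjugation by them carries $AP^{=0}_{-k,-l}$ onto $AP^{=0}_{+k,+l}$, and the $+$-side identity gives $V_{-l} = V_{AP^{=0}_{-k,-l}}(V_{-k})$ for $l \geq 1$. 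The colour $-0$ is the genuinely delicate case, and I expect the real work to sit here: it uses the relation $V_{-0} \subset V_{+1}$ together with the distinguished morphism $d : -0 \to +1$ of Page \pageref{d}. The subtlety is the parity asymmetry of the index sets $\N_{\vlon,\eta}$ (one has $0 \in \N_{-,-}$ but $0 \notin \N_{+,-}$), so that a level-zero morphism $\psi^0_{-k,-0}(x)$, once conjugated into the $+$-presentation by $AR_{-k}$, becomes a level-one morphism in $AP_{+k,-0}$ and is realized through $d$ by action type (b). One must check that this transport does not silently raise the minimal level and that the resulting maps, ranging over $v \in V_{-k} = V_{+k}$, have image exactly the $c_2$-twisted subspace defining $V_{-0}$, thereby giving $V_{-0} = \overline{\t{span }}\,V_{AP^{=0}_{-k,-0}}(V_{-k})$.

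Once this bookkeeping is carried out so that the single parameter $k$ simultaneously witnesses the generation condition at every colour (both $\pm l$ for $l \geq 1$ and both $\pm 0$), the definition of finite $P$-support is met. Combined with the local finiteness already established for $V(p,c)$, this places $\t{Ind }V(p,c)$ in $\t{ob}(\mcal D)$, as claimed.
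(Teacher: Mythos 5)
Your treatment of the $+$-colours coincides with the paper's (the action of $\psi^0_{+k,+l}(x)$ on $p$ is $x \us{2k}{\odot} p$, which sweeps out $V_{+l} = P_{+(k+l)} \us{2k}{\odot} p$), but the transport to the $-$-colours contains a genuine error: the rotation tangles are \emph{not} level zero. $AR_{\nu s}$ shifts the marked points by one click, so one string must wrap around the annulus; it lies in the range of $\Psi^1$, not of $\Psi^0$, and conjugation by rotations therefore does not carry $AP^{=0}_{-k,-l}$ into $AP^{=0}_{+k,+l}$. The paper computes exactly what happens instead: for $b = \psi^0_{-(k+1),-l}(x)$ one has $AR_{-l}\circ b \circ (AR_{-(k+1)})^{-1} = \psi^2_{+(k+1),+l}(\,\cdot\,)$, a level-\emph{two} plus-side morphism, whose action on $\t{Ind}\, V(p,c)$ involves the commutativity constraint $c_2$. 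So the minus-side generation statement cannot be deduced formally from the plus-side one; it is here that the paper does its real work: it fixes an arbitrary $y \in V_{-l}$, chooses $x \in P_{-(k+l+1)}$ built out of $y$ and $c_2$, chooses $v \in V_{-(k+1)}$ built out of $p$ with one extra string, and uses the grouping relation of $c$ to show $V_b(v) = \delta y$. Note also that the paper's witness is $k+1$, not $k$; the extra string in $v$ is what lets the $c_2$ be absorbed, and your argument gives no reason to believe that $k$ itself suffices on the minus side.

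A secondary misdiagnosis: you single out the colour $-0$ as the genuinely delicate case, but it requires no work at all. As remarked immediately after Definition \ref{finPsupp}, the generation condition holds automatically for every colour $\pm l$ with $l$ at most the witness (include with cups up to the witness level and cap back down, at the cost of a power of $\delta$), and $0 < k+1$. Neither the parity of the index sets $\N_{\vlon , \eta}$ nor the morphism $d$ enters the paper's proof of this lemma; the only delicate colours are the minus colours with $l$ large, for the reason above.
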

\begin{proof}
Let us  denote $\t{Ind } V(p,c)$ by $V$.
Suppose $p\in \mscr P (P_{+2k})$.
Note that $V_{+l} = P_{+(k+l)} \us{2k} \odot p = V_{AP^{=0}_{+k,+l}} (p) = V_{ AP^{=0}_{+k,+l}} (V_{+k})$ for all $l \in \N_0$.

Next, we consider the space $V_{-l}$ for $l>0$.
By definition, $V_{-l} = V_{+l}$ as a vector space.
Fix $l > k+1$ and {$y \in V_{-l} = \left(P_{+(k+l)} \us {2k} \odot p\right)$.}
Set $x:=P_{
\psfrag{-}{$-$}
\psfrag{2l-1}{$2l\!-\!1$}
\psfrag{y}{$y$}
\psfrag{c}{$c_2$}
\psfrag{2k}{$2k$}
\includegraphics[scale=0.15]{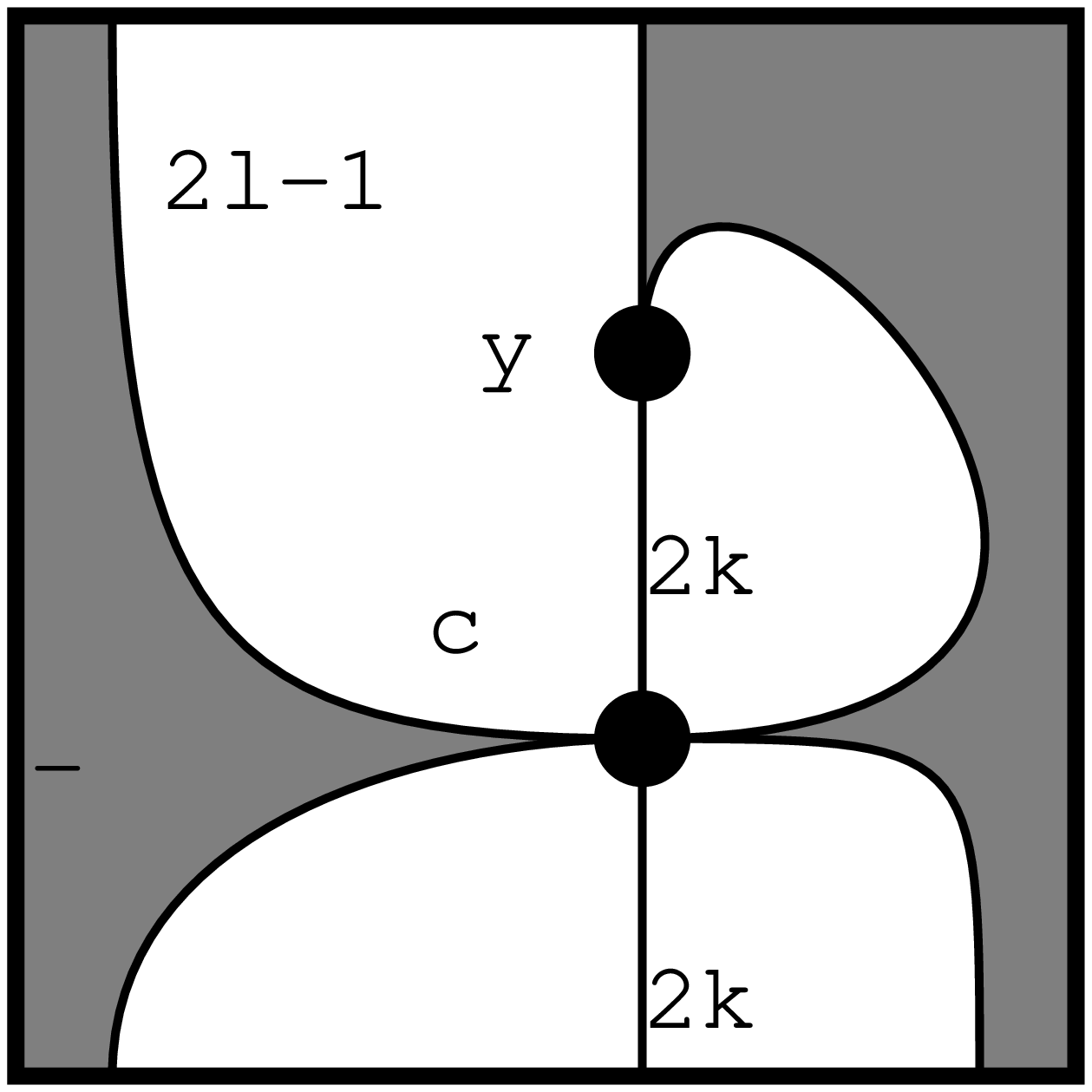}
}\in P_{-(k+l+1)}$ and $v := P_{
\psfrag{+}{$+$}
\psfrag{p}{$p$}
\psfrag{2k}{$2k$}
\includegraphics[scale=0.15]{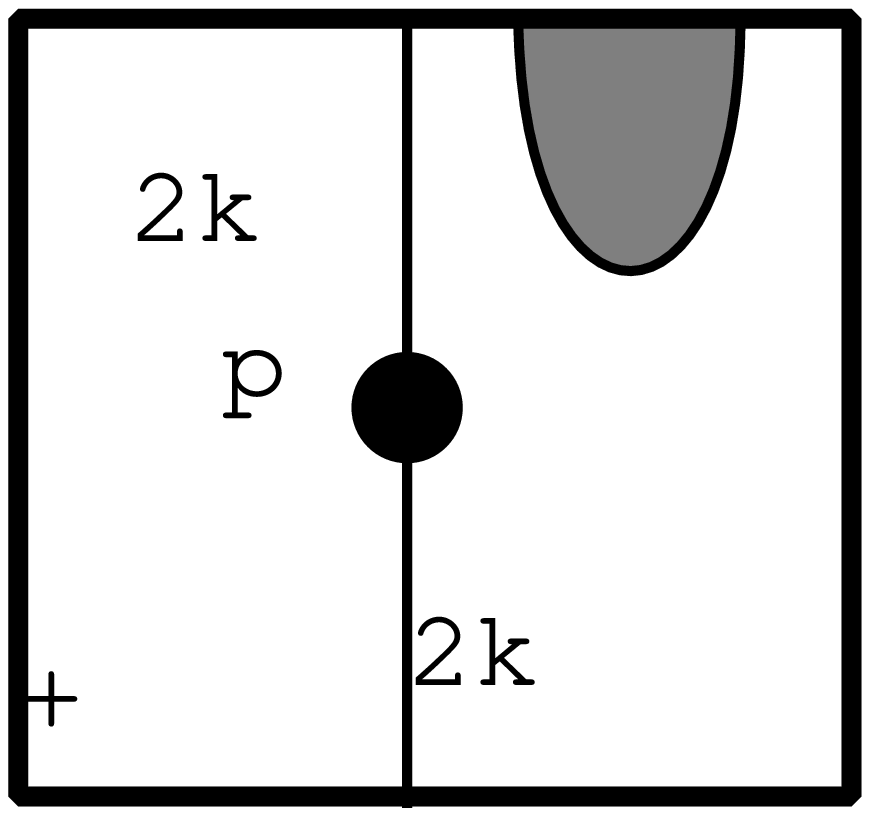}
} \in V_{-(k+1)}$.
If $b = \psi^0_{-(k+1),-l} (x) \in AP^{=0}_{-(k+1),-l}$, then $V_{b} (v) = V_{AR_{-l} \circ b \circ (AR_{-(k+1)})^{-1}} (v)$.
Observe that $AR_{-l} \circ b \circ (AR_{-(k+1)})^{-1} = \psi^2_{+(k+1),+l} (P_{
\psfrag{-}{$+$}
\psfrag{2l-1}{$2l\!-\!1$}
\psfrag{y}{$y$}
\psfrag{c}{$c_2$}
\psfrag{2k}{$2k$}
\psfrag{2}{$2$}
\psfrag{}{$$}
\includegraphics[scale=0.15]{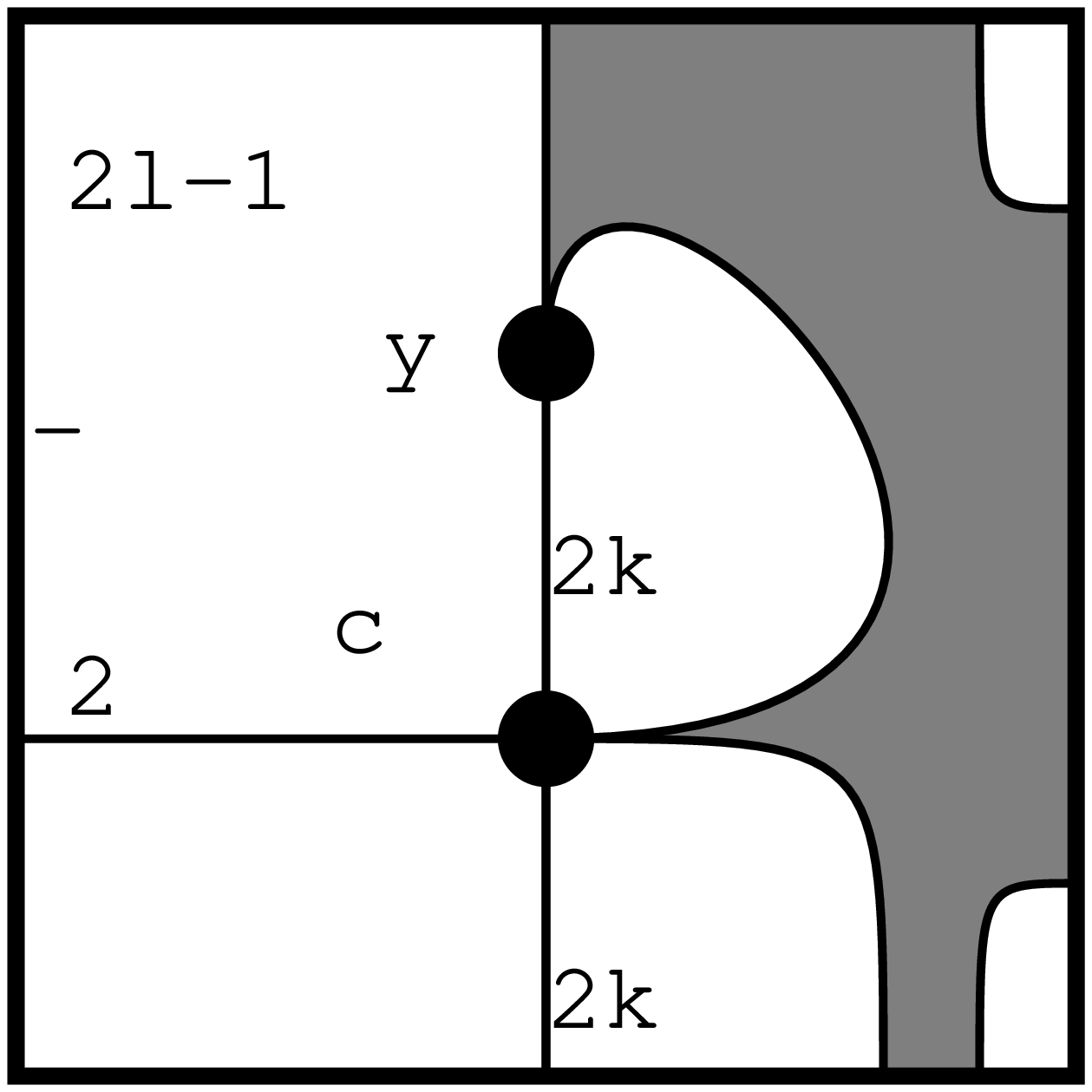}
})$.
So, $V_b (v) = P_{
\psfrag{-}{$+$}
\psfrag{2l-1}{$2l\!-\!1$}
\psfrag{y}{$y$}
\psfrag{c}{$c_2$}
\psfrag{2k}{$2k$}
\psfrag{2}{$2$}
\psfrag{}{$$}
\includegraphics[scale=0.15]{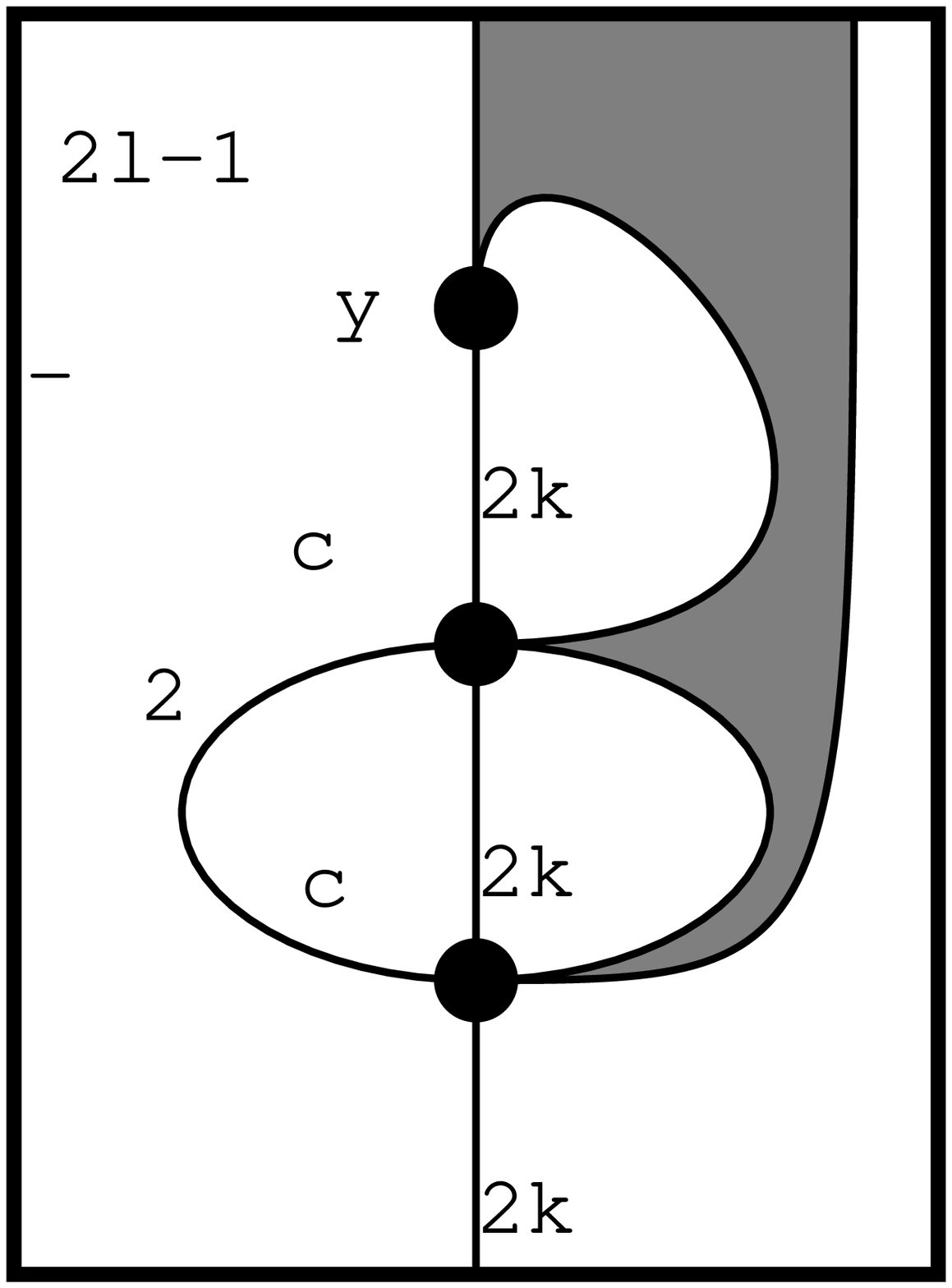}
} = \delta y$  using the grouping relation of $c$ in the last equality.
Thus, $V_{-l} = V_{AP^{=0}_{-(k+1), - l}} (V_{-(k+1)})$ for all $l > k+1$ and thereby, for all $l \in \N_0$.
Hence, $V_{-l} = V_{AP^{=0}_{\vlon (k+1), \vlon l}} (V_{\vlon (k+1)})$ for all $l \in \N_0$, $\vlon = \pm$.
\end{proof}

\begin{thm}
{$\t{ob}(\mcal {ZC} ) \ni (p,c) {\longmapsto} Ind\, V(p,c) \in \t{ob} (\mcal D)$ extends to a contravariant, fully faithful, $\C$-linear, monoidal $*$-functor from $ \mcal {ZC}$ to $ \mcal D$.}
\end{thm}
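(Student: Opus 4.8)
The plan is to construct the functor $F$ explicitly and then check its four properties in turn, leaving full faithfulness as the last and hardest step. On objects set $F(p,c) := \t{Ind}\, V(p,c)$, which the preceding lemmas already place in $\mcal D$. On morphisms, for $f \in \mcal{ZC}((p,c),(q,d))$ with $p \in \mscr P(P_{+2k})$ and $q \in \mscr P(P_{+2k'})$, I would define $F(f) : V(q,d) \ra V(p,c)$ levelwise by $v \mapsto v \us{2k'}{\odot} f$ (with the companion formula at the colour $-0$ obtained by conjugating with the affine tangle $d$ of Page \pageref{d}). The defining relations $f \us{2k}{\odot} p = f = q \us{2k'}{\odot} f$ of $\mcal C(p,q)$ guarantee that the image again satisfies the support condition cutting out $V(p,c)_{+l}$, associativity of $\odot$ gives $F(g \circ f) = F(f) \circ F(g)$ (contravariance), $\C$-linearity is immediate, and $F(\t{id}_{(p,c)}) = \t{id}$ because $p$ is the identity of $\mcal C(p,p)$.

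The first substantive point is that $F(f)$ is a morphism in $\mcal D$, i.e. a bounded $AP$-linear natural transformation. Boundedness with $\norm{F(f)} < \infty$ follows from local finiteness together with the fact that $V(q,d)$ is generated in the single weight $k'$, so only finitely many levelwise finite-dimensional maps are relevant. The crucial verification is $AP$-linearity: comparing $F(f)(a \tr v)$ with $a \tr F(f)(v)$ for $a = \psi^{2n}_{+l,+m}(x)$ reduces, after unwinding the definitions (a)--(d) of the action and the $CC$-valued inner products in Remark \ref{ccipvpc}, exactly to the half-braiding compatibility of Equation \eqref{fcd} (equivalently, naturality of $c$ and $d$ lets one slide $f$ past the constraint boxes $c_{2n}$ and $d_{2n}$). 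The identity $F(f^*) = F(f)^*$ is then read off from the formula for $\lab \cdot , \cdot \rab_V$ preceding Remark \ref{ccipvpc} and the interaction of $\odot$ with $*$ under sphericality, exactly as in the proof that the action of $V(p,c)$ preserves $*$.

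For full faithfulness I would identify both Hom-spaces with intertwiners of the generating weight. Faithfulness is direct: evaluating $F(f)$ on the cyclic vector $q \in V(q,d)_{+k'}$ returns $q \us{2k'}{\odot} f = f$, so $F(f) = 0$ forces $f = 0$. For fullness, given $g \in \mcal D(V(q,d), V(p,c))$, I would use that $V(q,d) = [V(q,d)_{+k'}]$ (so $g$ is determined by its restriction to the generating piece, cf. Remark \ref{affmodmor}) to reduce to the $AP_{+k',+k'}$-linear map $g_{+k'}$ on the bottom. Restricting $g_{+k'}$ to the subalgebra $\psi^0_{+k',+k'}(P_{+2k'})$, and invoking the positive definiteness of $\lab \cdot , \cdot \rab_{+k'}$ and faithfulness of $\omega_{+k'}$ from Section \ref{straffmor}, represents $g_{+k'}$ as $\odot$-composition with a unique $f \in P_{+(k+k')}$ lying in $\mcal C(p,q)$; running the $AP$-linearity computation of the previous paragraph in reverse then shows that compatibility of $g$ with the affine generators carrying $c$ and $d$ forces $f$ to satisfy Equation \eqref{fcd}, whence $f \in \mcal{ZC}((p,c),(q,d))$ and $g = F(f)$.

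Finally, for monoidality I would take the isomorphism already implicit in Lemma \ref{fusbasis}: writing $(r,e) = (p,c) \otimes (q,d)$, that lemma identifies the levels of $V(p,c) \btimes V(q,d)$ with $\{\, p \os{x}{\btimes} q : x \in V_{+m}(r,e)\,\}$, which is precisely $V(r,e)=V((p,c)\otimes(q,d))$, so $p \os{x}{\btimes} q \leftrightarrow x$ is the candidate constraint. I would check it is a unitary $AP$-linear natural isomorphism by comparing the $CC$-valued inner products via Lemma \ref{fusCCip} and Remark \ref{ccipvpc} against the definition of $e = c \otimes d$ on Page \pageref{ZCtensor}, verify that the unit constraint sends $(1_{\mcal C}, i)$ to the trivial module $P$ (the $\btimes$-unit), and deduce the associativity coherence from the explicit associativity constraints recorded after Proposition \ref{fXgbdd}. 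I expect the main obstacle to be the fullness step: showing that plain $AP$-linearity of $g$ --- a condition phrased purely in terms of affine tangles --- already encodes the full half-braiding relation \eqref{fcd}, since this is where the topological input (naturality of the commutativity constraint against all of $\mcal C$) must be reconstructed from the action of affine morphisms alone.
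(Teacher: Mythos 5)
Your proposal is correct and takes essentially the same route as the paper's proof: the same definition of the functor on morphisms ($v \mapsto v \odot f$, with relation \eqref{fcd} supplying $AP$-linearity), faithfulness by evaluating on the cyclic vector $q$, fullness by recovering $f$ from the generating level $+l$ and forcing \eqref{fcd} via non-degeneracy of the trace tangle, and the same monoidal constraint $x \mapsto p \os{x}{\btimes} q$ checked against Lemma \ref{fusbasis}, Lemma \ref{fusCCip} and Remark \ref{ccipvpc}. The only minor deviation is your appeal to positive definiteness of $\lab \cdot , \cdot \rab_{+k'}$ and faithfulness of $\omega_{+k'}$ in the fullness step, where the paper simply sets $f := g(q)$ and uses $P_{+2l}$-module cyclicity; this is harmless.
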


\begin{proof}
{By Remark \ref{eaff}, it is enough to show that $\t{ob}(\mcal {ZC} ) \ni (p,c) \os{V}{\longmapsto}  V(p,c)$ extends to a contravariant, fully faithful, $\C$-linear, monoidal $*$-functor.}

 We begin with defining $V$ in the level of morphisms.
Let $f:(p,c)  \ra (q,d)$ be a morphism in $\mcal {ZC}$ where $p \in \mscr P (P_{+2k})$ and $q \in \mscr P (P_{+2l})$.
Define $V(f)$ by $V_{+m} (q,d) \ni v \os {V(f)} \longmapsto v \us {2l} \odot f \in V_{+m} (p,c)$ for all $m \in \N_0$.
Note that by the relation \ref{fcd} and the definition of $V_{-0} (q,d)$ and $V_{-0} (p,c)$, we get $V(f) (V_{-0} (q,d)) \subset V_{-0} (p,c)$ where $V_{-0} (q,d)$ and $V_{-0} (p,c)$ are considered as subsets of $V_{+1} (q,d)$ and $V_{+1} (p,c)$ respectively; so, we define $V_{-0} (q,d) \os {V(f)} \lra V_{-0} (p,c)$ as the restriction of $V_{+1} (q,d) \os {V(f)} \lra V_{+1} (p,c)$.
Again, relation \ref{fcd} easily implies that $V(f)$ preserves the action of those affine morphisms whose input and output colors are both $+$-signed; then, one can use this fact to show that $V(f)$ preserves action of all affine morphisms, that is, $V(f) \circ V_a (q,d) = V_a (p,c) \circ V(f)$ for all $a\in AP_{\vlon k,\eta l}$, $\vlon k, \eta l \in \{-0\}\cup \{+n\}_{n \in \N_0}$.
$V(f \circ g) = V(g) \circ V(f)$ and $V(f^*) = (V(f))^*$ follow almost immediately.
Thus, $V$ is a contravariant, $\C$-linear $*$-functor.

To check $V$ is faithful, note that $V_{+2l} (q,d) \ni q \os{V(f)}{\longmapsto} f \in V_{+2l} (p,c)$ which is nonzero if and only if $0 \neq f:(p,c)\ra (q,d)$.

Now, consider a morphism $g: V(q,d) \ra V(p,c)$ in $\mcal D$.
Set $f := g (q) \in V_{+l} (p,c) = P_{+(k+l)} \us{2k} \odot p$ where $q$ is viewed as an element of $V_{+l} (q,d) = P_{+2l} \us{2l} \odot q$.
Clearly, $f = f \us{2k} \odot p$.
Using the $V(p,c)$- and $V(q,d)$-actions of affine morphisms, and $g \in \t{Mor} (\mcal D)$, we get $v \us {2l} \odot f = V_{\psi^0_{+l,+l} (v)} (p,c) \left( g (q) \right) = g \left( V_{\psi^0_{+l,+l} (v)} (q,d) (q) \right) = g(v)$ for all $v \in V_{+l} (q,d)$.
Taking $v = q$, we get $f = q \us{2l} \odot f$ implying $f \in \mcal C(p,q)$; moreover, $\left( \left. \cdot \us{2l} \odot f \right) \right|_{V_{+l} (q,d)} = g: V_{+l} (q,d) \ra V_{+l} (p,c)$.
It remains to show that $f$ is compatible with $c$ and $d$. For $m \in \N_0$, $x \in P_{+ (k+l +2m)}$, observe that
\[
P_{\!\!
\psfrag{c}{$c_{2m}$}
\psfrag{+}{$+$}
\psfrag{f}{$f$}
\psfrag{x}{$x$}
\psfrag{2k}{$2k$}
\psfrag{2l}{$2l$}
\psfrag{2m}{$2m$}
\includegraphics[scale=0.15]{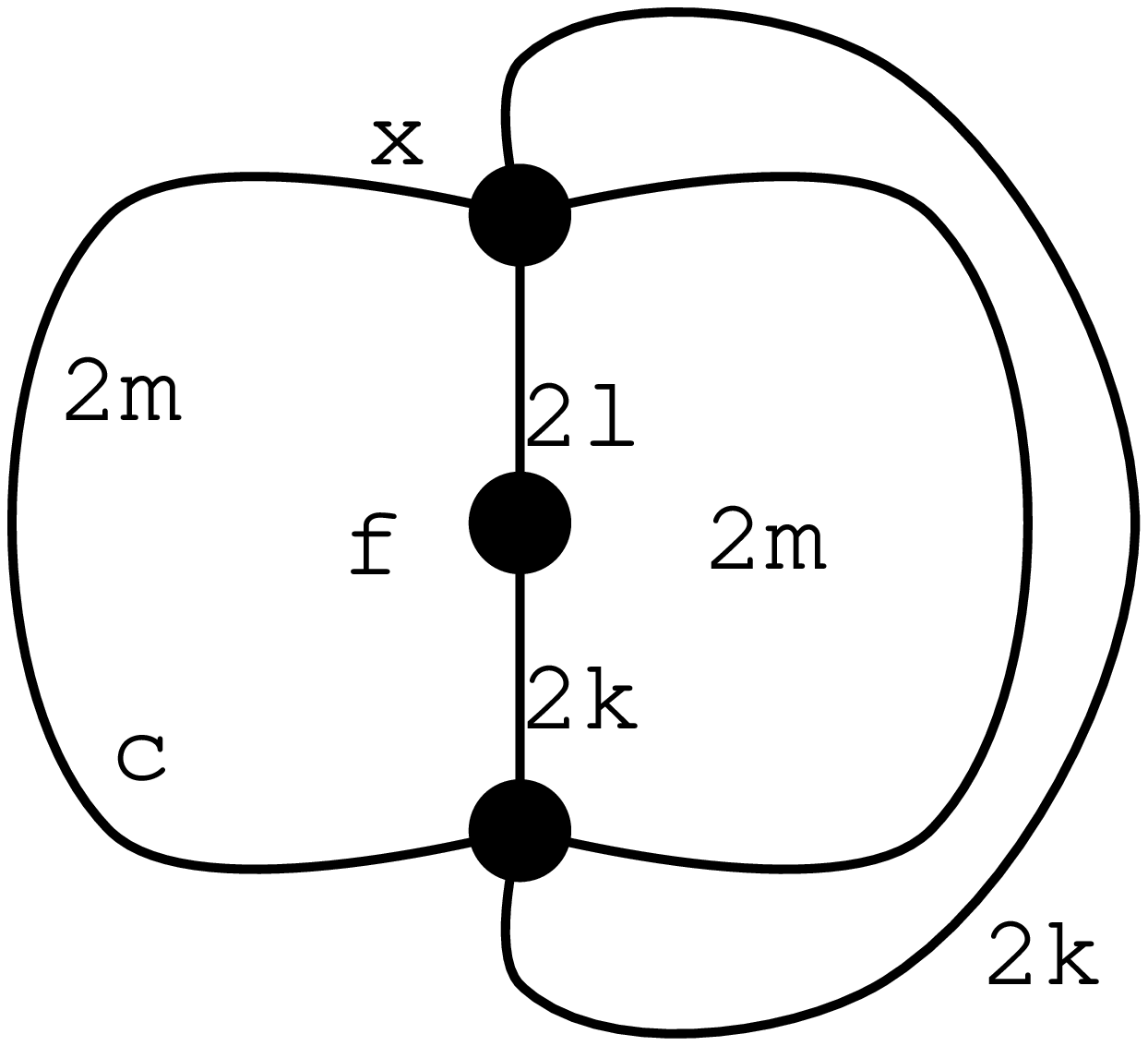}
} \!\!\!\! = \lab p , V_a (p,c) (f) \rab_{V (p,c)} = \lab p , g \left( V_a (q,d) (q) \right) \rab_{V (p,c)} = \lab p , \left[ V_a (q,d) (q) \right] \us{2l} \odot f \rab_{V(p,c)} = P_{\!\!
\psfrag{c}{$d_{2m}$}
\psfrag{+}{$+$}
\psfrag{f}{$f$}
\psfrag{x}{$x$}
\psfrag{2k}{$2k$}
\psfrag{2l}{$2l$}
\psfrag{2m}{$2m$}
\includegraphics[scale=0.15]{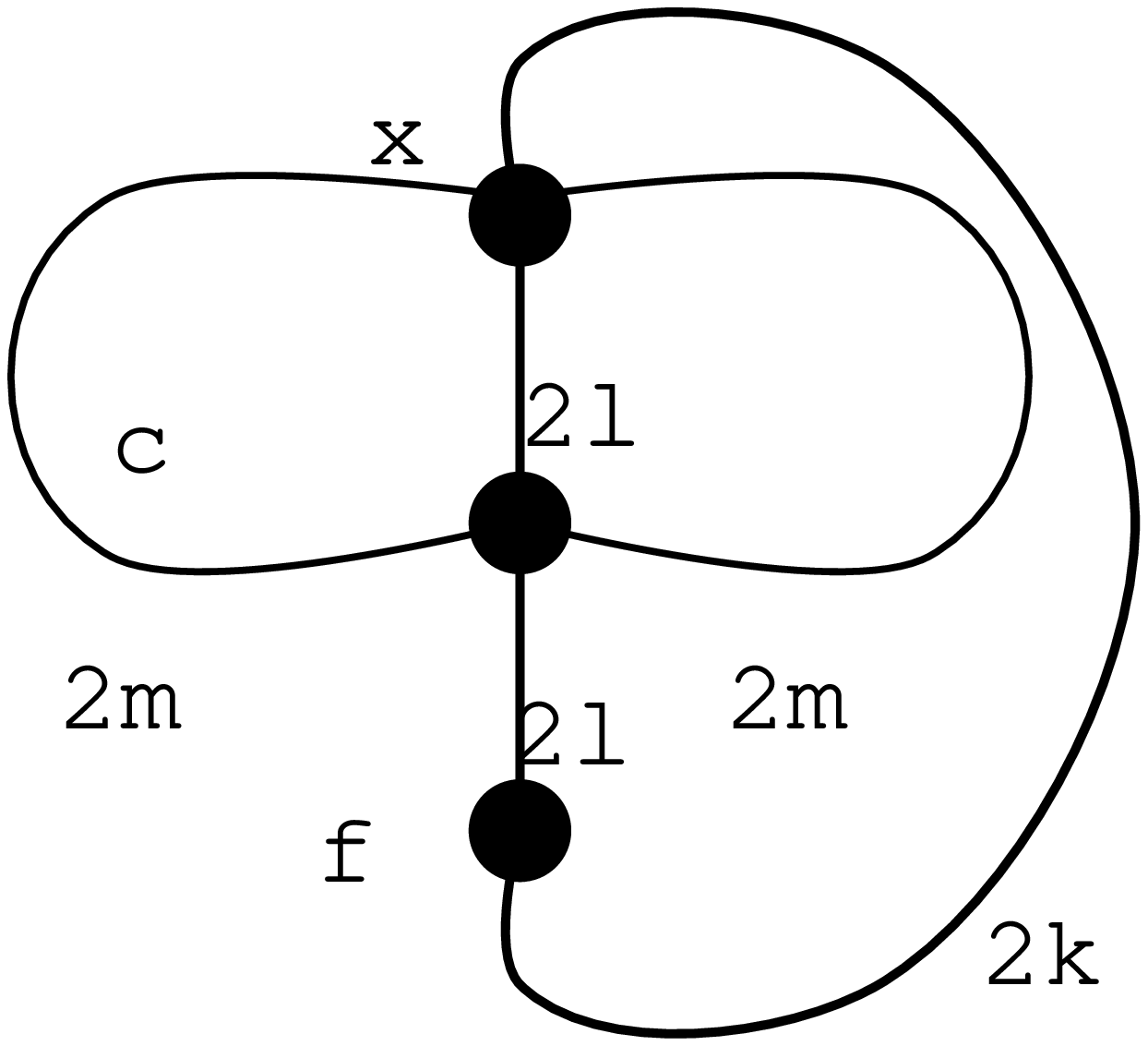}
}
\]
where $a:= \psi^{2m}_{+l , +k} (x)$.
By the non-degeneracy of the action of the trace tangle
, we get $d_{2m} \circ (1_{+2m} \otimes f) = (f \otimes 1_{+2m}) \circ c_{2m}$ for all $m \in \N_0$ where $f$ is viewed as an element of $\mcal C (p,q)$.
So, $f \in \mcal{ZC} ((p,c) , (q,d))$. Since $\left. g \right|_{ V_{+l} (q,d) } = \left. V(f) \right|_{ V_{+l} (q,d) }$ and $V(q,d) = \left[ V_{+l} (q,d) \right]$, therefore $V(f) = g$.

It remains to show that $V$ is a monoidal functor.
Suppose $(p,c) , (q,d) \in \t{ob} (\mcal {ZC})$ with $p \in \mscr P (P_{+2k})$ and $q \in \mscr P (P_{+2l})$; set $(r,e) := (p,c) \otimes (q,d)$ (as defined in Page \pageref{ZCtensor}), $k' =k+l$, $U := V(p,c)$, $V:= V(q,d)$ and $W := U\btimes V$.
So, $r = p\otimes q \in \mscr P (P_{+2k'})$.
Define the map $f: V(r,e) \ra W$ by
\begin{align*}
V_{+ m} (r,e) \ni x & \os{f_{+m}} \longmapsto p \os{x}{\btimes} q \in W_{+m} \t{ for } m\in \N_0 \t{ and}\\
V_{-0} (r,e) \ni P_{
\psfrag{c}{$\;\,e_{2}$}
\psfrag{+}{$+$}
\psfrag{-}{$x$}
\psfrag{2k}{$2k'$}
\psfrag{2l-1}{}
\includegraphics[scale=0.15]{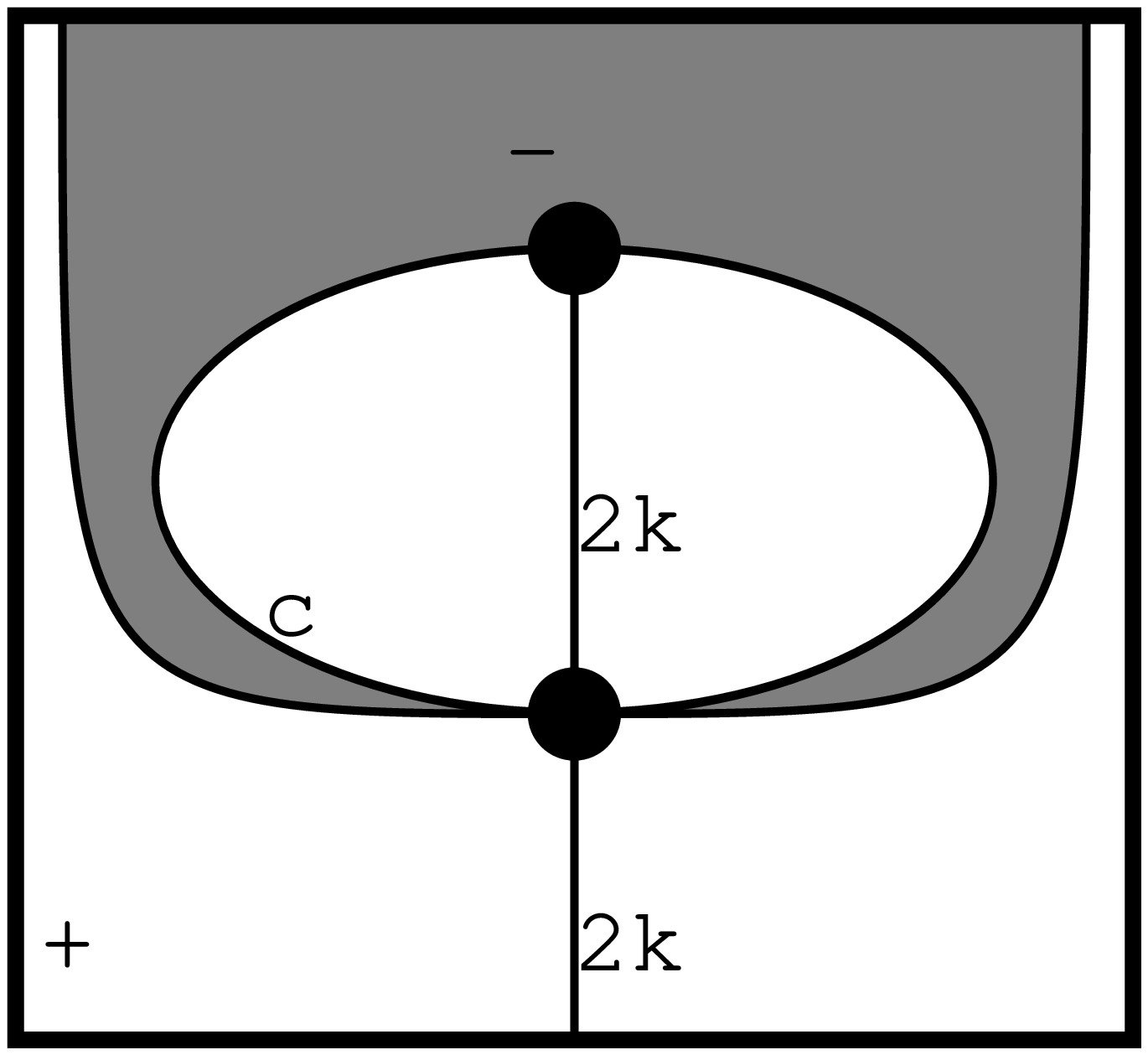}
} & \os{f_{-0}} \longmapsto \delta^{\frac 1 2} \left( p \os{\psi^{1}_{+k',-0} (x)}{\btimes} q \right) \in W_{-0}.
\end{align*}
We will check that $f_{-0}$ is well-defined and $f$ is an isometric morphism between Hilbert affine $P$-modules (and hence, an isometric isomorphism since $f$ is surjective by Lemma \ref{fusbasis}).
For this, it will be enough to show that $f$ preserves the $CC$-valued inner product, that is, $c_{n'} (w,v) = c_{n'} (f(w) , f(v))$ for all $w \in V_{\vlon m} (r,e)$, $v \in V_{\eta n} (r,e)$ where $\vlon m , \eta n \in \{-0\} \cup \{+s:s\in \N_0\}$ and $n' \in \N_{\vlon , \eta}$.
We will prove this only for two cases, namely, $(\vlon ,\eta) = (+,-)$ and $(\vlon ,\eta) = (-,-)$; the case $(\vlon ,\eta) = (+,+)$ is very easy, and the case $(\vlon ,\eta) = (-,+)$ can be established using arguments similar to the proof of the case $(\vlon ,\eta) = (+,-)$.
Set $k' := k+l$.

\noindent {\bf Case 1:}
Suppose $w \in V_{+m}(r,e)$ and $v = P_{
\psfrag{c}{$\;\,e_{2}$}
\psfrag{+}{$+$}
\psfrag{-}{$x$}
\psfrag{2k}{$2k'$}
\psfrag{2l-1}{}
\includegraphics[scale=0.15]{figures/z2ap/f-0.eps}
} \in V_{-0} (r,e)$.
So,
\[
c_{2n'-1} (v,w) = \delta^{ - \frac 1 2} P_{
\psfrag{e2}{$e_{2}$}
\psfrag{e2n}{$e_{2n'}$}
\psfrag{-}{$-$}
\psfrag{x}{$x$}
\psfrag{w}{$w^*$}
\psfrag{2k'}{$2k'$}
\psfrag{2n}{$2m$}
\psfrag{2n'-1}{$2n'\!-\!1$}
\includegraphics[scale=0.15]{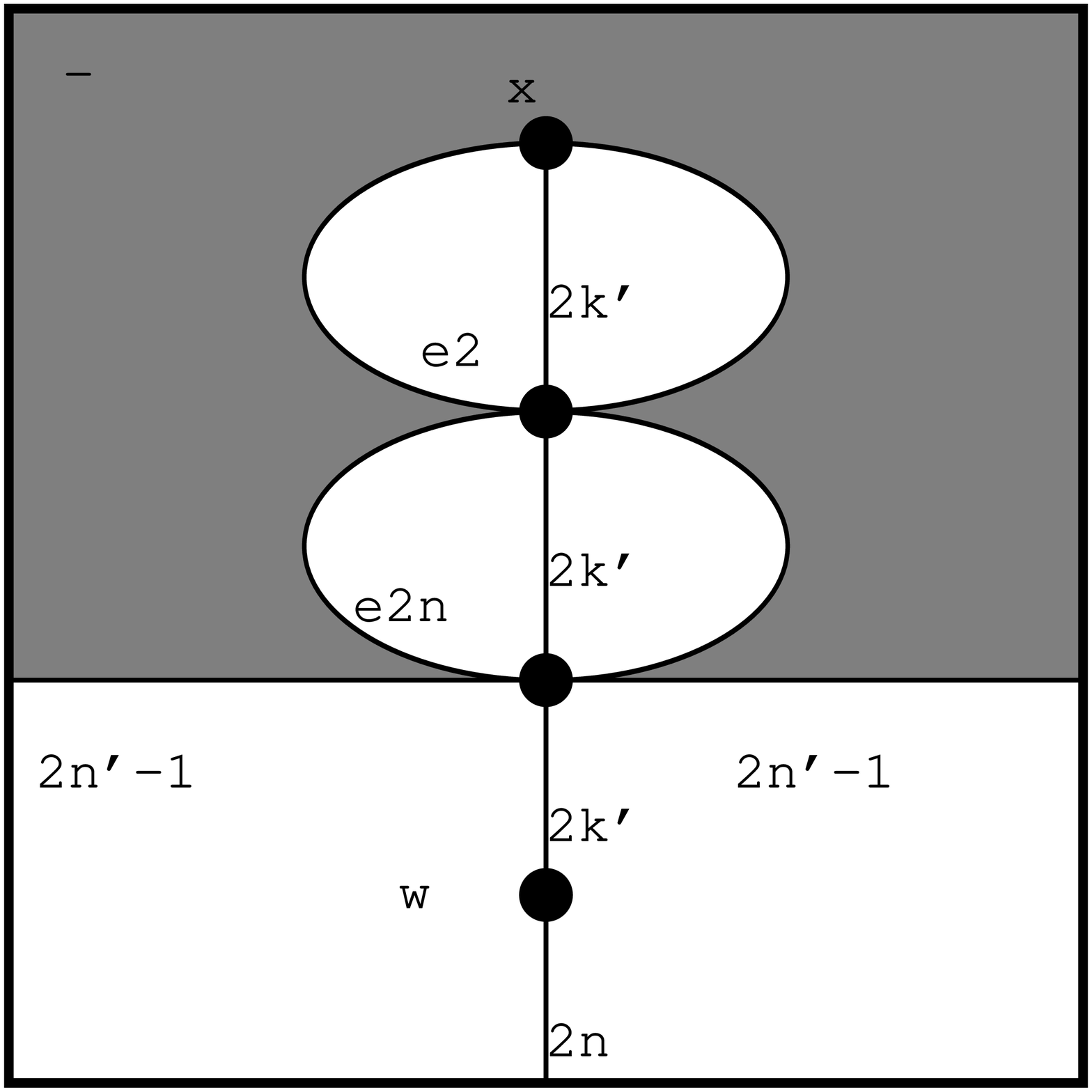}
} = \delta^{\frac 1 2} P_{
\psfrag{e2n}{$e_{2n'}$}
\psfrag{-}{$-$}
\psfrag{x}{$x$}
\psfrag{w}{$w^*$}
\psfrag{2k'}{$2k'$}
\psfrag{2n}{$2m$}
\psfrag{2n'-1}{$2n'\!-\!1$}
\includegraphics[scale=0.15]{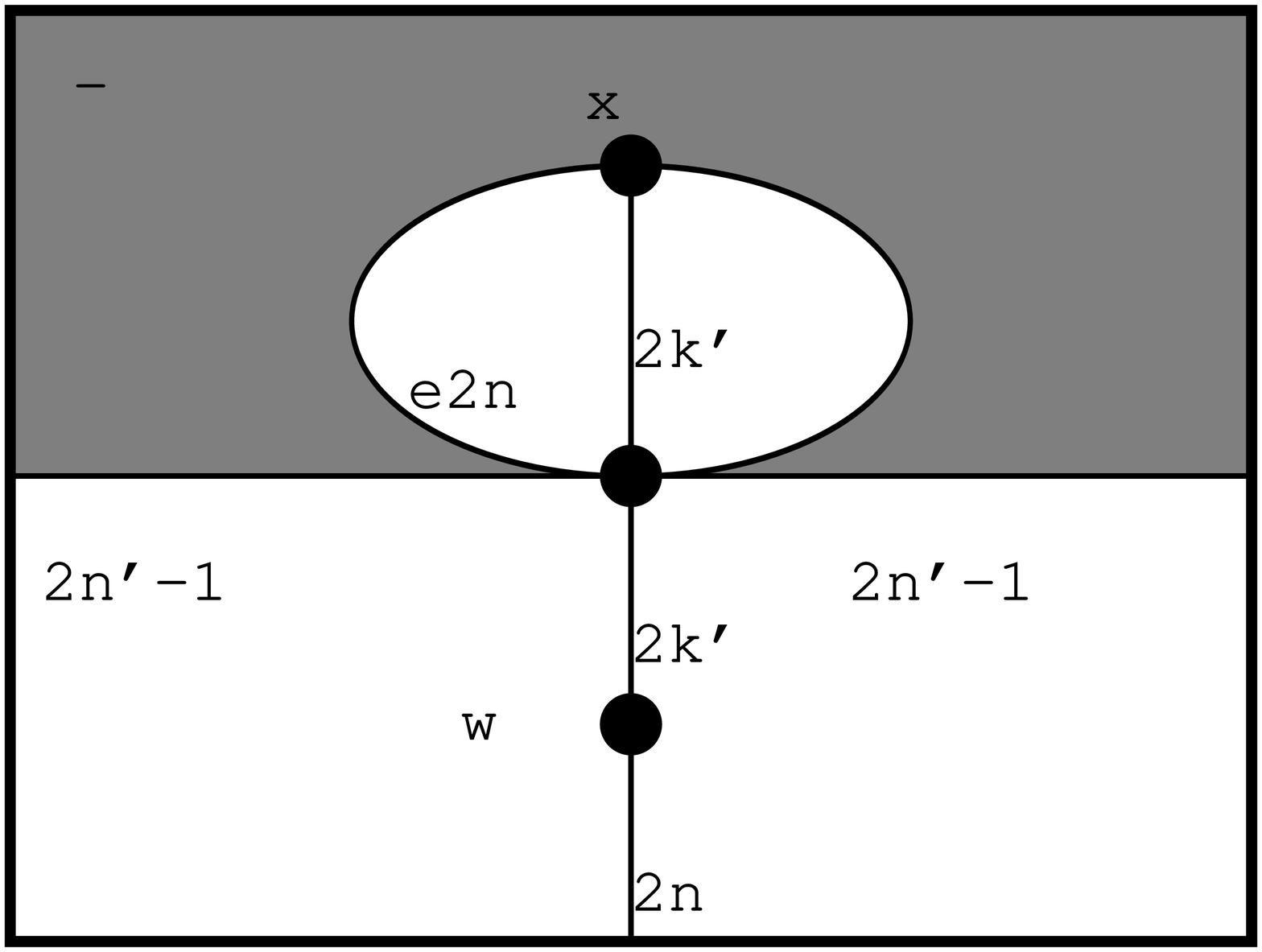}
} = c_{2n'-1} (f(v),f(w))
\]
where the first equality follows from Remark \ref{ccipvpc}, the second comes from the grouping relation of $\{e_{2m}\}_{m \in \N_0} \in CC_{+k',+k'}$, and for the last one, we apply Lemma \ref{fusCCip} and use the dependence of $\{e_{2m}\}_{m \in \N_0}$ on $\{c_{2m}\}_{m \in \N_0}$ and $\{d_{2m}\}_{m \in \N_0}$.

\noindent {\bf Case 2:}
Suppose $w = P_{
\psfrag{c}{$\;\,e_{2}$}
\psfrag{+}{$+$}
\psfrag{-}{$\!\!y$}
\psfrag{2k}{$2k'$}
\psfrag{2l-1}{}
\includegraphics[scale=0.15]{figures/z2ap/f-0.eps}
} \in V_{-0} (r,e)$ and $v = P_{
\psfrag{c}{$\;\,e_{2}$}
\psfrag{+}{$+$}
\psfrag{-}{$x$}
\psfrag{2k}{$2k'$}
\psfrag{2l-1}{}
\includegraphics[scale=0.15]{figures/z2ap/f-0.eps}
} \in V_{-0} (r,e)$.
So,
\[
c_{2n'-2} (v,w) = \delta^{-1} P_{
\psfrag{e2}{$e_{2}$}
\psfrag{e2*}{$e^*_{2}$}
\psfrag{e2n}{$e_{2n'}$}
\psfrag{-}{$-$}
\psfrag{x}{$x$}
\psfrag{y}{$y^*$}
\psfrag{2k'}{$2k'$}
\psfrag{2n'-2}{$2n'\!-\!2$}
\includegraphics[scale=0.15]{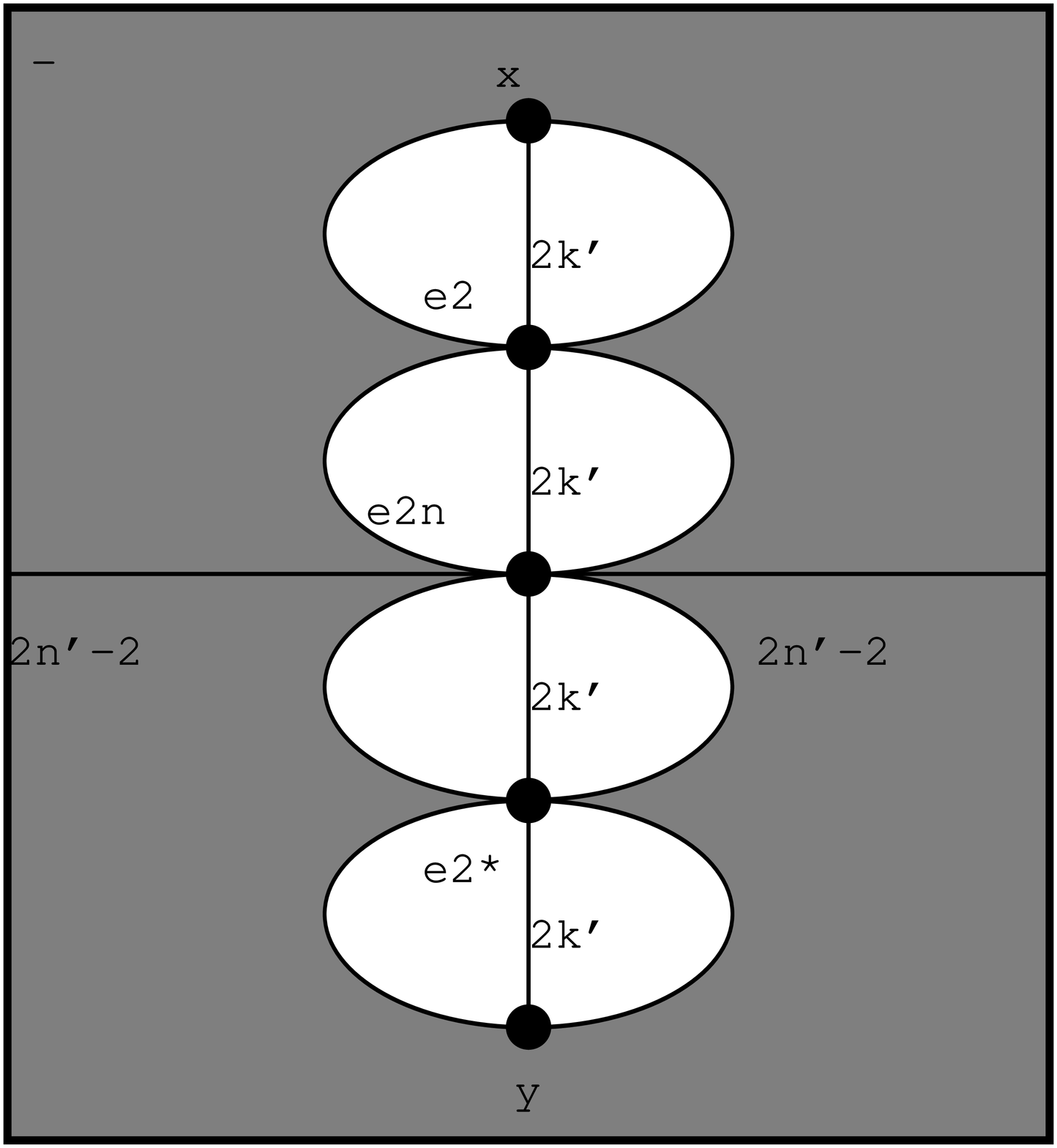}
} = \delta P_{
\psfrag{e2n}{$e_{2n'}$}
\psfrag{-}{$-$}
\psfrag{x}{$x$}
\psfrag{y}{$y^*$}
\psfrag{2k'}{$2k'$}
\psfrag{2n'-2}{$2n'\!-\!2$}
\includegraphics[scale=0.15]{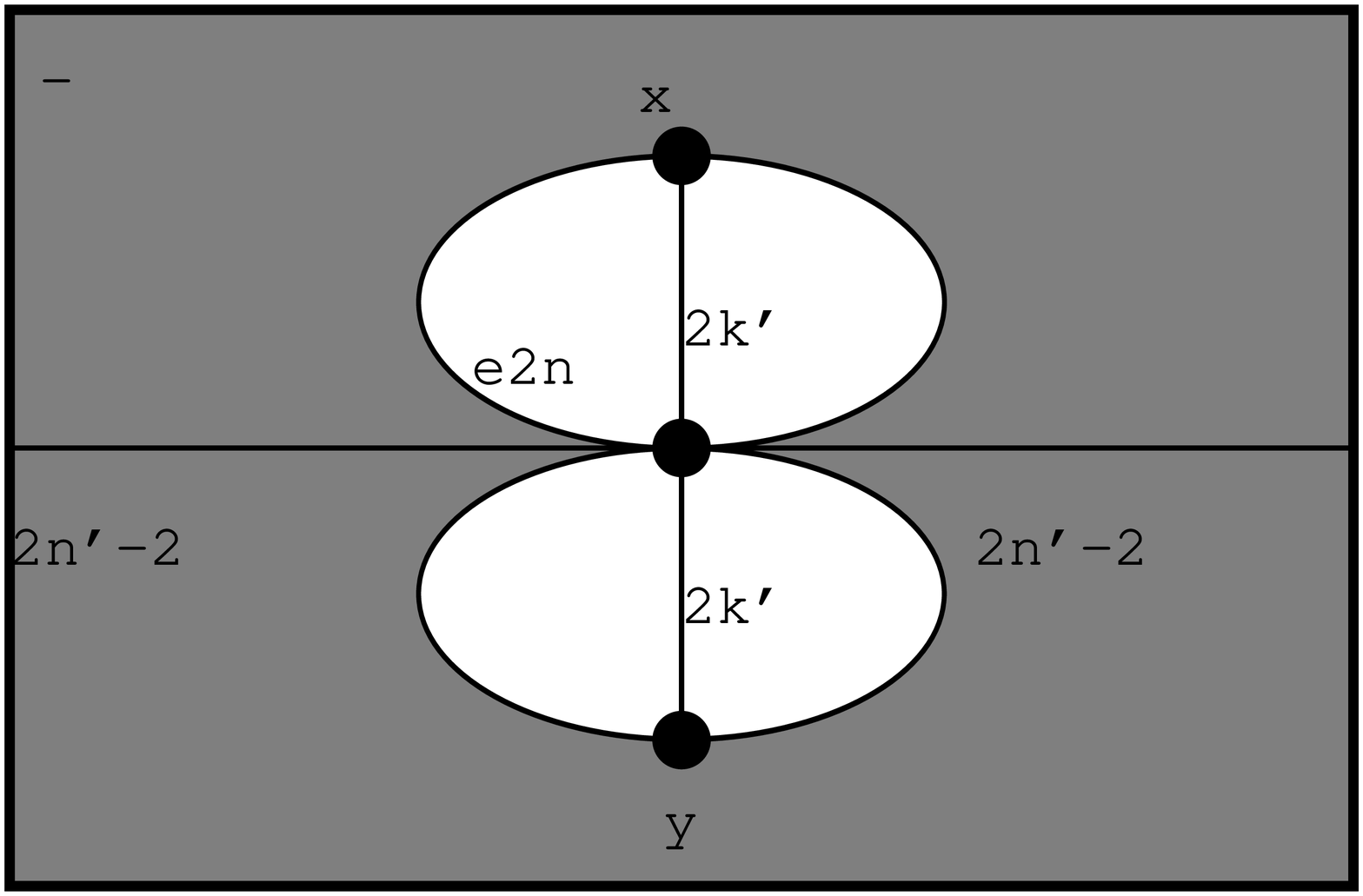}
} = c_{2n'-2} (f(v),f(w))
\]
where, just like Case 1, the first equality follows from Remark \ref{ccipvpc}, the second comes from the grouping relation and the $*$-relation of $\{e_{2m}\}_{m \in \N_0} \in CC_{+k',+k'}$, and for the last one, we apply Lemma \ref{fusCCip} and use the dependence of $\{e_{2m}\}_{m \in \N_0}$ on $\{c_{2m}\}_{m \in \N_0}$ and $\{d_{2m}\}_{m \in \N_0}$.

Next, we show that $V$ preserves the identity objects.
Note that $V_{\vlon k} (1_{\mcal C} , i) = \left\{
\begin{tabular}{ll}
$P_{+k}$, & if $\vlon = +$,\\
$\C 1_{P_{+1}}$, & if $\vlon k = -0$.
\end{tabular}
\right.$
On the other hand, the unit object $1_{\mcal D}$ of $\mcal D$ is the planar algebra $P$ itself viewed as a Hilbert affine $P$-module.
Define $\vphi : V(1_{\mcal C} , i) \ra P$ by $\vphi |_{V_{+k} (1_{\mcal C} , i)} = \t{id}_{P_{+k}}$ and $V_{-0} (1_{\mcal C} , i) \ni 1_{P_{+1}} \os \vphi \mapsto \sqrt{\delta} \; 1_{P_{-0}} \in P_{-0}$.
Clearly, $\vphi$ is an isometric isomorphism of Hilbert affine $P$-modules.

Commutativity of compatibility diagrams involving $f$, $\vphi$ and associativity and unit constraints are completely routine to check and is left as an exercise.

This completes the proof.
\end{proof}
We will now proceed towards proving essential surjectivity of the functor $V$.
Consider a locally finite, Hilbert affine $P$-module $W$ having finite $P$-support.
So, there exists $l \in \N$ such that $W_{+m} = \t{span } AP^{=0}_{+l,+m} (W_{+l})$ for all $m \in \N_0$.
One can view $W_{+l}$ as a finite dimensional left $P_{+2l}$-module by restricting the action of $AP_{+l,+l}$ to its subalgebra $P_{+2l}$ (given by the inclusion $\psi^0_{+l,+l}$).
Before proceeding further, we set up some notations and state a useful fact.

\noindent {\bf Notation:} For any $p \in \mscr P(P_{+2k})$, $V_{+l} (p) := \left[ P_{+(k+l)} \us{2k} \odot p \right]$ is the left $P_{+2l}$-module where (i) the inner product is given by action of the trace tangle and (ii) $P_{+2l} \times V_{+l} (p) \ni (x,v) \mapsto x \us{2l} \odot v \in V_{+l} (p)$ gives the $P_{+2l}$-action.
\begin{rem}\label{vlp}
For any finite dimensional Hilbert space $V$ with an action of the  finite dimensional  $C^*$-algebra $P_{+2l}$, there exists $k \geq l$ and $p \in \mscr P(P_{+2k})$ such that

(a)  $V$ is isometrically isomorphic (as a $P_{+2l}$-module) to  $V_{+l} (p)$, and
(b) $V_{+l} (q) \neq \{0\}$ for all $0 \neq q \in \mscr P (P_{+2k})$ such that $p \geq q$.
\end{rem}
This remark is acheived by considering a finite decomposition $V \cong \us{1\leq i \leq n}\oplus V^{\oplus n_i}_i$ as $P_{+2l}$-modules where $V_i$'s are mutually non-isomorphic irreducible $P_{+2l}$-modules.
There exists a set of centrally orthogonal set of minimal projections $\{q_i : 1 \leq i \leq n\}$ in $P_{+2l}$ such that $V_i \cong V_{+l}(q_i) = (P_{+2l})q_i$ as $P_{+2l}$-modules for all $1\leq i \leq n$.
If needed, we move to a higher level $k > l$ such that size of the direct summand corresponding to the minimal projection $p_i := q_i (e_{2l+1} \ldots e_{2k-1})$ in $P_{+2k}$, is bigger than $n_i$ for all $1 \leq i \leq n$.
Clearly, $V_{+l} (p_i) = P_{+(k+l)} \us{2k}\odot p_i \cong P_{+2l} \us{2l}\odot q_i = V_{+l}(q_i) \cong V_i$, and also there exists a projection $p$ in $P_{+2k}$ which can be decomposed as $p=\ous{n} \sum {i = 1} \; \ous{n_i} \sum {j=1} p_{i,j}$ such that $p_{i,j}$'s are minimal projections, $p_{i,j}$ and $p_{i',j'}$ have orthogonal central support whenever $i \neq i'$, and $p_{i,j}$ and $p_i$ are always Murray-von Neumann equivalent.
It is then straight forward to verify that this $k$ and $p$ satisfies the conditions (a) and (b) of the remark.

Applying the above remark for $V = W_{+l}$, there exists $k \geq l$ and $p \in \mscr P (P_{+2k})$ satisfying conditions (a) and (b); let $\vphi_{+l}: W_{+l} \ra V_{+l} (p)$ be the $P_{+2l}$-linear isometric isomorphism.
Using the finiteness of $P$-support of $W$, we consider $W_{+m} \ni \psi^0_{+l, +m} (x) \cdot w \os{\vphi_{+m}} \longmapsto x \us{2l} \odot \vphi_{+l} (w) \in V_{+m} (p)$ where $x \in P_{+(l+m)}$ and $w \in W_{+l}$.
Well-definedness, injectivity and $P_{+2m}$-linearity of $\vphi_{+m}$, follows from $\lab \psi^0_{+l,+m} (x) \cdot v , \psi^0_{+l,+m} (y) \cdot w \rab = \lab v , \left[ \left( \psi^0_{+l,+m} (x) \right)^* \circ\psi^0_{+l,+m} (y) \right] \cdot w \rab = \lab v , \psi^0_{+l,+l} (x^* \us{2m} \odot y) \cdot w \rab = \lab \vphi_{+l} (v) , (x^* \us{2m} \odot y) \us{2l} \odot \vphi_{+l} (w) \rab = \lab x \us{2l} \odot \vphi_{+l} (v) , y \us{2l} \odot \vphi_{+l} (w) \rab$ for all $x , y \in P_{+(l+m)}$, $v,w \in W_{+l}$.
In fact, $\vphi_{+m}$'s are isometries.

Surjectivity of $\vphi_{+m}$ is immediate for $m \leq l$.
Now, suppose $m > l$.
We will check $V_{+m} (p) = \t{span } P_{+(l+m)} \us{2l} \odot V_{+l} (p)$ which will then imply surjectivity of $\vphi_{+m}$.
Let $\{ u_\alpha \}_\alpha$ denote an orthonormal basis of $V_{+l} (p)$ and set $z := \us \alpha \sum u^*_\alpha \us{2l} \odot u_\alpha \in \left(P_{+2k} \right)_p$.
$z$ is clearly a positive element of $P_{+2l}$.
Also, $z$ must be invertible in $\left(P_{+2k} \right)_p$; if not, there exists $0 \neq q \in \mscr P \left( \left(P_{+2k} \right)_p \right)$ such that $qz^{1/2} = 0$, that is,
$
0 =  P_{TR^l_{+2k}} (qz) = \us \alpha \sum \norm{ u_\alpha \us{2k} \odot q}^2_{V_{+l} (p)}
$
and thereby, we get $0 = V_{+l} (p) \us{2k} \odot q = V_{+l} (q)$ which is a contradiction to condition (b) of Remark \ref{vlp} (where $TR^l_{+2k} : +2k \ra +0$ is the left trace tangle).
Let $v$ be the inverse of $z$ in $\left(P_{+2k} \right)_p$.
So, we have the identity $p = \us \alpha \sum v \us{2k} \odot u^*_\alpha \us{2l} \odot u_\alpha$.
This identity implies $V_{+m} (p) = \t{span } P_{+(l+m)} \us{2l} \odot V_{+l} (p)$.

We now try to show that there exists a unitary commutativity constraint $c : \cdot \otimes p \ra p \otimes \cdot$ such that $(p,c)$ becomes an object of $\mcal {ZC}$.
For this, we first induce the action of affine morphisms on $V_{+m} (p)$'s via the maps $\vphi_{+m}$'s.
Consider the element $c \in CC_{+k,+k}$ coming from the linear functional $AP_{+k, +k} \ni a \mapsto \lab p , a \cdot p \rab_{V_{+k} (p)} \in \C$ which is described in the proof of Proposition \ref{cma}, that is, $\lab p , \psi^{2m}_{+k,+k} (x) \cdot p \rab_{V_{+k} (p)} = P_{
\psfrag{c}{$c_{2m}$}
\psfrag{x}{$x$}
\psfrag{2k}{$2k$}
\psfrag{2l}{$2k$}
\psfrag{2m}{$2m$}
\includegraphics[scale=0.15]{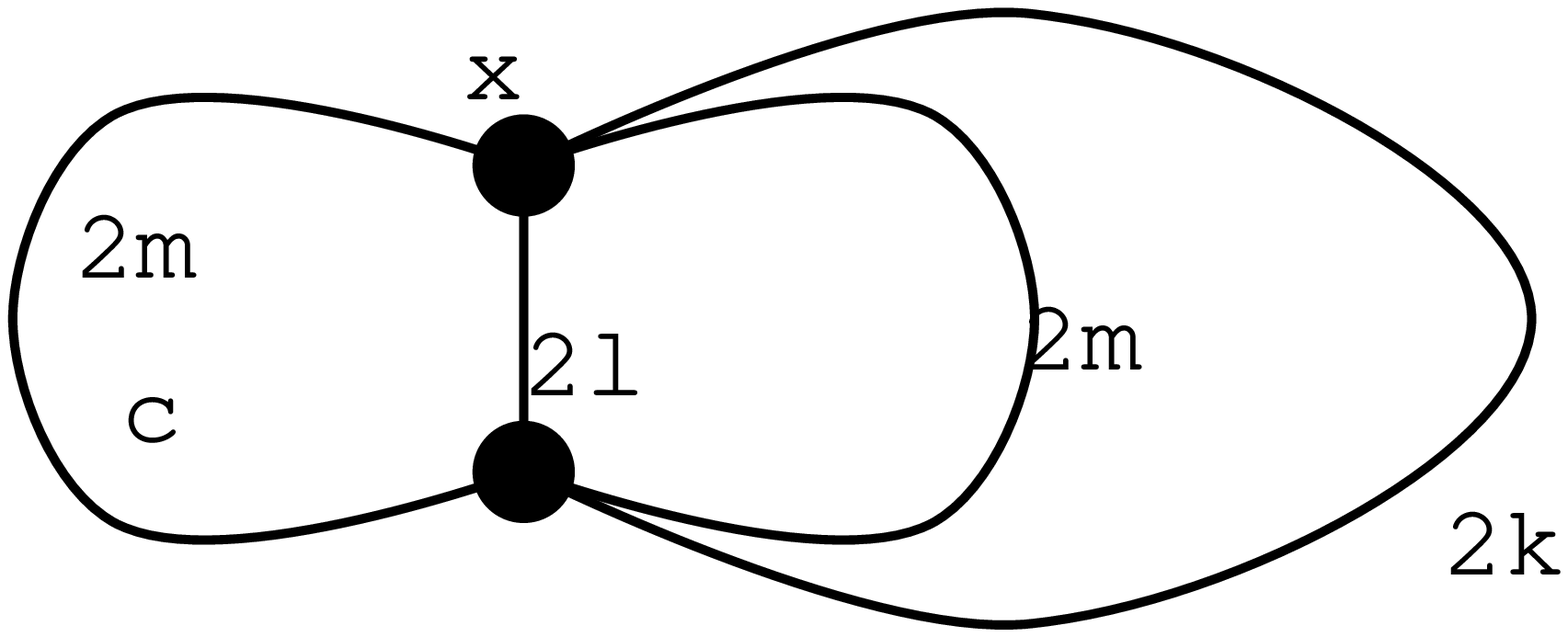}
}$ for all $m \in \N_0$, $x \in P_{+2(k+m)}$.
This then implies
\begin{equation}\label{vwip}
\lab v , \psi^{2s}_{+m,+n} (x) \cdot w \rab_{V_{+n} (p)} =  \lab p , \left[\psi^0_{+n,+k} (v^*) \circ \psi^{2m}_{+k,+k} (x) \circ \psi^0_{+k,+m} (w) \right] \cdot p \rab_{V_{+k} (p)} = P_{
\psfrag{c}{$c_{2s}$}
\psfrag{x}{$x$}
\psfrag{v}{$v^*$}
\psfrag{w}{$w$}
\psfrag{2k}{$2k$}
\psfrag{2n}{$2n$}
\psfrag{2m}{$2m$}
\psfrag{2s}{$2s$}
\includegraphics[scale=0.18]{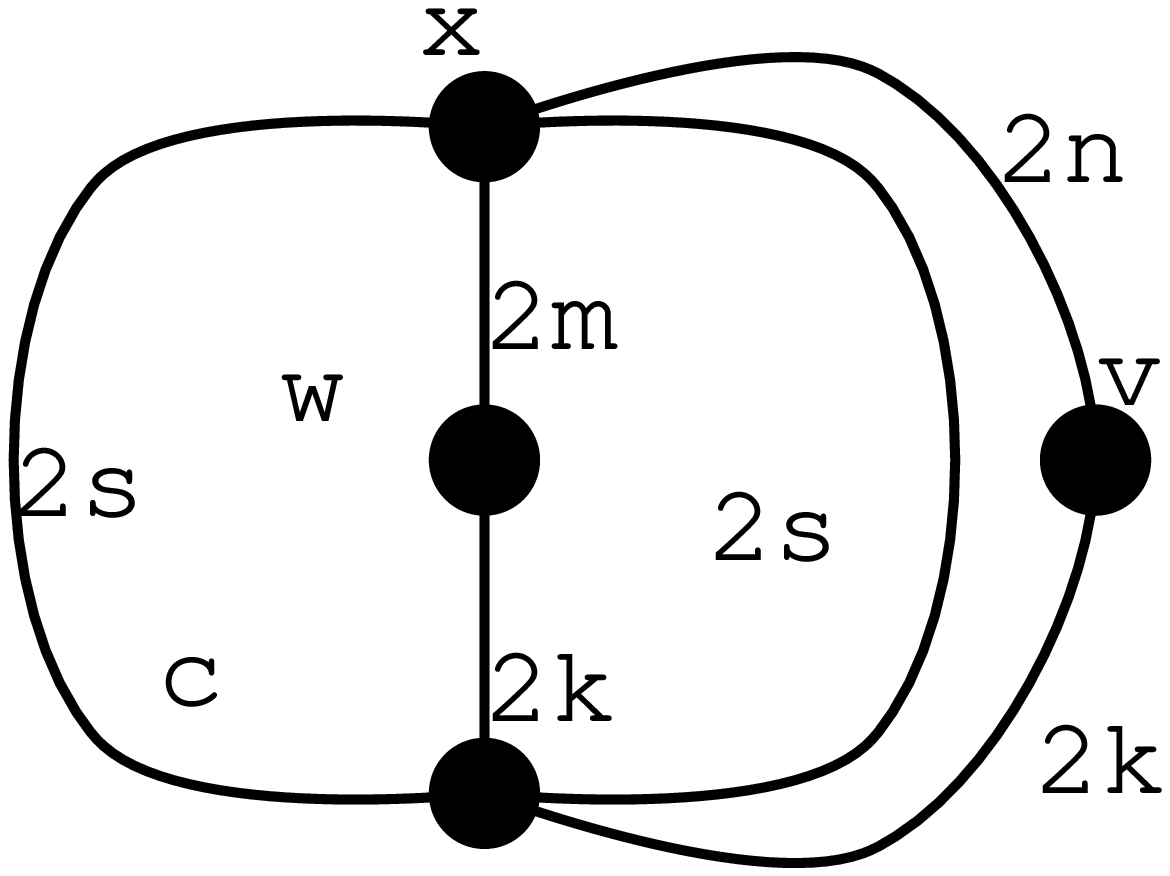}
}
\end{equation}
for all $m,n,s \in \N_0$, $x \in P_{+(m+n+2s)}$, $w \in V_{+m}(p)$, $v \in V_{+n}(p)$. 

By setting $n=m=k$, $v = w = p$ in Equation \ref{vwip}, and using non-degeneracy of the action of trace tangle and the freedom of the choice of $x$, we get $c_{2s} = \left [ p \otimes 1_{+2s} \right] c_{2s} \left [  1_{+2s} \otimes p \right]$.
Thus, $c_{2s} \in \mcal C (1_{+2s} \otimes p , p \otimes 1_{+2s} )$.

Similarly, using the freedom of the choice of $v$ in Equation \ref{vwip} and nondegeneracy of the action of trace tangle, we may conclude that the action has the following form:
\begin{equation}\label{a.w}
a \cdot w = P_{
\psfrag{c}{$c_{2s}$}
\psfrag{x}{$x$}
\psfrag{+}{$+$}
\psfrag{v}{$v^*$}
\psfrag{w}{$w$}
\psfrag{2k}{$2k$}
\psfrag{2n}{$2n$}
\psfrag{2m}{$2m$}
\psfrag{2s}{$2s$}
\includegraphics[scale=0.15]{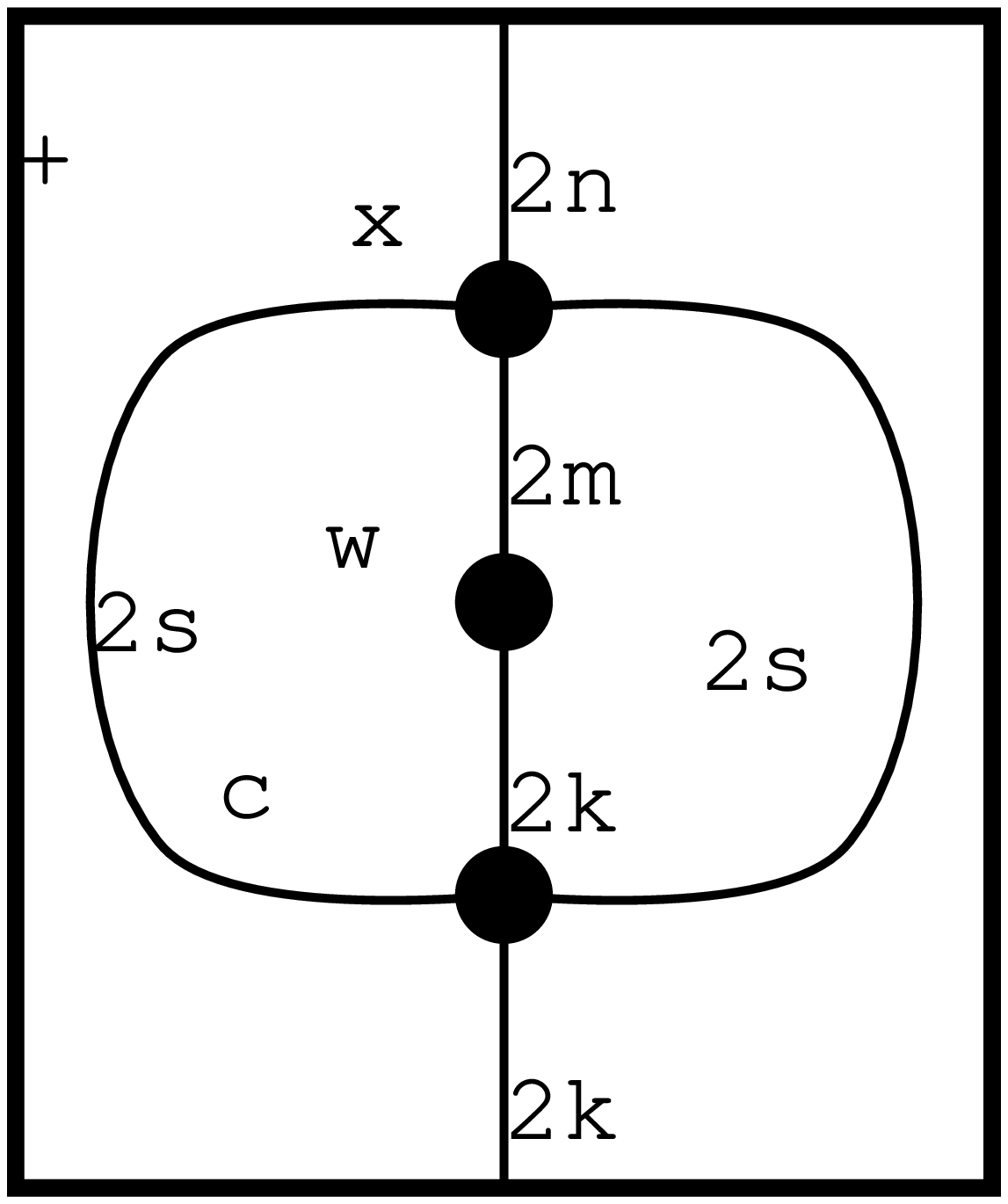}
} \t{ for all } m,n, s \in \N_0, w \in V_{+m} (p), x \in P_{+(m+n+2s)} \t{ s.t. } a = \psi^{2s}_{+m,+n} (x).
\end{equation}

We need to show that $c$ satisfies the grouping relation (in Remark \ref{cprop}).
\comments{
$c_{2(s+t)} = P_{
\psfrag{cl}{$c_{2t}$}
\psfrag{cm}{$c_{2s}$}
\psfrag{+}{$+$}
\psfrag{2k}{$2k$}
\psfrag{2l}{$2t$}
\psfrag{2m}{$2s$}
\includegraphics[scale=0.15]{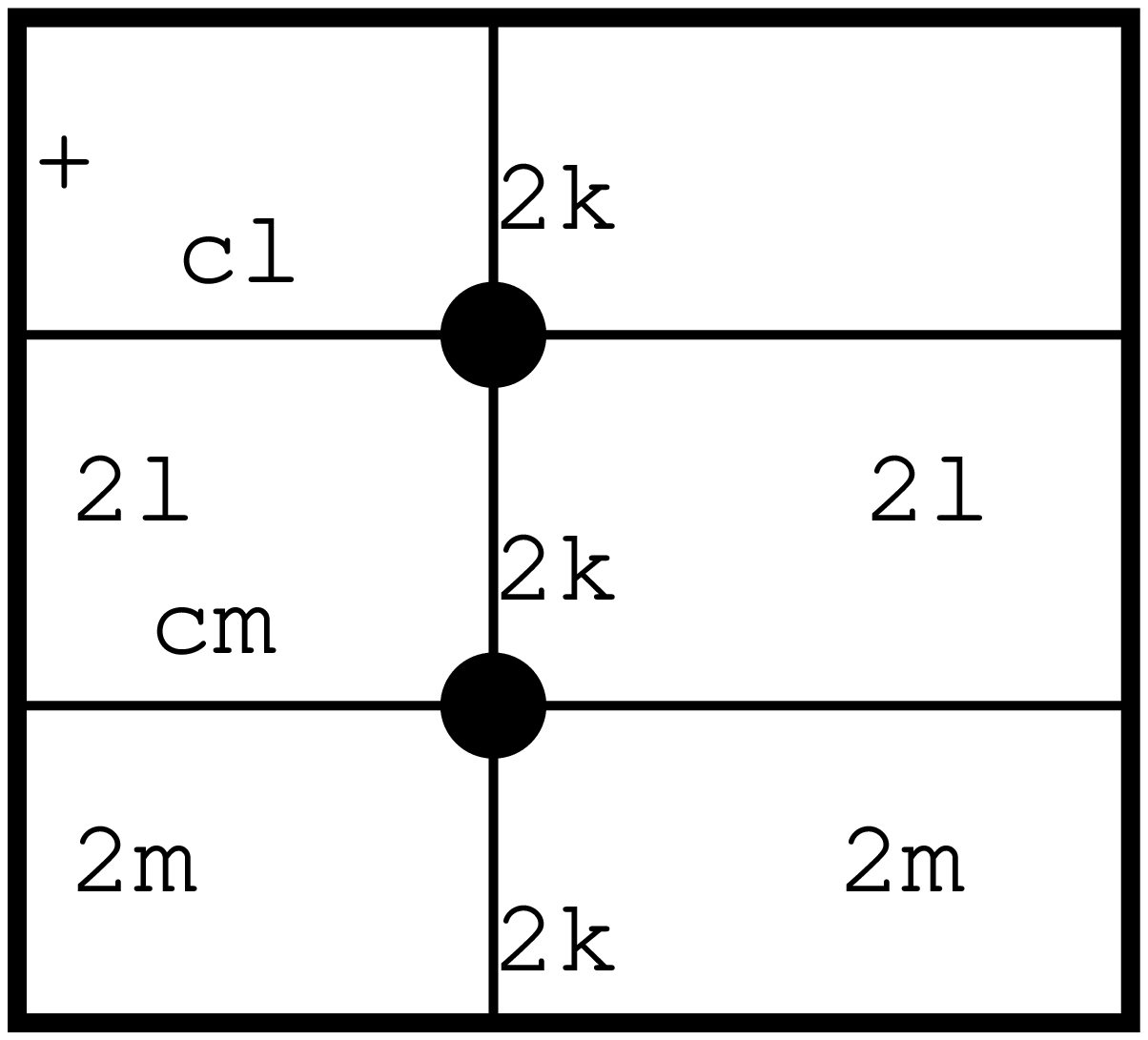}
}$ for all $s,t \in \N_0$.
}
For this, set $a := \psi^{2s}_{+(k+2t) , + (k + 2(s+t) ) } (1_{+(k+2(s+t) )})$ and $b := \psi^{2t}_{+k , +(k+2t)} (1_{+(k+2s)})$.
The description of the action of affine morphisms in Equation \ref{a.w}, implies
\[
P_{
\psfrag{cs}{$c_{2s}$}
\psfrag{ct}{$c_{2t}$}
\psfrag{x}{$x$}
\psfrag{+}{$+$}
\psfrag{2k}{$2k$}
\psfrag{2s}{$2s$}
\psfrag{2t}{$2t$}
\includegraphics[scale=0.15]{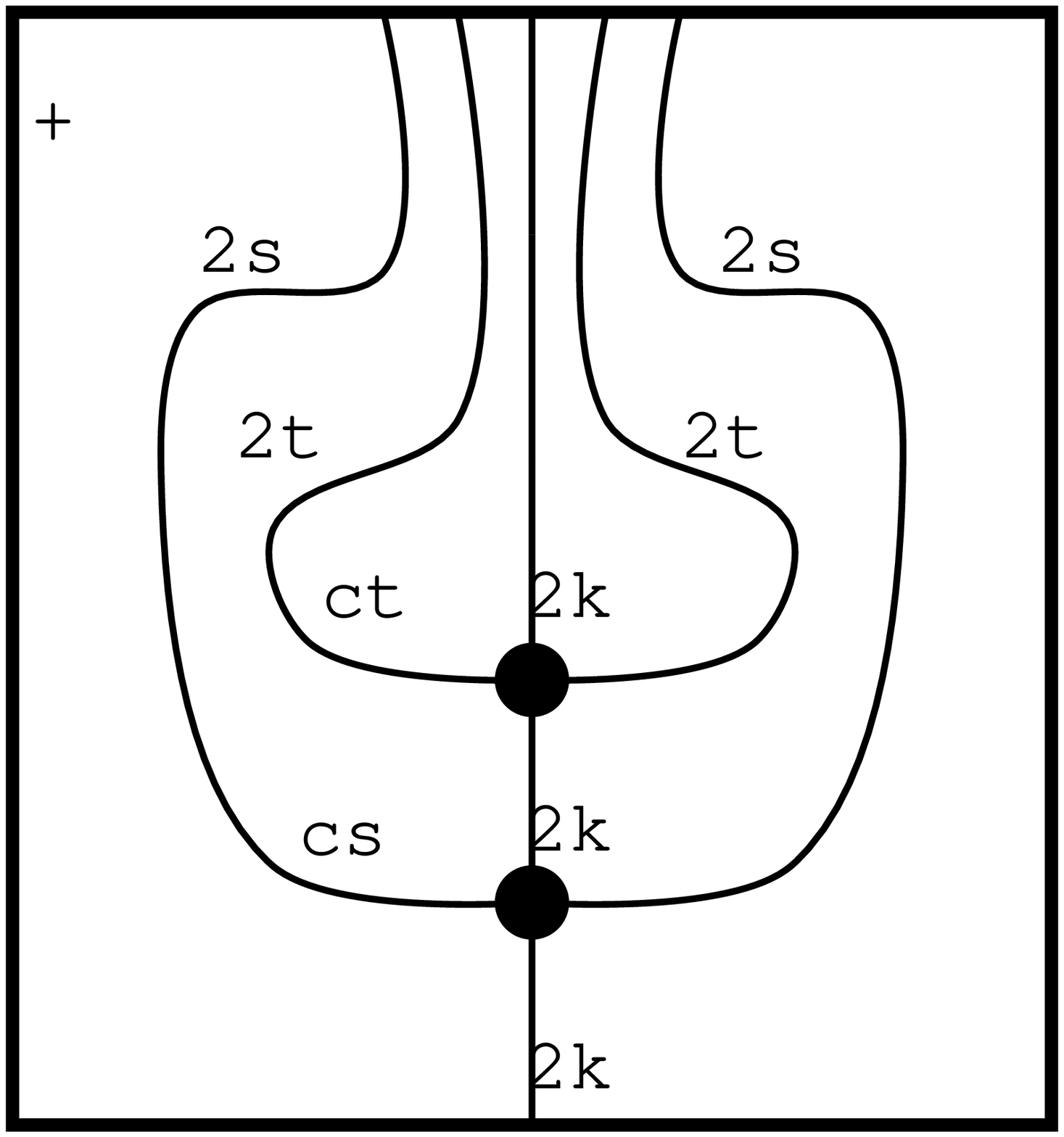}
} = a \cdot (b \cdot p) = (a \circ b) \cdot p = \left[ \psi^{2(s+t)}_{+k,+k} (1_{+2(k+s+t)}) \right] \cdot p = P_{
\psfrag{c}{$c_{2(s+t)}$}
\psfrag{x}{$x$}
\psfrag{+}{$+$}
\psfrag{2k}{$2k$}
\psfrag{s}{$2(s+t)$}
\includegraphics[scale=0.15]{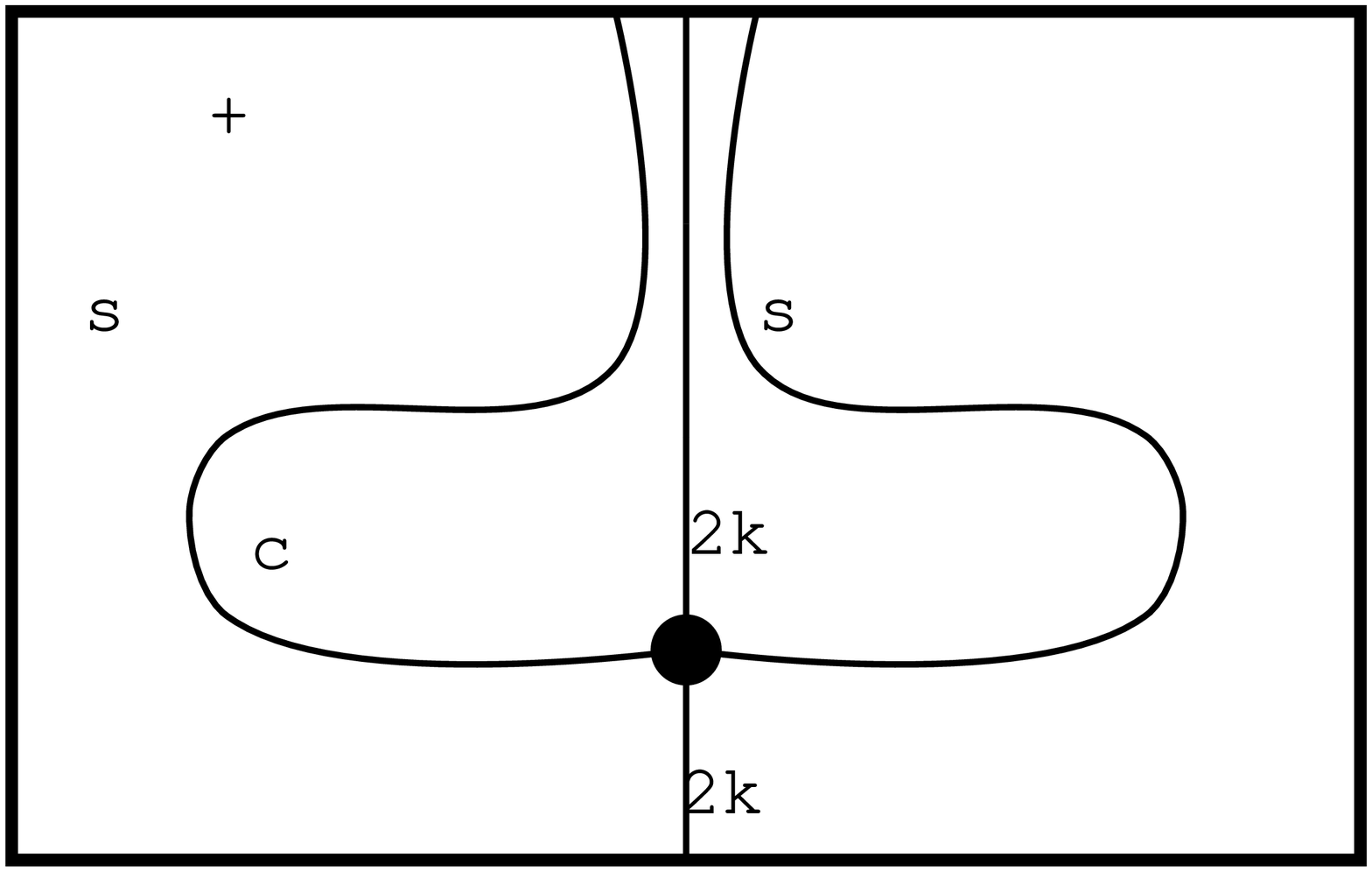}
}
\]
which gives the required equation for grouping relation.

For the $*$-relation (in Remark \ref{cprop}), we use the action (in Equation \ref{a.w}) to obtain
\[
P_{
\psfrag{c}{$c_{2m}$}
\psfrag{x}{$x$}
\psfrag{2k}{$2k$}
\psfrag{2l}{$2k$}
\psfrag{2m}{$2m$}
\includegraphics[scale=0.15]{figures/z2ap/ppip.eps}
} = \lab p , \psi^{2m}_{+k,+k} (x) \cdot p \rab_{V_{+k} (p)} = \lab \left( \psi^{2m}_{+k,+k} (x) \right)^* \cdot p , p \rab_{V_{+k} (p)} = P_{
\psfrag{c}{$c^*_{2m}$}
\psfrag{x}{$x$}
\psfrag{2k}{$2k$}
\psfrag{2l}{$2k$}
\psfrag{2m}{$2m$}
\includegraphics[scale=0.15]{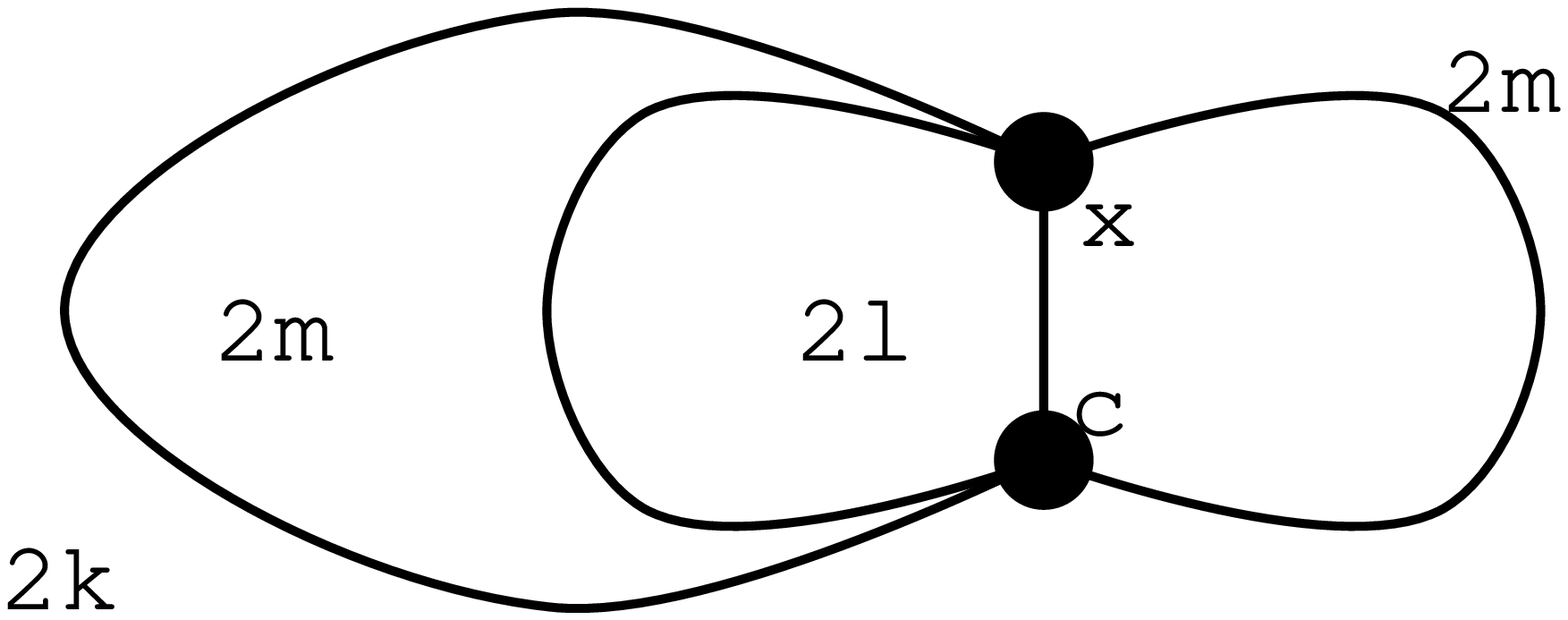}
}
\]
for all $m \in \N_0$, $x \in P_{+2(k+m)}$.
Then, we apply sphericality on the right side and use non-degeneracy of the trace tangle to get the required relation.

$c_0 = p$ follows from Equation \ref{a.w} by setting $s=0$, $m=n=k$, $w = p$ and $x = 1_{+2k}$.
Hence, by Remark \ref{cprop}, $c$ is a unitary commutativity constraint and as a result, $(p,c) \in \t{ob} (\mcal {ZC})$.
From the action of the tangles in Equation \ref{a.w}, we also proved that $\vphi_{+k} : W_{+k} \ra V_{+k} (p) = V_{+k} (p, c)$ is an $AP_{+k,+k}$-linear isometric isomorphism.
Since support of both $W$ and $V(p,c)$ can be atmost $k$, therefore by Remark \ref{affmodmor}, $W$ must be isometrically isomorphic to $V(p,c)$ as affine $P$-modules.
This shows $V$ is essentially surjective.
Thus, we have proved the following theorem.

\begin{thm}\label{genconj}
Let $N \subset M$ be a finite index extremal subfactor, $P$ be its
 subfactor planar algebra and $\mcal{C}$ be the category of $N$-$N$-bimodules generated by $_NL^2(M)_M$. Then, the Drinfeld center $\mcal {ZC}$ is contravariantly equivalent to the category of locally finite Hilbert affine $P$-modules with finite $P$-support as braided tensor categories, in particular, with notations as above,   $V: \mcal {ZC} \ra \mcal D$ gives such an equivalence.
\end{thm}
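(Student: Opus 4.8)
The bulk of the work has already been carried out: the preceding theorem shows that $\t{Ind}\circ V : \mcal{ZC} \ra \mcal{D}$ is a contravariant, fully faithful, $\C$-linear, monoidal $*$-functor, and the discussion immediately preceding the statement establishes that every object of $\mcal{D}$ lies in its essential image. The plan is therefore to (i) assemble these facts into a contravariant equivalence of tensor $*$-categories, and (ii) upgrade it to an equivalence of \emph{braided} tensor categories by checking that $V$ intertwines the canonical braiding of the Drinfeld center with the braiding $c$ on $\mcal{D}$ constructed in Proposition \ref{braiding}.

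For (i), full faithfulness together with essential surjectivity yields a contravariant equivalence of categories; by Remark \ref{eaff} this passes from $V$ to $\t{Ind}\circ V$, and the monoidal $*$-structure has already been supplied by the previous theorem. Thus only the braiding remains. Recall here that $\mcal{D}$ is genuinely braided (by the Remark asserting $\mcal{D}$ is closed under $\btimes$), its braiding being $c_{V,W}$ of Proposition \ref{braiding}.

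For (ii), the braiding of the Drinfeld center $\mcal{ZC}$ is read off from the half-braiding data: for objects $(p,c)$ and $(q,d)$ with $p\in \mscr P(P_{+2k})$, $q\in \mscr P(P_{+2l})$, the braiding morphism $(p,c)\otimes(q,d)\ra (q,d)\otimes(p,c)$ is the underlying morphism $d_{2k}\in \mcal{C}(p\otimes q,\, q\otimes p)$, which by the grouping relation of Remark \ref{cprop} is completely determined by the single constraint $d_2$. The verification I would carry out is a direct diagrammatic comparison: apply the contravariant functor $V$ to $d_{2k}$, transport through the monoidal isomorphism $f:V((p,c)\otimes(q,d))\os{\sim}{\ra} V(p,c)\btimes V(q,d)$ of the previous theorem, and expand both sides against the $\t{CC}$-valued inner product using Lemma \ref{fusCCip} together with the explicit constraints of Remark \ref{ccipvpc}. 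The content is the pictorial identity that the half-braiding $d$ of the center, after induction, is exactly the ``twisted pair of pants'' $a\circ b_{\vlon,k,l}$ composed with $V_{r_{\vlon k}}$ that defines $c_{V,W}$. Because $V$ is contravariant while its monoidal structure is order-preserving ($V(A\otimes B)\cong V(A)\btimes V(B)$), this matches $V$ applied to the center braiding with the braiding $c_{V(q,d),V(p,c)}$ of $\mcal{D}$, i.e.\ the braiding with its two arguments interchanged, which is precisely the correct compatibility for a braided contravariant equivalence.

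The main obstacle is exactly this braiding compatibility. All the analytic difficulties (positivity of the $\t{CC}$-valued inner product, boundedness, and the non-trivial tweaking needed to make $c$ unitary rather than merely densely defined) have already been absorbed into Proposition \ref{braiding}; what remains delicate is bookkeeping the contravariance against the half-braiding convention and confirming that the specific rotation-and-cap normalization of $b_{\vlon,k,l}$ and $r_{\vlon k}$ matches the normalization built into $V(p,c)$ in Remark \ref{ccipvpc}. Once the single-strand case ($d_2$ against the elementary twist) is matched, the general case follows by the grouping relation, so the genuinely new computation is confined to one elementary diagrammatic identity.
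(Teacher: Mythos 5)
Your proposal has the right skeleton, but the load-bearing step is cited rather than proved. The paper's proof of Theorem \ref{genconj} consists of exactly two things: (a) the preceding (unnamed) theorem, that $(p,c) \mapsto \t{Ind}\, V(p,c)$ is a contravariant, fully faithful, $\C$-linear, monoidal $*$-functor into $\mcal D$, and (b) the long argument establishing essential surjectivity of $V$ --- the text beginning ``We will now proceed towards proving essential surjectivity'' and ending ``Thus, we have proved the following theorem'' \emph{is} the paper's proof of this statement. Your step (i) simply points at that discussion, so as an independent argument it assumes precisely what has to be shown. The content you would need to supply is substantial: given a locally finite Hilbert affine $P$-module $W$ with finite $P$-support, realize $W_{+l}$ as $V_{+l}(p)$ for a projection $p$ chosen at a possibly higher level $k$ satisfying the non-degeneracy condition of Remark \ref{vlp}; extend to isometries $\vphi_{+m}$ and prove their surjectivity via invertibility of $z = \us{\alpha}{\sum}\, u^*_\alpha \us{2l}{\odot} u_\alpha$ in the corner algebra $\left(P_{+2k}\right)_p$; extract a candidate half-braiding $c \in CC_{+k,+k}$ from the functional $a \mapsto \lab p , a \cdot p \rab_{V_{+k}(p)}$ using Proposition \ref{cma}; and verify the grouping relation, the $*$-relation and $c_0 = p$ so that Remark \ref{cprop} upgrades $c$ to a unitary commutativity constraint and Remark \ref{affmodmor} yields $W \cong \t{Ind}\,V(p,c)$. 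None of these ideas appear in your proposal, and they are the heart of the theorem.

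Your step (ii), on the other hand, is a genuine addition relative to the paper: the paper never explicitly verifies that $V$, together with the monoidal isomorphisms $f$, intertwines the braiding of $\mcal{ZC}$ (which, as you correctly note, is the compression of $d_{2k}$, i.e.\ $d_p \in \mcal C(p \otimes q, q\otimes p)$, determined by $d_2$ via the grouping relation) with the braiding of Proposition \ref{braiding}; it simply asserts that the equivalence is one of braided tensor categories. Your proposed tools (Lemma \ref{fusCCip} and Remark \ref{ccipvpc}) are the right ones for that computation. But be aware that your conclusion --- that $V$ applied to the center braiding equals $c_{V(q,d),V(p,c)}$ rather than the inverse braiding $c^{-1}_{V(p,c),V(q,d)}$ --- is asserted, not checked: for a contravariant functor with order-preserving monoidal structure both candidates have the correct source $V(q,d)\btimes V(p,c)$ and target $V(p,c)\btimes V(q,d)$, and deciding between them is exactly the convention bookkeeping you flag but do not resolve. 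So even granting yourself the paper's surjectivity argument, the one piece of new mathematics you promise remains a sketch.
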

\begin{rem}
Theorem \ref{genconj} proves Walker's conjecture in the affirmative using Remark \ref{findepfinPsupp}.
\end{rem}

\subsection*{Acknowledgements}
The authors would like to thank Masaki Izumi, Vaughan Jones and Scott Morrison for  fruitful discussions on several occassions.

\bibliographystyle{alpha}


\comments{

\noindent----------------------------------------------------------------------------------------------------------------------------------
\bibitem[Bis]{Bis97} D Bisch, {\em Bimodules, higher relative commutants and the fusion algebra associated to a subfactor}, Operator algebras and their applications, 13-63, Fields Inst. Commun., 13, Amer.~Math.~Soc., Providence, RI, (1997).
\bibitem[BDG1]{BDG09} D Bisch, P Das and S K Ghosh, {\em The planar algebra of group-type subfactors}, J.~Func.~Anal., 257(1), 20-46(2009).
\bibitem[BDG2]{BDG08} D Bisch, P Das and S K Ghosh, {\em The planar algebra of diagonal subfactors}, Proc.~Conference in honor of Alain Connes' 60th birthday, ``Non-Commutative Geometry'' April 2-6, 2007, IHP Paris, to appear, arXiv:0811.1084v2 [math.OA].
\bibitem[BDG3]{BDG10} D Bisch, P Das and S K Ghosh, {\em The planar algebra of group-type subfactors with cocycle}, in preparation.
\bibitem[Bur]{Bur} M Burns, {\em Subfactors, planar algebras, and rotations}, Ph.D.~Thesis at the University of California Berkeley, 2003.
\bibitem[DGG]{DGG} P Das, S K Ghosh, V P Gupta, {\em Perturbations of planar algebras}, arXiv:1009.0186v1 [math.QA].
\bibitem[DK]{DK} P Das, V Kodiyalam, {\em Planar algebras and the Ocneanu-Szymanski theorem}, Proc. Amer. Math Soc. 133 (2005), 2751-2759.
\bibitem[EK]{EK98} D Evans and Y Kawahigashi, {\em Quantum symmetries on operator algebras}, OUP New York (1998).
\bibitem[GHJ]{GHJ89} F Goodman, P de la Harpe and V F R Jones, \textit{Coxeter graphs and towers of algebras}, Springer, Berlin, MSRI publication (1989).
\bibitem[GJS]{GJS} A Guionnet, V F R Jones and D Shlyakhtenko, \textit{Random matrices, free probability, planar algebras and subfactors}, arXiv:0712.2904v2 [math.OA].
\bibitem[GL]{GL98} J J Graham and G I Lehrer, \textit{The  representation theory of affine Temperley-Lieb algebras},  Enseign.~Math., (2), 44(3-4), 173-218, (1998).
\bibitem[Jon1]{Jon83} V F R Jones,  \textit{Index for subfactors}, Invent.~Math., 72, 1-25 (1983).
\bibitem[Jon2]{Jon} V F R Jones, \textit{Planar algebras I}, NZ J.~Math., to appear, arXiv:math/9909027v1 [math.QA].
\bibitem[Jon3]{Jon00} V F R Jones, \textit{The planar algebra of a bipartite graph}, Knots in Hellas'98 (Delphi), 94-117, (2000).
\bibitem[Jon4]{Jon01} V F R Jones, \textit{The annular structure of subfactors}, L'Enseignement Math., 38, (2001).
\bibitem[Jon5]{Jon03} V F R Jones, \textit{Quadratic tangles in planar algebras}, arXiv:1007.1158v1 [math.OA].
\bibitem[JP]{JP} V F R Jones and D Penneys, \textit{The embedding theorem for finite depth subfactor planar algebras}, arXiv:1007.3173v1 [math.OA].
\bibitem[JS]{JS97} V F R Jones and V S Sunder, \textit{Introduction to Subfactors}, LMS Lecture Notes Series, 234 (1997).
\bibitem[Kas]{Kas} C Kassel, \textit{Quantum groups}, Graduate Texts in Mathematics, 155, (1995).
\bibitem[KLS]{KLS} V Kodiyalam, Z Landau and V S Sunder, {\em The planar algebra associated to a Kac algebra}, Proc. Ind. Acad. Sci. 113 (2003), no. 1, 15-51.
\bibitem[KS]{KS04} V Kodiyalam and V S Sunder, \textit{On Jones' planar algebras}, J.~Knot Theory and its Ramifications, 13, No. 2, 219-247 (2004).
\bibitem[Pet]{Pet} Emily Peters, \textit{A planar algebra construction of the Haagerup subfactor}, Int.~J.~Math., 21, No.8, 987-1045 (2010).
\bibitem[PP1]{PP86} M Pimsner and S Popa, \textit{Entropy and index for subfactors}, Ann.~Sci.~Ec.~Norm.~Sup., 19, No. 1, 57-106 (1986).
\bibitem[PP2]{PP88} M Pimsner and S Popa, \textit{Iterating the basic construction}, Trans.~Amer.~Math. Soc., 310, No. 1, 127-133 (1988).
\bibitem[Pop1]{Pop90a} Sorin Popa, \textit{Classification of subfactors: the reduction to commuting squares}, Invent.~Math., 101, No. 1, 19-43 (1990). 
\bibitem[Pop2]{Pop94} Sorin Popa, \texit{Symmetric enveloping algebras, amenability and AFD properties for subfactors}, Math. Res. Lett., No. 1, 409-425 (1994).
\bibitem[Pop3]{Pop95} Sorin Popa, \textit{An axiomatization of the lattice of higher relative commutants}, Invent.~Math., 120, 427-445 (1995).
\bibitem[Pop2]{Pop94} Sorin Popa, \texit{Some properties of the symmetric enveloping algebra of a subfactor, with applications to amenability and Property T}, Doc. Math., No. 4, 665-774 (1999). 

\end{thebibliography}

}

\end{document}